\documentclass[12pt,a4paper]{article}
\pdfoutput=1
\usepackage[utf8]{inputenc}  
\usepackage[T1]{fontenc}  
\usepackage[english]{babel}
\usepackage{tikz}
\usetikzlibrary{positioning,chains,fit,shapes,calc}
\usetikzlibrary{patterns}

\usepackage{geometry}
\geometry{hmargin=2.2cm,vmargin=2.2cm}

\usepackage{amsmath}
\usepackage{amsfonts}
\usepackage{amssymb}
\usepackage{amsthm}
\usepackage{yhmath}
\usepackage{stmaryrd}
\usepackage{dsfont}
\usepackage{bbm}
\usepackage{mathtools}

\newtheorem{thm}{Theorem}[section]
\newtheorem{prop}[thm]{Proposition}
\newtheorem{lem}[thm]{Lemma}
\newtheorem*{rk}{Remark}

\newtheorem*{defi}{Definition}
\newtheorem*{thm*}{Theorem}
\newtheorem*{prop*}{Proposition}
\newtheorem*{lem*}{Lemma}
\usepackage{graphicx}
\usepackage{float}
\restylefloat{figure}

\newcommand{\E}{\mathbb{E}}
\renewcommand{\P}{\mathbb{P}}

\newcommand{\R}{\mathbb{R}}
\newcommand{\N}{\mathbb{N}}
\newcommand{\Z}{\mathbb{Z}}
\newcommand{\C}{\mathbb{C}}
\newcommand{\D}{\mathbb{D}}

\newcommand{\bL}{\mathbb{L}}
\newcommand{\cP}{\mathcal{P}}
\newcommand{\cK}{\mathcal{K}}

\newcommand{\cT}{\mathcal{T}}
\newcommand{\cL}{\mathcal{L}}

\newcommand{\cM}{\mathcal{M}}
\newcommand{\cC}{\mathcal{C}}
\newcommand{\cN}{\mathcal{N}}
\newcommand{\cV}{\mathcal{V}}
\newcommand{\cA}{\mathcal{A}}
\newcommand{\cEG}{\mathcal{EG}}
\newcommand{\kU}{\mathfrak{U}}
\newcommand{\kM}{\mathfrak{M}}
\newcommand{\kL}{\mathfrak{L}}

\newcommand{\oD}{\overline{\D}}

\usepackage[colorlinks=false]{hyperref}
\tikzset{cross/.style={cross out, draw=black, minimum size=2*(#1-\pgflinewidth), inner sep=0pt, outer sep=0pt},
cross/.default={1pt}}
\usepackage{animate}
\usepackage{todonotes}

\author{Paul Thevenin\footnote{CMAP \& \'Ecole polytechnique, paul.thevenin@polytechnique.edu \newline The author acknowledges partial support from Agence Nationale de la Recherche,
Grant Number ANR-14-CE25-0014 (ANR GRAAL).}}
\title{A geometric representation of fragmentation processes\\ on stable trees}
\date{\today}
\begin{document}

\maketitle

\begin{abstract}
\small{We provide a new geometric representation of a family of fragmentation processes by nested laminations, which are compact subsets of the unit disk made of noncrossing chords. We specifically consider a fragmentation obtained by cutting a random stable tree at random points, which split the tree into smaller subtrees. When coding each of these cutpoints by a chord in the unit disk, we separate the disk into smaller connected components, corresponding to the smaller subtrees of the initial tree. This geometric point of view allows us in particular to highlight a new relation between the Aldous-Pitman fragmentation of the Brownian continuum random tree and minimal factorizations of the $n$-cycle, i.e. factorizations of the permutation $(1 \, 2 \, \cdots \, n)$ into a product of $(n-1)$ transpositions. We discuss various properties of these new lamination-valued processes, and we notably show that they can be coded by explicit Lévy processes.}
\end{abstract}

\begin{figure}
\center
\caption{The image represents an approximation of the lamination $\bL^{(1.8)}_{100}$. By using Adobe Acrobat and by clicking on the ``play'' button, one can view an approximation of the process $\left(\bL^{(1.8)}_c\right)_{c \geq 0}$.}
\label{fig:animate}
\animategraphics[width=.5\textwidth,controls]{5}{film/moiii}{0}{50}
\end{figure}

\section{Introduction}
\label{sec:intro}

The purpose of this work is to investigate a  geometric and dynamical representation of fragmentation processes derived from random stable trees in terms of laminations, with an application to permutation factorizations.  Specifically, we shall code the analogue of the Aldous-Pitman fragmentation on a stable tree by a new lamination-valued càdlàg process. Also, in the Brownian case, we shall establish a connection between this lamination-valued process and minimal factorizations of a cycle into transpositions. Before stating our results, let us first present the main objects of interest.

\subsection{Fragmentations and laminations}
\label{ssec:fraglam}

\paragraph*{Fragmentation processes derived from stable trees.}

Fragmentation processes describe the evolution of an object with given mass, which splits into smaller pieces as time passes.  Specifically, a \textit{fragmentation process} $\Lambda = \left( \Lambda(t), t \geq 0 \right)$ is a càdlàg process (that is, left-continuous with right limits) on the set

\begin{align*}
\Delta \coloneqq \left\{ x_1 \geq x_2 \geq \cdots \geq 0, \sum_{i \geq 1} x_i = 1 \right\}.
\end{align*}
such that, if one denotes by $P_s$ the law of $\Lambda$ starting from $s \coloneqq (s_1, s_2, ...)$, then $P_s$ is the nonincreasing reordering of the elements of independent processes of laws $P_{(s_1, 0, 0, ...)}, P_{(s_2, 0, 0, ...)}, ...$. This means that each fragment breaks independently of the others, in a way that only depends on its mass.

The starting point of this paper is a well-known fragmentation process which was introduced by Aldous and Pitman \cite{AP98} and which consists in cutting a specific random tree - namely, Aldous' Brownian tree - at random points. These \textit{cutpoints} are spread out on the tree following a homogeneous Poisson distribution of density $c \, d\ell$, where $c>0$ and $\ell$ is the length measure on the tree. The Brownian tree (sometimes called CRT, for continuum random tree) is therefore split into smaller components as $c$ increases. This process has been studied in depth, notably by Bertoin \cite{Ber00} who gives a different surprising construction from a linearly drifted standard Brownian excursion over its current infimum, as the slope of the drift varies. Miermont \cite{Mie05} has considered more generally fragmentations obtained by cutting at random the so-called \textit{stable trees}. These random trees $\cT^{(\alpha)}$ (for $\alpha \in (1,2]$), introduced by Duquesne and Le Gall \cite{DLG02} (see also \cite{LGLJ98}), can be coded by $\alpha$-stable spectrally positive Lévy processes and arise as scaling limits of size-conditioned Galton-Watson trees. They generalize Aldous' Brownian tree, which can be seen as the $2$-stable tree. Miermont investigates a way of cutting these stable trees only at \textit{branching points} --- that is, points whose removal splits the tree into three or more different subtrees --, while Abraham \& Serlet  \cite{AS02} cut them uniformly on their \textit{skeleton} (made of points which are not branching points). This gives birth to two different fragmentation processes. Let us also mention Voisin \cite{Voi11} who studies a mixture of these two processes. Fragmentations can also more generally be derived from Lévy trees (see \cite{AD08, ADV10, AS02}), which are trees coded by Lévy processes.

Let us briefly mention that a fragmentation process can be seen as a time-reversed coalescent process, where particles with given masses merge at a rate that depends on their respective masses.  The so-called \textit{standard additive coalescent} is the coalescent process where only two particles merge at each time, at a rate that is the sum of their masses. This standard coalescent is the time-reversed analogue of the previously mentioned Aldous-Pitman fragmentation process on the Brownian tree \cite{AP98}. Several other models of coalescent processes have been investigated, such as Kingman's coalescent \cite{Kin82} where two particles merge at rate $1$, or Aldous' multiplicative coalescent \cite{Ald97, Ald98} where particles merge proportionally to the product of their masses. See also the book of Bertoin \cite{Ber06} for fully detailed information about coalescent processes. Let us finally mention Chassaing and Louchard \cite{CL02} who provide a representation of the standard additive coalescent as \textit{parking schemes} (see also \cite{MW19}).

In this paper, we consider the previously mentioned analogue of the Aldous-Pitman fragmentation on a stable tree. Specifically, we  fix $\alpha \in (1,2]$ and focus on cutting the $\alpha$-stable tree $\cT^{(\alpha)}$ homogeneously on its skeleton by a homogeneous Poisson process $\cP_c(\cT^{(\alpha)})$ of intensity $c \, d\ell$, where $c>0$ and $\ell$ is the length measure on the tree, consistently as $c$ increases (we refer to Section \ref{sec:construction} for precise definitions, and \cite{ALW17} for a rigorous definition of $\ell$). Cutting $\cT^{(\alpha)}$ at the points of $\cP_c(\cT^{(\alpha)})$ then splits the tree into a random set of smaller components, whose decreasingly reordered sequence $\textbf{m}_c^{(\alpha)}$ of \textit{masses} (i.e., the proportion of leaves of the tree in these components, see again Section~\ref{sec:construction} for precise definitions) is an element of $\Delta$. This defines the $\alpha$-fragmentation process

\begin{align*}
F^{(\alpha)} \coloneqq \left(F_c^{(\alpha)} \right)_{c \geq 0} = \left( \textbf{m}_c^{(\alpha)} \right)_{c \geq 0}.
\end{align*}
In the case $\alpha=2$, this is the Aldous-Pitman fragmentation of $\cT^{(2)}$.

\paragraph{Laminations and excursion-type functions.} The aim of this paper is to code the analogue of the Aldous-Pitman fragmentation on a stable tree by a nondecreasing lamination-valued process, where, roughly speaking, a chord in the lamination corresponds to a cutpoint on the tree.  By definition, a lamination is a closed subset of the closed unit disk $\overline{\mathbb{D}}$ which can be written as the union of the unit circle $\mathbb{S}^1$ and a set of chords which do not intersect in the open unit disk $\mathbb{D}$. Laminations are important objects in topology and in hyperbolic geometry, see for instance \cite{Bon01} and references therein. If $L$ is a lamination, a \textit{face} of $L$ is a connected component of the complement of $L$ in $\overline{\mathbb{D}}$.

The connection between random trees and random laminations goes back to Aldous \cite{ald} who used the Brownian excursion to code the so-called Brownian triangulation (see Fig.  \ref{fig:arbconlam}, right, for a simulation). The Brownian triangulation is a random lamination whose faces are all triangles, and its ``dual'' tree is, in some sense, the Brownian CRT. Since then, this object has appeared as the limit of several discrete structures \cite{CK14, KM17, Bet17}, and in the theory of random planar maps \cite{LGP08}.

Other models of random laminations have been recently studied. The Brownian triangulation has been generalized by Kortchemski \cite{Kor14}, who introduced, for $\alpha \in (1,2]$ the so-called \textit{$\alpha$-stable lamination}, whose ``dual'' tree is in a certain sense the $\alpha$-stable tree, and which appears as the limit of certain models of random dissections (which are collections of noncrossing diagonals of a regular polygon). In a different direction, Curien and Le Gall \cite{CLG11} consider laminations built by recursively adding chords. Another family of random laminations connected to random minimal factorizations of a cycle into transpositions, which will be one of the objects of interest in this paper, has been introduced in \cite{FK17}. While all these random laminations can be coded by random excursion-type functions, other laminations such as the hyperbolic triangulation \cite{CW13} or triangulated stable laminations \cite{KM16} cannot.

Let us immediately explain how to construct laminations from so-called excursion-type functions. Let $f: [0,1] \rightarrow \R$. We say that $f$ is an \emph{excursion-type function} if the following conditions are verified: 

\begin{enumerate}
\item[(i)] $f$ is càdlàg (that is, right-continuous on $[0,1)$, with left limits on $(0,1]$);
\item[(ii)] $f$ is nonnegative on $[0,1]$ and $f(1)=0$;
\item[(iii)] $f$ only makes positive jumps, that is, for all $x \in (0,1]$, $f(x-) \leq f(x)$.
\end{enumerate}

Following the construction of \cite{Kor14}, to an excursion-type function $f$, one can associate a lamination $\bL(f)$ as follows.  For any $0 \leq s<t \leq 1$, say that $s \sim_f t$ if $t \coloneqq \inf \{ u>s,f(u) \leq f(s-) \}$ (where we set $f(0-)=0$). For $t>s$, we say that $t \sim_f s$ if $s \sim_f t$, and we say that for any $s \in [0,1]$, $s \sim_f s$. This way, $\sim_f$ is an equivalence relation on $[0,1]$. The lamination $\bL(f)$ is defined as the closure
\begin{align*}
\bL(f) \, = \, \overline{\mathbb{S}^1 \cup \underset{\substack{s,t \in (0,1)\\ s \sim_f t}}{\bigcup} \left[e^{-2i\pi s}, e^{-2i\pi t}\right]}
\end{align*}
where $[y,z]$ denotes the line segment joining the two complex numbers $y$ and $z$. 

The $\alpha$-stable lamination, which plays an important role in our work, can be constructed from a planar version of the $\alpha$-stable tree (we refer to Section \ref{ssec:construction} for precise definitions). Indeed, we view  $\cT^{(\alpha)}$  as coded by a continuous normalized $\alpha$-stable height process  $( H^{(\alpha)}_t)_{t \in [0,1]}$ (so that, informally, $H^{(\alpha)}$ is the contour function of $\cT^{(\alpha)}$). We define the $\alpha$-stable lamination $\bL_\infty^{(\alpha)}$ as
\begin{equation}
\label{eq:bLa}\bL_\infty^{(\alpha)} \quad \coloneqq  \quad \bL\left( H^{(\alpha)} \right).
\end{equation}
It is possible to check (see \cite{Kor14}) that faces of $\bL_\infty^{(\alpha)}$ are in correspondence with branching points of $\cT^{(\alpha)}$, and that there are chords which are not adjacent to any face (one can find chords arbitrarily close to such a chord, from both sides) which are in correspondence with the points of $\cT^{(\alpha)}$ that are not leaves nor branching points. See Fig. \ref{fig:stablearbconlam} for an approximation of these items, for $\alpha=1.5$.

\begin{figure}
\center
\caption{An approximation of $\left( \cT^{(1.5)}, H^{(1.5)}, \bL_\infty^{(1.5)} \right)$.}
\label{fig:stablearbconlam}
\includegraphics[scale=.6]{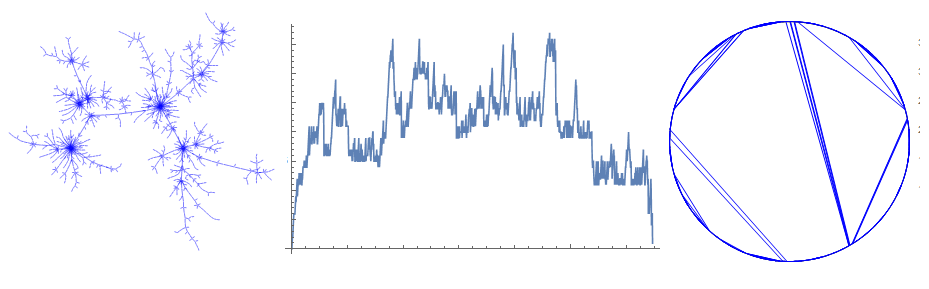}
\end{figure}

We conclude this section with a last definition concerning laminations.  We define the \textit{mass} of a face $F$ of a lamination $L$ as $\frac{1}{2\pi}$ times the Lebesgue measure of $\partial F \cap \mathbb{S}^1$ (roughly speaking, it corresponds to the part of the perimeter of $F$ that lies on the unit circle). Finally, the \emph{mass sequence} of $L$, denoted by $\cM[L]$, is the sequence of the masses of its faces, sorted in nonincreasing order.

\subsection{The  lamination-valued process \texorpdfstring{$( \bL_c^{(\alpha)})_{c \in [0,+\infty]}$}{Lcalpha}}

For a fixed $\alpha \in (1,2]$, we now introduce a new  lamination-valued process $( \bL_c^{(\alpha)})_{c \in [0,+\infty]}$ which encodes, in a certain sense, the fragmentation  $F^{(\alpha)}$ of the  $\alpha$-stable tree. Here we give a rather informal definition, and defer to Section \ref{sec:construction} precise definitions.

\paragraph{Definition of $( \bL_c^{(\alpha)})_{c \in [0,+\infty]}$.} As above, we view $\cT^{(\alpha)}$  as coded by a normalized $\alpha$-stable height process  $( H^{(\alpha)}_t)_{t \in [0,1]}$. We consider a homogeneous Poisson process $\cP_c(\cT^{(\alpha)})$ of intensity $c \, d\ell$ on the skeleton of  $\cT^{(\alpha)}$, where $c \geq 0$ and $\ell$ is the length measure on the tree, consistently as $c$ increases. For $c \geq 0$, we define the lamination $\bL_c^{(\alpha)}$ as the subset of the $\alpha$-stable lamination $\bL_\infty^{(\alpha)}$, obtained by keeping only the chords which correspond to the vertices of $\cP_c(\cT^{(\alpha)})$  (recall that to points of the skeleton of $\cT^{(\alpha)}$  correspond   chords of $\bL_\infty^{(\alpha)}$). Intuitively speaking, one obtains the process $( \bL_c^{(\alpha)})_{c \in [0,+\infty]}$  by revealing the chords of  $\bL_\infty^{(\alpha)}$ in a Poissonian way (see Fig.~\ref{fig:animate} for an approximation of $( \bL_c^{(1.8)} )_{c \in [0,+\infty]}$).

\paragraph{Connection with fragmentations.}

The process $c \mapsto  \bL_c^{(\alpha)}$, which is an increasing lamination-valued process,  is the main object of interest in this paper. It encodes the Aldous-Pitman fragmentation   of the  $\alpha$-stable tree in the following sense (where we recall that $\cM[L]$ is the mass sequence of a lamination $L$).

\begin{figure}

\caption{An approximation of $\left( \cT^{(2)}, H^{(2)}, \bL_\infty^{(2)} \right)$.}

\label{fig:arbconlam}

\includegraphics[scale=0.4]{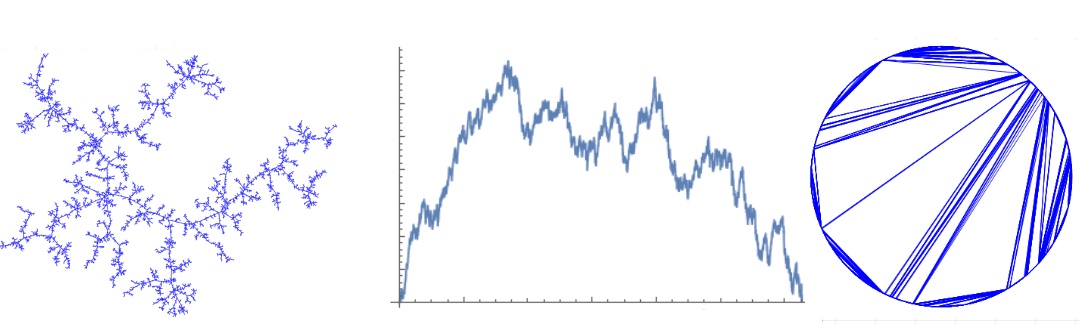}

\end{figure}

\begin{thm}

\label{thm:existence}

The following equality holds in distribution in $\Delta$:

\begin{align*}
\left( \cM \left[ \bL_c^{(\alpha)} \right] \right)_{c \geq 0}  \quad \mathop{=}^{(d)} \quad  F^{(\alpha)}.
\end{align*}

\end{thm}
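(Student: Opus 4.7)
The plan is to establish Theorem~\ref{thm:existence} pathwise in the natural coupling where both sides are constructed from the same normalized $\alpha$-stable height process $H^{(\alpha)}$ and the same Poisson process $\cP_c(\cT^{(\alpha)})$ of cutpoints. Concretely, I would build a measure-preserving bijection between the faces of $\bL_c^{(\alpha)}$ and the subtree components of $\cT^{(\alpha)}\setminus\cP_c(\cT^{(\alpha)})$, which will immediately give the identity $\cM[\bL_c^{(\alpha)}]=\textbf{m}_c^{(\alpha)}$ almost surely, and hence the equality in distribution of the two processes.

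The correspondence is cleanest when one starts from a single cutpoint. A skeleton point $x$ visited by the contour exploration of $\cT^{(\alpha)}$ at some time $s$, with associated ``right-return time'' $t=\inf\{u>s : H^{(\alpha)}_u\leq H^{(\alpha)}_{s-}\}$, is assigned via the construction of Section~\ref{sec:construction} the chord $[e^{-2i\pi s},e^{-2i\pi t}]$. This chord separates $\overline{\D}$ into two regions whose circular boundaries are the arcs parameterized by $[s,t]$ and by $[0,s]\cup[t,1]$; these arcs carry exactly the leaves of the two subtrees obtained by cutting $\cT^{(\alpha)}$ at $x$. Since the chords of $\bL_\infty^{(\alpha)}$ are pairwise non-crossing, iterating this observation yields that for any finite subset $A\subset\cP_c(\cT^{(\alpha)})$, adding the chords attached to $A$ one at a time refines the disk into faces that are in bijection with the subtrees of $\cT^{(\alpha)}$ cut at $A$, with matching circular-arc boundaries and leaf-sets. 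Equality of mass sequences then follows at the level of finitely many cuts, because the leaf measure on $\cT^{(\alpha)}$ is the push-forward of Lebesgue measure on $[0,1]$, and the mass of a face is by definition $1/(2\pi)$ times the Lebesgue measure of its circular boundary.

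The main obstacle is the passage from finite to countable Poisson configurations: for $\alpha\in(1,2)$ the length measure $\ell$ is infinite, so $\cP_c(\cT^{(\alpha)})$ is countably infinite for every $c>0$ and one cannot conclude purely by induction. I would exhaust $\cP_c(\cT^{(\alpha)})$ by an increasing sequence of finite subsets $A_n\uparrow\cP_c(\cT^{(\alpha)})$. Both the laminations $\bL(A_n)\uparrow\bL_c^{(\alpha)}$ and the associated partitions of $\cT^{(\alpha)}$ are monotone in $n$, and a standard measure-theoretic argument shows that the mass sequence of $\bL_c^{(\alpha)}$ is the nonincreasing limit of the mass sequences of $\bL(A_n)$, while the same holds for the fragmentation masses. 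Care must be taken to ensure that no mass escapes in the limit, which follows from the fact that Lebesgue-almost every $u\in[0,1]$ corresponds to a leaf of $\cT^{(\alpha)}$ and therefore lies on the circular boundary of exactly one face of $\bL_c^{(\alpha)}$; equivalently, $\bL_c^{(\alpha)}\cap \mathbb{S}^1=\mathbb{S}^1$ up to a Lebesgue-negligible set of chord endpoints. Assembling the three steps yields $\cM[\bL_c^{(\alpha)}]=\textbf{m}_c^{(\alpha)}$ almost surely for every $c\geq 0$, whence the claimed equality in distribution as processes indexed by $c$.
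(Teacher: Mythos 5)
Your proposal is correct and takes essentially the same route as the paper, which proves Theorem~\ref{thm:existence} by applying Proposition~\ref{prop:massequality}: each cutpoint $u$ and its associated chord split the tree and the disk into two pieces of matching masses $\ell_u(f)$ and $1-\ell_u(f)$, and the result follows jointly over all cuts. You spell out more explicitly the finite-to-countable exhaustion step (exhausting $\cP_c(\cT^{(\alpha)})$ by finite subsets $A_n$ and verifying that no mass escapes), which the paper compresses into the single sentence ``The result follows, since this holds jointly for all $c>0$ and all $u\in\cN_c(f)$.''
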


In a certain sense, $( \bL_c^{(\alpha)})_{c \in [0,+\infty]}$ can be viewed as a ``dual planar representation'' of  the Aldous-Pitman fragmentation of the  $\alpha$-stable tree, and as a ``linearization'' of the associated time-reversed coalescent process. In order to prove Theorem \ref{thm:existence}, we view the Poissonian cuts on the skeleton of $\cT^{(\alpha)}$ as a non-homogenous Poisson process in the epigraph of $ H^{(\alpha)}$ (see Section \ref{ssec:construction}).

Let us mention that for fixed $c>0$ and $\alpha=2$,  the lamination $\bL_c^{(2)}$ appears in \cite{FK17} in the context of random minimal factorizations of a cycle, without any connection to fragmentations. In addition, defining a  coupling $\bL_c^{(2)}$ as $c$ increases  and obtaining a functional convergence was left open in \cite{FK17}. Also, Shi \cite{Shi15} used fragmentation theory to study large faces in the Brownian triangulation and in stable laminations, by using the so-called fragmentation by heights of stable trees (which is different from the one that appears here, see \cite{Mie03}).

Also, throughout the paper, the lamination-valued processes will be defined on $[0,+\infty]$, while the associated fragmentation processes $F^{(\alpha)}$ are only defined on $\R_+$. Observe indeed that, almost surely, $\sup F_c^{(\alpha)} \rightarrow 0$ as $c \rightarrow +\infty$, which corresponds to extinction at $+\infty$. On the other hand, the increasing process $(\bL_c^{(\alpha)})_{c \geq 0}$ has a non-trivial limit at $+\infty$.

\subsection{Connections with random minimal factorizations}

One of the main contributions of this paper is to show that the process $( \bL_c^{(2)} )_{c \in [0,+\infty]}$ appears as the functional limit of a natural coding of  so-called minimal factorizations of the $n$-cycle. More precisely, for $n \in \Z_+$, denote by $\mathfrak{S}_n$ the group of permutations acting on $\llbracket 1, n \rrbracket$ and by $\mathfrak{T}_n$ the set of transpositions of $\mathfrak{S}_n$. Then, the elements of the set

\begin{align*}
\mathfrak{M}_n \coloneqq \left\{ \left( t_1,  \ldots , t_{n-1} \right) \in \mathfrak{T}_n^{n-1}, \, t_1 \cdots t_{n-1} = (1 \, 2 \, \cdots \, n) \right\}
\end{align*}
are called minimal factorizations of the $n$-cycle into transpositions, or just \textit{minimal factorizations} in short. Their study goes back to Dénès \cite{Den59} and Moszkowski \cite{Mos89}. By convention, we read transpositions from left to right, so that $t_1 t_2$ corresponds to $t_2 \circ t_1$.

Goulden and Yong \cite{GY02} view minimal factorizations in a geometric way, noticing that it is possible to represent each of them by a non-crossing tree in the unit disk. More specifically, if $ ( t_1,  \ldots , t_{n-1} ) \in \mathfrak{M}_n$ and $t_{j}= (a_{j},b_{j})$ for $1 \leq j \leq n-1$, then

$$\bigcup_{j=1}^{n-1}  \left[ e^{-2i\pi a_{j}/n}, e^{-2i\pi b_{j}/n} \right]$$
is a non-crossing tree and, in particular, a lamination (adding $\mathbb{S}^1$, see Fig. \ref{fig:minfaclam}). In this direction, for a uniform minimal factorization  $t^{(n)}$ of the $n$-cycle, Féray and Kortchemski \cite{FK17} have shown that a phase transition occurs  when roughly $\sqrt{n}$ transpositions have been read. More precisely, for $c>0$, if  $\cL^{(n)}_{c}$ is the lamination obtained by drawing the chords corresponding to the first $\lfloor c \sqrt{n} \rfloor$ transpositions of  $t^{(n)}$, then \cite[Theorem $3$, (i)]{FK17} shows that for $c>0$, $\cL_{c}^{(n)}$ converges in distribution for the Hausdorff distance to a limiting random lamination, defined by using a certain Lévy process (and not fragmentations nor Poisson processes). 

One of the main results of this paper is to show that this convergence actually holds in the functional sense  (that is, jointly in $c \in [0,\infty]$) and that the limiting process is $( \bL_c^{(2)} )_{c \in [0,+\infty]}$. As a corollary, we obtain an alternative and, in our opinion, simpler proof of the one-dimensional convergence \cite[Theorem $3$, (i)]{FK17}.

Let us quickly give some background concerning this notion of convergence. The set $\bL(\oD)$  of laminations of the closed unit disk is endowed with the Hausdorff distance $d_H$ between compact subsets of $\oD$, so that $(\bL(\oD),d_{H})$ is a Polish metric space (that is, separable and complete). The Hausdorff distance is defined as follows. If $K$ is a compact subset of $\oD$ and $\epsilon>0$, define the $\epsilon$-neighbourhood of $K$ as $K^\epsilon \coloneqq \left\{ x \in \oD, d(x,K) < \epsilon \right\}$, where $d$ denotes the usual Euclidean distance on $\R^2$. Then, for $K_1,K_2$ compact subsets of the unit disk, we define

\begin{align*}
d_H(K_1,K_2) \coloneqq \inf \left\{ \epsilon>0, K_2 \subset K_1^\epsilon \text{ and } K_1 \subset K_2^\epsilon  \right\}.
\end{align*}

\begin{figure}
\center
\caption{The lamination associated to the minimal factorization $F \coloneqq (34)(89)(35)(13)(16)(18)(23)(78) \in \kM_9$.}
\label{fig:minfaclam}
\begin{tikzpicture}[scale=0.7, every node/.style={scale=0.7}, rotate=-40]

\draw (0,0) circle (3);

\foreach \i in {1,...,9}
{
\draw[auto=right] ({3.3*cos(-(\i-1)*360/9)},{3.3*sin(-(\i-1)*360/9)}) node{\i};
}

\draw ({3*cos(-2*360/9)},{3*sin(-2*360/9)}) -- ({3*cos(-3*360/9)},{3*sin(-3*360/9)});

\draw ({3*cos(2*360/9)},{3*sin(2*360/9)}) -- ({3*cos(1*360/9)},{3*sin(1*360/9)});

\draw ({3*cos(-2*360/9)},{3*sin(-2*360/9)}) -- ({3*cos(-4*360/9)},{3*sin(-4*360/9)});

\draw ({3*cos(-2*360/9)},{3*sin(-2*360/9)}) -- ({3*cos(0*360/9)},{3*sin(0*360/9)});

\draw ({3*cos(-5*360/9)},{3*sin(-5*360/9)}) -- ({3*cos(0*360/9)},{3*sin(0*360/9)});

\draw ({3*cos(-7*360/9)},{3*sin(-7*360/9)}) -- ({3*cos(0*360/9)},{3*sin(0*360/9)});

\draw ({3*cos(-2*360/9)},{3*sin(-2*360/9)}) -- ({3*cos(-1*360/9)},{3*sin(-1*360/9)});

\draw ({3*cos(-6*360/9)},{3*sin(-6*360/9)}) -- ({3*cos(-7*360/9)},{3*sin(-7*360/9)});

\end{tikzpicture}

\end{figure}
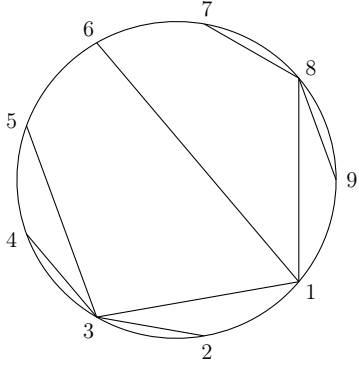

In the rest of the paper, for $E,F$ two metric spaces, $\mathbb{D}\left( E, F \right)$ denotes the set of càdlàg processes from $E$ to $F$, endowed with the Skorokhod $J_1$ topology (see Annex $A2$ in \cite{Kal02} for background). Finally, we denote by $[0,\infty]$ the Alexandrov extension of $\R_+$, which is compact by definition.

\begin{thm}
\label{thm:dicretecv2}

The following convergence holds in distribution in $\D \left([0,+\infty],\bL(\oD)\right)$:

\begin{equation*}
\left(\cL^{(n)}_{c}\right)_{c \in [0,+\infty]}  \quad \mathop{\longrightarrow}^{(d)}_{n \rightarrow \infty} \quad  \left(\bL_c^{(2)}\right)_{c \in [0,+\infty]}.
\end{equation*}

\end{thm}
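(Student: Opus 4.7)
The plan is to encode the discrete model in a form directly matching the construction of $(\bL_c^{(2)})_{c\geq 0}$ from the Brownian CRT with Poissonian cuts (Section \ref{ssec:construction}), and then to apply an invariance principle. Using an encoding of minimal factorizations by planted plane trees with labeled edges, in the spirit of \cite{FK17}, one can represent a uniform random $t^{(n)} \in \kM_n$ as a plane tree $T_n$ on $n$ vertices together with a uniformly random bijective labeling of its $n-1$ edges by $\llbracket 1, n-1\rrbracket$, in such a way that reading the first $k$ transpositions of $t^{(n)}$ amounts to keeping only the edges of label $\leq k$, and so that the chord of $\cL^{(n)}_c$ associated to $t_j$ coincides with the chord of $\oD$ naturally attached to the edge of $T_n$ carrying label $j$ via the cyclic order induced by the contour of $T_n$. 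Moreover, under the uniform distribution on $\kM_n$, $T_n$ is distributed as a size-$n$-conditioned Galton--Watson tree with finite-variance offspring law.

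By Aldous' invariance principle, the contour function of $T_n$, rescaled by $1/\sqrt{n}$ in height and $1/n$ in time, converges in distribution to the normalized Brownian excursion $H^{(2)}$. Conditionally on $T_n$, the edge labels form a uniform random permutation of $\llbracket 1, n-1\rrbracket$; after dividing these labels by $\sqrt{n}$ and viewing each edge as sitting at its height in $T_n$, the resulting point process converges jointly with the contour function to a Poisson point process $\Pi^{(2)}$ on the epigraph of $H^{(2)}$ of intensity $\mathrm{Leb} \otimes d\ell$. This is exactly the Poissonian cut configuration used in Section \ref{ssec:construction} to build $(\bL^{(2)}_c)_{c\geq 0}$, which identifies the limit unambiguously.

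It then remains to prove that the map $\Phi$ sending a pair (excursion-type function, point process on its epigraph) to the associated lamination-valued process is a.s.\ continuous at $(H^{(2)}, \Pi^{(2)})$, from the product of the uniform and vague topologies to the Skorokhod $J_1$ topology on $\D([0,+\infty], \bL(\oD))$. Compactness of $\bL(\oD)$ for $d_H$ yields automatic tightness; continuity at a fixed time $c < +\infty$ reduces to the fact that each Poissonian cut contributes a single chord whose endpoints are continuous functions of $(H,\Pi)$ outside of negligible events, and that only finitely many chords of diameter exceeding a given $\varepsilon>0$ are added during a bounded interval $[0,C]$. The extension to $c=+\infty$ in the Alexandrov compactification is handled by observing that both $\cL^{(n)}_c$ and $\bL^{(2)}_c$ are nondecreasing in $c$ and converge to a common limit lamination, combined with a uniform tail estimate bounding the diameter of chords revealed after time $C$, uniformly in $n$ as $C \to +\infty$.

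The main obstacle is establishing the almost-sure continuity of $\Phi$: one must rule out pathological configurations where a cut falls exactly at a local extremum of $H^{(2)}$, or where two chord endpoints coincide, since these could produce discontinuous jumps of the chord-adding operation in the Skorokhod topology. These bad events are shown to have probability zero using classical regularity properties of the Brownian excursion (simple local minima, dense set of record times, etc.) and the simplicity of the limiting Poisson process. Combining this continuity with the convergences of Steps 1 and 2 via the continuous mapping theorem then yields the desired functional convergence.
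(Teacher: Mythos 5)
Your high-level strategy (encode the factorization by a tree, use a scaling limit of the tree to get $H^{(2)}$, and show the marked edges/vertices converge to the Poissonian cut process) is the same as the paper's, but there is a genuine gap at the step where you claim that, conditionally on the plane tree $T_n$, the edge labels form a \emph{uniform} random permutation of $\llbracket 1, n-1\rrbracket$. Under the Goulden--Yong bijection a uniform minimal factorization corresponds to a uniform element of $\kU_n$ (labelled non-plane trees), and its canonical plane embedding $\tilde{T}(t^{(n)})$ satisfies the constraint $(C_\Delta)$: around each vertex the labels of its children are sorted in decreasing clockwise order. Hence, conditionally on the underlying plane tree shape, the labels are \emph{not} uniformly random --- this is exactly what the paper calls ``the main difficulty,'' and the entire machinery of Section~\ref{ssec:shuffling} (Definition of the shuffling $T^{(K)}$, Lemma~\ref{lem:changedtree}, and Lemma~\ref{lem:branchingpoints} on $\epsilon n$-branching points) is devoted to showing that this constrained labelling can be replaced by a uniform one without moving the lamination-valued process much. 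Without some version of this argument, you cannot conclude that the marked configuration converges to a Poisson process of the right intensity.

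There is also a secondary imprecision: you assert that the chord of $\cL^{(n)}_c$ attached to the transposition $t_j$ \emph{coincides} with the chord coded by the corresponding edge/vertex of $T_n$ through the contour exploration. In fact they do not coincide; Lemma~\ref{lem:bothlaminations} only shows they are within Hausdorff distance $2\pi H(\tilde{T}(F))/n + o(1)$ of one another, and one needs the separate height bound $H(\tilde{T}(t^{(n)})) = o(n)$ (from Theorem~\ref{thm:duquesne}) to conclude. This is a smaller omission, but together with the uniform-labelling assumption it means your proposal skips the two nontrivial steps that bridge the discrete combinatorics of minimal factorizations to the clean tree-plus-Poisson picture to which Theorem~\ref{thm:discreteconvergencestable} and Proposition~\ref{prop:discrcontin} apply.
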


To establish this result, we actually prove a more general result (Theorem~\ref{thm:discreteconvergence} below). We show that $ (\bL_c^{(2)})_{c \in [0,+\infty]}$ is the functional limit of discrete lamination valued-processes, obtained by marking vertices of discrete trees  (this can be seen as the discrete analogue of the Aldous-Pitman fragmentation). We then use a bijection between the set of minimal factorizations of the $n$-cycle and a subset of plane  trees with $n$ labelled vertices, which allows us to reformulate Theorem~\ref{thm:dicretecv2} in terms of random trees. The main difficulty is that the labelling of the vertices has constraints. To lift these constraints and to reduce the study to uniform labellings,  an important tool in the study of these random trees is an operation that shuffles the labels of their vertices in two ways (see Section \ref{sec:permutations} for details).

The process $(\bL_c^{(2)})_{c \in [0,+\infty]}$ is therefore the limit of discrete lamination-valued processes which code a uniform minimal factorization into transpositions; in a forthcoming work, we establish an analogous result concerning the processes $(\bL_c^{(\alpha)})_{c \in [0,+\infty]}$ for $1<\alpha<2$, by proving that they appear as limits of discrete lamination-valued processes which code other random factorizations of the $n$-cycle. Notably, cycles of length $\geq 3$ are allowed in these new models of factorizations.

\subsection{Coding \texorpdfstring{$\bL_c^{(\alpha)}$}{Lcalpha} by a function}

For fixed $\alpha \in (1,2]$ and $c>0$, we show that $\bL_c^{(\alpha)}$ can be coded by a Lévy process, similarly to the way $\bL_\infty^{(\alpha)} $ is coded by $H^{(\alpha)}$ in \eqref{eq:bLa}. In the case of $\bL_c^{(\alpha)}$, we introduce the $\alpha$-stable spectrally positive Lévy process $Y^{(\alpha)}$, which is the Lévy process whose Laplace exponent is given by $\E[ e^{-\lambda Y^{(\alpha)}_s} ] = e^{s \lambda^\alpha}$ for $s, \lambda \geq 0$. Then, for any $s \geq 0$, we define the stopping time $\tau^{(\alpha),c}_s$ as 

\begin{align*}
\tau^{(\alpha),c}_s = \inf \left\{ t > 0, Y^{(\alpha)}_t - c^{1/\alpha} t < - c^{1+1/\alpha} s \right\} - c s.
\end{align*}

It is not difficult (see Section~\ref{sec:distribution})  to check that $(\tau^{(\alpha),c}_s)_{s \in \R^+}$ is a Lévy process with Laplace exponent given by 
\begin{equation*}
\E \left[ \exp (- \lambda \tau^{(\alpha),c}_s) \right] = \exp \left(- s \ c \ (\overline{\phi}(\lambda)-\lambda)\right), \qquad \lambda>0, \quad s \geq 0.
\end{equation*}
where $\overline{\phi}(\lambda)$ is the unique nonnegative solution of the equation $X^\alpha + cX = \lambda c$. It is interesting to note that this equation appears in the work of Bertoin \cite[Section $6.1$]{Ber10'}, in the study of a random spatial branching process with emigration.

It turns out that  $\bL_c^{(\alpha)}$ can be coded by the normalized excursion $\tau^{(\alpha),c,exc}$ of the Lévy process $s \mapsto  \tau^{(\alpha),c}_{s}$, as stated in the following theorem:

\begin{thm}
\label{thm:levyprocess}
The following equality holds in distribution, for any $c \geq 0$:
\begin{align*}
\mathbb{L}^{(\alpha)}_{c} \overset{(d)}{=} \bL(\tau^{(\alpha),c,exc})
\end{align*}
\end{thm}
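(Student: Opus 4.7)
The plan is to exhibit an explicit càdlàg excursion-type function $f_c$ on $[0,1]$, built from the height process $H^{(\alpha)}$ and the Poisson cut process $\cP_c(\cT^{(\alpha)})$, such that $\bL(f_c) = \bL_c^{(\alpha)}$ almost surely, and then to identify the distribution of $f_c$ with that of the normalized Lévy excursion $\tau^{(\alpha),c,exc}$.

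Each Poisson cut $p_i$ corresponds, via the quotient from $[0,1]$ to $\cT^{(\alpha)}$ determined by $\sim_{H^{(\alpha)}}$, to an equivalence class $[s_i, t_i] \subset [0,1]$, giving a chord $[e^{-2i\pi s_i}, e^{-2i\pi t_i}] \in \bL_\infty^{(\alpha)}$. The natural candidate for $f_c$ has an upward jump at each $s_i$, of size calibrated so that after a continuous linear decrease of slope $-c$, the process returns to its pre-jump value precisely at time $t_i$. A short recursion on the nested interval structure forces the jump size at $s_i$ to equal $c$ times the Lebesgue measure of the portion of $(s_i, t_i)$ not covered by strictly smaller cut intervals; a suitable time reparametrization may be required to absorb the portion of $[0,1]$ not covered by any top-level cut interval, so that $f_c$ vanishes at the endpoint and qualifies as an excursion. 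One then verifies that no spurious chords are created: between jumps the slope is strictly negative, so the return-time relation $s \sim_{f_c} t$ is nontrivial only at jump times.

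To identify the distribution, I exploit the realization of $\cT^{(\alpha)}$ via the $\alpha$-stable Lévy process $Y^{(\alpha)}$, together with the Bertoin--Miermont-style alternative construction of the fragmentation: cutting $\cT^{(\alpha)}$ at intensity $c$ is equivalent, in distribution, to looking at excursions of the drifted Lévy process $Y^{(\alpha)}_t - c^{1/\alpha} t$ above its running infimum, each excursion corresponding to one chord. The first-passage subordinator $T$ of $Y^{(\alpha)}_{\cdot} - c^{1/\alpha}\cdot$ has Laplace exponent $\phi$ solving $\phi(\lambda)^\alpha + c^{1/\alpha} \phi(\lambda) = \lambda$. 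A direct computation then shows that $\tau^{(\alpha),c}_s = T_{c^{1+1/\alpha} s} - cs$ has Laplace exponent $c(\overline{\phi}(\lambda) - \lambda)$ with $\overline{\phi}(\lambda) = c^{1/\alpha} \phi(\lambda)$ solving $X^\alpha + cX = c\lambda$, matching the formula stated above the theorem. The jumps of $f_c$ are in bijection with the excursions of the drifted process, with matching sizes, and the linear decrease of slope $-c$ in $f_c$ matches the drift $-cs$ of $\tau^{(\alpha),c}$; together these yield $f_c \stackrel{(d)}{=} \tau^{(\alpha),c,exc}$.

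The main obstacle is the construction and analysis of $f_c$. Because the skeleton of $\cT^{(\alpha)}$ has infinite length measure, there are infinitely many Poisson cuts almost surely; summability of the jump sizes and the density of nested intervals must be controlled using quantitative estimates on the $\alpha$-stable excursion, and one must rule out spurious accumulations that could destroy the càdlàg property. Careful bookkeeping of excursion theory and local time normalizations is also needed to translate between the Poissonian cut picture on $\cT^{(\alpha)}$ and the first-passage subordinator picture underlying $\tau^{(\alpha),c}$, particularly for $\alpha < 2$ where $H^{(\alpha)}$ is not itself a Lévy process.
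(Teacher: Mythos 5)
Your approach is genuinely different from the paper's, and it runs into two substantial obstacles that I do not think can be dismissed as bookkeeping. First, the direct construction of $f_c$ does not produce an excursion-type function on $[0,1]$: a leaf $x$ of $\cT^{(\alpha)}$ whose ancestral segment $[\text{root},x]$ contains no point of $\cP_c(\cT^{(\alpha)})$ --- and such leaves have positive $h$-measure, since the segment has finite length --- contributes a time interval in $[0,1]$ not covered by any cut interval $(s_i,t_i)$, so the total downward motion $c$ strictly exceeds the total jump mass $\sum_i \delta_i$ and $f_c(1)<0$. The ``time reparametrization'' you suggest to fix this would shift the chord endpoints and therefore change the lamination, so it is no longer clear that $\bL(f_c^{\mathrm{reparam}})=\bL_c^{(\alpha)}$; this needs to be resolved, not merely acknowledged. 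Second, and more seriously, the identification $f_c \overset{(d)}{=} \tau^{(\alpha),c,\mathrm{exc}}$ is essentially the content of the theorem, and the ``Bertoin--Miermont-style'' correspondence you invoke does not exist in the required form. Bertoin's construction in the case $\alpha=2$ identifies the \emph{masses} of the Aldous--Pitman fragments with excursion lengths of a drifted Brownian excursion; what you need is a distributional identity for the entire planar/lamination structure, jointly over all Poisson cuts, and for $\alpha<2$ where $H^{(\alpha)}$ is not a Lévy process even the mass-level statement for skeleton cuts (as opposed to Miermont's branching-point cuts) is not available off the shelf. Your Laplace exponent computation for $\tau^{(\alpha),c}$ via the first-passage subordinator of $Y^{(\alpha)}_\cdot - c^{1/\alpha}\cdot$ is correct, but it only shows the two candidate processes have the right one-dimensional law; it does not establish the pathwise correspondence between Poisson cuts and excursions above the infimum.

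The paper's route sidesteps the continuum identification entirely. It introduces Galton--Watson trees $\cT^{(n)}_{\lfloor cB_n\rfloor}$ whose offspring law $\mu_n$ is defined by the implicit relation $F_{\mu_n}(x) = F_\mu(p_n x + (1-p_n)F_{\mu_n}(x))$ with $p_n = cB_n/n$, and shows in Theorem~\ref{thm:biasedgwtree} that $\bL(\cT^{(n)}_{\lfloor cB_n\rfloor})$ converges in distribution to \emph{both} limits: to $\bL_c^{(\alpha)}$ because $\cT^{(n)}$ is the reduced tree of a $\mu$-GW tree with each vertex marked independently with probability $p_n$ (Proposition~\ref{prop:tripleconvergence}), and to $\bL(\tau^{(\alpha),c,\mathrm{exc}})$ because the Lukasiewicz path of $\cT^{(n)}_{\lfloor cB_n\rfloor}$ is a conditioned random walk whose rescaled limit is identified through the local limit theorem~\ref{thm:biasedllt}, which in turn rests on the complex generating-function estimate of Theorem~\ref{thm:estimate}. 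The discrete approximation is more work (it requires the whole machinery of Sections 5.2--5.5 and Section 6), but it replaces the unproved continuum excursion-theory correspondence by concrete concentration and local-limit arguments. If you want to pursue your direct route, the place to start would be proving the Bertoin-type correspondence for the Abraham--Serlet skeleton fragmentation at the level of the lamination, for general $\alpha\in(1,2]$; that is a nontrivial open step, not a calculation.
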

Here, $\bL \left(\tau^{(\alpha),c,exc} \right)$ is the lamination constructed from $\tau^{(\alpha),c,exc}$ by the method described in Section \ref{ssec:fraglam}. 

The main idea of the proof of Theorem \ref{thm:levyprocess} is to exhibit a new family of random trees, which can be seen as a randomly reduced version, in some sense, of Galton-Watson trees conditioned by their number of vertices. It happens that these reduced trees code a new sequence of random laminations, which converges at the same time towards $\bL_c^{(\alpha)}$ and $\bL(\tau^{(\alpha),c,exc})$.

\subsection{An estimate on generating functions}

An important ingredient to code $\bL_c^{(\alpha)}$ by the normalized excursion of $\tau^{(\alpha),c}$,  which is crucial in the proof of Theorem \ref{thm:levyprocess} and which we believe to be of independent interest, is a general estimate of the behavior of generating functions in the complex plane, involving slowly varying functions. Recall that a function $L: \R_+ \rightarrow \R_+^*$ is slowly varying if, for any $c>0$, ${L(cx)}/{L(x)} \rightarrow 1$ as $x \rightarrow +\infty$. Precise estimates concerning these functions are needed in many different contexts, see e.g. \cite{Sla68} \cite{Tok08}, although nothing seems to have been proved regarding asymptotics in the complex domain.

\begin{thm}
\label{thm:estimate}

Let $\mu$ be a probability distribution on the nonnegative integers and denote by $F_{\mu}$ its generating function. Assume that there exists $ \alpha \in (1,2]$ and a slowly varying function $L: \R_{+} \rightarrow \R^{*}_{+}$ such that

$$F_\mu(1-s) - (1-s)  \quad \underset{s \downarrow 0}{\sim} \quad s^\alpha L \left( \frac{1}{s} \right).$$

Then

$$F_\mu(1 + \omega) - (1 + \omega) \underset{\substack{|\omega| \rightarrow 0 \\ |1+\omega|<1}}{\sim} (-\omega)^\alpha L \left( \frac{1}{|\omega|} \right).$$
\end{thm}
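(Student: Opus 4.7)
The plan is to exploit the fact that $F_\mu$ is a probability generating function with mean $1$ and to reduce the assertion to a Karamata-type estimate for a complex Laplace transform.

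First, the hypothesis forces $\mathbb{E}[X]=1$ (where $X\sim\mu$): the left-hand side being $\sim s^\alpha L(1/s)=o(s)$, combined with $F_\mu(1)=1$, pins down $F_\mu'(1^-)=1$. Writing $\psi(z):=F_\mu(z)-z$, both $\psi(1)=0$ and $\psi'(1)=0$. A direct manipulation of power series then gives the key factorisation
$$\psi(z)=(1-z)^2 G(z), \qquad G(z):=\sum_{j\geq 0}W(j)\,z^j, \qquad W(j):=\sum_{k>j}\mathbb{P}(X>k),$$
where the sequence $(W(j))_{j\geq 0}$ is nonnegative and nonincreasing. Under this factorisation, the hypothesis becomes $G(1-s)\sim s^{-(2-\alpha)}L(1/s)$ and the conclusion reduces to
$$G(1+\omega)\sim (-\omega)^{\alpha-2}L(1/|\omega|) \qquad \text{as } |\omega|\to 0,\ |1+\omega|<1,$$
since $(-\omega)^2=\omega^2$.

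Second, I would extract the coefficient asymptotic $W(j)\sim j^{1-\alpha}L(j)/\Gamma(2-\alpha)$ by Karamata's Tauberian theorem combined with the monotone density theorem (applicable because $W$ is monotone). For $\omega$ with $|1+\omega|<1$, set $\eta:=-\log(1+\omega)$, so that $\mathrm{Re}(\eta)>0$ and $\eta=-\omega(1+o(1))$. Then $G(1+\omega)=\sum_j W(j)e^{-\eta j}$, and I would compare this discrete sum with the continuous Laplace transform
$$\int_0^\infty \frac{t^{1-\alpha}L(t)}{\Gamma(2-\alpha)}\,e^{-\eta t}\,dt,$$
using the monotonicity of $W$, a cutoff at the natural scale $j\sim 1/|\eta|$, and slow variation plus Potter's bounds to control the error in the Euler--Maclaurin-style comparison.

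The heart of the argument is the complex Laplace-transform estimate
$$\int_0^\infty t^{1-\alpha}L(t)e^{-\eta t}\,dt \sim \Gamma(2-\alpha)\,\eta^{\alpha-2}\,L(1/|\eta|) \qquad \text{as } |\eta|\to 0,\ \mathrm{Re}(\eta)>0.$$
The plan is to substitute $u=|\eta|t$ and write $\eta=|\eta|e^{i\theta}$ with $\theta\in(-\pi/2,\pi/2)$, which rewrites the integral as $|\eta|^{\alpha-2}\int_0^\infty u^{1-\alpha}L(u/|\eta|)e^{-e^{i\theta}u}\,du$. Passing to the limit $|\eta|\to 0$ (with Potter's bounds providing an integrable dominating function) reduces matters to the analytic identity $\int_0^\infty u^{1-\alpha}e^{-e^{i\theta}u}\,du=e^{-i\theta(2-\alpha)}\Gamma(2-\alpha)$, which follows from rotating the contour back to the positive real axis; combining with $\eta\sim -\omega$ and the slow variation of $L$ gives the required asymptotic for $G(1+\omega)$.

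The main obstacle is making this estimate uniform in the direction $\theta=\arg(\eta)$, in particular for \emph{tangential} approaches $\omega\to 0$ with $|1+\omega|\to 1$ (corresponding to $\theta\to\pm\pi/2$): one must simultaneously control the ratio $L(u/|\eta|)/L(1/|\eta|)$ and the oscillation of $e^{-e^{i\theta}u}$ near $\theta=\pm\pi/2$, which is precisely the slowly-varying-function analysis in the complex domain that the theorem advertises as being of independent interest.
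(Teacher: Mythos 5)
Your reduction $\psi(z)=F_\mu(z)-z=(1-z)^2G(z)$ with $G(z)=\sum_j W(j)z^j$, $W(j)=\sum_{k>j}\P(X>k)$, is correct and is a genuinely cleaner algebraic starting point than the paper's: it amounts to a double summation by parts, so that the problem becomes a Hardy--Littlewood/Karamata statement about a power series with \emph{nonnegative, nonincreasing} coefficients, rather than the paper's single summation by parts yielding the identity $F_\mu(e^\omega)=1+\omega-\omega\int_{\R_+}(1-e^{\omega x})M_x\,dx$. After that point, though, the two arguments converge: you rescale at scale $1/|\eta|$, invoke Potter's bounds for domination, and rotate the contour to compute $\int_0^\infty u^{1-\alpha}e^{-e^{i\theta}u}\,du$; this is exactly the content of the paper's Lemma~\ref{lem:omega} (i)--(ii), and the ``tangential'' uniformity in $\theta\in(-\pi/2,\pi/2)$ that you flag as the main obstacle is precisely what that lemma nails down. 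So for $\alpha\in(1,2)$ your plan is correct and close to the paper's, modulo a tidier reduction.

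The gap is the case $\alpha=2$, which is part of the statement and is the case used in the paper's main application. Your coefficient asymptotic $W(j)\sim j^{1-\alpha}L(j)/\Gamma(2-\alpha)$ degenerates there: $\Gamma(2-\alpha)\to\infty$, and the Karamata--Tauberian plus monotone-density argument only yields the trivial $W(j)=o(L(j)/j)$; the correct statement is $W(j)\sim j^{-1}L'(j)$ for a \emph{different} slowly varying $L'$ with $L'/L\to 0$ (cf.\ \eqref{carbink}), so that the constant of proportionality you would feed into the Laplace-transform comparison is lost. Correspondingly, your model integral $\int_0^\infty t^{1-\alpha}L(t)e^{-\eta t}\,dt$ diverges at $t=0$ when $\alpha=2$, so the scaling/contour-rotation scheme does not apply as stated. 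The paper treats $\alpha=2$ by an entirely separate argument (Lemma~\ref{lem:alpha2}): a cutoff at $N_\omega\asymp1/|\omega|$, the elementary bound $|e^x-1-x|\le e|x|^2$, and the estimate $\int_0^N xM_x\,dx\sim(L(N)+1)/2$, with the $L'/L\to0$ relation absorbing the tail; no contour rotation and no pointwise asymptotic for $W$ are used. You would need an analogous patch (or a de Haan-type second-order argument) to make your route cover $\alpha=2$.
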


In the terminology of Galton-Watson trees, this is an estimate in the complex unit disk, near $1$, of the generating function of a critical offspring distribution which belongs to the domain of attraction of a \textit{stable law}. Very often, additional assumptions, such as $\Delta$-analyticity, are made in order to obtain estimates for generating functions in the complex plane (see \cite[Section 6]{FS09}). Observe that here it is not the case, and no assumptions on $L$ are made.

The proof of this theorem is given in Section~\ref{sec:appendix}. The main idea is to use an integral representation, see \eqref{eq:estim}.

\paragraph*{Outline}

After describing a general construction of trees and laminations coded by excursion-type functions, we rigorously define the process $(\bL_c^{(\alpha)})_{c \in [0,+\infty]}$ in Section~\ref{sec:construction} and prove Theorem~\ref{thm:existence}. In a second time, in Section~\ref{sec:discrete}, we make $(\bL_c^{(\alpha)})_{c \in [0,+\infty]}$ appear as the limit of a process of laminations coded by discrete trees; this framework is used in Section~\ref{sec:permutations} to extend the results of Féray \& Kortchemski \cite{FK17} and highlight a relation between the Aldous-Pitman fragmentation of the Brownian tree and minimal factorizations of the $n$-cycle as $n \rightarrow \infty$. Finally, in Section~\ref{sec:distribution}, we recover the $1$-dimensional marginal of the lamination process as the lamination coded by $\tau^{(\alpha),c,exc}$ (Theorem~\ref{thm:levyprocess}), while Section \ref{sec:appendix} is devoted to the proof of Theorem \ref{thm:estimate}.

\paragraph*{Acknowledgements.}

I would like to thank Igor Kortchemski for asking the questions at the origin of this paper and for his help, comments, suggestions and corrections. I would also like to thank Bénédicte Haas, Cyril Marzouk and Loïc Richier for fruitful discussions and comments on the paper, and Ger\'onimo Uribe Bravo for his useful remarks on Theorem \ref{thm:cub}.

\section*{Notations}

Let us immediately sum up some notations that will often appear throughout the paper. We write $\overset{\P}{\rightarrow}$ for the convergence in probability, and $\overset{(d)}{\rightarrow}$ for the convergence in distribution of a sequence of random variables. We say that an event $E_n$ (depending on $n$) occurs with high probability if $\P(E_n) \rightarrow 1$ as $n \rightarrow \infty$. When talking about trees, deterministic ones will be denoted by a straight $T$, while random ones will be denoted by a curved $\cT$. Finally, at the beginning of each section, we sum up the most important notations that we use in this section.

\section{Construction of lamination-valued processes}

\label{sec:construction}

This section is devoted to the construction of càdlàg processes taking their values in the set of laminations of the unit disk. We start by explaining a general method of construction of lamination-valued processes, starting from a deterministic excursion-type function. Then, we apply this in the particular case of an $\alpha$-stable excursion, for $\alpha \in (1,2]$, giving birth to a random lamination-valued process.

\subsection*{Notations of Section \ref{sec:construction}}

In this table of notations, $f$ always denotes a continuous excursion-type function such that $f(0)=0$ (except for $\bL(f)$, which is defined for any excursion-type function). $u \coloneqq (s,t)$ denotes an element of $\R^2$.

\renewcommand{\arraystretch}{1.6}
\begin{center}
\begin{tabular}{|c|c|}
\hline
$\cEG(f)$ & epigraph of $f$\\
\hline
$g(f,u),d(f,u)$ & $\sup \{ s' \leq s, f(s') < t \}, \, \inf \{ s' \geq s, f(s') < t \}$\\
\hline
$\cN_c(f)$ & Poisson point process of intensity $\frac{2c ds dt}{d(f,u)-g(f,u)}$ on  $\cEG(f)$\\
\hline
$\cP_c(T)$ & Poisson point process of intensity $c d\ell$ on a tree $T$\\
\hline
$\bL(f)$ & lamination coded by $f$\\
\hline
$\bL_c(f)$ & lamination coded by $\cN_c(f)$\\
\hline
$\cT^{(\alpha)}$ & $\alpha$-stable tree\\
\hline
$H^{(\alpha)}$ & contour function of the $\alpha$-stable tree\\
\hline
$\bL_\infty^{(\alpha)}$ & $\alpha$-stable lamination, coded by $H^{(\alpha)}$\\
\hline
$\bL_c^{(\alpha)}$ & lamination coded by $\cN_c\left(H^{(\alpha)}\right)$\\
\hline 
\end{tabular}
\end{center}

\subsection{Excursions and laminations}
\label{ssec:excursions}

Starting from an excursion-type function $f$, we have seen in Section \ref{sec:intro} that we can define a lamination $\bL(f)$. In the particular case of a continuous $f$ verifying $f(0)=0$, we shall recall in this section the classical construction of the tree $T(f)$ as the quotient of $[0,1]$ by the equivalence relation $\sim_f$ defined in Section \ref{ssec:fraglam}. Then, we shall construct a nondecreasing lamination-valued process $(\bL_c(f))_{0 \leq c \leq \infty}$, by associating chords in the unit disk to straight lines under the graph of $f$ (see Fig. \ref{fig:cut2}), such that $\bL_\infty(f)=\bL(f)$. It is to note that, when $f$ is deterministic, $\bL(f)$ and $T(f)$ are also deterministic, while $(\bL_c(f))_{0 \leq c \leq \infty}$ will be a random process. This coding is used in the next sections, when the function $f$ is the contour function of a tree (later in this section and in the next one) or when it is the standard excursion of the Lévy process $\tau^{(\alpha),c}$ (Section \ref{sec:distribution}).

\paragraph*{The tree associated to continuous excursion-type function.}

Assume that $f$ is a continuous excursion-type function with $f(0)=0$. In this case, the equivalence relation $\sim_f$ defined in Section \ref{ssec:fraglam} can be understood in a nicer way. For $0 \leq s \leq t \leq 1$, define $m(s,t) \coloneqq {\inf}_{{[s,t]}} f$ and $d(s,t) \coloneqq f(s)+f(t)-2 m(s,t)$. For $t>s$, set $d(t,s)=d(s,t)$. For $s,t \in [0,1]$, we write $s \sim_f t$ if $d(s,t)=0$, which matches the definition of Section \ref{ssec:fraglam}. 

From this continuous function $f$, we define the tree $T(f)$ as
\begin{align*}
T(f) = [0,1]/\sim_f.
\end{align*}
One can check (see \cite{DLG02}) that $d$ induces a distance on $T(f)$, which we still denote by $d$ with a slight abuse of notation, and that the metric space $(T(f),d)$ is a tree, in the sense that from one point of $T(f)$ to another, there exists a unique path in $T(f)$. See Fig.~\ref{fig:stablearbconlam} and \ref{fig:arbconlam} for two examples of a continuous excursion-type function, its associated lamination and its associated tree.

Let us immediately define some important notions about trees. We say that an equivalence class $\overline{x} \in T(f)$ is a \textit{branching point} if $T(f) \backslash \{\overline{x} \}$ has at least three disjoint connected components, and the set of points that are not branching points is called the \textit{skeleton} of $T(f)$. A \textit{leaf} of the tree is an equivalence class $\overline{x}$ such that $d(\overline{0},\cdot)$ has a local maximum at $\overline{x}$ in $T(f)$ (where $\overline{0}$ denotes the equivalence class of $0$). In other words, a branching point is a point where the tree splits into two or more branches, and leaves are ends of branches. The \textit{volume measure} $h$, or mass measure on $T(f)$, is defined as the projection on $T(f)$ of the Lebesgue measure on $[0,1]$. Finally, the \textit{length measure} $\ell$ on $T(f)$, supported by the set of non-leaf points, is the unique $\sigma$-finite measure on this set such that, for $x,y \in T(f)$ non-leaf points, $\ell([x,y]) = d(x,y)$, where $[x,y]$ is the path from $x$ to $y$ in $T(f)$. See \cite{ALW17} for further details about this length measure. This $\sigma$-finite measure expresses the intuitive notion of length of a branch in the tree.

\paragraph*{Poisson point processes on epigraphs.} Assume as above that $f$ is a continuous excursion-type function with $f(0)=0$. We explain how to obtain a Poisson point process on the skeleton of $T(f)$ from a Poisson point process under the graph of $f$. First, define the \textit{epigraph} of $f$, denoted by $\cEG(f)$, as the set of points under the graph of $f$:

\begin{align*}
\cEG(f) \coloneqq \left\{ (s,t) \in \R^2: s \in (0,1), 0 \leq t < f(s) \right\}.
\end{align*}

To $u \coloneqq (s,t) \in \cEG(f)$, associate $g(f,u) \coloneqq \sup \{ s' \leq s, f(s')<t \}$ and $d(f,u) \coloneqq \inf \{ s' \geq s, f(s')<t \}$ (see  Fig.~\ref{fig:cut2}). In particular, note that one can associate  to each $u \in \cEG(f)$ the chord $[e^{-2i\pi g(f,u)},e^{-2i\pi d(f,u)}]$, and that for two different points of $\cEG(f)$, the associated chords are either equal or disjoint.

We now consider a Poisson point process $\cN(f)$ on $\R^2 \times \R_+$, with intensity

$$\frac{2}{d(f,u)-g(f,u)} \mathds{1}_{u \in \cEG(f)} {\mathrm{d}}u {\mathrm{d}r},$$
thinking of the second coordinate as time. Using $ \mathcal{N}(f)$, for every $c \geq 0$, we shall now define $\cN_c(f)$, $\cP_c(T(f))$, $\bL_c(f)$ (see Fig. \ref{fig:cut1} and  \ref{fig:cut2}).

\emph{Definition of $\cN_c(f)$.} For $c\geq 0$, let $\cN_c(f)$ be the projection on the first coordinate of $\cN(f) \cap (\R^2 \times [0,c])$.  Roughly speaking, $\cN_c(f)$ is the set of  all points that have appeared before or at time $c$. Therefore $\cN_{c}(f)$ is a Poisson point process on $\cEG(f)$ of intensity $\frac{2 c}{d(f,u)-g(f,u)} \mathds{1}_{u \in \cEG(f)} {\mathrm{d}}u$. Moreover, the processes $(\cN_c(f))_{c \geq 0}$ are coupled in a nondecreasing way.

\emph{Definition of $\cP_c(T(f))$.} To $u \in \cN_c(f)$, associate the vertex $x_u \in T(f)$, which is the equivalence class of $g(f,u)$ in $T(f)$ for $\sim_f$ (see Fig.~\ref{fig:cut1}). Then  $\cP_c(T(f)) \coloneqq \{ x_u, u \in \cN_c(f) \}$ is a Poisson point process on $T(f)$ of intensity $c d\ell$. It can be checked that there are only countably many branching points in $T(f)$, and therefore almost surely all points of $\cP_c(T(f))$ are points of the skeleton of $T(f)$. Furthermore, by construction, the process $(\cP_c(T(f)))_{c \geq 0}$ is nondecreasing for the inclusion.  

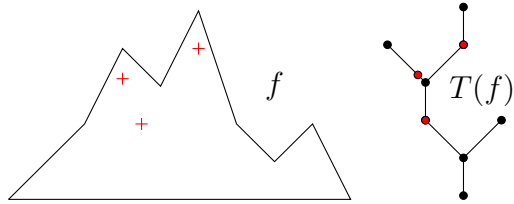
\begin{figure}[h!]

\caption{A continuous excursion-type function $f$ with three points in its epigraph, which correspond to three points in its associated tree $T(f)$.}

\center

\label{fig:cut1}
\begin{tabular}{c c}
\begin{tikzpicture}[scale=0.5]
\draw (0,0) -- (2,2) -- (3,4) -- (4,3) -- (5,5) -- (6,2) -- (7,1) -- (8,2) -- (9,0) -- (0,0);
\draw (3.5,2) node[cross=2pt,rotate=45,red]{};
\draw (3,3.2) node[cross=2pt,rotate=45,red]{};
\draw (5,4) node[cross=2pt,rotate=45,red]{};
\draw (7,3) node{  $f$};
\end{tikzpicture}
&
\begin{tikzpicture}[scale=0.5]
\draw (1,2) -- (0,1) -- (-1,2) -- (-1,3) (-2,4) -- (-1,3) -- (0,4) -- (0,5) (0,0) -- (0,1);

\draw[fill] (0,1) circle (.1);

\draw[fill] (1,2) circle (.1);

\draw[fill] (-1,2) circle (.1);

\draw[fill] (0,0) circle (.1);

\draw[fill] (-2,4) circle (.1);

\draw[fill] (0,5) circle (.1);

\draw[fill] (0,4) circle (.1);

\draw[fill] (-1,3) circle (.1);

\draw[fill=red] (-1,2) circle (.1);

\draw[fill=red] (0,4) circle (.1);

\draw[fill=red] (-1.2,3.2) circle (.1);

\draw (.5,2.8) node{$T(f)$};

\end{tikzpicture}
\end{tabular}
\end{figure}

\emph{Definition of $\bL_c(f)$.} Finally, associate to $\cN_c(f)$ the lamination $\bL_c(f)$ as follows: $\bL_c(f)$ is a sublamination of $\bL(f)$, constructed by drawing only the chords that correspond to the points of $\cN_c(f)$. More precisely,

$$\bL_c(f)  \quad \coloneqq  \quad \mathbb{S}^1 \cup \bigcup_{u \in \cN_c(f)}[e^{-2i\pi g(f,u)}, e^{-2i\pi d(f,u)}].$$
Define finally
$$\bL_\infty(f) \coloneqq \overline{\bigcup_{c \geq 0} \bL_c(f)}.$$
Remark that, since $f$ is continuous, $\bL_\infty(f)$ is exactly $\bL(f)$ as defined in Section \ref{ssec:fraglam}.

The next proposition highlights a relation between the mass sequence of $\bL_c(f)$ and the mass measure on the tree $T(f)$. For $f$ a continuous excursion-type function on $[0,1]$ with $f(0)=0$ and $c\geq 0$ fixed, let $\textit{\textbf{m}}_c(f)$ be the sequence of $h$-masses of the connected components of $T(f)$ delimited by the points of $\cP_c(T(f))$, sorted in nondecreasing order.

\begin{prop}
\label{prop:massequality}
Let $f$ be a continuous excursion-type function on $[0,1]$ with $f(0)=0$. Then the following equality holds almost surely in $\Delta$:

\begin{align*}
\left( \cM\left[ \bL_c(f) \right] \right)_{c \geq 0} = \left(\textbf{m}_c(f)\right)_{c \geq 0}.
\end{align*}
\end{prop}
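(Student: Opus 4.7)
The plan is to build, for each fixed $c \geq 0$, a mass-preserving bijection between the faces of $\bL_c(f)$ and the connected components of $T(f)\setminus \cP_c(T(f))$, and then deduce the stated equality of processes from the fact that both sides are piecewise constant and jump at the same (Poissonian) times. Recall that the points of $\cN_c(f)$ are in bijection with the chords of $\bL_c(f)$ and with the cutpoints of $\cP_c(T(f))$ (via $u \mapsto x_u$). The main task is to extend this from chords to faces and components in a way that preserves mass.

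The key structural observation is that the intervals $\{[g(f,u), d(f,u)] : u \in \cN_c(f)\}$ form a laminar family on $[0,1]$: any two are either disjoint or nested, since each one is a maximal interval on which $f$ exceeds a fixed level. This immediately gives the noncrossing property of the chords of $\bL_c(f)$ and organizes them (together with $[0,1]$) as the vertices of a rooted tree of intervals. A face $F_I$ of $\bL_c(f)$ is then naturally indexed by an interval $I$ in this laminar family (or by $[0,1]$ for the outer face): it is the face bounded by the chord of $I$ on the outside and by the chords of its maximal strict sub-intervals $J_1,\ldots,J_k$ on the inside. A direct computation gives
\begin{align*}
\cM[\bL_c(f)](F_I) \;=\; \frac{1}{2\pi}\,\mathrm{Leb}(\partial F_I \cap \mathbb{S}^1) \;=\; |I| - \sum_{j=1}^k |J_j|.
\end{align*}

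On the tree side, the continuity of $f$ gives $d(g(f,u), d(f,u)) = 0$ in $T(f)$, so the cutpoint $x_u$ is exactly the common equivalence class of $g(f,u)$ and $d(f,u)$; cutting at $x_u$ separates the subtree projected from the interval $[g(f,u),d(f,u)]$ from its complement. Consequently, the connected components of $T(f)\setminus \cP_c(T(f))$ are indexed by the same laminar family, and the component associated to $I$ is the canonical projection of $I \setminus \bigcup_j J_j$ to $T(f)$ (with the countably many endpoints glued). Since $h$ is the pushforward of the Lebesgue measure on $[0,1]$, this component has mass $|I| - \sum_j |J_j|$, matching the face mass. Sorting on both sides yields the pointwise identity $\cM[\bL_c(f)] = \textbf{m}_c(f)$ in $\Delta$.

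The main delicate point is to verify rigorously that the assignment $I \mapsto F_I$ (and $I \mapsto$ tree component) exhausts all faces/components without overlap, which requires a little care when the laminar family is infinite (accumulation of chords near $\mathbb{S}^1$, etc.); continuity of $f$ and the fact that $\cN_c(f)$ is locally finite on $\cEG(f)$ at positive heights let one reduce to the finite case by a monotone limit. Finally, since both processes $c\mapsto \cM[\bL_c(f)]$ and $c\mapsto\textbf{m}_c(f)$ are constructed from the same Poisson process $\cN(f)$ and evolve only at its arrival times, pointwise equality upgrades to equality of the càdlàg trajectories, giving the claim almost surely in $\Delta$.
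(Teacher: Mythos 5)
Your proof is correct and takes essentially the same approach as the paper's: both rest on the bijection $u \in \cN_c(f) \leftrightarrow \text{chord} \leftrightarrow \text{cutpoint } x_u$ and the observation that each such $u$ bipartitions the mass identically into $\ell_u(f)$ and $1-\ell_u(f)$ on both the disk side and the tree side. Your laminar-family indexing of faces and tree components (and the formula $|I|-\sum_j|J_j|$ for the masses) makes explicit what the paper compresses into the phrase ``the result follows, since this holds jointly for all $c>0$ and all $u\in\cN_c(f)$.''
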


\begin{proof}
Fix $c>0$. For any $u \coloneqq (s,t) \in \cN_c(f)$, draw $I_u(f) \coloneqq [(g(f,u), t), (d(f,u),t)]$ the horizontal line in $\cEG(f)$ containing $u$ (see Fig.~\ref{fig:cut2}). As seen above, almost surely the corresponding vertex $x_u \in \cP_c(T(f))$ is not a branching point, and therefore the line $I_u$ separates the epigraph into exactly two connected components. Let $\ell_u(f)=d(f,u)-g(f,u)$ be the length of $I_u(f)$. The cutpoint of $\cP_c(T(f))$ corresponding to $u$ splits $T(f)$ into two connected components of $h$-masses $\ell_u(f)$ and $1-\ell_u(f)$, by definition of $h$. On the other hand, the chord corresponding to $u$ in $\bL(f)$ splits the disk into two components of masses $\ell_u(f)$ and $1-\ell_u(f)$. The result follows, since this holds jointly for all $c>0$ and all $u \in \cN_c(f)$.
\end{proof}

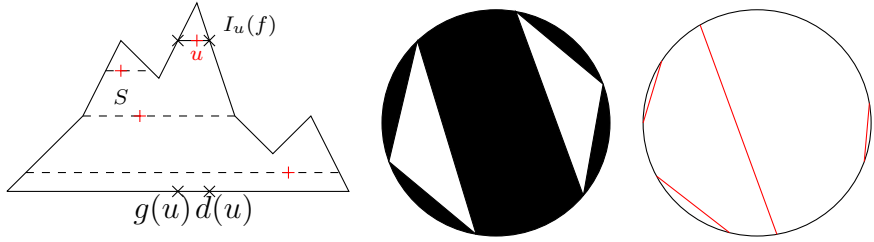
\begin{figure}[h!]

\caption{From left to right: a continuous excursion-type function $f$ with four points on its epigraph and the five components of $\cEG(f)$ delimited by these points; the lamination $\mathbb{L}(f)$ coded by $f$; its sublamination formed by the chords associated to these four points.}

\center

\label{fig:cut2}
\begin{tabular}{c c c}
\begin{tikzpicture}[scale=.5]

\draw (0,0) -- (2,2) -- (3,4) -- (4,3) -- (5,5) -- (6,2) -- (7,1) -- (8,2) -- (9,0) -- (0,0);

\draw[dashed] (2,2) -- (6,2);

\draw (4.5,4) -- (5.33,4);

\draw[dashed] (2.6,3.2) -- (3.8,3.2);

\draw (3.5,2) node[cross=2pt,rotate=45,red]{};

\draw (3,3.2) node[cross=2pt,rotate=45,red]{};

\draw (5,4) node[cross=2pt,rotate=45,red]{};

\draw (4.5,4) node[cross=2.5pt]{};

\draw (5.33,4) node[cross=2.5pt]{};

\draw (3,2.5) node{\scriptsize $S$};

\draw (5,3.6) node[red]{ \scriptsize $u$};

\draw (6.4,4.4) node{\scriptsize $I_{u}(f)$};

\draw[dashed] (.5,.5) -- (8.75,.5);

\draw (7.41,.5) node[cross=2pt,rotate=45,red]{};

\draw (4.5,0) node[cross=2.5pt]{};

\draw (5.33,0) node[cross=2.5pt]{};

\draw (4.1,-.5) node{$g(u)$};

\draw (5.73,-.5) node{$d(u)$};
\end{tikzpicture}

&

\begin{tikzpicture}[scale=1.5]
\draw[fill=black] (0,0) circle (1);

\draw[fill=white] ({cos(-40*1)},{sin(-40*1}) -- ({cos(-40*7)},{sin(-40*7}) -- ({cos(-40*8.5)},{sin(-40*8.5}) -- ({cos(-40*1)},{sin(-40*1});

\draw[fill=white] ({cos(-40*2.5)},{sin(-40*2.5}) -- ({cos(-40*4)},{sin(-40*4}) -- ({cos(-40*5.66)},{sin(-40*5.66}) -- ({cos(-40*2.5)},{sin(-40*2.5});

\end{tikzpicture}

&

\begin{tikzpicture}[scale=1.5]

\draw (0,0) circle (1);

\draw[red] ({cos(-40*2)},{sin(-40*2}) -- ({cos(-40*6)},{sin(-40*6});

\draw[red] ({cos(-40*3.8)},{sin(-40*3.8}) -- ({cos(-40*2.6)},{sin(-40*2.6});

\draw[red] ({cos(-40*4.5)},{sin(-40*4.5}) -- ({cos(-40*5.33)},{sin(-40*5.33});

\draw[red] ({cos(-40*.5)},{sin(-40*.5}) -- ({cos(-40*8.75)},{sin(-40*8.75});
\end{tikzpicture}

\end{tabular}

\end{figure}

We end this subsection by highlighting the nested structure of the lamination-valued process $(\bL_c(f))_{c \geq 0}$.

\begin{prop}

\label{prop:properties}

Let $f$ be a continuous excursion-type function such that $f(0)=0$. Then:

\begin{enumerate}

\item[(i)] for every $0 \leq c \leq c'$, $\bL_c(f) \subset \bL_{c'}(f) \subset \bL(f)$;

\item[(ii)]the convergence $\underset{c \rightarrow \infty}{\lim} \bL_c(f) = \bL(f)$ holds almost surely for the Hausdorff distance.

\end{enumerate}

\end{prop}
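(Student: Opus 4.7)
The plan is to read both statements straight off the construction of $\cN_c(f)$ and $\bL_c(f)$ given earlier in the section, combined with the identity $\bL_\infty(f)=\bL(f)$ that is noted in the remark at the end of that construction (valid because $f$ is continuous). For (i), the key observation is that $\cN_c(f)$ is defined as the projection on the first coordinate of $\cN(f) \cap (\R^2 \times [0,c])$, so the families $(\cN_c(f))_{c \geq 0}$ are nondecreasing for inclusion; and for each atom $u \in \cEG(f)$, the chord $[e^{-2i\pi g(f,u)},e^{-2i\pi d(f,u)}]$ depends only on $u$ and $f$, not on $c$. Thus for $c \leq c'$ one immediately gets $\bL_c(f) \subset \bL_{c'}(f)$, and $\bL_{c'}(f) \subset \bigcup_{c \geq 0} \bL_c(f) \subset \overline{\bigcup_{c \geq 0} \bL_c(f)} = \bL_\infty(f) = \bL(f)$.

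For (ii), thanks to (i) the inclusion $\bL_c(f) \subset \bL(f)$ gives one side of the Hausdorff estimate for free, so I only need to argue that for every $\epsilon>0$ one has $\bL(f) \subset \bL_c(f)^\epsilon$ for all $c$ large enough. I would do this by a standard finite-cover argument: fix $\epsilon>0$, use the compactness of $\bL(f) \subset \overline{\D}$ to choose points $x_1,\ldots,x_N$ with $\bL(f) \subset \bigcup_i B(x_i,\epsilon/2)$; then use $\bL(f) = \overline{\bigcup_{c \geq 0} \bL_c(f)}$ to find, for each $i$, an index $c_i$ and a point $y_i \in \bL_{c_i}(f)$ with $d(x_i,y_i) < \epsilon/2$. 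By the monotonicity of (i), as soon as $c \geq c^\ast \coloneqq \max_i c_i$, all the $y_i$ belong to $\bL_c(f)$, and the triangle inequality then gives $d(z,\bL_c(f)) < \epsilon$ for every $z \in \bL(f)$, i.e.\ $\bL(f) \subset \bL_c(f)^\epsilon$. Combined with the monotone bound from (i), this yields $d_H(\bL_c(f),\bL(f)) \to 0$.

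There is no serious obstacle here: both items are essentially unpackings of the definitions. The only step one might hesitate on is that $\bL_c(f)$ is only presented as the countable union $\mathbb{S}^1 \cup \bigcup_{u \in \cN_c(f)} [e^{-2i\pi g(f,u)},e^{-2i\pi d(f,u)}]$ and may fail to be closed in $\overline{\D}$; but since the Hausdorff distance and the inclusions $\bL_c(f) \subset \bL_{c'}(f) \subset \bL(f)$ pass to the closure, and $\bL(f)$ is itself compact, this plays no role in the argument. The nontrivial input that is being black-boxed is the almost sure identity $\overline{\bigcup_{c \geq 0}\bL_c(f)} = \bL(f)$ for continuous $f$, which is the content of the remark preceding the proposition; everything else reduces to monotonicity and a compactness argument in $(\overline{\D},d)$.
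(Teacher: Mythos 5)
Part (i) is fine and matches the paper's (one-line) treatment: the monotonicity of $c \mapsto \cN_c(f)$ gives $\bL_c(f) \subset \bL_{c'}(f)$, and each chord $[e^{-2i\pi g(f,u)},e^{-2i\pi d(f,u)}]$ with $u\in\cEG(f)$ belongs to $\bL(f)$, so $\bL_{c'}(f)\subset\bL(f)$.

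For (ii), however, your argument has a genuine gap: it is circular. Your compactness/finite-cover reduction is correct as far as it goes, but it reduces (ii) to the almost-sure identity $\overline{\bigcup_{c\geq 0}\bL_c(f)} = \bL(f)$, which you explicitly black-box as ``the remark preceding the proposition.'' The trouble is that, given (i) and the compactness of $\bL(f)$, that identity is \emph{equivalent} to (ii): for a nondecreasing family of subsets of a compact set, closure of the union equals the ambient compact set if and only if the sets converge to it in Hausdorff distance --- and the proof of this equivalence is exactly your finite-cover step. The ``remark'' in the paper is asserted, not proven, and it is the proof of Proposition~\ref{prop:properties}~(ii) that actually justifies it; you cannot cite it as an input without proving (ii) in disguise. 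In short, you have shown ``the remark $\Rightarrow$ (ii),'' which is trivially true, rather than proving (ii) itself.

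The substance that is missing is the probabilistic part, which is also where a purely topological intuition breaks down. Almost surely $\bigcup_c\cN_c(f)$ is dense in $\cEG(f)$, but the map $u\mapsto\bigl[e^{-2i\pi g(f,u)},e^{-2i\pi d(f,u)}\bigr]$ is \emph{not} continuous on $\cEG(f)$ (for instance, raising the second coordinate of $u$ past a local minimum of $f$ makes $g(f,u)$ and $d(f,u)$ jump), so density of the sampled points does not immediately give density of the sampled chords. The paper's proof handles this correctly: it first invokes Lemma~\ref{lem:sublamination} to reduce $\bL(f)$ to a finite $\epsilon$-sublamination $L'$, and then, for each chord of $L'$ coming from some $u$ with $g\coloneqq g(f,u)\neq 0$, it exhibits a rectangle $[g,d]\times[m,f(g)]$ of positive two-dimensional Lebesgue measure all of whose points yield chords within $2\pi\epsilon$ of the target one --- using that $f(g)$ is not a local minimum of $f$ at $g$ nor at $d$ to guarantee $m<f(g)$. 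Since $\cN_c(f)$ is Poisson with strictly positive intensity on $\cEG(f)$, it a.s.\ eventually hits each such rectangle, giving $\P\bigl(d_H(\bL_c(f),L')>2\pi\epsilon\bigr)\to 0$. This positive-measure argument at the level of $\cEG(f)$ is the step your proposal needs and currently defers to an unproven remark.
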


The first assertion is straightforward by definition of $(\bL_c(f))_{c \geq 0}$, while Proposition~\ref{prop:properties} (ii) is a consequence of the following deterministic lemma. The idea is to choose a finite subset of chords of $\bL(f)$ which is close to the whole lamination $\bL(f)$, and then prove that, as $c$ grows, this finite subset of chords is well approximated by $\bL_c(f)$. For $\epsilon>0$, we say that $L'$ is an $\epsilon$-sublamination of $L$ if $L' \subset L$ and $d_{H}( L', L) \leq \epsilon$.

\begin{lem}

\label{lem:sublamination}

Fix $\epsilon > 0$. There exists a deterministic constant $K_\epsilon \in \Z_+$ such that any lamination has an $\epsilon$-sublamination with at most $K_\epsilon$ chords. 

\end{lem}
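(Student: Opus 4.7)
The plan is to discretize the unit circle into short arcs and, for each pair of arcs, select at most one chord of $L$ that joins them. I would fix an integer $N = N(\epsilon)$ large enough that $2\pi/N \leq \epsilon/3$ and partition $\mathbb{S}^1$ into $N$ consecutive arcs $A_1, \ldots, A_N$ of equal arclength $2\pi/N$, so that each $A_k$ has Euclidean diameter at most $\epsilon/3$. Given an arbitrary lamination $L$, for every unordered pair $\{i,j\} \subseteq \{1, \ldots, N\}$ (allowing $i = j$) such that $L$ contains at least one chord with one endpoint in $A_i$ and the other in $A_j$, I would pick one such chord arbitrarily and call it $c_{ij}$.

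I would then set $L' \coloneqq \mathbb{S}^1 \cup \bigcup_{\{i,j\}\text{ selected}} c_{ij}$. By construction $L' \subseteq L$, and $L'$ is itself a lamination: it is closed as a finite union of closed sets, and its chords come from $L$, so they pairwise do not cross in $\D$. The number of selected chords is bounded by the number of unordered pairs (with $i=j$ allowed), namely $\binom{N}{2} + N = N(N+1)/2$, and I would take this as the value of $K_\epsilon$, which depends only on $\epsilon$.

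The remaining step is to check that $d_H(L, L') \leq \epsilon$. Since $L' \subseteq L$, only $L \subseteq (L')^\epsilon$ needs a proof, and since $\mathbb{S}^1 \subseteq L'$, it is enough to approximate the chords of $L$. For any chord $c \in L$ with endpoints in $A_i$ and $A_j$, the pair $\{i,j\}$ was selected, so $c_{ij} \in L'$ has endpoints in the same two arcs. Pairing endpoints arc by arc and using the elementary bound
\begin{equation*}
d_H\bigl([a,b],\,[a',b']\bigr) \leq \max(|a-a'|,\,|b-b'|)
\end{equation*}
(obtained by comparing $(1-t)a + tb$ with $(1-t)a' + tb'$ for $t \in [0,1]$), I would conclude $d_H(c, c_{ij}) \leq \epsilon/3$, hence $c \subseteq (L')^\epsilon$. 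The argument is purely combinatorial-geometric and I do not expect any serious obstacle; the only mild points to verify are the two observations just used, namely that a finite collection of non-crossing chords of $L$ together with $\mathbb{S}^1$ is a lamination, and the displayed Lipschitz estimate between segments.
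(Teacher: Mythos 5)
Your argument is correct and is essentially the paper's proof: partition $\mathbb{S}^1$ into finitely many short arcs, select at most one chord of $L$ per pair of arcs, and use the linear-interpolation bound on the Hausdorff distance between two chords with endpoints in the same pair of arcs. The paper uses $r = \lfloor 2\pi/\epsilon \rfloor + 1$ arcs and the crude bound $K_\epsilon = r^2$ on ordered pairs, whereas you take a slightly finer mesh ($\epsilon/3$) and count unordered pairs, but these are cosmetic differences only.
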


\begin{proof}

Set $r\coloneqq \lfloor {2\pi}/{\epsilon} \rfloor + 1$ and let $I_r$ be the set of arcs of the form $(e^{-2i\pi k/r}, e^{-2i\pi (k+1)/r})$ for $k \in \llbracket 0,r-1 \rrbracket$. Fix a lamination $L$  and remark that, for $a_1,a_2$ two arcs of $I_r$, two chords of $L$ connecting $a_1$ to $a_2$ are at Hausdorff distance at most $ \epsilon$. Therefore, we construct an $\epsilon$-sublamination of $L$ by choosing, for each pair $(a_1,a_2) \in I_r^2$ such that $L$ contains at least one chord connecting $a_1$ and $a_2$, exactly one of them. By construction, the sublamination $L'$ made of $\mathbb{S}^1$ and these chords is at Hausdorff distance at most $\epsilon$ of $L$. The result follows, with $K_\epsilon = |I_r|^2 \leq \left( \lfloor {2\pi}/{\epsilon} \rfloor + 1 \right)^2$.

\end{proof}

\begin{proof}[Proof of Proposition~\ref{prop:properties} (ii)]
Fix $\epsilon > 0$. Using Lemma~\ref{lem:sublamination}, take $L'$ an $\epsilon$-sublamination of $\bL(f)$ with at most $K_\epsilon$ chords, and consider the points in $\cEG(f)$ corresponding to the chords of $L'$. Let $u$ be one of these points and set $g\coloneqq g(f,u)$, $d\coloneqq d(f,u)$ to simplify notation. If $g=0$ then the chord associated to $u$ is reduced to a point of $\mathbb{S}^1$, and therefore is in $\bL_c(f)$ for all $c \geq 0$. If $g \neq 0$, set $m=\max( \, \inf_{ [g-\epsilon, g] } f , \,\inf_{[d, d+\epsilon]} f \, )$. By definition of $g$ and $d$, $f(g) = f(d)$ is not a local minimum of $f$ at $g$ nor at $d$, which implies $m < f(g)$. Therefore, $[g,d] \times [m, f(g)]$ has positive $2$-dimensional Lebesgue measure. Moreover, a point of this set corresponds to a chord at distance at most $2 \pi \epsilon$ of the chord corresponding to $u$. Hence, with  probability tending to $1$ as $c \rightarrow \infty$, there exists a point of $\cN_c(f)$ in this set. This means that $\P( d_{H}( \bL_c(f), L')> 2\pi \epsilon) \rightarrow 0$ as $c \rightarrow \infty$, which concludes the proof, since $L'$ is an $\epsilon$-sublamination of $\bL(f)$.
\end{proof}

\subsection{Construction of \texorpdfstring{$(\bL_c^{(\alpha)})_{c \in [0,+\infty]}$}{Lca}}

\label{ssec:construction}

Fix $\alpha \in (1,2]$. We are now ready to introduce the lamination valued-process $( \bL_c^{(\alpha)} )_{c \in [0,+\infty]}$. To this end, denote by $H^{(\alpha)}=(H^{(\alpha)}_{t})_{0 \leq t \leq 1}$ the continuous normalized $\alpha$-stable height process defined in \cite[Chapter 1]{DLG02}. In particular, $H^{(\alpha)}$ is a continuous excursion-type function with $H^{(\alpha)}_{0}=0$. In addition, for $\alpha=2$, $H^{(2)}$ is (a multiple of) the Brownian excursion and $\cT^{(2)} \coloneqq T(H^{(2)})$ is Aldous' Brownian tree. 

We now specify the definitions of Section \ref{ssec:excursions} with the random excursion-type function $H^{(\alpha)}$, by letting $\cT^{(\alpha)} \coloneqq T\left( H^{(\alpha)} \right)$ and $\bL_\infty^{(\alpha)} \coloneqq \bL (H^{(\alpha)})$ be respectively the $\alpha$-stable tree and the $\alpha$-stable lamination. Finally, we set $(\bL_c^{(\alpha)})_{c \geq 0}=(\bL_c(H^{(\alpha)}))_{c \geq 0}$. 

The proof of Theorem~\ref{thm:existence} is now just an application of Proposition~\ref{prop:massequality} in this specific case.

\begin{rk}
The lamination-valued process $( \bL_c^{(\alpha)} )_{c \in [0,+\infty]}$ is almost surely càdlàg. Indeed, the process is nondecreasing and therefore admits a limit from the left and from the right at each $c>0$. Furthermore, for any $c \geq 0$, any $\epsilon>0$, one can check that almost surely there are only finitely many chords of length $> \epsilon$ in $\bL_{c+1}^{(\alpha)}$, and therefore there exists $\delta>0$ such that no chord of length $> \epsilon$ appears in the process between times $c$ and $c+\delta$. Hence the process is right-continuous.
\end{rk}

\subsection{A limit theorem for lamination-valued processes}

We exhibit here a way of translating the convergence of a sequence of excursion-type functions to the convergence of the associated lamination-valued processes.

\begin{thm}

\label{thm:discreteconvergence}

Let $(f_{n})_{n \geq 1}$ be a sequence of continuous excursion-type functions such that $f_{n}(0)=0$ for every $n \geq 1$. Assume that $(f_{n})$ converges uniformly to a continuous excursion-type function  $f$  such that $f(0)=0$. Then, for every $c>0$, the convergence

\begin{align*}
\left( \bL_{s}(f_{n}) \right)_{s \in [0,c]}  \quad \mathop{\longrightarrow}^{(d)}_{n \rightarrow \infty} \quad \left( \bL_s(f) \right)_{s \in [0,c]}
\end{align*}
holds in distribution in the space $\D([0,c],\bL(\overline{\D}))$.
\end{thm}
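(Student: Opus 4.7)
My plan is to construct all the processes on a common probability space via a single ``master'' Poisson point process, and to deduce almost sure convergence in the Skorokhod $J_1$ topology on $\D([0,c],\bL(\oD))$, which yields the convergence in distribution claimed. Specifically, let $\Xi$ be a Poisson point process on $[0,1] \times \R_+ \times \R_+ \times [0,c]$ of unit Lebesgue intensity. For any continuous excursion-type function $\phi$ with $\phi(0)=0$, I define $\cN_r(\phi)$ as the projection on the first, second and fourth coordinates of the set of $(s,t,u,r') \in \Xi$ such that $r' \leq r$, $t < \phi(s)$, and $u < \tfrac{2}{d(\phi,(s,t)) - g(\phi,(s,t))}$; a direct intensity computation shows that $\cN_r(\phi)$ is a Poisson point process on $\cEG(\phi) \times [0,r]$ with the correct intensity, so that $(\bL_r(\phi))_{r \in [0,c]}$ is recovered as in Section~\ref{ssec:excursions}.

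The second step is a reduction to finitely many chords via Lemma~\ref{lem:sublamination}. Fix $\epsilon>0$ and extract from $\bL_c(f)$ an $\epsilon$-sublamination $L'$ consisting of $N \leq K_\epsilon$ chords, corresponding to $N$ points $(s_i^*,t_i^*,u_i^*,r_i^*)$ of $\Xi$ for $1\leq i\leq N$. Almost surely, none of the levels $t_i^*$ is a \emph{critical level} of $f$ (i.e.\ a value attained by $f$ at a local minimum), since the union of the corresponding horizontal lines is a Lebesgue-null subset of $[0,1]\times\R_+$. Away from critical levels of $f$, the functionals $\phi \mapsto g(\phi,(s_i^*,t_i^*))$ and $\phi \mapsto d(\phi,(s_i^*,t_i^*))$ are continuous under uniform convergence; hence $g(f_n,(s_i^*,t_i^*)) \to g(f,(s_i^*,t_i^*))$ and similarly for $d$, so that for $n$ large each of these $N$ points also belongs to $\cN_{r_i^*}(f_n)$ and the associated chord of $\bL_r(f_n)$ converges to the corresponding chord of $\bL_r(f)$.

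I expect the \emph{main obstacle} to be ruling out the appearance of ``spurious'' large chords in $\bL_c(f_n)$ that have no counterpart in $L'$. Any such chord would arise from a point $(s,t,u,r') \in \Xi$ with $t \in [f(s), f_n(s))$ and $u < \tfrac{2}{d(f_n,(s,t))-g(f_n,(s,t))}$; one needs to show that, with high probability, no such point produces a chord of length greater than $O(\epsilon)$. By Fubini, the expected number of such points equals $c\int_0^1 \int_{f(s)}^{f_n(s)} \tfrac{2}{d(f_n,(s,t))-g(f_n,(s,t))} \, dt\, ds$, and the argument would consist in combining the fact that $f_n$ is uniformly close to $f$ with a geometric control on the slices of $\cEG(f_n)$ near the top of the graph of $f$: whenever such a slice is long, it must coincide (up to a small perturbation) with a long slice of $\cEG(f)$, and hence the corresponding chord was already present, at least approximately, in $\bL_c(f)$ and captured by $L'$.

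Once this is established, I would construct an explicit time-change $\lambda_n : [0,c] \to [0,c]$ matching the (finitely many) jump times $r_i^*$ of the large chords of $(\bL_r(f_n))_r$ with those of $(\bL_r(f))_r$, and interpolating affinely elsewhere. Then $\sup_{r \in [0,c]} |\lambda_n(r) - r| \to 0$ almost surely, and $\sup_{r \in [0,c]} d_H(\bL_{\lambda_n(r)}(f_n), \bL_r(f))$ is bounded, up to a term vanishing in $n$, by the contribution of the small chords, which is $O(\epsilon)$. Letting $\epsilon \to 0$ along a suitable sequence and extracting diagonally yields almost sure convergence in $\D([0,c], \bL(\oD))$ for the Skorokhod $J_1$ topology, and hence the claimed convergence in distribution.
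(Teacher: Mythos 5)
Your construction of a single master Poisson process $\Xi$ that simultaneously realizes all of the processes $(\bL_r(f_n))_{r,n}$ and $(\bL_r(f))_r$ is a genuinely different route from the paper's. The paper stays at the level of distributional convergence: it enumerates configurations of chords of length $>\epsilon$ through the events $E^c_{A,B,R}$, proves via dominated convergence (Proposition~\ref{prop:K}) that the corresponding probabilities converge, and supplements this with a tightness estimate on the number of such chords. Your thinning coupling (the extra coordinate $u$ implementing the density $2/(d-g)$) buys a real structural simplification: any point of $\Xi$ that is accepted for both $f_n$ and $f$ produces its chord at \emph{exactly} the same $r'$-coordinate in both processes, so for the shared large chords the Skorokhod time-change is simply the identity --- the bookkeeping the paper does through the time windows $[r_1(i),r_2(i)]$ essentially disappears. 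You also end up with a stronger statement (convergence in probability under the coupling) than bare distributional convergence.

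There is, however, a gap in your ``main obstacle'' paragraph. Because the acceptance rule $u < 2/\bigl(d(\phi,(s,t))-g(\phi,(s,t))\bigr)$ is itself $\phi$-dependent, a point of $\Xi$ with base $(s,t)\in\cEG(f)\cap\cEG(f_n)$ can be accepted for $f_n$ and rejected for $f$ (or vice versa) whenever $d(f_n,(s,t))-g(f_n,(s,t)) \neq d(f,(s,t))-g(f,(s,t))$. Such a point can carry a chord of length $>\epsilon$ for $f_n$ while the corresponding chord for $f$ was never drawn, so it is not ``captured by $L'$''; your Fubini computation only covers the case $(s,t)\in\cEG(f_n)\setminus\cEG(f)$ and misses this second source of spurious chords. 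The fix is exactly the dominated-convergence estimate the paper runs in \eqref{eq:eq2}: the expected number of discordant points with $d(f_n)-g(f_n)>\epsilon/2\pi$ is bounded by $c\int \bigl| \tfrac{2}{d(f_n)-g(f_n)}\mathds{1}_{\cEG(f_n)} - \tfrac{2}{d(f)-g(f)}\mathds{1}_{\cEG(f)} \bigr| \mathds{1}_{d(f_n)-g(f_n)>\epsilon/2\pi}\,ds\,dt$, whose integrand is bounded by $4\pi c/\epsilon$ on the bounded set $\cEG(f)\cup\cEG(f_n)$ and vanishes pointwise a.e., hence tends to $0$. One further small imprecision: the exceptional set should be taken as in the paper, namely the $(s,t)$ such that $t$ is attained at a local minimum of $f$ \emph{between} $g(f,(s,t))$ and $d(f,(s,t))$; this (rather than ``$t$ not a critical level'') is what guarantees continuity of $g$ and $d$ at $f$ under uniform convergence, and it is still Lebesgue-null. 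With these two points repaired, your proof goes through and is, in my view, a cleaner argument than the one in the paper.
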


In general, the convergence of Theorem \ref{thm:discreteconvergence} does not hold in $\D([0,\infty],\bL(\overline{\D}))$. Nevertheless, it is the case when the functions $f_n$ are the contour functions of certain trees (Theorem \ref{thm:discreteconvergencestable}).

The idea of the proof is to focus on the emergence of large chords, and to prove that there is only a finite number of them that appear up to time $c$. To this end, one reformulates the emergence of large chords in terms of the Poisson point processes $\cN_{c}(f_{n})$ and $\cN_{c}(f)$.

Let us introduce some notation. For an integer $s \geq 1$ and $k \in \llbracket 0,s-1 \rrbracket$, we denote by $x_k$ the arc of the form $(e^{-2i\pi k/s}, e^{-2i\pi (k+1)/s})$. We furthermore define, for any $K \geq 1$, $I_s^{(K)} := \{(x_{i_1}, \ldots, x_{i_K}) \in I_s^K, i_1 \leq i_2 \leq \ldots \leq i_K \}$. Fix $\epsilon>0$ and an integer $K \geq 1$. Take $A = (a_1,\ldots,a_K)  \in I_s^{(K)}$, $B = (b_1,\ldots,b_K) \in I_s^{(K)}$, as well as $R = (r_1(i),r_2(i))_{1 \leq i \leq K} \subset ([0,c]^{2})^{K}$ with $r_1(i) < r_2(i)$ for every $1 \leq i \leq K$. 

Now, given a nondecreasing lamination-valued process $\kL \coloneqq (L_r)_{r \in [0,+\infty]}$,  we define the event $E^c_{A,B,R}(\kL)$ as follows:

$E^c_{A,B,R}(\kL)$: ``$L_c$ has exactly $K$ chords of length greater than $\epsilon$, which can be indexed so that the $i$-th one connects the arcs $a_i$ and $b_i$, and has appeared between times $r_1(i)$ and $r_2(i)$.''

To simplify notation, we set $\cL(f_{n}) =(\bL_r(f_{n}) )_{r \in [0,c]}$ and $\cL(f)=( \bL_r(f) )_{r \in [0,c]}$. The following result is the key ingredient to prove Theorem \ref{thm:discreteconvergence}:

\begin{prop}

\label{prop:K}

The following convergence holds:

\begin{align*}
\P\left(E^c_{A,B,R}(\cL(f_n))\right)  \quad \mathop{\longrightarrow}_{n \rightarrow \infty} \quad  \P \left(E^c_{A,B,R}(\cL(f))\right).
\end{align*}

\end{prop}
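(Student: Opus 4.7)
The plan is to rewrite $E^c_{A,B,R}$ as an event on the underlying Poisson point process $\cN(\cdot) \cap (\R^2 \times [0,c])$, express its probability explicitly in terms of the PPP intensity measure, and then check that this intensity measure depends continuously on $f$ under uniform convergence.

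\textbf{Step 1: rewriting the event.} For $g \in \{f, f_n\}$, introduce the disjoint ``large-chord cells'' in $\R^2 \times [0,c]$
\begin{equation*}
\tilde D_i(g) := \bigl\{ (u,r) : u \in \cEG(g),\, r \in [r_1(i), r_2(i)],\, d(g,u)-g(g,u)>\epsilon,\, e^{-2i\pi g(g,u)} \in a_i,\, e^{-2i\pi d(g,u)} \in b_i \bigr\},
\end{equation*}
for $1 \le i \le K$, together with $D_{\mathrm{big}}(g) := \{(u,r) : u \in \cEG(g),\, r \in [0,c],\, d(g,u)-g(g,u)>\epsilon\}$. The event $E^c_{A,B,R}(\cL(g))$ occurs if and only if $\cN(g)$ places exactly one point in each $\tilde D_i(g)$ and no point in $D_{\mathrm{big}}(g) \setminus \bigcup_i \tilde D_i(g)$. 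Writing $\nu_g$ for the intensity of $\cN(g)$ restricted to $\R^2 \times [0,c]$, independence of the PPP on disjoint sets yields
\begin{equation*}
\P\bigl(E^c_{A,B,R}(\cL(g))\bigr) = \exp\bigl(-\nu_g(D_{\mathrm{big}}(g))\bigr) \prod_{i=1}^K \nu_g(\tilde D_i(g)).
\end{equation*}

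\textbf{Step 2: convergence of intensities.} It then suffices to show $\nu_{f_n}(\tilde D_i(f_n)) \to \nu_f(\tilde D_i(f))$ and $\nu_{f_n}(D_{\mathrm{big}}(f_n)) \to \nu_f(D_{\mathrm{big}}(f))$. Each such intensity is an integral of $\frac{2}{d(g,u)-g(g,u)}$ against Lebesgue measure on a bounded subset of $\R^2 \times [0,c]$, and on the ``large-chord'' regions the integrand is bounded by $2/\epsilon$. Hence dominated convergence applies as soon as, for Lebesgue-a.e.\ $u \in \R^2$, one has $\mathds{1}_{u \in \cEG(f_n)} \to \mathds{1}_{u \in \cEG(f)}$ (immediate from uniform convergence together with the fact that the graph of the continuous function $f$ has zero Lebesgue measure), and $(g(f_n,u), d(f_n,u)) \to (g(f,u), d(f,u))$.

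\textbf{Step 3: main obstacle.} The pointwise convergence $(g(f_n, u), d(f_n, u)) \to (g(f, u), d(f, u))$ is the delicate point: it genuinely fails at $u = (s,t)$ for which $t$ is the level of a local extremum of $f$ meeting $[g(f,u), s]$, because uniformly small perturbations can then move $g(f_n, u)$ by a macroscopic amount. I would handle this by observing that the set of such ``exceptional'' levels $t$ is at most countable (to each local extremum of the continuous function $f$ one associates a rational-endpoint interval on which the extremum is attained, giving an injection into a countable set), so that the exceptional $u$ form a Lebesgue-null set. At any non-exceptional $u$, uniform convergence $f_n \to f$ combined with the transversal crossing of the level $t$ by $f$ at $g(f,u)$ and $d(f,u)$ gives a simple two-sided squeeze forcing $g(f_n,u) \to g(f,u)$ and $d(f_n,u) \to d(f,u)$. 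Plugging this into the formula of Step~1 via dominated convergence concludes the proof.
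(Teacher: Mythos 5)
Your proposal is essentially the paper's own proof: same reformulation of the event in terms of the Poisson point process $\cN(\cdot)$, same explicit product formula, and the same dominated-convergence argument over the complement of a Lebesgue-null exceptional set with domination constant $2c/\epsilon$. One thing you should make explicit: the product formula $\exp(-\nu_g(D_{\mathrm{big}})) \prod_i \nu_g(\tilde D_i)$ presupposes that the cells $\tilde D_i$ are pairwise disjoint, which can fail when $(a_i,b_i)=(a_j,b_j)$ with overlapping time windows; the paper handles this at the outset by reducing via inclusion-exclusion to the case where all the couples $(a_i,b_i)$ are pairwise distinct, and you need that reduction too. With that addition your Step 1 is correct, and your Step 3 usefully spells out why the exceptional set is Lebesgue-null (the paper asserts this without detail); just phrase the countability argument so that it counts local extremum \emph{values} rather than extremal points, e.g.\ by noting every local extremum value is $\max_{[p,q]} f$ or $\min_{[p,q]} f$ for some rational $p<q$.
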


Let us immediately see how this implies our main result:

\begin{proof}[Proof of Theorem \ref{thm:discreteconvergence} from Proposition \ref{prop:K}] Define the diameter of $R$ as $\Delta(R) \coloneqq \max \{ r_2(i) - r_1(i), 1 \leq i \leq K \}$.  The  idea of the proof is the following observation: for $\kL \coloneqq (L_r)_{r \in [0,\infty]},\kL' \coloneqq (L'_r)_{r \in [0,\infty]}$ two processes, if $E^c_{A,B,R}(\kL)$ and $E^c_{A,B,R}(\kL')$ both hold, where $R$ has diameter $\leq D$, then the Skorokhod distance between $(L_r)_{r \leq c}$ and $(L'_r)_{r \leq c}$ is deterministically bounded by a constant $C(K,\epsilon, s, D)$, which tends to $0$ as $\epsilon, 1/s, D \rightarrow 0$.  Also, for fixed $K$ and $R$, for different couples $(A,B) \in (I_s^{(K)})^2$, the events $E^c_{A,B,R}(\cL(f_n))$ are all disjoint almost surely, and there exists only a finite number of events of this form (for $\epsilon, s, R$ fixed).

In order to use Proposition \ref{prop:K} and prove Theorem \ref{thm:discreteconvergence}, it is therefore enough to show that the number of chords of length greater than $\epsilon$ in the lamination $\bL_{c}(f_{n})$ is tight as $n \rightarrow \infty$. For this, remark that, for any $\delta>0$, the expectation of the number of chords in $\bL_{c}(f_{n})$ corresponding to points $u \in \cN_{c}(f_{n})$ such that $d(f_n,u)-g(f_n,u)> \delta$ has the expression:

\begin{align*}
\int_{\R^2} \frac{2c}{d(f_n,u)-g(f_n,u)} du \mathds{1}_{d(f_n,u) - g(f_n,u)>\delta} \mathds{1}_{u \in \cEG(f_{n})} \leq \frac{2c}{\delta} \| f_{n} \|_{\infty}.
\end{align*}
Furthermore, a chord in $\bL_{c}(f_{n})$ of length greater than $\epsilon$ necessarily corresponds to a point $u \in \cN_{c}(f_{n})$ such that $d(f_n,u)-g(f_n,u)> \epsilon/2\pi$.
Since $(f_{n})$ converges uniformly, it follows that the number of chords in $\bL_{c}(f_{n})$ whose length is greater than $\epsilon$ is asymptotically stochastically bounded by a Poisson distribution. By taking $\epsilon, 1/s, \Delta(R) \rightarrow 0$, we get the desired result.

\end{proof}

It remains to prove Proposition \ref{prop:K}.

\begin{proof}[Proof of Proposition \ref{prop:K}]
By inclusion-exclusion, we can assume that the couples $(a_i,b_i)$ for $1 \leq i \leq K$ are all different.  The idea of the proof is to reformulate the events $E^c_{A,B,R}(\cL(f_{n}))$ and $E^c_{A,B,R}(\cL(f))$ in terms of the Poisson point processes $\cN(f_{n})$ on $\cEG(f_{n})$ and $\cN(f)$ on $\cEG(f)$. We write, for $1 \leq i \leq K$, $a_i = (e^{-2i\pi j_i/s}, e^{-2i \pi (j_i+1)/s})$ and $b_i = (e^{-2i\pi k_i/s}, e^{-2i \pi (k_i+1)/s})$ for some $j_i$, $k_i \in \llbracket 0,s-1 \rrbracket$. The probability that $\bL_c(f)$ has exactly $K$ chords of length greater than $ \epsilon$, the $i$-th of them connecting $a_i$ to $b_i$ and having appeared between times $r_1(i)$ and $r_2(i)$, is equal to

\begin{align*}
\P \left( \nexists u \in \cN_c(f) \cap \cA_{K+1}(f) \right) \prod_{i=1}^K \P \left( \exists! u \in (\cN_{r_2(i)}(f_{n})  \backslash \cN_{r_1(i)}(f_{n})) \cap \cA_i(f) \right),
\end{align*}
where, for $1 \leq i \leq K$, we have set $\cA_i(f) = \{ u \in \cEG (f), \, d(f,u)-g(f,u) > \epsilon, \, g(f,u) \in [j_i/s, (j_i+1)/s], \, d(f,u) \in [k_i/s, (k_i+1)/s] \}$ and $\cA_{K+1}(f) = \{ u \in \cEG (f), \, d(f,u)-g(f,u) > \epsilon  \} \backslash \cup_{i=1}^K \cA_i(f)$. A similar formula holds with $f$ replaced by $f_{n}$.

Therefore, proving Proposition~\ref{prop:K} boils down to proving that, for any $(a,b,x,y) \in [0,1]^4$, any $0 \leq r_1<r_2$:

\begin{align}
\label{eq:eq2}
&\int_\R \mathds{1}_{r_1 \leq t \leq r_2} dt \int_{\R^2} \frac{2c}{d(f_n,u)-g(f_n,u)} du \mathds{1}_{d(f_n,u) - g(f_n,u)>\epsilon, g(f_n,u) \in [a,b], d(f_n,u) \in [x,y]} \mathds{1}_{u \in \cEG(f_{n}  )} \nonumber\\
& \qquad  \qquad \longrightarrow \int_\R \mathds{1}_{r_1 \leq t \leq r_2} dt \int_{\R^2} \frac{2c}{d(f,u)-g(f,u)} du \mathds{1}_{d(f,u) - g(f,u)>\epsilon, g(f,u) \in [a,b], d(f,u) \in [x,y]} \mathds{1}_{u \in \cEG( f)}
\end{align}
as $n \rightarrow \infty$. To this end, we use dominated convergence. Indeed, consider $\mathcal{R}$ the set of points $u \coloneqq (s,t) \in \R^2$ such that $f(g(f,u))$ is not attained at a local minimum of $f$ between $g(f,u)$ and $d(f,u)$. Remark that the pointwise convergence of the function under the integral holds for all $u \in \mathcal{R}$, and its complement $\mathcal{R}^c$ has Lebesgue measure $0$. Furthermore, for every $n \geq 1$ and  $u \in \mathcal{R}$,

$$\frac{2c}{d(f_n,u)-g(f_n,u)} \mathds{1}_{d(f_n,u) - g(f_n,u)>\epsilon, g(f_n,u) \in (a,b), d(f_n,u) \in (x,y)} \mathds{1}_{u \in \cEG(f_{n} )}\leq \frac{2c}{\epsilon} \mathds{1}_{u \in [0,1] \times [0, \| f_{n}\|_{\infty}]},$$
and the convergence \eqref{eq:eq2} follows by dominated convergence, since $(f_{n})$ converges uniformly to $f$.
\end{proof}

\begin{rk}
We make here a small abuse of words, saying that we prove the convergence of these lamination-valued processes towards $(\bL_r(f))_{0 \leq r \leq c}$ under the condition that there are $K$ chords of length $>\epsilon$ in $\bL_c(f_n)$. This has to be understood as follows: under the event that $\bL_c(f)$ has $K$ such chords, with high probability $\bL_c(f_n)$ has exactly $K$ such chords for $n$ large enough, and $(\bL_r(f_n))_{0 \leq r\leq c}$ converges towards $(\bL_r(f))_{0 \leq r\leq c}$ conditioned to have $K$ such chords. Since $\bL_c(f)$ has almost surely a finite number of chords of length $>\epsilon$, this implies the convergence of the unconditioned processes. We will always make this abuse of words, by saying that we prove such convergences on disjoint events, whose union has probability $1$.
\end{rk}

\section{Limit of cut processes on discrete trees}

\label{sec:discrete}

In this section, our goal is to prove that the lamination-valued process $(\bL_c^{(\alpha)})_{c \in [0,+\infty]}$ is the functional limit of a discrete analogue, namely a discrete lamination-valued process constructed from labelled size-conditioned Galton--Watson trees. This is natural since stable trees appear as limits of certain size-conditioned Galton-Watson trees (see  Theorem~\ref{thm:duquesne}) and since  $(\bL_c^{(\alpha)})_{c \in [0,+\infty]}$ is coded by an $\alpha$-stable tree with some additional structure (the Poisson point process $(\cP_c(\cT^{(\alpha)}))_{c \geq 0}$ on its skeleton).

\subsection*{Notations of Section \ref{sec:discrete}}

\renewcommand{\arraystretch}{1.6}
\begin{center}
\begin{tabular}{|c|c|}
\hline
$\mu$ & critical law in the domain of attraction of an $\alpha$-stable law\\
\hline
$\cT$ & nonconditioned $\mu$-Galton-Watson tree\\
\hline
$\cT_n$ & $\mu$-Galton-Watson tree conditioned to have $n$ vertices\\
\hline
$C(\cT_n)$ & contour function of $\cT_n$\\
\hline
$\tilde{C}(\cT_n)$ & renormalized contour function of $\cT_n$\\
\hline
$\bL_{n,c}$ & $\bL_c(\tilde{C}(\cT_n))$\\
\hline
\end{tabular}
\end{center}

\subsection{Background on trees}

We first  define \textit{plane trees}, following Neveu's formalism \cite{Nev86}. First, let $\N^* = \left\{ 1, 2, \ldots \right\}$ be the set of all positive integers, and $\mathcal{U} = \cup_{n \geq 0} (\N^*)^n$ be the set of finite sequences of positive integers, with $(\N^*)^0 = \{ \emptyset \}$ by convention.

By a slight abuse of notation, for $k \in \Z_+$, we write an element $u$ of $(\N^*)^k$ by $u=u_1 \cdots u_k$, with $u_1, \ldots, u_k \in \N^*$. For $k \in \Z_+$, $u=u_1\cdots u_k \in (\N^*)^k$ and $i \in \Z_+$, we denote by $ui$ the element $u_1 \cdots u_ki \in (\N^*)^{k+1}$. A plane tree $T$ is formally a subset of $\mathcal{U}$ satisfying the following three conditions:

(i) $\emptyset \in T$ (the tree has a root);

(ii) if $u=u_1\cdots u_n \in T$, then, for all $k \leq n$, $u_1\cdots u_k \in T$ (these elements are called ancestors of $u$, and the set of all ancestors of $u$ is called its ancestral line; $u_1 \cdots u_{n-1}$ is called the \textit{parent} of $u$);

(iii) for any $u \in T$, there exists a nonnegative integer $k_u(T)$ such that, for every $i \in \N^*$, $ui \in T$ if and only if $1 \leq i \leq k_u(T)$ ($k_u(T)$ is called the number of children of $u$, or the outdegree of $u$).

See an example of a plane tree on Fig. \ref{fig:treecontour}, left. The elements of $T$ are called \textit{vertices}, and we denote by $|T|$ the total number of vertices in $T$. The height $h(u)$ of a vertex $u$ is its distance to the root, that is, the integer $k$ such that $u \in (\N^*)^k$. We define the height of a tree $T$ as $H(T) = {\sup}_{{u \in T}} \, h(u)$. In the sequel, by tree we always mean plane tree unless specifically mentioned.

The \textit{lexicographical order} $\prec$ on $\mathcal{U}$ is defined as follows:  $\emptyset \prec u$ for all $u \in \mathcal{U} \backslash \{\emptyset\}$, and for $u,w \neq \emptyset$, if $u=u_1u'$ and $w=w_1w'$ with $u_1, w_1 \in \N^*$, then we write $u \prec w$ if and only if $u_1 < w_1$, or $u_1=w_1$ and $u' \prec w'$.  The lexicographical order on the vertices of a tree $T$ is the restriction of the lexicographical order on $\mathcal{U}$; for every $0 \leq k \leq |T|-1$ we write $v_k(T)$ for the $(k+1)$-th vertex of $T$ in the lexicographical order.

We do not distinguish between a finite tree $T$, and the corresponding planar graph where each vertex is connected to its parent by an edge of length $1$, in such a way that the vertices with same height are sorted from left to right in lexicographical order.

It is useful to define the contour function $C(T): [0,2n] \rightarrow \R_+$ of a finite plane tree $T$ with $n$ vertices: imagine a particle exploring $T$ from left to right at unit speed. Then, for $0 \leq t \leq 2n-2$, $C_t(T)$ is the distance to the root of the particle at time $t$. For convenience, we set $C_t(T) = 0$ for $2n-2 \leq t \leq 2n$. See Fig. \ref{fig:treecontour} for an example.

\begin{figure}
\center
\caption{A tree $T$ with $9$ vertices labelled à la Neveu, and its contour function $(C_t(T))_{0 \leq t \leq 18}$.}
\label{fig:treecontour}
\begin{tabular}{c c}
\begin{tikzpicture}[scale=0.7]
\draw (1,1) -- (0,0) -- (-1,1) -- (0,2);
\draw (-1,1) -- (-1,2) -- (0,3) -- (0,4);
\draw (-1,2) -- (-2,3) -- (-2,4);
\draw[fill] (0,0) circle (.1);
\draw[fill] (1,1) circle (.1);
\draw[fill] (-1,1) circle (.1);
\draw[fill] (0,2) circle (.1);
\draw[fill] (-1,2) circle (.1);
\draw[fill] (-2,3) circle (.1);
\draw[fill] (-2,4) circle (.1);
\draw[fill] (0,3) circle (.1);
\draw[fill] (0,4) circle (.1);
\draw (1,3) node{$T$};
\draw (-.5,0) node{$\emptyset$};
\draw (-1.5,1) node{$1$};
\draw (0.5,1) node{$2$};
\draw (-1.5,2) node{$11$};
\draw (-2.7,3) node{$111$};
\draw (-2.8,4) node{$1111$};
\draw (-.7,3) node{$112$};
\draw (-.8,4) node{$1121$};
\draw (-.5,2) node{$12$};
\end{tikzpicture}
&
\begin{tikzpicture}[scale=.5]
\draw[->] (0,-1) -- (0,5);
\draw[->] (-1,0) -- (18.5,0);
\draw (0,0) -- (4,4) -- (6,2) -- (8,4) -- (11,1) -- (12,2) -- (14,0) -- (15,1) -- (16,0);
\draw (1,-.2) -- (1,.2);
\draw (2,-.2) -- (2,.2);
\draw (3,-.2) -- (3,.2);
\draw (4,-.2) -- (4,.2);
\draw (5,-.2) -- (5,.2);
\draw (6,-.2) -- (6,.2);
\draw (7,-.2) -- (7,.2);
\draw (8,-.2) -- (8,.2);
\draw (9,-.2) -- (9,.2);
\draw (10,-.2) -- (10,.2);
\draw (11,-.2) -- (11,.2);
\draw (12,-.2) -- (12,.2);
\draw (13,-.2) -- (13,.2);
\draw (14,-.2) -- (14,.2);
\draw (15,-.2) -- (15,.2);
\draw (16,-.2) -- (16,.2);
\draw (17,-.2) -- (17,.2);
\draw (18,-.2) -- (18,.2);
\draw (1,-.5) node{1};
\draw (2,-.5) node{2};
\draw (3,-.5) node{3};
\draw (4,-.5) node{4};
\draw (5,-.5) node{5};
\draw (6,-.5) node{6};
\draw (7,-.5) node{7};
\draw (8,-.5) node{8};
\draw (9,-.5) node{9};
\draw (10,-.5) node{10};
\draw (11,-.5) node{11};
\draw (12,-.5) node{12};
\draw (13,-.5) node{13};
\draw (14,-.5) node{14};
\draw (15,-.5) node{15};
\draw (16,-.5) node{16};
\draw (17,-.5) node{17};
\draw (18,-.5) node{18};
\draw (-.5,-.5) node{0};
\draw (-.2,1) -- (.2,1) (-.2,2) -- (.2,2) (-.2,3) -- (.2,3) (-.2,4) -- (.2,4);
\draw (-.5,1) node{1};
\draw (-.5,2) node{2};
\draw (-.5,3) node{3};
\draw (-.5,4) node{4};
\draw (14,3) node{$C(T)$};
\end{tikzpicture}
\end{tabular}
\end{figure}
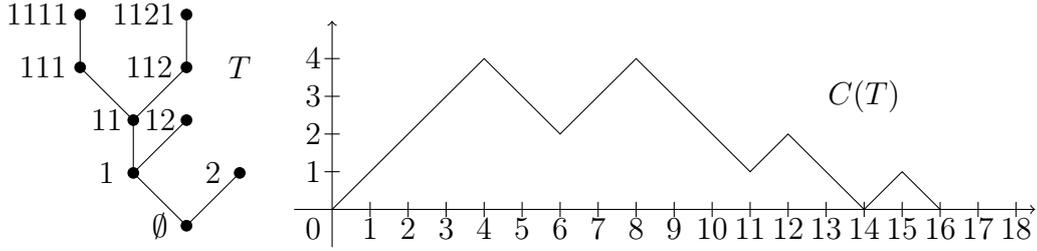

\paragraph*{Slowly varying functions}

Slowly varying functions appear in the study of the domain of attraction of $\alpha$-stable laws (for $\alpha \in (1,2]$). We recall here their definition and useful properties.

A function $L: \R_+ \rightarrow \R_+^*$ is said to be slowly varying if, for any $c>0$,
\begin{align*}
\frac{L(cx)}{L(x)} \underset{x \rightarrow \infty}{\rightarrow} 1.
\end{align*}
As their name says, such functions vary slowly, and in particular more slowly than any polynomial. This statement is quantified by the following useful Potter bounds (see e.g. \cite[Theorem $1.5.6$]{BGT89} for a proof):

\begin{thm}[Potter bounds]
\label{thm:potterbounds}
Let $L: \R_+ \rightarrow \R_+^*$ be a slowly varying function. Then, for any $\epsilon>0$, $A>0$, there exists $X>0$ such that, for $x,y \geq X$,
\begin{align*}
\frac{L(y)}{L(x)} \leq A \max \left\{ \left( \frac{y}{x}\right)^\epsilon, \left( \frac{x}{y} \right)^\epsilon \right\}.
\end{align*}
\end{thm}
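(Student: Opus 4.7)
The proof proposal rests on one substantial ingredient, the \emph{Uniform Convergence Theorem} (UCT) for slowly varying functions: if $L$ is slowly varying, then for every compact interval $[a,b] \subset (0,\infty)$, the convergence $L(\lambda x)/L(x) \to 1$ as $x \to \infty$ is uniform for $\lambda \in [a,b]$. I would take this as a starting point. It is itself a classical statement that can be established, under a measurability hypothesis on $L$ (implicit in the setting of \cite{BGT89}), either via the Karamata integral representation $L(x) = c(x)\exp\bigl(\int_1^x \varepsilon(t)/t\, dt\bigr)$ with $c(x) \to c \in (0,\infty)$ and $\varepsilon(t) \to 0$, or more directly via a Baire-category / Steinhaus-type argument applied to $g(u) \coloneqq \log L(e^u)$, whose increments $g(u+h) - g(u)$ vanish pointwise as $u \to \infty$.

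Once the UCT is available, the deduction of the Potter bounds is a short chain argument. Given $\epsilon > 0$ and $A > 0$, I pick $\delta \in (0, \min(e^\epsilon - 1, A-1)]$. By the UCT applied to $[1/e, e]$, there exists $X_0 \geq 1$ such that
\begin{align*}
\sup_{\lambda \in [1/e,\, e]} \left| \frac{L(\lambda z)}{L(z)} - 1 \right| \leq \delta \qquad \text{for all } z \geq X_0.
\end{align*}
I then set $X = X_0$ and consider arbitrary $x,y \geq X$. By the symmetry of the claim, assume first $y \geq x$, and let $n \geq 1$ be the least integer with $y/x \leq e^n$. Write $y/x = \mu_1 \mu_2 \cdots \mu_n$ with $\mu_i \in [1,e]$, and set $x_0 = x$, $x_i = \mu_1 \cdots \mu_i \, x$, so that $x_n = y$ and each $x_{i-1} \geq x \geq X_0$.

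The telescoping identity and the UCT yield
\begin{align*}
\frac{L(y)}{L(x)} = \prod_{i=1}^n \frac{L(x_i)}{L(x_{i-1})} \leq (1+\delta)^n.
\end{align*}
Since $n \leq \log(y/x) + 1$, this gives $L(y)/L(x) \leq (1+\delta)\, (y/x)^{\log(1+\delta)} \leq A \, (y/x)^\epsilon$ by the choice of $\delta$, which matches the bound because $\max\{(y/x)^\epsilon,(x/y)^\epsilon\} = (y/x)^\epsilon$ when $y \geq x$. The case $x > y$ is handled identically, using the UCT on $[1/e,1]$ and chaining downwards.

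The main obstacle in this plan is not the chain computation, which is mechanical, but the UCT itself: slow variation as defined only provides pointwise convergence, and upgrading to uniformity on compact sets genuinely requires either the Karamata representation theorem or a measurability-based argument. Everything else in the proof is a clean telescoping exercise, and the interplay between the exponent $\log(1+\delta) \leq \epsilon$ and the prefactor $1+\delta \leq A$ is what makes both free parameters $\epsilon$ and $A$ appear independently in the conclusion.
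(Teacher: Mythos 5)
The paper itself does not prove the Potter bounds; it cites \cite[Theorem~1.5.6]{BGT89}, so there is no internal proof to compare against. Your argument is correct and follows a mildly different path from the standard one in that reference: BGT89 derives the inequality directly from the Karamata representation $L(x)=c(x)\exp\bigl(\int_1^x \varepsilon(t)/t\,\mathrm{d}t\bigr)$, whereas you invoke the Uniform Convergence Theorem and telescope $L(y)/L(x)$ across a multiplicative chain with ratios $\mu_i \in [1,e]$. The two starting points (representation theorem, UCT) are essentially equivalent facts about slowly varying functions, so neither route is strictly more elementary; the chaining version has the pedagogical advantage of making the independent roles of the two free parameters visible, exactly as you observe, with $\log(1+\delta)\leq\epsilon$ controlling the exponent and $1+\delta\leq A$ controlling the prefactor.

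Two small points worth making explicit. First, your choice $\delta\leq A-1$ quietly assumes $A>1$; this is in fact the correct hypothesis (the bound is already false at $x=y$ when $A<1$) and matches BGT89 — the paper's ``$A>0$'' is a harmless overstatement. Second, in the symmetric case $x>y$ the downward chain $x=x_0>x_1>\cdots>x_n=y$ is bounded below by $y\geq X$, so each $x_{i-1}\geq X_0$ and the UCT bound applies at every step; this is the one place where the case is not a verbatim copy of the first, and it deserves a sentence.
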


\paragraph*{Galton--Watson trees}

Let $\mu$ be a probability distribution on $\Z_+$ with mean at most $1$, such that $\mu_0+\mu_1<1$ (this assumption is made to avoid degenerate cases). A $\mu$-Galton-Watson tree (in short, $\mu$-GW tree) is a random variable $\cT$ on the space of finite trees such that, for any finite tree $T$, $\P \left( \cT = T \right) = \prod\limits_{v \in T} \mu_{k_v(T)}$. $\mu$ is then said to be the \textit{offspring distribution} of $\cT$. In what follows, $\cT_n$ will stand for $\cT$ conditioned to have exactly $n$ vertices (provided that it holds with positive probability).

In the whole paper, we mostly focus on distributions $\mu$ that are critical - that is, with mean $1$ - and in the domain of attraction of a stable law - that is, there exists a slowly varying function $L$ such that, if $X$ is a random variable of law $\mu$, then the following statement holds:

\begin{equation}
\label{eq:L}
\E \left[ X^2 \mathds{1}_{X \leq x} \right] \underset{x \rightarrow +\infty}{\sim} x^{2-\alpha} L(x)+1.
\end{equation}
In what follows, when $\mu$ is a given distribution that is in the domain of attraction of a stable law, $(B_n)_{n \in \Z_+}$ will always denote a sequence verifying 
\begin{equation}
\label{eq:Bn}
\forall n \geq 1, \frac{n L(B_n)}{B_n^\alpha} = \frac{\alpha (\alpha - 1)}{\Gamma \left( 3 - \alpha \right)}.
\end{equation}
where $L$ is a slowly varying function which verifies \eqref{eq:L}. Furthermore,  we define the renormalized contour function of $\cT_n$ as
$$\tilde{C}_t(\cT_n):=\frac{B_n}{n} C_{2nt}(\cT_n)$$
for all $t \in [0,1]$.

The following useful theorem, due to Duquesne \cite{Duq08}, relates the contour function of $\cT_n$ to the process $H^{(\alpha)}$ and is a cornerstone of the paper.

\begin{thm}
\label{thm:duquesne}
Let $\alpha \in (1,2]$, $\mu$ be a critical distribution in the domain of attraction of an $\alpha$-stable law and $(B_n)_{n \in \Z_+}$ a sequence verifying \eqref{eq:Bn}. Then the following convergence holds in distribution in $\mathbb{D} \left( [0,1],\R \right)$:
\begin{align*}
\tilde{C}(\cT_n) \overset{(d)}{\underset{n \rightarrow \infty}{\rightarrow}} H^{(\alpha)}.
\end{align*}
\end{thm}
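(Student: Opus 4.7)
The plan is to go through the classical three-step scheme: Łukasiewicz walk $\Rightarrow$ height function $\Rightarrow$ contour function. For a plane tree $T$ with $|T|=n$, define the Łukasiewicz walk $W(T) = (W_k(T))_{0 \leq k \leq n}$ by $W_0(T) = 0$ and $W_{k+1}(T) - W_k(T) = k_{v_k(T)}(T) - 1$. Under the unconditioned $\mu$-GW law, $W(\cT)$ is the first $|\cT|$ steps of an i.i.d. random walk with step law $\nu(\cdot) = \mu(\cdot + 1)$, which is centered (by criticality) and in the domain of attraction of the $\alpha$-stable spectrally positive law, with normalizing sequence $B_n$ defined in \eqref{eq:Bn}. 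The classical Skorokhod invariance principle gives
\[
\left( \frac{B_n}{n} W_{\lfloor n t \rfloor}(\cT) \right)_{t \in [0,1]} \; \overset{(d)}{\longrightarrow} \; \left( X^{(\alpha)}_t \right)_{t \in [0,1]},
\]
where $X^{(\alpha)}$ is the standard $\alpha$-stable Lévy process. The condition $|\cT| = n$ is equivalent to $W_n(\cT) = -1$ and $W_k(\cT) \geq 0$ for $0 \leq k < n$. A Vervaat-type argument combined with the local limit theorem for stable random walks (as in Kortchemski or Duquesne--Le Gall) then upgrades this to the convergence of the rescaled walk under the conditioning, toward the normalized excursion $X^{(\alpha), \mathrm{exc}}$ of $X^{(\alpha)}$.

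Next, recover the height function $(H_k(\cT_n))_{0 \leq k \leq n}$, where $H_k(T)$ is the height of the vertex $v_k(T)$. It satisfies the combinatorial identity
\[
H_k(T) \; = \; \#\bigl\{ 0 \leq j < k \, : \, W_j(T) = \min_{j \leq i \leq k} W_i(T) \bigr\}.
\]
This functional is \emph{not} continuous for the Skorokhod $J_1$ topology, which is the main technical obstacle of the proof. The standard way around it is to exploit an exact combinatorial identity for $\mathbb{E}[H_k(\cT) \mid W(\cT)]$ combined with a second-moment estimate, both of which can be computed in terms of the ladder height process of $-W(\cT)$. Passing to the limit using Theorem \ref{thm:estimate} (or its analogue for the Laplace exponent of $X^{(\alpha)}$) and the stability of slowly varying functions, one shows that conditionally on the limit of the rescaled Łukasiewicz walk, the rescaled height process $\frac{B_n}{n} H_{\lfloor n t \rfloor}(\cT_n)$ concentrates at the continuous height process $H^{(\alpha)}$ associated with $X^{(\alpha), \mathrm{exc}}$. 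Tightness is obtained via a uniform modulus-of-continuity estimate coming from branching-process moment bounds on subtree heights.

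Finally, pass from the height function to the contour function. The depth-first contour exploration visits the $k$-th vertex (in lexicographic order) at a time $\psi_n(k)$ satisfying
\[
\psi_n(k) \; = \; 2k - H_k(\cT_n) + R_k, \qquad 0 \leq R_k \leq H_k(\cT_n),
\]
because each vertex below $v_k(\cT_n)$ has been entered before time $\psi_n(k)$ and the edges on its ancestral line have been traversed exactly once. In particular, $\sup_k |\psi_n(k) - 2k| \leq 2 H(\cT_n)$. Once multiplied by $B_n/n$, this error is $O(B_n H(\cT_n)/n)$, which tends to $0$ in probability because $\frac{B_n}{n} H(\cT_n) \to \sup H^{(\alpha)} < \infty$ in distribution by the previous step. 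Combining this uniform estimate on the time change with the uniform continuity of $H^{(\alpha)}$ on $[0,1]$ (itself a consequence of the continuity of $H^{(\alpha)}$ and compactness of $[0,1]$) yields the desired convergence $\tilde{C}(\cT_n) \to H^{(\alpha)}$ in $\mathbb{D}([0,1],\R)$.

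The hard part is the middle step: the lack of continuity of the map from Łukasiewicz walk to height function means one cannot simply apply the continuous mapping theorem, and one must instead compute finite-dimensional distributions via the ladder height structure and rely on the tail estimates for $\mu$ encoded by the slowly varying function $L$ and Theorem \ref{thm:potterbounds}. The first and third steps are softer, reducing to the classical invariance principle for stable random walks and to a deterministic time-change estimate, respectively.
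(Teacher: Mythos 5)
The paper does not prove this theorem: it is cited directly from Duquesne~\cite{Duq08} and used as a black box, so there is no ``paper's own proof'' to compare against. Your three-step scheme (\L{}ukasiewicz walk $\to$ height function $\to$ contour function) is indeed the classical Duquesne--Le~Gall approach, and the outer two steps are sketched correctly. A few remarks are nevertheless in order.

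First, a concrete error: the \L{}ukasiewicz walk $W$ has fluctuations of order $B_n$, so the invariance principle reads
\[
\left( \frac{1}{B_n} W_{\lfloor nt \rfloor}(\cT) \right)_{t \in [0,1]} \overset{(d)}{\longrightarrow} \left( X^{(\alpha)}_t \right)_{t \in [0,1]},
\]
with prefactor $1/B_n$, not $B_n/n$. The renormalization $B_n/n$ is for the height and contour functions, which live on a different scale ($n/B_n$ rather than $B_n$). Since $B_n \asymp n^{1/\alpha}$ up to a slowly varying correction, these are genuinely different scalings, and the mismatch would propagate through the argument.

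Second, the middle step --- which you correctly identify as the hard one --- is stated in a way that does not parse. You write of ``an exact combinatorial identity for $\mathbb{E}[H_k(\cT) \mid W(\cT)]$,'' but $H_k$ is a deterministic functional of the path $W$, so this conditional expectation is just $H_k$ itself. The actual mechanism in Duquesne--Le~Gall is to use the time-reversed walk and renewal/fluctuation theory (ladder epochs of the reversed walk) to compute the law, under the unconditioned GW measure, of the pair $(W_k, H_k)$; the second-moment estimates giving tightness of the height process come from moment computations in that renewal structure, not from any conditioning on $W$. Moreover, Theorem~\ref{thm:estimate} of the present paper concerns asymptotics of the generating function $F_\mu$ in the complex disk near $1$; it is the ingredient used here for the biased local limit theorem (Theorem~\ref{thm:biasedllt}), and is not the tool one uses to control the discrete-to-continuous passage for the height process. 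Invoking it here is a category error. So while the outline is the right one, the middle step as written would not yield a proof, and the scaling error in step one would need to be fixed; both are substantive rather than cosmetic.
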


\subsection{Convergence of the discrete cut processes in the case of contour functions}

We now translate the convergence obtained in Theorem \ref{thm:duquesne} into the convergence of the associated lamination-valued processes. In this subsection, to avoid heavy notations, $\bL_{n, +\infty}$ stands for $\bL(\tilde{C}(\cT_n))$ and $\bL_{n,c}$ for $\bL_c(\tilde{C}(\cT_n))$. Our goal is to prove the following convergence:

\begin{thm}
\label{thm:discreteconvergencestable}
Jointly with the convergence of Theorem~\ref{thm:duquesne}, the following convergence holds in distribution:
\begin{align*}
\left( \bL_{n,c} \right)_{c \in [0,+\infty]} \underset{n \rightarrow \infty}{\overset{(d)}{\rightarrow}} \left( \bL^{(\alpha)}_c \right)_{c \in [0,+\infty]}
\end{align*}
\end{thm}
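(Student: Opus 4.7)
The plan is to combine Theorem~\ref{thm:duquesne} and Theorem~\ref{thm:discreteconvergence} to obtain convergence on compact time intervals, and then to upgrade this to convergence on the compactification $[0, +\infty]$. By Skorokhod's representation theorem, I would first assume that the convergence $\tilde{C}(\cT_n) \to H^{(\alpha)}$ of Theorem~\ref{thm:duquesne} holds almost surely in $\D([0,1], \R)$; since $H^{(\alpha)}$ is almost surely continuous, this strengthens to almost sure uniform convergence. Applying Theorem~\ref{thm:discreteconvergence} then yields, for every fixed $c > 0$, the joint convergence $(\bL_{n,s})_{s \in [0,c]} \to (\bL_s^{(\alpha)})_{s \in [0,c]}$ in distribution in $\D([0,c], \bL(\oD))$.

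The non-trivial additional step is to pass from compact intervals to the Alexandrov compactification $[0, +\infty]$. Since both processes are nondecreasing (Proposition~\ref{prop:properties} (i)) and valued in a compact space, this reduces to a uniform tightness statement at $+\infty$: for every $\epsilon, \eta > 0$, there exists $c_0 > 0$ such that
\begin{equation*}
\limsup_{n \to \infty} \P\!\left( d_H\!\left( \bL_{n, c_0}, \bL_{n, +\infty} \right) > \epsilon \right) \; < \; \eta.
\end{equation*}
The corresponding statement for $(\bL_c^{(\alpha)})_c$ is already ensured by Proposition~\ref{prop:properties} (ii) applied to $f = H^{(\alpha)}$; combined with the convergence on $[0, c_0]$, these two controls yield the desired convergence in $\D([0, +\infty], \bL(\oD))$ by a standard argument for nondecreasing càdlàg processes.

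To secure this uniform tightness, I would adapt the proof of Proposition~\ref{prop:properties} (ii). Lemma~\ref{lem:sublamination} provides an $\epsilon$-sublamination $L'$ of $\bL_\infty^{(\alpha)}$ with at most $K_\epsilon$ chords. To each chord $\gamma$ of $L'$ one attaches, via the coding map, a region in $\cEG(H^{(\alpha)})$ parameterizing the chords of $\bL_\infty^{(\alpha)}$ within Hausdorff distance of order $\epsilon$ from $\gamma$; the PPP intensity mass $\int \frac{2\,du}{d(H^{(\alpha)},u)-g(H^{(\alpha)},u)}$ of this region is almost surely bounded below by a positive (random) constant $m_\epsilon$ depending only on $\epsilon$ and $H^{(\alpha)}$. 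Using the almost sure uniform convergence $\tilde{C}(\cT_n) \to H^{(\alpha)}$, I would argue that for $n$ large, analogous regions exist in $\cEG(\tilde{C}(\cT_n))$ with PPP intensity mass at least $m_\epsilon/2$, so that each one is hit by $\cN_{c_0}(\tilde{C}(\cT_n))$ with probability at least $1 - e^{-c_0 m_\epsilon/2}$. A union bound over the $K_\epsilon$ regions, with $c_0$ chosen large enough, delivers the tightness estimate.

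The main obstacle is securing this uniform-in-$n$ lower bound on the PPP intensity masses. The delicate point is that a single edge of the rescaled tree $\cT_n$ has vanishing vertical extent $B_n/n$ in $\cEG(\tilde{C}(\cT_n))$, so the PPP mass attached to any single edge also vanishes; the argument works because the region of chords $\epsilon$-close to a given target chord spans many consecutive edges along an entire ancestral line of $\cT_n$, converging to an ancestral segment of positive length in the limit tree, and the PPP intensity mass of such a region equals twice its total vertical extent, which therefore stays macroscopic thanks to uniform convergence of contour functions. This is precisely the place where the specific structure of contour functions of size-conditioned Galton--Watson trees is used, as hinted in the remark following Theorem~\ref{thm:discreteconvergence}.
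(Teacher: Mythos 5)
Your reduction of the problem to a uniform-in-$n$ tightness statement at $+\infty$ is a valid reformulation, and your handling of the compact-interval convergence via Skorokhod representation and Theorem~\ref{thm:discreteconvergence} matches the paper. However, your proposed proof of the tightness estimate proves the wrong inclusion and leaves the genuinely hard step unaddressed.

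Your argument selects $L'$ an $\epsilon$-sublamination of the \emph{limit} lamination $\bL_\infty^{(\alpha)}$, associates to each of its $K_\epsilon$ chords a region of positive PPP mass in $\cEG(\tilde{C}(\cT_n))$, and concludes that with high probability $\bL_{n,c_0}$ contains a chord near each chord of $L'$. This yields $\bL_\infty^{(\alpha)} \subset (\bL_{n,c_0})^{2\epsilon}$. But the tightness estimate you announced requires $d_H(\bL_{n,c_0}, \bL_{n,+\infty}) < \epsilon$, and since $\bL_{n,c_0} \subset \bL_{n,+\infty}$, this amounts to the \emph{opposite} inclusion: every chord of $\bL_{n,+\infty}$ must be $\epsilon$-close to $\bL_{n,c_0}$. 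Nothing in your argument rules out the possibility that $\bL_{n,+\infty}$ contains a large chord far from $\bL_\infty^{(\alpha)}$ (hence also far from $\bL_{n,c_0}$). Uniform convergence of contour functions does \emph{not} rule out such spurious chords: near a branching point of the limit tree, the discrete tree $\cT_n$ could a priori have several $\epsilon n$-nodes at mutual distance $o(n/B_n)$, whose contour heights would be nearly indistinguishable, and the corresponding chords of $\bL_{n,+\infty}$ would land inside a face of $\bL_\infty^{(\alpha)}$. (If instead you tried to take $L'$ a sublamination of $\bL_{n,+\infty}$ itself, the same clustering of $\epsilon n$-nodes would make the PPP mass around a spurious chord shrink to zero, so the hitting-probability bound would fail.)

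This is precisely the issue the paper isolates and resolves with Lemma~\ref{lem:treestructure}: with high probability, no two distinct $\epsilon n$-nodes of $\cT_n$ are at distance $\leq f(n)$ for any $f(n) = o(n/B_n)$. Its proof does not rely on uniform convergence of contour functions, but on the independence of disjoint subtrees of a size-conditioned Galton--Watson tree together with the scaling of Theorem~\ref{thm:duquesne}. The remark in the paper that Theorem~\ref{thm:discreteconvergence} does \emph{not} in general extend to $[0,+\infty]$ — only when the $f_n$ are contour functions of these trees — is exactly pointing at this missing combinatorial input. You correctly sensed that tree structure must enter, but placed it at the lower bound on PPP masses rather than at the control of spurious chords, which is where it is actually needed.
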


Note that Theorem \ref{thm:discreteconvergence} already provides a proof of the convergence of these discrete lamination-valued processes, stopped at a finite time $c < \infty$. Hence, we have here to study what happens at $+\infty$. To this end, we rely on the following lemma, which investigates the local structure of $\cT_n$. In what follows, we say that $x \in \cT_n$ is an $a$-node for $a\geq 0$ if the set of its children can be partitioned into two subsets $A_1(x), A_2(x)$ such that $\sum_{u \in A_1(x)} |\theta_u(\cT_n)| \geq a$, $\sum_{u \in A_2(x)} |\theta_u(\cT_n)| \geq a$, where $\theta_u(T)$ denotes the subtree of a tree $T$ rooted in the vertex $u$.

\begin{lem}
\label{lem:treestructure}
Let $\alpha \in (1,2]$ and let $\cT_n$ be a $\mu$-GW tree conditioned to have $n$ vertices, where $\mu$ is in the domain of attraction of an $\alpha$-stable law. Let $f(n) = o(n/B_n)$, where $B_n$ verifies \eqref{eq:Bn}. Then, with high probability, no two different $\epsilon n$-nodes of $\cT_n$ are at distance $\leq f(n)$ from each other.
\end{lem}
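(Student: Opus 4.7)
The strategy is to leverage the scaling limit of Theorem~\ref{thm:duquesne} together with the fact that the $\alpha$-stable tree $\cT^{(\alpha)}$ admits, almost surely, only finitely many ``macroscopic'' branching points, which sit at strictly positive pairwise distance. Since the discrete unit of length in $\cT_n$ is $n/B_n$, two distinct $\epsilon n$-nodes of $\cT_n$ must therefore be at distance of order $n/B_n$, which dwarfs $f(n)=o(n/B_n)$.

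First I would translate the $\epsilon n$-node condition into a statement about the rescaled contour function. For a vertex $v \in \cT_n$, let $\tilde I_v = [a_v,b_v] \subset [0,1]$ be its rescaled visit interval in $\tilde C(\cT_n)$, at level $(B_n/n)h(v)$: the excursions of $\tilde C(\cT_n)$ strictly above this level within $\tilde I_v$ are precisely the visit intervals of the children $c_j$ of $v$, with rescaled lengths $\ell_j = |\theta_{c_j}(\cT_n)|/n$. A short combinatorial lemma gives that $v$ is an $\epsilon n$-node if and only if $\sum_j \ell_j \geq 2\epsilon$ and $\sum_j \ell_j - \max_j \ell_j \geq \epsilon$ (equivalently, the $(\ell_j)$ admit a bipartition into two groups each of total mass $\geq \epsilon$). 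I would then define an \emph{$\epsilon$-branching point} of $\cT^{(\alpha)}$ analogously, from the masses of its continuous subtrees, and invoke the following standard fact about stable trees (derived from the Poisson structure of the jumps of the underlying $\alpha$-stable Lévy process): the set $\mathcal{B}_\epsilon$ of $\epsilon$-branching points is almost surely finite, and the minimum pairwise distance $D_\epsilon := \min\{d(p,p'): p \neq p' \in \mathcal{B}_\epsilon\}$ (with $D_\epsilon := +\infty$ if $|\mathcal{B}_\epsilon|\leq 1$) is strictly positive a.s.

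By Skorokhod's representation one may assume $\tilde C(\cT_n) \to H^{(\alpha)}$ uniformly a.s.\ on a common probability space. Suppose $x_n \neq y_n$ are two $\epsilon n$-nodes of $\cT_n$ at distance $\leq \delta n/B_n$ for some fixed small $\delta>0$. Extracting a subsequence along which the rescaled heights and the endpoints of $\tilde I_{x_n}, \tilde I_{y_n}$ all converge, the uniform convergence combined with the reformulation above yields, in the limit, two $\epsilon$-branching points $p_x, p_y \in \mathcal{B}_\epsilon$ at distance $\leq \delta$ in $\cT^{(\alpha)}$. The main technical obstacle is to show $p_x \neq p_y$, i.e.\ that two distinct $\epsilon n$-nodes cannot ``collapse'' onto the same continuous branching point. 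When $y_n$ is a descendant of $x_n$, this uses the $\epsilon n$-node condition at $x_n$ to force total excursion mass at least $\epsilon$ \emph{outside} the child subtree containing $y_n$, together with the $\epsilon n$-node condition at $y_n$ which contributes an additional $\epsilon$-scale bipartition strictly deeper; both structures survive in the uniform limit and produce two distinct points of $\mathcal{B}_\epsilon$. Hence $\{D_\epsilon \leq \delta\}$ would occur with asymptotically positive probability, and since $\P(D_\epsilon \leq \delta) \to 0$ as $\delta \to 0$ and $f(n) \leq \delta n/B_n$ for all $n$ large, the lemma follows.
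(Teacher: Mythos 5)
The central step of your argument does not hold. You acknowledge the ``main technical obstacle'' of showing $p_x \neq p_y$, but the resolution you offer is not correct: if $d(x_n,y_n) \leq f(n) = o(n/B_n)$, then after rescaling by $B_n/n$ the two vertices necessarily converge to the \emph{same} point $p \in \cT^{(\alpha)}$; the claim that ``both structures survive in the uniform limit and produce two distinct points of $\mathcal{B}_\epsilon$'' is thus false --- you obtain a single point $p$ carrying both bipartition structures at once. One could hope to derive a contradiction from the structure of $p$ (namely, that $p$ would need three pairwise disjoint collections of children subtrees each of mass $\geq \epsilon$), and this does work for $\alpha = 2$ because the Brownian tree is a.s.\ binary; but for $\alpha \in (1,2)$ branching points of $\cT^{(\alpha)}$ have infinitely many children and such a configuration has positive probability, so no contradiction arises. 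More fundamentally, ``two $\epsilon n$-nodes at distance $\leq f(n)$'' is a vanishing-scale event ($f(n)/(n/B_n) \to 0$) and is simply not a continuous functional of the rescaled contour function: whether a continuum branching point with several heavy children is represented discretely by one high-degree vertex or by two nearby $\epsilon n$-nodes is a question about the local discrete structure of $\cT_n$, which the uniform convergence $\tilde C(\cT_n) \to H^{(\alpha)}$ does not control. The paper's proof resolves exactly this issue by \emph{conditioning} on the size $|\theta_U(\cT_n)| = A$ of the subtree at a child $U$ of a candidate $\epsilon n$-node, using that conditionally on $A$ this subtree is an independent $\mu$-GW tree $\cT_A$, and then applying the scaling limit to $\cT_A$ to control the distance from its \emph{root} to the nearest $\epsilon n$-node --- a quantity that does converge, because the root of $\cT^{(\alpha)}$ is a.s.\ a leaf sitting at positive distance from every macroscopic branching point. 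Separately, your combinatorial characterization is not right: for children with subtree masses $\{0.9\epsilon n, 0.9\epsilon n, 0.2\epsilon n\}$, both $\sum_j \ell_j \geq 2\epsilon n$ and $\sum_j \ell_j - \max_j \ell_j \geq \epsilon n$ hold, yet no bipartition into two groups each of total mass $\geq \epsilon n$ exists.
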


Let us immediately see how it implies Theorem \ref{thm:discreteconvergencestable}.

\begin{proof}[Proof of Theorem \ref{thm:discreteconvergencestable}]

We give the main ideas of the proof of this theorem. Assume by Skorokhod theorem that Theorem \ref{thm:duquesne} holds almost surely. By Theorem \ref{thm:discreteconvergence}, the only thing that we have to prove is that, almost surely,
\begin{equation}
\label{eq:equalitylamin}
\bL_\infty^{(\alpha)} = \underset{n \rightarrow \infty}{\lim} \bL_{n,+\infty}.
\end{equation}

First, it is clear that $\bL_\infty^{(\alpha)} \subset \underset{n \rightarrow \infty}{\lim} \bL_{n,+\infty}$. Indeed, by Theorem \ref{thm:discreteconvergence} and Proposition \ref{prop:properties} (ii) applied to $(\bL_c^{(\alpha)})_{c \in [0,\infty]}$, for any $\epsilon > 0$ there exists $c(\epsilon)$ such that, with high probability as $n \rightarrow \infty$, $d_H(\bL_{n,c(\epsilon)},\bL_\infty^{(\alpha)})\leq \epsilon$.

We now have to prove the reverse inclusion, that is, $\underset{n \rightarrow \infty}{\lim} \bL_{n,+\infty} \subset \bL_\infty^{(\alpha)}$. For this, take a chord of $(\underset{n \rightarrow \infty}{\lim} \bL_{n,+\infty}) \backslash \bL_\infty^{(\alpha)}$, of length larger than $\epsilon$. This chord has to be drawn inside a face of $\bL_\infty^{(\alpha)}$. In the discrete setting, this corresponds to the existence of $\epsilon>0$ such that, for $n$ large enough, there exists $x$ in $\cT_n$ which is an $\epsilon n$-node, and one of its ancestors $y$ which is an $\epsilon n$-node as well, such that $d(x,y) = o(n/B_n)$. By Lemma \ref{lem:treestructure}, with high probability this does not happen. The result follows.
\end{proof}

\begin{proof}[Proof of Lemma \ref{lem:treestructure}]
The main idea of the proof is to use  the independence between disjoint subtrees of the Galton-Watson tree $\cT_n$, conditionally to their sizes. Define $J_{\epsilon,n}$ the following event: 

$J_{\epsilon, n}$: "there exist $x,y \in \cT_n$ both $\epsilon n$-nodes, such that $x$ is an ancestor of $y$ and $d(x,y) \leq f(n)$". 
We will prove that, for any $\epsilon>0$, $\P(J_{\epsilon,n}) \rightarrow 0$ as $n \rightarrow 0$. Note that we impose here that $x$ is an ancestor of $y$. In order to get rid of it, remark that, if two different $\epsilon n$-nodes $x$, $y$ in $\cT_n$ are at distance less than $f(n)$, then their common ancestor is still an $\epsilon n$-node at distance less than $f(n)$ from any of them, and $J_{\epsilon, n}$ holds.

In what follows, $X$ and $U$ denote two i.i.d. uniform variables on the set of vertices of $\cT_n$, and $F_a(\cT_n)$ denotes the set of $a$-nodes in $\cT_n$. Finally, $K_x(\cT_n)$ denotes the set of children of $x$ in $\cT_n$. 

Remark that we have the inequality
\begin{align*}
\P\left(J_{\epsilon,n}\right) &\leq \E \Big[ \sum_{x \in \cT_n} \mathds{1}[x \in F_{\epsilon n}(\cT_n)] \quad  \sum_{u \in K_x(\cT_n)} \mathds{1}[|\theta_u(\cT_n)| \geq \epsilon n] \\
& \qquad \qquad \qquad \qquad \qquad \times \mathds{1}[\exists y \in \theta_u(\cT_n) \cap F_{\epsilon n}(\cT_n),d(x,y) \leq f(n)] \Big]\\
&= n^{2} \E \Big[\mathds{1}[X \in F_{\epsilon n}(\cT_n)] \quad \mathds{1}[U \in K_X(\cT_n),|\theta_U(\cT_n)| \geq \epsilon n] \\
& \qquad \qquad \qquad \qquad \qquad \times \mathds{1}[\exists y \in \theta_U(\cT_n) \cap F_{\epsilon n}(\cT_n),d(X,y) \leq f(n)] \Big]\\
&= n^{2} \P\left(X \in F_{\epsilon n}(\cT_n), U \in K_X(\cT_n), |\theta_U(\cT_n)|\geq \epsilon n \right) \times \\
& \qquad \P \left(\exists y \in \theta_U(\cT_n) \cap F_{\epsilon n}(\cT_n),d(X,y) \leq f(n) | X \in F_{\epsilon n}(\cT_n), U \in K_X(\cT_n), |\theta_U(\cT_n)|\geq \epsilon n \right)
\end{align*}
The first probability is bounded from above by $(\epsilon n)^{-2}$. Indeed, in a tree of size $n$, there are at most $1/\epsilon$ $\epsilon n$-nodes, and among the children of any vertex at most $1/\epsilon$ are the root of a subtree of size larger than $\epsilon n$ (note that these considerations are deterministic).
On the other hand, remark that the second probability is bounded from above by
\begin{align*}
\sup_{A \geq \epsilon n} \P\left(\exists y \in \theta_U(\cT_n) \cap F_{\epsilon n}(\cT_n),d(X,y) \leq f(n) \big| X \in F_{\epsilon n}(\cT_n), U \in K_X(\cT_n), |\theta_U(\cT_n)|=A \right)
\end{align*}
which, since we condition $U$ to be a child of $X$, is equal to
\begin{align*}
\sup_{A \geq \epsilon n} \P\left(\exists y \in \theta_U(\cT_n) \cap F_{\epsilon n}(\cT_n),d(U,y) \leq f(n)-1 \big| X \in F_{\epsilon n}(\cT_n), U \in K_X(\cT_n), |\theta_U(\cT_n)|=A \right).
\end{align*}
This way we get rid of one dependency in $X$. Then, by usual independence properties of Galton-Watson trees, we obtain
\begin{align*}
\P\left(J_{\epsilon,n}\right) &\leq \epsilon^{-2} \sup_{A \geq \epsilon n} \P \left( \exists y \in \theta_U(\cT_n) \cap F_{\epsilon n}(\cT_n),d(U,y) \leq f(n)-1 \big|  |\theta_U(\cT_n)|=A \right)\\
&= \epsilon^{-2} \sup_{A \geq \epsilon n} \P \left( \exists y \in F_{\epsilon n}(\cT_A),d(\emptyset,y) \leq f(n)-1 \right)
\end{align*}
where $\cT_A$ is a $\mu$-GW tree with $A$ vertices, and $\emptyset$ denotes its root. But, by Theorem \ref{thm:duquesne}, $\sup_{A \geq \epsilon n} \P ( \exists y \in F_{\epsilon n}(\cT_A),d(\emptyset,y) \leq f(n)-1) \rightarrow 0$ as $n \rightarrow \infty$, using the assumption that $f(n)=o(n/B_n)$. Finally, this leads to:
\begin{align*}
\P\left(J_{\epsilon,n}\right) \underset{n \rightarrow \infty}{\rightarrow} 0.
\end{align*}
The result follows.
\end{proof}

\section{Application to minimal factorizations of the cycle}
\label{sec:permutations}

In this section, we consider an application of Theorem~\ref{thm:discreteconvergencestable} to typical minimal factorizations of the $n$-cycle and prove Theorem~\ref{thm:dicretecv2}. We start by defining the so-called Goulden-Yong bijection, which maps minimal factorizations to trees. Then we conclude the proof of Theorem~\ref{thm:dicretecv2}, by studying new laminations obtained from discrete trees by only marking its vertices.

\subsection*{Notations of Section \ref{sec:permutations}}

\renewcommand{\arraystretch}{1.6}
\begin{center}
\begin{tabular}{|c|c|}
\hline
$F$ & minimal factorization of the cycle\\
\hline
$\cC(F)$ & chord configuration associated to $F$\\
\hline
$T(F)$ & dual tree of $\cC(F)$\\
\hline
$\tilde{T}$ & canonical embedding of a labelled non-plane tree $T$\\
\hline
$t^{(n)}$ & uniform minimal factorization of the $n$-cycle\\
\hline
$\cL_c^{(n)}$ & $\cC(t^{(n)})$ restricted to the first $\lfloor c \sqrt{n} \rfloor$ transpositions of $t^{(n)}$.\\
\hline
$\bL(T)$ & lamination obtained from a tree $T$ by drawing chords only at the level of vertices.\\
\hline
$\bL_u(T)$ & sublamination of $\bL(T)$ corresponding to the first $\lfloor u \rfloor$ vertices of a labelled tree $T$\\
\hline 
\end{tabular}
\end{center}

\subsection{Minimal factorizations}

We start by a study of the class of minimal factorizations: recall that $\mathfrak{M}_n$ is the set of minimal factorizations of the $n$-cycle, namely
\begin{align*}
\mathfrak{M}_n := \left\{ (t_1, \, ..., \, t_{n-1}) \in \mathfrak{T}_n^{n-1}, t_1...t_{n-1}=(1 \, ... \,n) \right\}.
\end{align*}
Recall that, by convention, we apply the transpositions from the left to the right, in the sense that the notation $t_1 t_2$ corresponds to $t_2 \circ t_1$.

Féray and Kortchemski are interested in the properties of a uniform element of $\mathfrak{M}_n$ (see \cite{FK17, FK18}), which we will denote by $t^{(n)} := (t_1^{(n)},...,t_{n-1}^{(n)})$. The starting point of \cite{FK17}, taken from \cite{GY02}, is a geometric coding of $t^{(n)}$ by a random lamination-valued process $(\cL_c^{(n)})_{c \in [0,+\infty]}$: to a transposition $(a b)$ with $a,b \in \llbracket 1,n \rrbracket$, they associate the chord $[ e^{-2i\pi a/n}, e^{-2i \pi b/n} ]$ and, for $c \geq 0$ fixed, they define the random lamination $\cL_c^{(n)}$ as the union of the unit circle and the chords associated to the first $\lfloor c \sqrt{n} \rfloor$ transpositions of $t^{(n)}$: $t_1^{(n)}, \,..., \, t_{\lfloor c \sqrt{n} \rfloor}^{(n)}$ (taking all chords if $c \sqrt{n} \geq n-1$). We furthermore denote by $\cL_\infty^{(n)}$ the union of the unit disk and all the $(n-1)$ chords associated to the factors of $t^{(n)}$. It turns out that these laminations are closely related to the laminations $(\mathbb{L}^{(2)}_c)_{c \in [0,+\infty]}$. 

Féray and Kortchemski prove the following $1$-dimensional convergence, at $c$ fixed:

\begin{thm}[Féray \& Kortchemski \cite{FK17}]
\label{thm:permutationlamination}

Fix $c \in \R_+ \cup \{ +\infty \}$. There exists a lamination $\cL_c$ such that in distribution, for the Hausdorff distance,
\begin{align*}
\cL_c^{(n)} \overset{(d)}{\underset{n \rightarrow \infty}{\rightarrow}} \cL_c.
\end{align*}
\end{thm}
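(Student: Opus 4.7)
The plan is to reduce Theorem~\ref{thm:permutationlamination} to a special case of Theorem~\ref{thm:discreteconvergencestable} (with $\alpha=2$), via the Goulden--Yong bijection which turns a minimal factorization into a tree with labelled edges. My strategy is to prove the stronger functional convergence $(\cL^{(n)}_c)_{c \in [0,+\infty]} \to (\bL_c^{(2)})_{c \in [0,+\infty]}$, which at any fixed $c$ gives the stated one-dimensional convergence and therefore identifies the limit $\cL_c$ appearing in the statement.

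First, I would transport the problem to the world of random trees. The Goulden--Yong correspondence associates to a minimal factorization $F=(t_1,\ldots,t_{n-1}) \in \kM_n$ a non-crossing tree $T(F)$ on the vertices $\{1,\ldots,n\}$ placed on the unit circle at positions $e^{-2i\pi k/n}$, whose edges are labelled by $\{1,\ldots,n-1\}$ according to the position of the corresponding transposition in $F$. Through this bijection, $\cL^{(n)}_c$ is simply the union of $\mathbb{S}^1$ with the edges of $T(t^{(n)})$ whose label is at most $\lfloor c\sqrt{n}\rfloor$. Hence the lamination-valued process $(\cL^{(n)}_c)_c$ is exactly the process obtained by revealing the edges of $T(t^{(n)})$ in the order of their labels, rescaled in time by $\sqrt{n}$.

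Second, I would identify the law of $T(t^{(n)})$ (forgetting the labelling) and show that it is equivalent, in the scaling limit, to a size-conditioned Galton--Watson tree with a critical offspring distribution having finite variance, so Theorem~\ref{thm:duquesne} applies with $\alpha=2$ and $B_n \asymp \sqrt{n}$. The delicate point is that the edge labels from the Goulden--Yong bijection are not uniform: around each vertex $v$ the labels of incident edges must satisfy a clockwise-increasing local rule. To bypass this, I would introduce a label-shuffling operation (as announced in the paper's outline of Section~\ref{sec:permutations}) that maps the constrained labelling to a uniform labelling on the $n-1$ edges, and show that this shuffling has a negligible effect on the lamination-valued process at the scale $\sqrt{n}$; intuitively, a uniform labelling on the edges seen through time $\lfloor c\sqrt{n}\rfloor$ defines a uniform $\lfloor c\sqrt{n}\rfloor$-subset of edges, which after rescaling converges to a Poisson point process of intensity $c\,d\ell$ on the limiting tree.

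Once the problem is translated to marking a random plane tree with approximately a Poisson $(c\,d\ell)$ set of points, I would invoke Theorem~\ref{thm:discreteconvergencestable} (with $\alpha=2$) to obtain convergence in $\D([0,+\infty],\bL(\oD))$ of the rescaled discrete lamination-valued process to $(\bL_c^{(2)})_{c \in [0,+\infty]}$. Extracting the $c$-marginal gives Theorem~\ref{thm:permutationlamination} with the explicit identification $\cL_c = \bL_c^{(2)}$.

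The main obstacle will be controlling the constrained labelling issue: one has to prove that the Goulden--Yong labelling on $T(t^{(n)})$ is close enough, in the sense of induced lamination processes, to a uniform labelling. I expect this to require a combinatorial argument via the shuffling operation combined with an estimate ensuring that only $O(\sqrt{n})$ edges matter on macroscopic scales, so that the local reshuffling of labels at most $O(1)$ apart does not affect the empirical point process of revealed chords in the limit. Everything else is then a direct application of the machinery built in Sections~\ref{sec:construction} and \ref{sec:discrete}.
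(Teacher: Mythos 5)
Your proposal takes essentially the same route as the paper: deduce the one-dimensional convergence from the stronger functional statement (Theorem~\ref{thm:dicretecv2}), whose proof relies on the Goulden--Yong bijection, the shuffling operation $\tilde{U}_n \mapsto \tilde{U}_n^{(K_n)}$ to lift the labelling constraint $(C_\Delta)$, the comparison between vertex-marking and Poissonian marking (Proposition~\ref{prop:discrcontin}), and the convergence machinery of Theorem~\ref{thm:discreteconvergencestable}. The only slip is that you describe the Goulden--Yong image as a tree on the circle with \emph{labelled edges} (that is, the primal non-crossing tree $\cC(F)$) rather than the \emph{dual} tree $\tilde{T}(F)$ with labelled vertices, which is the object the paper works with; passing from one to the other is harmless but requires the estimate of Lemma~\ref{lem:bothlaminations}, so it should be made explicit.
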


We extend this result and get the functional convergence of the lamination-valued process, which was left open in \cite{FK17}, proving in addition that $(\cL_c)_{0 \leq c \leq \infty} = (\bL_c^{(2)})_{0 \leq c \leq \infty}$ in distribution (Theorem~\ref{thm:dicretecv2}). Let us briefly explain the structure of the proof of Theorem~\ref{thm:dicretecv2}. It is based on two ingredients. The first one is the so-called Goulden-Yong bijection (presented in Section \ref{ssec:GY}), which yields an explicit bijection between $\mathfrak{M}_n$ and a  subset of plane trees with $n$ labelled vertices. The labellings have constraints, namely, the root is the vertex with label $1$ and the labels of a vertex and of its children are sorted in clockwise decreasing order (we call this condition $(C_\Delta)$, see Fig.~\ref{fig:gy}, middle-right for an example). The second one is the introduction of a discrete analogue of the construction given in Section \ref{sec:construction}, where one only marks vertices of the tree instead of all its points. This allows us to obtain in Section \ref{ssec:discrete} an analogue of Theorem~\ref{thm:discreteconvergencestable}, where the lamination-valued processes are obtained from plane trees with a uniform labelling. In order to combine these two ingredients, we lift the constraints on the labellings which appear in the Goulden-Yong bijection by using a shuffling argument based on two operations, presented in Section \ref{ssec:shuffling}.

\subsection{The Goulden-Yong bijection}
\label{ssec:GY}

The Goulden-Yong bijection (see \cite{GY02}) allows us to translate results on random trees into results on minimal factorizations. Let us first explain what this bijection consists in. See Fig.~\ref{fig:gy} for an example on an element of $\kM_9$.

\paragraph*{Step $1$}
Let $F := (t_1, ..., t_{n-1}) \in \mathfrak{M}_n$. For a factor $t_i \coloneqq (a_i,b_i)$, draw a chord between $e^{-2i\pi a_i/n}$ and $e^{-2i\pi b_i/n}$, and give the label $(i+1)$ to this chord. Doing this for the $(n-1)$ transpositions of $F$ gives a compact subset $\cC(F)$ of the disk, made of the unit circle and of $(n-1)$ chords labelled from $2$ to $n$. It appears (see \cite[Theorem $2.2$]{GY02} for further details) that these chords do not intersect - except possibly at their endpoints - and form a tree. Furthermore, the labels of the chords that share an endpoint are sorted in increasing clockwise order around this endpoint (we call this condition  $(C_\Delta)$ as well; see Fig.~\ref{fig:gy}, top-left for an example). Remark in particular that, forgetting about the labels, $\cC(t^{(n)}) = \cL^{(n)}_{\infty}$. 

\paragraph*{Step 2}
Now, draw the dual tree of $\cC(F)$ the following way: put a dual vertex inside each connected component of the complement of $\cC(F)$ in the unit disk, and put a dual edge between two dual vertices if the corresponding connected components share a primal chord as a border. Then, give the label $1$ to the dual vertex whose connected component contains the points $1$ and $e^{-2i\pi/n}$ (this dual vertex exists and is unique by \cite[Proposition $2.3$]{GY02}). The set of dual edges then forms a tree, where each dual edge is given the label of the primal chord that it crosses. Finally, for each dual edge, find the unique path in this dual tree from this edge to the dual vertex $1$ and "slide" the label of the edge to its endpoint further from $1$. This finally provides a plane labelled tree which we denote by $\tilde{T}(F)$. It notably verifies condition $(C_\Delta)$: its root is labelled $1$, and, for any vertex of $\tilde{T}(F)$, its label and the labels of its children are sorted in decreasing clockwise order (see Fig. \ref{fig:gy}, middle-right). Furthermore, forgetting about the planar structure of $\tilde{T}(F)$, we obtain a non plane tree with $n$ labelled vertices, which we denote by $T(F)$.

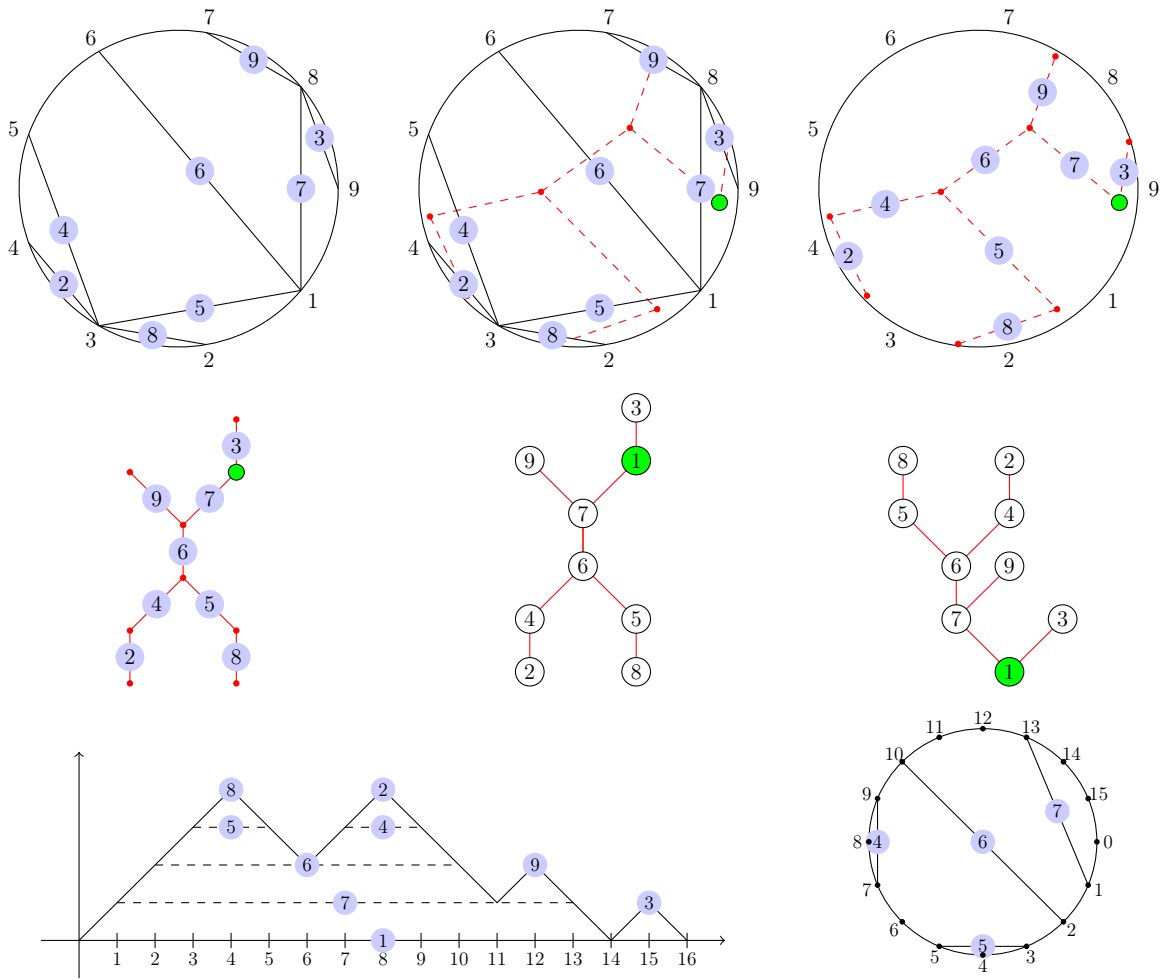
\begin{figure}
\center
\caption{The Goulden-Young mapping, applied to $F := (34)(89)(35)(13)(16)(18)(23)(78) \in \kM_9$. Condition $(C_\Delta)$ is verified for the lamination $\cC(F)$ (top-left) and the tree $\tilde{T}(F)$ (middle-right). At the bottom, the contour function of $\tilde{T}(F)$ and the lamination $\bL (\tilde{T}(F))$, where a labelled chord corresponds to the vertex with the same label. We do not represent the chords of length $0$ in order not to overload the picture.}
\label{fig:gy}
\begin{tabular}{c c c}
\begin{tikzpicture}[scale=0.7, every node/.style={scale=0.7}, rotate=-40]
\draw (0,0) circle (3);
\foreach \i in {1,...,9}
{
\draw[auto=right] ({3.3*cos(-(\i-1)*360/9)},{3.3*sin(-(\i-1)*360/9)}) node{\i};
}
\draw ({3*cos(-2*360/9)},{3*sin(-2*360/9)}) -- ({3*cos(-3*360/9)},{3*sin(-3*360/9)}) node[circle,midway,fill=blue!20, inner sep=2pt]{2};
\draw ({3*cos(2*360/9)},{3*sin(2*360/9)}) -- ({3*cos(1*360/9)},{3*sin(1*360/9)}) node[circle,midway,fill=blue!20, inner sep=2pt]{3};
\draw ({3*cos(-2*360/9)},{3*sin(-2*360/9)}) -- ({3*cos(-4*360/9)},{3*sin(-4*360/9)}) node[circle,midway,fill=blue!20, inner sep=2pt]{4};
\draw ({3*cos(-2*360/9)},{3*sin(-2*360/9)}) -- ({3*cos(0*360/9)},{3*sin(0*360/9)}) node[circle,midway,fill=blue!20, inner sep=2pt]{5};
\draw ({3*cos(-5*360/9)},{3*sin(-5*360/9)}) -- ({3*cos(0*360/9)},{3*sin(0*360/9)}) node[circle,midway,fill=blue!20, inner sep=2pt]{6};
\draw ({3*cos(-7*360/9)},{3*sin(-7*360/9)}) -- ({3*cos(0*360/9)},{3*sin(0*360/9)}) node[circle,midway,fill=blue!20, inner sep=2pt]{7};
\draw ({3*cos(-2*360/9)},{3*sin(-2*360/9)}) -- ({3*cos(-1*360/9)},{3*sin(-1*360/9)}) node[circle,midway,fill=blue!20, inner sep=2pt]{8};
\draw ({3*cos(-6*360/9)},{3*sin(-6*360/9)}) -- ({3*cos(-7*360/9)},{3*sin(-7*360/9)}) node[circle,midway,fill=blue!20, inner sep=2pt]{9};
\end{tikzpicture}
&
\begin{tikzpicture}[scale=0.7, every node/.style={scale=0.7}, rotate=-40]
\draw (0,0) circle (3);
\foreach \i in {1,...,9}
{
\draw[auto=right] ({3.3*cos(-(\i-1)*360/9)},{3.3*sin(-(\i-1)*360/9)}) node{\i};
}
\draw[red,fill=red] (-.5,-.5) circle (.05);
\draw[red,fill=red] (0,1.5) circle (.05);
\draw[red,fill=red] (-.5,2.85) circle (.05);
\draw[red,fill=red] (1.6,-2.5) circle (.05);
\draw[red,fill=red] (1.6,2.5) circle (.05);
\draw[red,fill=red] (2.6,-.8) circle (.05);
\draw[red,fill=red] (-1.8,-2.2) circle (.05);
\draw[red,fill=red] (-.3,-2.9) circle (.05);
\draw[dashed,red] (-.5,-.5) -- (0,1.5) -- (-.5,2.85);
\draw[dashed,red] (1.6,-2.5) -- (2.6,-.8) --(-.5,-.5) -- (-1.8,-2.2) -- (-.3,-2.9);
\draw[dashed,red] (0,1.5) -- (2.2,1.5) -- (1.6,2.5);

\draw[fill=green] (2.2,1.5) circle (.15);
\draw ({3*cos(-2*360/9)},{3*sin(-2*360/9)}) -- ({3*cos(-3*360/9)},{3*sin(-3*360/9)}) node[circle,midway,fill=blue!20, inner sep=2pt]{2};
\draw ({3*cos(2*360/9)},{3*sin(2*360/9)}) -- ({3*cos(1*360/9)},{3*sin(1*360/9)}) node[circle,midway,fill=blue!20, inner sep=2pt]{3};
\draw ({3*cos(-2*360/9)},{3*sin(-2*360/9)}) -- ({3*cos(-4*360/9)},{3*sin(-4*360/9)}) node[circle,midway,fill=blue!20, inner sep=2pt]{4};
\draw ({3*cos(-2*360/9)},{3*sin(-2*360/9)}) -- ({3*cos(0*360/9)},{3*sin(0*360/9)}) node[circle,midway,fill=blue!20, inner sep=2pt]{5};
\draw ({3*cos(-5*360/9)},{3*sin(-5*360/9)}) -- ({3*cos(0*360/9)},{3*sin(0*360/9)}) node[circle,midway,fill=blue!20, inner sep=2pt]{6};
\draw ({3*cos(-7*360/9)},{3*sin(-7*360/9)}) -- ({3*cos(0*360/9)},{3*sin(0*360/9)}) node[circle,midway,fill=blue!20, inner sep=2pt]{7};
\draw ({3*cos(-2*360/9)},{3*sin(-2*360/9)}) -- ({3*cos(-1*360/9)},{3*sin(-1*360/9)}) node[circle,midway,fill=blue!20, inner sep=2pt]{8};
\draw ({3*cos(-6*360/9)},{3*sin(-6*360/9)}) -- ({3*cos(-7*360/9)},{3*sin(-7*360/9)}) node[circle,midway,fill=blue!20, inner sep=2pt]{9};
\end{tikzpicture}
&
\begin{tikzpicture}[scale=0.7, every node/.style={scale=0.7}, rotate=-40]
\draw (0,0) circle (3);
\foreach \i in {1,...,9}
{
\draw[auto=right] ({3.3*cos(-(\i-1)*360/9)},{3.3*sin(-(\i-1)*360/9)}) node{\i};
}
\draw[red,fill=red] (-.5,-.5) circle (.05);
\draw[red,fill=red] (0,1.5) circle (.05);
\draw[red,fill=red] (-.5,2.85) circle (.05);
\draw[red,fill=red] (1.6,-2.5) circle (.05);
\draw[red,fill=red] (1.6,2.5) circle (.05);
\draw[red,fill=red] (2.6,-.8) circle (.05);
\draw[red,fill=red] (-1.8,-2.2) circle (.05);
\draw[red,fill=red] (-.3,-2.9) circle (.05);
\draw[dashed,red] (-.5,-.5) -- (0,1.5) node[circle, black,midway,fill=blue!20, inner sep=2pt]{6} -- (-.5,2.85) node[circle,black,midway,fill=blue!20, inner sep=2pt]{9};
\draw[dashed,red] (1.6,-2.5) -- (2.6,-.8) node[circle, black,midway,fill=blue!20, inner sep=2pt]{8} --(-.5,-.5) node[circle, black,midway,fill=blue!20, inner sep=2pt]{5} -- (-1.8,-2.2) node[circle, black,midway,fill=blue!20, inner sep=2pt]{4} -- (-.3,-2.9) node[circle, black,midway,fill=blue!20, inner sep=2pt]{2};
\draw[dashed,red] (0,1.5) -- (2.2,1.5) node[circle, black,midway,fill=blue!20, inner sep=2pt]{7} -- (1.6,2.5) node[circle, black,midway,fill=blue!20, inner sep=2pt]{3};
\draw[fill=green] (2.2,1.5) circle (.15);
\end{tikzpicture}
\\
\begin{tikzpicture}[scale=0.7, every node/.style={scale=0.7}]
\draw[red] (-1,-2) -- (-1,-1) node[black,circle,midway,fill=blue!20, inner sep=2pt]{2} -- (0,0) node[black,circle,midway,fill=blue!20, inner sep=2pt]{4} -- (1,-1) node[black,circle,midway,fill=blue!20, inner sep=2pt]{5} -- (1,-2) node[black,circle,midway,fill=blue!20, inner sep=2pt]{8};
\draw[red] (0,0) -- (0,1) node[black,circle,midway,fill=blue!20, inner sep=2pt]{6} -- (-1,2) node[black,circle,midway,fill=blue!20, inner sep=2pt]{9};
\draw[red] (0,1) -- (1,2) node[black,circle,midway,fill=blue!20, inner sep=2pt]{7} -- (1,3) node[black,circle,midway,fill=blue!20, inner sep=2pt]{3};
\draw[fill=green] (1,2) circle (.15);
\draw[red,fill=red] (-1,-2) circle (.05);
\draw[red,fill=red] (-1,-1) circle (.05);
\draw[red,fill=red] (0,0) circle (.05);
\draw[red,fill=red] (1,-1) circle (.05);
\draw[red,fill=red] (1,-2) circle (.05);
\draw[red,fill=red] (0,1) circle (.05);
\draw[red,fill=red] (-1,2) circle (.05);
\draw[red,fill=red] (1,3) circle (.05);
\end{tikzpicture}
&
\begin{tikzpicture}[scale=0.7, every node/.style={scale=0.7}]
\draw[red] (-1,-2) -- (-1,-1) -- (0,0) -- (0,1) -- (-1,2);
\draw[red] (1,-2) -- (1,-1) -- (0,0) -- (0,1) -- (1,2) -- (1,3);
\draw (0,0) node[circle,fill=white, draw=black, inner sep=2pt]{6};
\draw (0,1) node[circle,fill=white, draw=black, inner sep=2pt]{7};
\draw (1,2) node[circle,fill=green, draw=black, inner sep=2pt]{1};
\draw (1,3) node[circle,fill=white, draw=black, inner sep=2pt]{3};
\draw (-1,2) node[circle,fill=white, draw=black, inner sep=2pt]{9};
\draw (-1,-1) node[circle,fill=white, draw=black, inner sep=2pt]{4};
\draw (-1,-2) node[circle,fill=white, draw=black, inner sep=2pt]{2};
\draw (1,-1) node[circle,fill=white, draw=black, inner sep=2pt]{5};
\draw (1,-2) node[circle,fill=white, draw=black, inner sep=2pt]{8};
\end{tikzpicture}
&
\begin{tikzpicture}[scale=0.7, every node/.style={scale=0.7}]
\draw[red] (1,1) -- (0,0) -- (-1,1) -- (0,2);
\draw[red] (-1,1) -- (-1,2) -- (0,3) -- (0,4);
\draw[red] (-1,2) -- (-2,3) -- (-2,4);
\draw (-1,2) node[circle,fill=white, draw=black, inner sep=2pt]{6};
\draw (-1,1) node[circle,fill=white, draw=black, inner sep=2pt]{7};
\draw (0,0) node[circle,fill=green, draw=black, inner sep=2pt]{1};
\draw (1,1) node[circle,fill=white, draw=black, inner sep=2pt]{3};
\draw (0,2) node[circle,fill=white, draw=black, inner sep=2pt]{9};
\draw (0,3) node[circle,fill=white, draw=black, inner sep=2pt]{4};
\draw (0,4) node[circle,fill=white, draw=black, inner sep=2pt]{2};
\draw (-2,3) node[circle,fill=white, draw=black, inner sep=2pt]{5};
\draw (-2,4) node[circle,fill=white, draw=black, inner sep=2pt]{8};
\end{tikzpicture}
\\
\multicolumn{2}{c}{
\begin{tikzpicture}[scale=.5,every node/.style={scale=0.6}]
\draw[->] (0,-1) -- (0,5);
\draw[->] (-1,0) -- (17,0);
\draw (0,0) -- (4,4) -- (6,2) -- (8,4) -- (11,1) -- (12,2) -- (14,0) -- (15,1) -- (16,0);
\draw[dashed] (1,1) -- (13,1) node[black,circle,midway,fill=blue!20, inner sep=2pt]{7} (2,2) -- (10,2) node[black,circle,midway,fill=blue!20, inner sep=2pt]{6} (3,3) -- (5,3) node[black,circle,midway,fill=blue!20, inner sep=2pt]{5} (7,3) -- (9,3) node[black,circle,midway,fill=blue!20, inner sep=2pt]{4};
\draw (1,-.2) -- (1,.2);
\draw (2,-.2) -- (2,.2);
\draw (3,-.2) -- (3,.2);
\draw (4,-.2) -- (4,.2);
\draw (5,-.2) -- (5,.2);
\draw (6,-.2) -- (6,.2);
\draw (7,-.2) -- (7,.2);
\draw (8,-.2) -- (8,.2);
\draw (9,-.2) -- (9,.2);
\draw (10,-.2) -- (10,.2);
\draw (11,-.2) -- (11,.2);
\draw (12,-.2) -- (12,.2);
\draw (13,-.2) -- (13,.2);
\draw (14,-.2) -- (14,.2);
\draw (15,-.2) -- (15,.2);
\draw (16,-.2) -- (16,.2);
\draw (1,-.5) node{1};
\draw (2,-.5) node{2};
\draw (3,-.5) node{3};
\draw (4,-.5) node{4};
\draw (5,-.5) node{5};
\draw (6,-.5) node{6};
\draw (7,-.5) node{7};
\draw (8,-.5) node{8};
\draw (9,-.5) node{9};
\draw (10,-.5) node{10};
\draw (11,-.5) node{11};
\draw (12,-.5) node{12};
\draw (13,-.5) node{13};
\draw (14,-.5) node{14};
\draw (15,-.5) node{15};
\draw (16,-.5) node{16};
\draw (4,4) node[black,circle,fill=blue!20, inner sep=2pt]{8};
\draw (8,4) node[black,circle,fill=blue!20, inner sep=2pt]{2};
\draw (12,2) node[black,circle,fill=blue!20, inner sep=2pt]{9};
\draw (15,1) node[black,circle,fill=blue!20, inner sep=2pt]{3};
\draw (8,0) node[black,circle,fill=blue!20, inner sep=2pt]{1};
\end{tikzpicture}
}
&
\begin{tikzpicture}[scale=1.5,every node/.style={scale=0.6}]
\draw (0,0) circle (1);
\draw ({cos(-360/16)},{sin(-360/16)}) -- ({cos(-13*360/16)},{sin(-13*360/16)}) node[black,circle,midway,fill=blue!20, inner sep=2pt]{7} ;
\draw ({cos(-2*360/16)},{sin(-2*360/16)}) -- ({cos(-10*360/16)},{sin(-10*360/16)}) node[black,circle,midway,fill=blue!20, inner sep=2pt]{6};
\draw ({cos(-3*360/16)},{sin(-3*360/16)}) -- ({cos(-5*360/16)},{sin(-5*360/16)}) node[black,circle,midway,fill=blue!20, inner sep=2pt]{5};
\draw ({cos(-7*360/16)},{sin(-7*360/16)}) -- ({cos(-9*360/16)},{sin(-9*360/16)}) node[black,circle,midway,fill=blue!20, inner sep=2pt]{4};
\draw[fill] ({cos(-4*360/16)},{sin(-4*360/16)}) circle (.02);
\draw[fill] ({cos(-8*360/16)},{sin(-8*360/16)}) circle (.02);
\draw[fill] ({cos(-12*360/16)},{sin(-12*360/16)}) circle (.02);
\draw[fill] ({cos(-15*360/16)},{sin(-15*360/16)}) circle (.02);
\draw[fill] ({cos(-16*360/16)},{sin(-16*360/16)}) circle (.02);
\draw[fill] ({cos(-1*360/16)},{sin(-1*360/16)}) circle (.02);
\draw[fill] ({cos(-2*360/16)},{sin(-2*360/16)}) circle (.02);\draw[fill] ({cos(-3*360/16)},{sin(-3*360/16)}) circle (.02);\draw[fill] ({cos(-5*360/16)},{sin(-5*360/16)}) circle (.02);
\draw[fill] ({cos(-6*360/16)},{sin(-6*360/16)}) circle (.02);\draw[fill] ({cos(-7*360/16)},{sin(-7*360/16)}) circle (.02);
\draw[fill] ({cos(-9*360/16)},{sin(-9*360/16)}) circle (.02);
\draw[fill] ({cos(-10*360/16)},{sin(-10*360/16)}) circle (.02);
\draw[fill] ({cos(-11*360/16)},{sin(-11*360/16)}) circle (.02);
\draw[fill] ({cos(-13*360/16)},{sin(-13*360/16)}) circle (.02);
\draw[fill] ({cos(-14*360/16)},{sin(-14*360/16)}) circle (.02);
\foreach \i in {0,...,15}
{
\draw[auto=right] ({1.1*cos(-(\i)*360/16)},{1.1*sin(-(\i)*360/16)}) node{\i};
}
\end{tikzpicture}
\end{tabular}
\end{figure}

Denote by $\kU_n$ the set of non plane trees with $n$ vertices labelled from $1$ to $n$. A complete proof of the following proposition can be found in \cite{GY02}:

\begin{prop}
The Goulden-Yong map $F \rightarrow T(F)$ is a bijection between $\mathfrak{M}_n$ and $\mathfrak{U}_n$.
\end{prop}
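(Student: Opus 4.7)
The plan is to combine well-definedness, an injectivity argument via a canonical planar embedding, and a counting argument based on Cayley's formula and Dénès' theorem.

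First, I would verify that $F \mapsto T(F)$ is well-defined. The crucial step is to show that for any $F \in \kM_n$, the chord configuration $\cC(F)$ consists of $n-1$ pairwise non-crossing (in the interior) chords that together with $\mathbb{S}^1$ form a tree structure in $\oD$. This is a combinatorial consequence of minimality --- the fact that $F$ uses exactly $n-1$ transpositions and its product is the long cycle --- and is proved in \cite[Theorem 2.2]{GY02}. Once the non-crossing property is established, the dual tree construction of Step 2 of the Goulden-Yong map is unambiguous: the $n$ faces of $\cC(F)$ are in bijection with the vertices of a tree on $n$ vertices, the root (face adjacent to the arc from $1$ to $e^{-2i\pi/n}$) is labelled $1$, and the label-sliding step transfers the edge labels $\{2,\ldots,n\}$ onto the $n-1$ non-root vertices. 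Forgetting the planar structure yields $T(F) \in \kU_n$, showing the map is well-defined.

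Second, I would prove injectivity by exhibiting an explicit partial inverse. The key point is that the condition $(C_\Delta)$ uniquely determines the planar embedding of a labelled non-plane tree rooted at the vertex labelled $1$: around each vertex $v$, the cyclic order of the edges to its children is completely determined by their labels, since these must appear in strictly decreasing order clockwise starting just after the edge to the parent (with a canonical convention at the root coming from the distinguished arc $(1, e^{-2i\pi/n})$). Hence $T(F)$ determines $\tilde{T}(F)$. From $\tilde{T}(F)$, one recovers $\cC(F)$ by reversing the dual construction: slide each non-root vertex's label back onto the edge to its parent, place the marks $e^{-2i\pi k/n}$ in cyclic order on $\mathbb{S}^1$ starting with the face adjacent to the root, and reconstruct each chord from the pair of arcs bounding its two adjacent faces. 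Finally, the chord labelled $i+1$ in $\cC(F)$ reads off the $i$-th transposition $t_i$, so $F$ is fully recovered and $F \mapsto T(F)$ is injective.

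Third, I would conclude by counting. Cayley's formula gives $|\kU_n| = n^{n-2}$, and the classical theorem of Dénès \cite{Den59} states $|\kM_n| = n^{n-2}$. An injective map between finite sets of equal cardinality is automatically a bijection, which concludes the proof. The main obstacle in this plan is the non-crossing property of $\cC(F)$ --- this is the essential combinatorial content of the Goulden-Yong bijection and is where minimality enters in a critical way; the rest of the argument is essentially bookkeeping. An alternative to the counting step would be to verify directly that the candidate inverse map always produces an element of $\kM_n$ (i.e., that the transpositions read off in order multiply to the $n$-cycle), but this requires an induction on $n$ that is arguably more delicate than invoking Dénès' cardinality formula.
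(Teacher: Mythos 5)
The paper does not in fact prove this proposition; it simply delegates to \cite{GY02}, which constructs an explicit two-sided inverse and verifies directly that the candidate inverse map always lands in $\kM_n$. Your argument takes a genuinely different route: you prove well-definedness and injectivity (via the canonical embedding forced by condition $(C_\Delta)$), and then finish by counting, invoking Cayley's formula $|\kU_n| = n^{n-2}$ together with D\'en\`es' theorem $|\kM_n| = n^{n-2}$. This is sound, and the cardinality shortcut does spare you the most delicate part of the Goulden--Yong argument, namely the verification that the reconstructed sequence of transpositions multiplies to the long cycle. The injectivity chain $T(F) \to \tilde{T}(F) \to \cC(F) \to F$ is correct: the planar embedding is forced by $(C_\Delta)$, the dual construction is reversible for a planar tree in a disk, and each chord's label recovers the corresponding transposition and its position in the word. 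Two small caveats are worth flagging. First, the reconstruction of $\cC(F)$ from $\tilde{T}(F)$ requires a bit more care than you indicate (one must argue that the cyclic order of the $n$ boundary points and the assignment of the integer $k$ to each point is fully determined by the planar structure and labelling), though this is routine once the duality is spelled out. Second, D\'en\`es' theorem is itself classically proved via a (different) bijection between $\kM_n$ and labelled trees, so your approach does not avoid bijection-construction entirely --- it offloads it onto a prior result; it is not circular, but it is less self-contained than the direct proof in \cite{GY02}.
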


As a corollary, $F \rightarrow \tilde{T}(F)$ is a bijection between $\kM_n$ and the set of plane trees with $n$ labelled vertices verifying condition $(C_\Delta)$.

\subsection{A discrete lamination-valued process coded by a discrete tree}
\label{ssec:discrete}

The construction of the process $(\bL_c(f))_{c \in [0,\infty]}$ given in Section \ref{ssec:excursions} is notably valid when $f$ is the (renormalized) contour function of a discrete tree. It consists in throwing points on the skeleton of these trees and then associating a chord to each of these cutpoints. Here, the lamination $\cC(F)$ associated to a minimal factorization $F$ is of a different type, since each of its chords corresponds to a \textit{vertex} of the tree $\tilde{T}(F)$ (namely, the vertex which gets the label of the chord) and not a point thrown uniformly at random on its skeleton. Furthermore, these chords appear at integer times, and not at random times as in Section \ref{sec:construction}. Nevertheless, it happens that laminations of both types can be related to each other, as stated in Proposition \ref{prop:discrcontin} below: in view of future use, we explain how to associate a discrete lamination-valued process to a labelled plane tree, and show that, roughly speaking, this process is close to the one obtained from a Poisson point process under its contour function (in the sense of Section \ref{ssec:excursions}).

Fix a plane tree $T$ with $n$ vertices. For every vertex $u \in T$, denote by $g_{u}$ (resp. $d_{u}$) the first time (resp. the last time) that the contour function of $T$ visits $u$, and let $c_{u}(T)=[e^{-2i \pi g_u/2n},e^{-2i\pi d_u/2n}]$ be the associated chord in $\overline{\mathbb{D}}$. We then set

$$\bL(T)= \mathbb{S}^1 \cup \bigcup_{u \in T} c_{u}(T).$$
where the union is taken over the set of vertices of $T$. Notably, the set of chords of $\bL(T)$ (which may have length $0$) is in bijection with the set of vertices of $T$. Now, we construct a random discrete lamination-valued process $(\bL_s(T))_{s \in [0,\infty]}$ as follows. Let $U_{1}$ be the root of $T$, and let $U_{2}, \ldots,U_{n}$ be a random uniform permutation of the other vertices of $T$. Then, for $s \geq 0$, set

$$\bL_s(T)= \mathbb{S}^1 \cup \bigcup_{i=1}^{ \min(\lfloor s \rfloor,n)} c_{U_{i}}(T),$$
which is roughly speaking the sublamination of $\bL(T)$ obtained by drawing the chords associated to the ``first'' $ \lfloor s \rfloor$ vertices of $T$.

\bigskip

Recall from Section \ref{ssec:excursions}  the notation $(\bL_c(f))_{c \in [0,\infty]}$ for the lamination-valued process obtained from a Poisson point process in the epigraph of a continuous excursion-type function $f$.  We denote by $(\bL_c(C(T)))_{c \in [0,\infty]}$ the lamination-valued process obtained in this way by considering the time-scaled contour function of $T$ on $[0,1]$: $t \rightarrow C_{2|T|t}(T)$. Roughly speaking, $(\bL_s(T))_{s \in [0,\infty]}$ is a discrete version of  $(\bL_c(C(T)))_{c \in [0,\infty]}$, where one only considers cuts on vertices. The following result shows that these two lamination-valued processes  are close in a certain sense, after suitable time-changes, when applied to Galton-Watson trees.

\begin{prop}
\label{prop:discrcontin}

Let $\mu$ be a critical distribution in the domain of attraction of an $\alpha$-stable law, and $\cT_n$ a $\mu$-GW tree with $n$ vertices. Then there exists a coupling between $(\bL_c(\tilde{C}(\cT_n)))_{c \in [0,\infty]}$ and $(\bL_s(\cT_n))_{s \in [0,\infty]}$ such that, with high probability, as $n$ tends to $\infty$:

\begin{align*}
d_{Sk} \left( \left( \bL_c(\tilde{C}(\cT_n)) \right)_{c \in [0,\infty]}, \left( \bL_{cB_n}(\cT_n) \right)_{c \in [0,\infty]} \right) = o(1).
\end{align*}
where $d_{Sk}$ denotes the Skorokhod $J_{1}$ distance on $\D([0,\infty],\bL(\overline{\D}))$ and the $o(1)$ does only depend on $n$ and not on the (random) tree $\cT_n$.
\end{prop}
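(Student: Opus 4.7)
The plan is to realize both processes on the same probability space using a single Poisson point process on the epigraph of $\tilde{C}(\cT_n)$, and then reduce the Skorokhod distance to a control on exponential order statistics. First, I would sample $\cN(\tilde C(\cT_n))$; for each non-root vertex $u$ of $\cT_n$, let $T_u$ be the first time a Poisson point falls in the subset of $\cEG(\tilde C(\cT_n))$ whose projection to $T(\tilde C(\cT_n))$ lies on the edge from $u$ to its parent. From the description in Section~\ref{sec:construction}, the $T_u$ are i.i.d.\ exponentials with rate proportional to $B_n/n$ (matching the scaling $s = cB_n$), so their induced ranking is a uniform random permutation of the non-root vertices; I then use this very permutation to define $\bL_s(\cT_n)$, placing both processes on the same probability space.

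Next I would use two approximations. \textbf{(i)} A Poisson point at $(s,t)$ in $u$'s region produces a chord whose endpoints $g(\tilde C,(s,t))$, $d(\tilde C,(s,t))$ differ from $\tilde g_u,\tilde d_u$ by at most $1/(2n)$ in angular parameter, since $\tilde C$ is piecewise linear with slope $\pm 2B_n$ on the rescaled edge of height $B_n/n$; hence the drawn chord lies within Hausdorff distance $O(1/n)$ of $c_u(\cT_n)$. \textbf{(ii)} Define a continuous, strictly increasing bijection $\lambda:[0,\infty]\to[0,\infty]$ with $\lambda(k/B_n)=T_{U_k}$ for $k=0,\ldots,n-1$ (piecewise linear, suitably extended past $(n-1)/B_n$). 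By construction, $\bL_{\lambda(c)}(\tilde C(\cT_n))$ and $\bL_{cB_n}(\cT_n)$ contain the same revealed vertices for every $c$, so combining with (i), the uniform Hausdorff distance between $\bL_{\lambda(\cdot)}(\tilde C(\cT_n))$ and $\bL_{(\cdot) B_n}(\cT_n)$ is $O(1/n)$, and the Skorokhod distance is bounded by $O(1/n)$ plus $\|\lambda-\mathrm{id}\|$ measured in the metric of the compact space $[0,\infty]$.

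Finally, I would control $\|\lambda-\mathrm{id}\|$ using Rényi's representation $T_{U_k}\stackrel{d}{=}\tfrac{n}{B_n}\sum_{i=1}^{k} E_i/(n-i)$ with i.i.d.\ $E_i\sim\mathrm{Exp}(1)$. Pick a threshold $c^\star=c^\star(n)\to\infty$ slow enough that $(c^\star)^2 B_n/n\to 0$ and $c^\star/B_n\to 0$ (e.g.\ $c^\star=\log n$ works for every $\alpha\in(1,2]$). On $\{k\le c^\star B_n\}$, a direct moment computation gives bias $\E T_{U_k}-k/B_n=O(k^2/(nB_n))$ and variance $O(k/B_n^2)$, and Doob's maximal inequality applied to the martingale $T_{U_k}-\E T_{U_k}=\sum_{i\le k}(E_i-1)\cdot n/((n-i)B_n)$ yields $\sup_{k\le c^\star B_n}|T_{U_k}-k/B_n|=o(1)$ in probability. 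On $\{k>c^\star B_n\}$, both $\lambda(c)$ and $c$ exceed $c^\star$, so their distance in the compactification metric $c\mapsto c/(1+c)$ on $[0,\infty]$ is $O(1/c^\star)=o(1)$. \emph{The main obstacle} is precisely this tail: for $k$ close to $n-1$ the Euclidean error $|T_{U_k}-k/B_n|$ is \emph{not} $o(1)$ (its bias alone is of order $(n/B_n)\log n$, and the maximum spacing between consecutive $T_{U_k}$ blows up), so the analysis must split at $c^\star$ and exploit compactness of $[0,\infty]$, using the fact that both laminations converge to the common full lamination $\bL(\cT_n)$ as $c\to\infty$. All the estimates depend only on $n$ and $B_n$, never on the particular realization of $\cT_n$, delivering the uniform $o(1)$ bound required.
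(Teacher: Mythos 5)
Your proposal is correct and takes a genuinely different route from the paper for the two error estimates, while building the \emph{same} coupling. Both you and the paper realize the discrete process $(\bL_s(\cT_n))_s$ by letting the Poisson process $\cN(\tilde C(\cT_n))$ induce the uniform ordering of non-root vertices (via the first-hit times $T_u$ of each edge), and both use the elementary observation that a Poisson chord and the chord of its host vertex differ by $O(1/n)$ in Hausdorff distance. Where you differ is in how you quantify the time distortion. The paper introduces the number-of-distinct-vertices-revealed function $\tau(\cdot)$ and proves $|\tau(cB_n)-c|=o(1)$ on $[0,f(n)]$ with $f(n)=n^{1/2-1/2\alpha}$ by combining a coupon-collector concentration estimate with the local limit theorem for the cumulative Poisson count; you instead write $T_{U_k}$ via R\'enyi's representation $\frac{n}{B_n}\sum_{i\le k}E_i/(n-i)$ and control it directly by a bias computation plus Doob's $L^2$ maximal inequality on the martingale part, which is both shorter and gives an explicit rate $O(\sqrt{c^\star/B_n}+(c^\star)^2B_n/n)$. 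For $c$ beyond the threshold, the paper invokes Theorem~\ref{thm:discreteconvergencestable} and Proposition~\ref{prop:properties}(ii) to argue that both processes are already Hausdorff-close to the terminal lamination $\bL_\infty(\cT_n)$; you bypass any appeal to the scaling limit entirely by exploiting the compactification metric on $[0,\infty]$: once $c,\lambda(c)>c^\star$ the distance $d(\lambda(c),c)=O(1/c^\star)$, and since your time change identifies the two laminations up to $O(1/n)$ for \emph{all} $c$, you are done. This makes the estimate visibly independent of the realization of $\cT_n$ (only of the Poisson clock), which aligns better with the paper's stated claim that the $o(1)$ depends only on $n$, and removes a dependency on the scaling-limit theorem. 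The only details worth nailing down in a final write-up are the index offset coming from the root being always $U_1$ in the paper's definition of $\bL_s(T)$, and a sentence recalling which metric on $[0,\infty]$ is used to define the Skorokhod $J_1$ distance (so that the $O(1/c^\star)$ bound is unambiguous).
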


\begin{proof}
The idea of the proof is to use concentration inequalities to show that, under a suitable coupling, chords appear roughly at the same time and place in both processes. To this end, we study the underlying point processes on the tree $\cT_n$.
For convenience, we use the notation $\bL_{n,c}$ instead of $\bL_c(\tilde{C}(\cT_n))$. Let us first explain the proper coupling between these lamination-valued processes. To this end, define the process $(\tilde{\bL}_{n,c})_{c \in [0,\infty]}$ as follows: remark that, taking the notations of Section \ref{ssec:excursions}, $T(\tilde{C}(\cT_n)) = \cT_n/B_n$. Therefore, by Section \ref{ssec:excursions} again, $\bL_{n,c}$ is obtained from a Poisson point process $\cP_c(\cT_n)$ on $\cT_n$, of intensity $(cB_n/n) d\ell$. For any $c \geq 0$, to each point $u \in \cP_{c}(\cT_n)$, associate the vertex $p(u)$ of $\cT_n$ such that $u$ is in the edge between the vertex $p(u)$ and its parent (if $u$ is a vertex, say that $p(u)=u$). Denote by $\tilde{\cP}_{c}(\cT_n)$ the set of all vertices of $\cT_n$ that are of the form $p(u)$ for some $u \in \cP_{c}(\cT_n)$, and denote finally by $\tilde{\bL}_{n,c}$ the lamination obtained by drawing the chords corresponding to all points of $\tilde{\cP}_{c}(\cT_n)$.
It is clear that 
\begin{equation}
\label{eq:skorlam}
d_{Sk} \left((\tilde{\bL}_{n,c})_{0 \leq c \leq \infty},(\bL_{n,c})_{0 \leq c \leq \infty} \right) \leq \frac{2\pi}{n},
\end{equation}
which tends to $0$ as $n$ grows. Hence we only have to find a proper coupling between the processes $(\tilde{\bL}_{n,c})$ and $(\bL_{cB_n}(\cT_n))$.

To this end, let us precisely compare the times at which points appear in the processes $\cP(\cT_n)$ and $\tilde{\cP}(\cT_n)$.
Since $\cT_n$ has finite length measure $n-1$, almost surely no two points appear at the same time in the process $(\tilde{\cP}_{c}(\cT_n))_{c \geq 0}$. Therefore, this process induces an order on the set of non-root vertices of $\cT_n$, according to the first time that they appear in the process. For $x \geq 0$, denote by $\tau(x)$ the minimum $c \geq 0$ such that $|\tilde{\cP}_{c}(\cT_n)| \geq x$. The order of arrival of the vertices of $\cT_n$ in $(\tilde{\cP}_{c}(\cT_n))_{c \geq 0}$ is uniform among all possible permutations of the non-root vertices, which induces a coupling between $(\tilde{\bL}_{n,c})_{0 \leq c \leq \infty}$ and $(\bL_{cB_n}(\cT_n) )_{0 \leq c \leq \infty}$ such that, for all $c \geq 0$, 
\begin{align*}
\bL_{cB_n}(\cT_n) = \tilde{\bL}_{n, \tau(cB_n)}.
\end{align*} 
Specifically, a chord appears at time $k$ in $(\bL_{u}(\cT_n))_{u \geq 0}$ if it is the chord associated to the $k$-th vertex of $\cT_n$ to get a point of $\cP(\cT_n)$ on the edge between it and its parent.

Now we have to prove that these coupled processes $(\tilde{\bL}_{n,c})$ and $(\bL_{cB_n}(\cT_n))$ are close. We prove in a first time that they are close up to a time $c = f(n) \coloneqq n^{1/2-1/2\alpha}$, and then show that both processes do not change much after this time, as they are already close to their final value.

To prove that they are close up to time $f(n)$, by classical properties of the $J_1$ Skorokhod topology (see \cite[$VI$, Theorem $1.14$]{JS13}), the only thing that we need to show is that the points roughly appear at the same time in both processes. More precisely, uniformly for $c \leq f(n)$, 
\begin{equation}
\label{eq:tau}
|\tau(cB_n) - c| = o(1)
\end{equation} 
with high probability. We prove this result later in this paragraph.
In a second time, assuming that \eqref{eq:tau} holds, we claim that the processes stay close after time $f(n)$. The idea is to use the convergence of the dicrete lamination-valued process to $(\bL_c^{(\alpha)})_{c \in [0,\infty]}$. Assume by Skorokhod theorem that the convergence of Theorem \ref{thm:duquesne} holds almost surely. Then, for $k \geq 1$, let $c_k>0$ such that $d_H(\bL_{c_k}^{(\alpha)}, \bL_\infty^{(\alpha)}) < 1/k$ with probability greater than $1-2^{-k}$. Such a $c_k$ exists by Proposition \ref{prop:properties} (ii). Then, putting together Theorem \ref{thm:discreteconvergencestable}, \eqref{eq:skorlam} and \eqref{eq:tau}, there exists $M_k$ verifying $M_k^{1/2-1/2\alpha} \geq c_k$ such that, for any $n \geq M_k$, $d_H(\bL_{c_k B_n}(\cT_n),\bL_{c_k}^{(\alpha)}) < 1/k$ with probability greater than $1-2^{-k}$. Hence, for any subsequence $(n_k)_{k \geq 1}$ such that, for all $k$, $n_k \geq M_k$, the following holds in almost surely:
\begin{align*}
\bL_\infty^{(\alpha)} = \underset{k \rightarrow \infty}{\lim} \bL_{c_k B_{n_k}}\left(\cT_{n_k}\right) \subset \underset{k \rightarrow \infty}{\lim} \bL_{f(n_k) B_{n_k}}\left(\cT_{n_k}\right).
\end{align*}
The reverse inclusion is clear by Theorem \ref{thm:discreteconvergencestable}. This implies that $d_H(\bL_{f(n) B_n}(\cT_n), \bL_\infty^{(\alpha)})$ converges to $0$ almost surely. Therefore,
\begin{align*}
d_H\left(\bL_{f(n) B_n}(\cT_n), \bL_\infty(\cT_n)\right) \overset{\P}{\rightarrow} 0.
\end{align*}
Since, for any $c \geq 0$, $\bL_{cB_n}(\cT_n)$ and $\tilde{\bL}_{n,c}$ are included in $\bL_\infty(\cT_n)$, this implies Proposition \ref{prop:discrcontin}.

Now we prove \eqref{eq:tau}. First, note that the distribution of the sequence of variables $(\tau(u))_{u \geq 0}$ is independent of $\cT_n$, and only depends on $n$. Thus, the study of these variables boils down to a coupon collector problem, where coupons are vertices of the tree. Set $g(n) \coloneqq n^{1/8 + 5/8\alpha}$, so that $\sqrt{f(n) B_n} \ll g(n) \ll B_n$. In order to prove \eqref{eq:tau}, we show two things:

(i) uniformly in $u \leq f(n) B_n$, with high probability $|\cP_{\tau(u)}(\cT_n)| \leq |\tilde{\cP}_{\tau(u)}(\cT_n)|+g(n)$. In other terms, uniformly in $u \leq f(n) B_n$, we need to throw at most $u+g(n)$ points on the edges of $\cT_n$ before $u$ different vertices appear in the process $(\tilde{\cP}_{c}(\cT_n))_{c \geq 0}$.
 
(ii) uniformly in $k \in \llbracket 0, f(n) B_n + g(n) \rrbracket$, $|\cP_{k/B_n}(\cT_n)|=k+o(B_n)$.

Roughly speaking, if (i) holds, then, since $g(n)=o(B_n)$, the number of points that appear in $\cP(\cT_n)$ on an edge where there was already an other point is negligible compared to $B_n$. Hence, if (ii) also holds, the $\lfloor cB_n \rfloor$-th point appears at time $c+o(1)$, and \eqref{eq:tau} follows.

\emph{Proof of (i)} 
By analogy with the coupon collector problem, let $q_x$ be the number of points that we have to throw on the edges of $\cT_n$ so that $x$ vertices appear in $\tilde{\cP}(\cT_n)$ (this is the number of coupons that we have to buy in order to get $x$ different ones). Remark immediately that $q_x \geq x$ for all $x$. Then, a direct application of Bienaymé-Tchebytchev inequality tells us that $q_{f(n) B_n}$ verifies
\begin{align*}
\P \left( |q_{f(n) B_n} - f(n) B_n| \geq g(n) \right) \underset{n \rightarrow \infty}{\rightarrow} 0,
\end{align*}
using the fact that $g(n)^2 \ll f(n) B_n$. This means that, among the first $f(n) B_n + g(n)$ points that have appeared in $\cP(\cT_n)$, at most $g(n)$ have appeared on an edge where there was already a point. Therefore, at any time $u \leq \tau(f(n) B_n)$, there cannot be more that $g(n)$ such points, which implies (i).

\emph{Proof of (ii)}.
Remark that the variables $X_i \coloneqq |\cP_{((i+1)/B_n) n/(n-1)}(\cT_n)|-|\cP_{(i/B_n)n/(n-1)}(\cT_n)|$ for $i \in \Z_+$ are i.i.d. Poisson variables of parameter $1$. This factor $B_n n /  (n-1)$ comes from the fact that $\cP_c(\cT_n)$ is a Poisson point process of intensity $cB_n/n d\ell$ on $\cT_n$, knowing that $\cT_n$ has total length $\ell(\cT_n)=n-1$.

An application of the so-called local limit theorem (see \cite[Theorem $4.2.1$]{IL71} for a statement and proof) shows that, with high probability,
\begin{align*}
B_n^{-1} \underset{0 \leq k \leq A(n)}{\sup}  \sum_{i \leq k} \left( X_{(i/B_n)n/(n-1)} -1 \right) \underset{n \rightarrow \infty}{\rightarrow} 0,
\end{align*}
where we have set $A(n)=f(n)B_n+g(n)$. Therefore (ii) holds with high probability.
\end{proof}

Let us now explain how to apply Proposition \ref{prop:discrcontin} in our framework: since in $\cC(F)$ each chord corresponds to a vertex of $\tilde{T}(F)$, we use the construction above to exhibit a discrete version of $\bL(C(\tilde{T}(F)))$, in which each chord corresponds to a vertex as well. In addition, we prove that, for $F$ a minimal factorization of the $n$-cycle, this discrete dual lamination $\bL(\tilde{T}(F))$ is close to $\cC(F)$. This statement is not straightforward, since two different minimal factorizations may lead to the same discrete lamination (see Fig. \ref{fig:twolamsonetree} for an example).
For $F$ a minimal factorization, we give $\bL( \tilde{T}(F))$ more structure, by labelling its chords the following way: remember that, in the construction of $\bL(T)$, each chord corresponds to a vertex of $T$. Then, for each vertex $x \in \tilde{T}(F)$, give to the corresponding chord in $\bL(\tilde{T}(F))$ the label of $x$. For $n \geq 2$, $F \in \kM_n$ and $2 \leq j \leq n$, denote by $c(j)$ the chord with label $j$ in $\cC(F)$, and by $c'(j)$ the chord of label $j$ in $\bL( \tilde{T}(F))$. Note that there are $(n-1)$ chords in each lamination, if one does not take into account the chord of length $0$ associated to the root of $\tilde{T}(F)$ in $\bL(\tilde{T}(F))$, and that the leaves of $\tilde{T}(F)$ are coded by chords of length $0$ in $\bL(\tilde{T}(F))$. The next lemma bounds the distance between chords with the same label in $\cC(F)$ and $\bL(\tilde{T}(F))$, by a quantity which only depends on the height of $\tilde{T}(F)$.

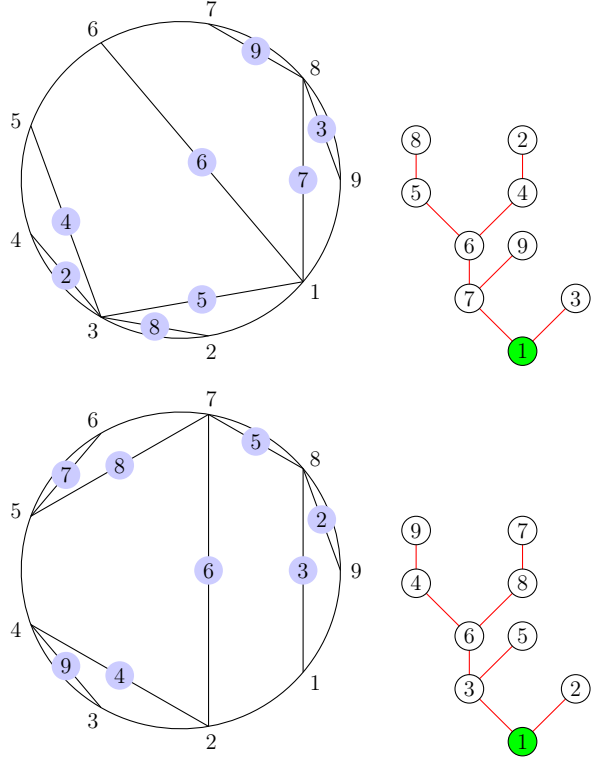
\begin{figure}
\center
\caption{The geometric representation of two different minimal factorizations whose images by the Goulden-Yong map (forgetting about labels) are the same tree.}
\label{fig:twolamsonetree}
\begin{tabular}{c c}
\begin{tikzpicture}[scale=0.7, every node/.style={scale=0.7}, rotate=-40]
\draw (0,0) circle (3);
\foreach \i in {1,...,9}
{
\draw[auto=right] ({3.3*cos(-(\i-1)*360/9)},{3.3*sin(-(\i-1)*360/9)}) node{\i};
}
\draw ({3*cos(-2*360/9)},{3*sin(-2*360/9)}) -- ({3*cos(-3*360/9)},{3*sin(-3*360/9)}) node[circle,midway,fill=blue!20, inner sep=2pt]{2};
\draw ({3*cos(2*360/9)},{3*sin(2*360/9)}) -- ({3*cos(1*360/9)},{3*sin(1*360/9)}) node[circle,midway,fill=blue!20, inner sep=2pt]{3};
\draw ({3*cos(-2*360/9)},{3*sin(-2*360/9)}) -- ({3*cos(-4*360/9)},{3*sin(-4*360/9)}) node[circle,midway,fill=blue!20, inner sep=2pt]{4};
\draw ({3*cos(-2*360/9)},{3*sin(-2*360/9)}) -- ({3*cos(0*360/9)},{3*sin(0*360/9)}) node[circle,midway,fill=blue!20, inner sep=2pt]{5};
\draw ({3*cos(-5*360/9)},{3*sin(-5*360/9)}) -- ({3*cos(0*360/9)},{3*sin(0*360/9)}) node[circle,midway,fill=blue!20, inner sep=2pt]{6};
\draw ({3*cos(-7*360/9)},{3*sin(-7*360/9)}) -- ({3*cos(0*360/9)},{3*sin(0*360/9)}) node[circle,midway,fill=blue!20, inner sep=2pt]{7};
\draw ({3*cos(-2*360/9)},{3*sin(-2*360/9)}) -- ({3*cos(-1*360/9)},{3*sin(-1*360/9)}) node[circle,midway,fill=blue!20, inner sep=2pt]{8};
\draw ({3*cos(-6*360/9)},{3*sin(-6*360/9)}) -- ({3*cos(-7*360/9)},{3*sin(-7*360/9)}) node[circle,midway,fill=blue!20, inner sep=2pt]{9};
\end{tikzpicture} 
&
\begin{tikzpicture}[scale=0.7, every node/.style={scale=0.7}]
\draw[red] (1,1) -- (0,0) -- (-1,1) -- (0,2);
\draw[red] (-1,1) -- (-1,2) -- (0,3) -- (0,4);
\draw[red] (-1,2) -- (-2,3) -- (-2,4);
\draw (-1,2) node[circle,fill=white, draw=black, inner sep=2pt]{6};
\draw (-1,1) node[circle,fill=white, draw=black, inner sep=2pt]{7};
\draw (0,0) node[circle,fill=green, draw=black, inner sep=2pt]{1};
\draw (1,1) node[circle,fill=white, draw=black, inner sep=2pt]{3};
\draw (0,2) node[circle,fill=white, draw=black, inner sep=2pt]{9};
\draw (0,3) node[circle,fill=white, draw=black, inner sep=2pt]{4};
\draw (0,4) node[circle,fill=white, draw=black, inner sep=2pt]{2};
\draw (-2,3) node[circle,fill=white, draw=black, inner sep=2pt]{5};
\draw (-2,4) node[circle,fill=white, draw=black, inner sep=2pt]{8};
\end{tikzpicture}
 \\
\begin{tikzpicture}[scale=0.7, every node/.style={scale=0.7}, rotate=-40]
\draw (0,0) circle (3);
\foreach \i in {1,...,9}
{
\draw[auto=right] ({3.3*cos(-(\i-1)*360/9)},{3.3*sin(-(\i-1)*360/9)}) node{\i};
}
\draw ({3*cos(-4*360/9)},{3*sin(-4*360/9)}) -- ({3*cos(-5*360/9)},{3*sin(-5*360/9)}) node[circle,midway,fill=blue!20, inner sep=2pt]{7};
\draw ({3*cos(2*360/9)},{3*sin(2*360/9)}) -- ({3*cos(1*360/9)},{3*sin(1*360/9)}) node[circle,midway,fill=blue!20, inner sep=2pt]{2};
\draw ({3*cos(-6*360/9)},{3*sin(-6*360/9)}) -- ({3*cos(-4*360/9)},{3*sin(-4*360/9)}) node[circle,midway,fill=blue!20, inner sep=2pt]{8};
\draw ({3*cos(-3*360/9)},{3*sin(-3*360/9)}) -- ({3*cos(-1*360/9)},{3*sin(-1*360/9)}) node[circle,midway,fill=blue!20, inner sep=2pt]{4};
\draw ({3*cos(-6*360/9)},{3*sin(-6*360/9)}) -- ({3*cos(-1*360/9)},{3*sin(-1*360/9)}) node[circle,midway,fill=blue!20, inner sep=2pt]{6};
\draw ({3*cos(-7*360/9)},{3*sin(-7*360/9)}) -- ({3*cos(0*360/9)},{3*sin(0*360/9)}) node[circle,midway,fill=blue!20, inner sep=2pt]{3};
\draw ({3*cos(-2*360/9)},{3*sin(-2*360/9)}) -- ({3*cos(-3*360/9)},{3*sin(-3*360/9)}) node[circle,midway,fill=blue!20, inner sep=2pt]{9};
\draw ({3*cos(-6*360/9)},{3*sin(-6*360/9)}) -- ({3*cos(-7*360/9)},{3*sin(-7*360/9)}) node[circle,midway,fill=blue!20, inner sep=2pt]{5};
\end{tikzpicture} 
&
\begin{tikzpicture}[scale=0.7, every node/.style={scale=0.7}]
\draw[red] (1,1) -- (0,0) -- (-1,1) -- (0,2);
\draw[red] (-1,1) -- (-1,2) -- (0,3) -- (0,4);
\draw[red] (-1,2) -- (-2,3) -- (-2,4);
\draw (-1,2) node[circle,fill=white, draw=black, inner sep=2pt]{6};
\draw (-1,1) node[circle,fill=white, draw=black, inner sep=2pt]{3};
\draw (0,0) node[circle,fill=green, draw=black, inner sep=2pt]{1};
\draw (1,1) node[circle,fill=white, draw=black, inner sep=2pt]{2};
\draw (0,2) node[circle,fill=white, draw=black, inner sep=2pt]{5};
\draw (0,3) node[circle,fill=white, draw=black, inner sep=2pt]{8};
\draw (0,4) node[circle,fill=white, draw=black, inner sep=2pt]{7};
\draw (-2,3) node[circle,fill=white, draw=black, inner sep=2pt]{4};
\draw (-2,4) node[circle,fill=white, draw=black, inner sep=2pt]{9};
\end{tikzpicture}
\end{tabular}
\end{figure}

\begin{lem}
\label{lem:bothlaminations}
As $n \rightarrow \infty$, uniformly for $F \in \kM_n$,
\begin{align*}
\underset{2 \leq j \leq n}{\sup}  d_H (c(j), c'(j)) \leq  2\pi \frac{H(\tilde{T}(F))}{n} + o(1)
\end{align*}
\end{lem}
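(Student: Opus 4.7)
Both $c(j)$ and $c'(j)$ are chords whose endpoints lie on $\mathbb{S}^1$: $c(j)$ has endpoints $e^{-2i\pi a/n}$ and $e^{-2i\pi b/n}$ with $t_{j-1}=(a,b)$, whereas $c'(j)$ has endpoints $e^{-2i\pi g_{v_j}/(2n)}$ and $e^{-2i\pi d_{v_j}/(2n)}$, writing $v_j$ for the vertex of $\tilde T(F)$ carrying the label $j$ and $g_{v_j},d_{v_j}$ for its first and last visit times in the contour of $\tilde T(F)$. Since a line segment lies in the convex hull of its endpoints and $|e^{-2i\pi x} - e^{-2i\pi y}| \leq 2\pi|x-y|$, one can bound $d_H(c(j),c'(j))$ by the maximum, over some pairing of the four endpoints, of $2\pi$ times the corresponding difference of ``fractions of the circle.'' It therefore suffices to show that $|2a - g_{v_j}|$ and $|2b - d_{v_j}|$ are at most $h(v_j) + O(1)$; dividing by $2n$ and taking suprema over $j$ and $F$ then yields the announced bound.

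The combinatorial heart of the argument identifies $a$ and $b$ via the Goulden–Yong bijection. The $n$ open unit arcs of $\mathbb{S}^1$ delimited by the points $(e^{-2i\pi k/n})_{1 \leq k \leq n}$ are in canonical bijection with the $n$ vertices of $\tilde T(F)$: each arc lies in the closure of exactly one face of $\cC(F)$, which in turn is indexed by a vertex of $\tilde T(F)$. Under this bijection, the $|\theta_{v_j}|$ arcs attached to the vertices of the subtree $\theta_{v_j}$ form the unique cyclic block separated from the root's arc by the chord $c(j)$, whose boundary points are precisely $e^{-2i\pi a/n}$ and $e^{-2i\pi b/n}$. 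In particular $b - a = |\theta_{v_j}|$, and $a - 1$ equals the number of vertices whose own arc lies strictly before $\theta_{v_j}$'s block in the clockwise order starting at position~$1$. Splitting this count according to whether the vertex is an ancestor of $v_j$ or not gives
\begin{align*}
a - 1 \;=\; A(v_j) + L(v_j),
\end{align*}
where $A(v_j) \leq h(v_j)$ is the number of ancestors of $v_j$ whose own arc precedes $\theta_{v_j}$'s block and $L(v_j)$ is the number of non-ancestors of $v_j$ that precede it lexicographically in $\tilde T(F)$.

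Combining this with the standard identities $g_{v_j} = h(v_j) + 2 L(v_j)$ and $d_{v_j} = g_{v_j} + 2(|\theta_{v_j}| - 1)$, which follow directly from the definition of the contour function on a plane tree, one obtains
\begin{align*}
|2a - g_{v_j}| \;=\; |2 + 2 A(v_j) - h(v_j)| \;\leq\; h(v_j) + 2, \qquad |2b - d_{v_j}| \;\leq\; h(v_j) + 4.
\end{align*}
Translating back through the map $x \mapsto e^{-2i\pi x}$ and pairing $a$ with $g_{v_j}$ and $b$ with $d_{v_j}$ gives a Hausdorff distance of at most $\pi (h(v_j) + 4)/n \leq 2\pi H(\tilde T(F))/n + o(1)$, uniformly in $j$ and in $F \in \kM_n$.

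The main obstacle is the identification $a - 1 = A(v_j) + L(v_j)$, specifically the claim that the non-ancestors of $v_j$ whose own arc precedes $\theta_{v_j}$'s block are exactly those preceding $v_j$ in the lexicographic order. This compatibility of the clockwise and lexicographic orders on non-descendants of $v_j$ rests on the planar labelling constraint $(C_\Delta)$ satisfied by $\tilde T(F)$, and is proved by induction on the ancestral path of $v_j$, tracking at each step how the chord arrangement around each ancestor is organised by the labels. Positions on $\mathbb{S}^1$ shared by several chords of $\cC(F)$ (elements of $\llbracket 1, n \rrbracket$ appearing in several transpositions of $F$) create off-by-one shifts in the ``position versus clockwise rank'' translation, which fortunately are absorbed into the $O(1)$ terms above.
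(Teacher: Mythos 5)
Your proof is correct and takes essentially the same route as the paper's: the discrepancy between $c(j)$ and $c'(j)$ is controlled by the number of ancestors of the vertex labelled $j$, hence by $H(\tilde{T}(F))$. You make this explicit via the contour-function identity $g_{v_j} = h(v_j) + 2L(v_j)$ combined with $a - 1 = A(v_j) + L(v_j)$, whereas the paper phrases the same combinatorial fact as the set identity $S'_1(j) = S_1(j) \cup E(x(j))$ (with $|E(x(j))| \leq H(\tilde{T}(F))$); both arguments rest on, and leave largely sketched, the same compatibility of the circular order in $\cC(F)$ with the contour order of $\tilde{T}(F)$ under condition $(C_\Delta)$.
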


\begin{proof}
Take $2 \leq j \leq n$ and $F \in \kM_n$.
Let $x(j)$ be the vertex of label $j$ in $\tilde{T}(F)$. $x(j)$ induces a natural partition of the vertices of the tree into three sets: $S'_1(j)$, the set of vertices that are visited by the contour exploration before $x(j)$; $S'_2(j)$ the set of vertices of the subtree rooted in $x(j)$; $S'_3(j)$ the set of vertices that are visited by the contour exploration for the first time after $x(j)$ has been visited for the last time. See an example on Fig. \ref{fig:si}, left. The three connected components of the circle delimited by $c'(j)$ (that is, by $1$ and the endpoints of the chord) have respective arc lengths $2\pi |S'_1(j)|/n +o(1)$, $2\pi |S'_2(j)|/n+o(1)$, $2\pi |S'_3(j)|/n +o(1)$, the $o(1)$ being uniform in $j$ as $n \rightarrow \infty$.

\begin{figure}
\caption{Representation of the two different partitions of the set of vertices of the tree $\tilde{T}(F)$ associated to a minimal factorization $F$, according to the vertex of label $4$. In the middle, $\cC(F)$.}
\label{fig:si}
\center
\begin{tabular}{c c c}
\begin{tikzpicture}[scale=1, every node/.style={scale=0.7}]
\draw[red] (1,1) -- (0,0) -- (-1,1) -- (0,2);
\draw[red] (-1,1) -- (-1,2) -- (0,3) -- (0,4);
\draw[red] (-1,2) -- (-2,3) -- (-2,4);
\draw (-1,2) node[circle,fill=white, draw=black, inner sep=2pt]{6};
\draw (-1,1) node[circle,fill=white, draw=black, inner sep=2pt]{7};
\draw (0,0) node[circle,fill=green, draw=black, inner sep=2pt]{1};
\draw (1,1) node[circle,fill=white, draw=black, inner sep=2pt]{3};
\draw (0,2) node[circle,fill=white, draw=black, inner sep=2pt]{9};
\draw (0,3) node[circle,fill=white, draw=black, inner sep=2pt]{4};
\draw (0,4) node[circle,fill=white, draw=black, inner sep=2pt]{2};
\draw (-2,3) node[circle,fill=white, draw=black, inner sep=2pt]{5};
\draw (-2,4) node[circle,fill=white, draw=black, inner sep=2pt]{8};
\draw[dashed] (1,.5) -- (1.5,1) -- (0,2.5) -- (-.5,2) -- cycle;
\draw[dashed] (-.5,2.5) -- (.5,2.5) -- (.5,4.5) -- (-.5,4.5) -- cycle;
\draw[dashed] (-2,4.5) -- (.5,0) -- (-.5,-1) -- (-3,3.5) -- cycle;
\draw (0,5) node{$S'_2(4)$};
\draw (1,2) node{$S'_3(4)$};
\draw (-2,1) node{$S'_1(4)$};
\end{tikzpicture}
&
\begin{tikzpicture}[scale=0.7, every node/.style={scale=0.7}, rotate=-40]
\draw (0,0) circle (3);
\foreach \i in {1,...,9}
{
\draw[auto=right] ({3.3*cos(-(\i-1)*360/9)},{3.3*sin(-(\i-1)*360/9)}) node{\i};
}
\draw ({3*cos(-2*360/9)},{3*sin(-2*360/9)}) -- ({3*cos(-3*360/9)},{3*sin(-3*360/9)}) node[circle,midway,fill=blue!20, inner sep=2pt]{2};
\draw ({3*cos(2*360/9)},{3*sin(2*360/9)}) -- ({3*cos(1*360/9)},{3*sin(1*360/9)}) node[circle,midway,fill=blue!20, inner sep=2pt]{3};
\draw ({3*cos(-2*360/9)},{3*sin(-2*360/9)}) -- ({3*cos(-4*360/9)},{3*sin(-4*360/9)}) node[circle,midway,fill=blue!20, inner sep=2pt]{4};
\draw ({3*cos(-2*360/9)},{3*sin(-2*360/9)}) -- ({3*cos(0*360/9)},{3*sin(0*360/9)}) node[circle,midway,fill=blue!20, inner sep=2pt]{5};
\draw ({3*cos(-5*360/9)},{3*sin(-5*360/9)}) -- ({3*cos(0*360/9)},{3*sin(0*360/9)}) node[circle,midway,fill=blue!20, inner sep=2pt]{6};
\draw ({3*cos(-7*360/9)},{3*sin(-7*360/9)}) -- ({3*cos(0*360/9)},{3*sin(0*360/9)}) node[circle,midway,fill=blue!20, inner sep=2pt]{7};
\draw ({3*cos(-2*360/9)},{3*sin(-2*360/9)}) -- ({3*cos(-1*360/9)},{3*sin(-1*360/9)}) node[circle,midway,fill=blue!20, inner sep=2pt]{8};
\draw ({3*cos(-6*360/9)},{3*sin(-6*360/9)}) -- ({3*cos(-7*360/9)},{3*sin(-7*360/9)}) node[circle,midway,fill=blue!20, inner sep=2pt]{9};
\end{tikzpicture}
&
\begin{tikzpicture}[scale=1, every node/.style={scale=0.7}]
\draw[red] (1,1) -- (0,0) -- (-1,1) -- (0,2);
\draw[red] (-1,1) -- (-1,2) -- (0,3) -- (0,4);
\draw[red] (-1,2) -- (-2,3) -- (-2,4);
\draw (-1,2) node[circle,fill=white, draw=black, inner sep=2pt]{6};
\draw (-1,1) node[circle,fill=white, draw=black, inner sep=2pt]{7};
\draw (0,0) node[circle,fill=green, draw=black, inner sep=2pt]{1};
\draw (1,1) node[circle,fill=white, draw=black, inner sep=2pt]{3};
\draw (0,2) node[circle,fill=white, draw=black, inner sep=2pt]{9};
\draw (0,3) node[circle,fill=white, draw=black, inner sep=2pt]{4};
\draw (0,4) node[circle,fill=white, draw=black, inner sep=2pt]{2};
\draw (-2,3) node[circle,fill=white, draw=black, inner sep=2pt]{5};
\draw (-2,4) node[circle,fill=white, draw=black, inner sep=2pt]{8};
\draw[dashed] (1,.5) -- (1.5,1) -- (0,2.5) -- (-.5,2) -- cycle;
\draw[dashed] (-.5,2.5) -- (.5,2.5) -- (.5,4.5) -- (-.5,4.5) -- cycle;
\draw[dashed] (-2.5,2.5) -- (-1.5,2.5) -- (-1.5,4.5) -- (-2.5,4.5) -- cycle;
\draw[dashed] (.5,0) -- (-1,2.5) -- (-2,1.5) -- (-.5,-1) -- cycle;
\draw (0,5) node{$S_2(x(4))$};
\draw (1,2) node{$S_3(x(4))$};
\draw (-2,5) node{$S_1(x(4))$};
\draw (-2,.5) node{$E(x(4))$} ;
\end{tikzpicture}
\end{tabular}
\end{figure}
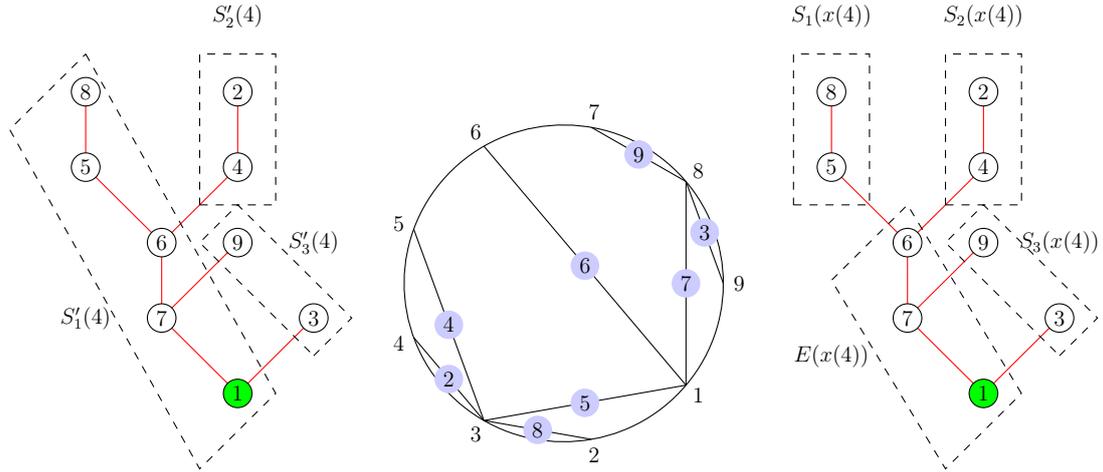

Now, let us focus on the corresponding chord $c(j)$ in $\cC(F)$, and note that it is not given by the position of the chord $c'(j)$. As an example, in Fig. \ref{fig:twolamsonetree}, the vertex with label $6$ is at the same place in both trees, while the chord $c(j)$ is not at the same place in both laminations. Denote by $(a \, b)$ the transposition corresponding to $c(j)$, with $1 \leq a < b \leq n$. Note that the length of $c(j)$ can be directly seen on the unlabelled tree $\tilde{T}(F)$, but the position of its endpoints on the circle depends on the labels of the other vertices, and therefore on its embedding in the disk. We now split the circle into four components, which correspond to a partition the set of vertices of $\tilde{T}(F)$ into four parts: $S_1(j)$ the set of vertices of $\tilde{T}(F)$ whose corresponding chord has its endpoints between $1$ and $a$ ($1$ included); $S_2(j)$ the set of vertices of $\tilde{T}(F)$ whose corresponding chord has its endpoints between $a$ and $b$; $S_3(j)$ the set of vertices of $\tilde{T}(F)$ whose corresponding chord has its endpoints between $b$ and $n$ ($n$ included); $E(x(j))$ the set of ancestors of $x(j)$ ($x(j)$ excluded). One can check that $S'_1(j) = S_1(j) \cup E(x(j))$, $S'_2(j)=S_2(j)$, $S'_3(j)=S_3(j)$. See Fig. \ref{fig:si}, right. Therefore, the distance between $c(j)$ and $c'(j)$ only depends on the labels of the other vertices, and is bounded by $2\pi \frac{|E(x(j))|}{n} + o(1)$. The result follows, since the size of the ancestral line of $x(j)$ is at most $H(\tilde{T}(F))$.
\end{proof}
Lemma \ref{lem:bothlaminations} not only proves that $\cC(F)$ and $\bL(\tilde{T}(F))$ are close, but in addition that they are close chord by chord. This will allow us to bound the distance between the underlying processes of laminations.

The last part of this section is devoted to the study of the set $\kU_n$ of non plane labelled trees. Indeed, the Goulden-Yong mapping allows us to translate results about trees into results about minimal factorizations. For $T$ an element of $\kU_n$ (that is, a non plane tree with $n$ vertices labelled from $1$ to $n$), one can associate exactly one plane rooted tree verifying condition $(C_\Delta)$. We denote by $\tilde{T}$ this canonical embedding of $T$, so that it matches the notations of Section \ref{ssec:GY}. In what follows, $U_n$ is an element of $\kU_n$ taken uniformly at random, and $\tilde{U}_n$ its canonical embedding on the plane.

Our first result concerns the distribution of the tree $\tilde{U}_n$, when one does not care about labels. A proof can be found in \cite[Example $10.2$]{Jan12}.

\begin{lem}
\label{lem:poisson}
Let $n \geq 1$. Then $\tilde{U}_n$, forgetting about the labels, has the law of a $\mu$-GW tree conditioned to have $n$ vertices, where $\mu$ is the Poisson distribution of parameter $1$. 
\end{lem}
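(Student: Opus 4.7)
The plan is to show that $\tilde U_n$, regarded as an unlabelled plane rooted tree, and a Poisson($1$)--Galton--Watson tree $\cT$ conditioned on having $n$ vertices, both put mass proportional to $\prod_v 1/k_v(T)!$ on each plane rooted tree $T$ with $n$ vertices. Equality of the two distributions will then follow by normalization. On the Galton--Watson side this is immediate: since $\mu_k = e^{-1}/k!$, we have $\P(\cT = T) = \prod_v \mu_{k_v(T)} = e^{-n}/\prod_v k_v(T)!$ for every plane rooted tree with $n$ vertices.

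For $\tilde U_n$, I would first recall from Section~\ref{ssec:GY} that the canonical embedding $U \mapsto \tilde U$ is a bijection between $\kU_n$ and the set of plane rooted trees on $n$ vertices labelled by $\{1,\dots,n\}$, rooted at the vertex with label $1$ and satisfying condition $(C_\Delta)$ (children's labels decreasing clockwise at every vertex). Hence $\tilde U_n$ is uniform on this latter set, and for any unlabelled plane rooted tree $T$ with $n$ vertices,
\[
\P\bigl(\tilde U_n \text{ has underlying plane tree } T\bigr) = \frac{a(T)}{|\kU_n|},
\]
where $a(T)$ denotes the number of labellings of $T$ satisfying $(C_\Delta)$.

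The crux of the argument is then to establish
\[
a(T) = \frac{(n-1)!}{\prod_{v \in T} k_v(T)!}.
\]
My approach is probabilistic: draw $\phi$ uniformly among the $(n-1)!$ bijections from the non-root vertices of $T$ to $\{2,\dots,n\}$ and, for each $v \in T$, let $E_v$ be the event that $v$'s children carry decreasing labels from left to right. By symmetry $\P(E_v) = 1/k_v(T)!$. The sibling classes $\{C_v\}_{v \in T}$ partition the $n-1$ non-root vertices, and a direct multinomial count shows that, for a uniform bijection to $\{2,\dots,n\}$, the relative orderings induced on the blocks of a fixed partition are mutually independent uniform permutations (each prescribed tuple of orderings is realized by exactly $(n-1)!/\prod_v k_v(T)!$ bijections, a count independent of the orderings themselves). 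Hence the events $E_v$ are independent, $\P\bigl(\bigcap_v E_v\bigr) = \prod_v 1/k_v(T)!$, and multiplying by $(n-1)!$ yields the formula for $a(T)$.

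The only non-routine ingredient is the mutual independence of the relative orderings across the sibling classes, which relies on nothing beyond the fact that these classes are disjoint (every non-root vertex has a unique parent). Once this is granted, the comparison with Poisson($1$)--Galton--Watson trees is automatic: no explicit computation of $|\kU_n|$ (equal to $n^{n-2}$ by Cayley) or of $\P(|\cT| = n)$ is required, since both are absorbed into a common normalizing constant.
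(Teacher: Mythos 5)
Your proof is correct, and it is genuinely different from the paper's treatment of this lemma, which does not prove it at all but refers the reader to \cite[Example~10.2]{Jan12}; a self-contained argument is worth having. The normalization step, the Galton--Watson weight $e^{-n}/\prod_v k_v(T)!$, the formula $a(T)=(n-1)!/\prod_v k_v(T)!$, and the multinomial count showing that sibling-class orderings are independent and uniform under a uniform bijection are all sound. One imprecision should be fixed: your event $E_v$ (``$v$'s children carry decreasing labels from left to right'') is not quite the paper's condition $(C_\Delta)$, which is cyclic and involves $\ell_v$ itself: the sequence $(\ell_v,\ell_{c_1},\dots,\ell_{c_{k_v}})$ read clockwise around $v$ must be cyclically decreasing. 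For the root this collapses to ``children decreasing'' since the root's label $1$ is the global minimum, but for a non-root vertex $v$, whose label can be interleaved with those of its children, the left-to-right order of the children's labels is \emph{not} in general decreasing (in Fig.~\ref{fig:gy}, middle-right, the vertex labelled $7$ has children $6$ then $9$). This does not change the count --- given the set of labels placed on the sibling class $C_v$ (and, for $(C_\Delta)$, also $\ell_v$), exactly one of the $k_v(T)!$ assignments of those labels to the children is admissible in either formulation --- so $a(T)$ is the same; but the proof should state the constraint accurately. In fact, the cleanest route, which sidesteps both the exact local form of $(C_\Delta)$ and the independence lemma, is a direct bijection: admissible labellings of $T$ correspond exactly to set partitions of $\{2,\dots,n\}$ into blocks $(B_v)_{v\in T}$ with $|B_v|=k_v(T)$, because given the partition the labelling is recovered uniquely by processing vertices from the root downward, using the constraint at each $v$ to decide which label in $B_v$ goes to which child. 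This yields $a(T)=(n-1)!/\prod_v k_v(T)!$ immediately.
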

This describes the structure of the unlabelled tree $\tilde{T}(t^{(n)})$. We now investigate the constraints that we have on the labelling (condition $(C_\Delta)$). 

\subsection{A shuffling operation on vertices}
\label{ssec:shuffling}

We prove here the important Theorem~\ref{thm:dicretecv2}. To this end, we define an operation on finite trees, which randomly shuffles the labels of its vertices without changing much the overall structure of the tree and the associated lamination. We use it to prove that the lamination $\cL_c^{(n)}$ is close in distribution to $\bL_{c \sqrt{n}}(\cT_n)$ uniformly in $c$, for $\cT_n$ a given Galton-Watson tree (which we will describe) conditioned by its number of vertices. This allows us to use Theorem~\ref{thm:discreteconvergencestable}.

Let us explain the main idea of the shuffling argument. The goal is to lift the constraint on the labels (condition $(C_\Delta)$) without changing much the structure of the tree. To this end, an idea would be to uniformly shuffle the labels of the children of each vertex. But consider a large chord of $\bL_{c \sqrt{n}}(\tilde{U}_n)$, hence corresponding to a vertex $u$ with label $\ell \leq c \sqrt{n}$ in $\tilde{U}_n$ with a large subtree on top of it. If one shuffles the labels uniformly at random among children of its parent, the label $\ell$ could be given to another vertex with a small descendance, resulting in a small chord. The associated lamination would then be far from $\bL_{c \sqrt{n}}(\tilde{U}_n)$. In order to keep the descendance fixed, one could try to shuffle the labels uniformly at random among children of all vertices, also keeping the subtrees on top of them. But then, large subtrees could be swapped at branching points, so that the associated laminations would also be far from each other. The idea is to combine these two operations.

\begin{defi}
\label{def}
Let $T$ be a plane tree with $n$ vertices labelled from $1$ to $n$, rooted at the vertex of label $1$, and let $K \leq n$. We define the shuffled tree $T^{(K)}$ as follows: starting from the root of $T$, we perform one of the following two operations on the vertices of $T$. For consistency, we impose that the operation shall be performed on a vertex before being performed on its children.
\begin{itemize}
\item Operation $1$: for a vertex such that the labels of its children are all $> K$, we uniformly shuffle these labels (without shuffling the corresponding subtrees).
\item Operation $2$: for a vertex such that at least one of its children has a label $\leq K$, we uniformly shuffle these labelled vertices and keep the subtrees on top of each of these children.
\end{itemize}
\end{defi}
See Figure~\ref{fig:shuffling} for an example. Note that this operation induces a transformation of the lamination $\bL(T)$ associated to $T$.

\begin{figure}
\center
\caption{Examples of the shuffling operation. The operation is different in both cases, since in the second case the vertex labelled $9$ has a child with label $4 \leq K$.}
\label{fig:shuffling}
\begin{tabular}{c c}
\begin{tikzpicture}[scale=.7, every node/.style={scale=.7}]
\draw (-2,3) -- (0,1.5) -- (-1,3);
\draw (0,3) -- (0,1.5) -- (1,3);
\draw (0,1.5) -- (2,3);
\draw (-2,3.3) circle (.3) node{7};
\draw (-1,3.3) circle (.3) node{4};
\draw (0,3.3) circle (.3) node{28};
\draw (2,3.3) circle (.3) node{12};
\draw (1,3.3) circle (.3) node{16};
\draw (0,1.2) circle (.3) node{9};
\draw (-2,3.6) -- (-2.3,5) -- (-1.7,5) -- cycle;
\draw (-1,3.6) -- (-1.3,4.5) -- (-.7,4.5) -- cycle;
\draw (0,3.6) -- (-.3,5.5) -- (.3,5.5) -- cycle;
\draw (1,3.6) -- (.7,4) -- (1.3,4) -- cycle;
\draw (2,3.6) -- (1.7,6) -- (2.3,6) -- cycle;
\draw (-2,5.5) node{$T_1$};
\draw (-1,5) node{$T_2$};
\draw (0,6) node{$T_3$};
\draw (1,4.5) node{$T_4$};
\draw (2,6.5) node{$T_5$};
\end{tikzpicture}
&
\begin{tikzpicture}[scale=.7, every node/.style={scale=.7}]
\draw (-2,3) -- (0,1.5) -- (-1,3);
\draw (0,3) -- (0,1.5) -- (1,3);
\draw (0,1.5) -- (2,3);
\draw (-2,3.3) circle (.3) node{28};
\draw (-1,3.3) circle (.3) node{4};
\draw (0,3.3) circle (.3) node{16};
\draw (2,3.3) circle (.3) node{7};
\draw (1,3.3) circle (.3) node{12};
\draw (0,1.2) circle (.3) node{9};
\draw (-2,3.6) -- (-2.3,5) -- (-1.7,5) -- cycle;
\draw (-1,3.6) -- (-1.3,4.5) -- (-.7,4.5) -- cycle;
\draw (0,3.6) -- (-.3,5.5) -- (.3,5.5) -- cycle;
\draw (1,3.6) -- (.7,4) -- (1.3,4) -- cycle;
\draw (2,3.6) -- (1.7,6) -- (2.3,6) -- cycle;
\draw (-2,5.5) node{$T_1$};
\draw (-1,5) node{$T_2$};
\draw (0,6) node{$T_3$};
\draw (1,4.5) node{$T_4$};
\draw (2,6.5) node{$T_5$};
\end{tikzpicture}
\\
\multicolumn{2}{c}{(a) Shuffling of a labelled plane tree when $K=3$: Operation $1$ is performed}\\

\begin{tikzpicture}[scale=.7, every node/.style={scale=.7}]
\draw (-2,3) -- (0,1.5) -- (-1,3);
\draw (0,3) -- (0,1.5) -- (1,3);
\draw (0,1.5) -- (2,3);
\draw (-2,3.3) circle (.3) node{7};
\draw (-1,3.3) circle (.3) node{4};
\draw (0,3.3) circle (.3) node{28};
\draw (2,3.3) circle (.3) node{12};
\draw (1,3.3) circle (.3) node{16};
\draw (0,1.2) circle (.3) node{9};
\draw (-2,3.6) -- (-2.3,5) -- (-1.7,5) -- cycle;
\draw (-1,3.6) -- (-1.3,4.5) -- (-.7,4.5) -- cycle;
\draw (0,3.6) -- (-.3,5.5) -- (.3,5.5) -- cycle;
\draw (1,3.6) -- (.7,4) -- (1.3,4) -- cycle;
\draw (2,3.6) -- (1.7,6) -- (2.3,6) -- cycle;
\draw (-2,5.5) node{$T_1$};
\draw (-1,5) node{$T_2$};
\draw (0,6) node{$T_3$};
\draw (1,4.5) node{$T_4$};
\draw (2,6.5) node{$T_5$};
\end{tikzpicture}
&
\begin{tikzpicture}[scale=.7, every node/.style={scale=.7}]
\draw (-2,3) -- (0,1.5) -- (-1,3);
\draw (0,3) -- (0,1.5) -- (1,3);
\draw (0,1.5) -- (2,3);
\draw (-2,3.3) circle (.3) node{12};
\draw (-1,3.3) circle (.3) node{28};
\draw (0,3.3) circle (.3) node{4};
\draw (2,3.3) circle (.3) node{16};
\draw (1,3.3) circle (.3) node{7};
\draw (0,1.2) circle (.3) node{9};
\draw (1,3.6) -- (.7,5) -- (1.3,5) -- cycle;
\draw (0,3.6) -- (-.3,4.5) -- (.3,4.5) -- cycle;
\draw (-1,3.6) -- (-1.3,5.5) -- (-.7,5.5) -- cycle;
\draw (2,3.6) -- (2.3,4) -- (1.7,4) -- cycle;
\draw (-2,3.6) -- (-1.7,6) -- (-2.3,6) -- cycle;
\draw (1,5.3) node{$T_1$};
\draw (0,4.8) node{$T_2$};
\draw (-1,5.8) node{$T_3$};
\draw (2,4.3) node{$T_4$};
\draw (-2,6.3) node{$T_5$};
\end{tikzpicture}
\\
\multicolumn{2}{c}{(b) Shuffling of the same tree when $K=5$: Operation $2$ is performed}
\end{tabular}
\end{figure}

The main interest of this shuffling is that, for any $K$, $\tilde{U}_n^{(K)}$ has the law of a $Po(1)$-GW tree conditioned to have $n$ vertices, where the root has label $1$ and the other vertices are uniformly labelled from $2$ to $n$. The challenge, in our case, is to find a suitable $K$.

In addition, for $T$ a plane tree with labelled vertices and $u \geq 0$, denote $\bL_u(T)$ the sublamination of $\bL(T)$ made only of the chords that correspond to vertices of label $\leq u$. This extends the notation of Section \ref{ssec:discrete} to a labelled tree (remember that, in Section \ref{ssec:discrete}, we start from an unlabelled tree and label its non-root vertices uniformly at random from $2$ to $|T|$). Notably, for $u \geq |T|$, $\bL_u(T)=\bL(T)$.

\begin{lem}
\label{lem:changedtree}
Let $U_n$ be a uniform element of $\mathfrak{U}_n$. Then, for any sequence $(K_n)_{n \in \Z_+}$ such that $\frac{K_n}{n} \underset{n \rightarrow \infty}{\rightarrow} 0$, as $n \rightarrow \infty$, in probability,
\begin{align*}
d_{Sk}\left( \left(\bL_{u \sqrt{n}}\left( \tilde{U}_n \right)\right)_{0 \leq u \leq K_n/\sqrt{n}}, \left(\bL_{u \sqrt{n}}\left( \tilde{U}_n^{(K_n)} \right) \right)_{0 \leq u \leq K_n/\sqrt{n}} \right) \overset{\P}{\rightarrow} 0,
\end{align*}
where $d_{Sk}$ denotes the Skorokhod distance between these processes.

If, in addition, $\frac{K_n}{\sqrt{n}} \rightarrow \infty$, then
\begin{align*}
d_{Sk}\left( \left(\bL_{u \sqrt{n}}\left( \tilde{U}_n \right)\right)_{0 \leq u \leq \infty}, \left(\bL_{u \sqrt{n}}\left( \tilde{U}_n^{(K_n)} \right) \right)_{0 \leq u \leq \infty} \right) \overset{\P}{\rightarrow} 0.
\end{align*}
\end{lem}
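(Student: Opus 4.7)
The strategy is to couple $\tilde U_n$ and $\tilde U_n^{(K_n)}$ via the shuffling construction of Definition~\ref{def} and compare the two lamination processes chord by chord. The key structural observation for the range $u\sqrt n\le K_n$ is the following: under this coupling, each label $\ell\le K_n$ is carried by the same underlying tree vertex $v_\ell$ in both $\tilde U_n$ and $\tilde U_n^{(K_n)}$, since Operation~$1$ only permutes labels $>K_n$ and Operation~$2$ moves each label together with its subtree. Consequently the subtree rooted at $v_\ell$ is identical in both trees, so the two chords with label $\ell$ have equal length and differ only by a common angular shift, controlled by the difference $\Delta_\ell$ between the contour times of $v_\ell$ in $\tilde U_n$ and $\tilde U_n^{(K_n)}$. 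The Hausdorff distance between the two chords is therefore at most $\pi|\Delta_\ell|/n$, so proving the first assertion reduces to showing that $\max_{\ell\le K_n}|\Delta_\ell|=o(n)$ in probability; since labels are drawn at the deterministic times $\ell/\sqrt n$ in both processes, no time change is needed and control of this quantity directly yields a $o(1)$ Skorokhod distance.

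To estimate $|\Delta_\ell|$, decompose it as a sum $\sum_u \delta_u$ over the Op~$2$ ancestors $u$ of $v_\ell$. Each $\delta_u$ is the difference of cumulative subtree sizes of siblings before and after a uniform permutation of $u$'s children, so $|\delta_u|\le |T_u|$ and, conditionally on the tree, $\delta_u$ is centered and independent across distinct Op~$2$ sites. The plan is to combine three inputs: (i) by Lemma~\ref{lem:poisson} and Theorem~\ref{thm:duquesne}, $\tilde U_n$ is a $\mathrm{Po}(1)$-GW tree conditioned on $n$ vertices and has height $O(\sqrt n)$ with high probability; (ii) since the vertices with labels $\le K_n$ form a uniformly random size-$K_n$ subset of the non-root vertices, a given ancestor $u$ of $v_\ell$ is an Op~$2$ site with probability $O(k_u K_n/n)$ (apart from the parent of $v_\ell$, which is always Op~$2$), so Op~$2$ ancestors are sparse when $K_n=o(n)$; (iii) the centering of $\delta_u$ under the independent uniform permutations at different Op~$2$ sites provides an $L^2$ bound far better than the trivial $|T_u|$. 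Putting (i)--(iii) together with a union bound over the $K_n$ values of $\ell$ should yield $\max_\ell|\Delta_\ell|=o(n)$ w.h.p. The main obstacle is (iii): for an Op~$2$ ancestor $u$ lying close to the root, the trivial bound $|\delta_u|\le|T_u|$ can be of order $n$, so the centering has to be genuinely exploited and combined with the fact that such high Op~$2$ sites are rare; this is analogous in spirit to the concentration arguments used in the proof of Proposition~\ref{prop:discrcontin}.

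For the second assertion, suppose $K_n/\sqrt n\to\infty$. For $u\in[0,K_n/\sqrt n]$ the first part applies. For $u>K_n/\sqrt n$ chords with labels $>K_n$ enter the picture and, under Op~$1$, these labels may land on different vertices in the two trees, so chord-by-chord identification breaks. However, the same shift decomposition applies to \emph{every} vertex of the tree (the planar structure is modified only through Op~$2$ sites, and the displacement of any vertex is bounded by $\sum_{u \in A(w)}|T_u|$), which shows that the full laminations $\bL(\tilde U_n)$ and $\bL(\tilde U_n^{(K_n)})$ are at Hausdorff distance $o(1)$ with high probability. Combining this bound with Proposition~\ref{prop:discrcontin}, Theorem~\ref{thm:discreteconvergencestable}, and Proposition~\ref{prop:properties}\,(ii), for any $\epsilon>0$ both $\bL_{u\sqrt n}(\tilde U_n)$ and $\bL_{u\sqrt n}(\tilde U_n^{(K_n)})$ lie within Hausdorff distance $\epsilon$ of a common $o(1)$-approximation of $\bL(\tilde U_n)$ once $u$ exceeds a threshold depending only on $\epsilon$. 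The Skorokhod distance on $[0,\infty]$ is then controlled by the Skorokhod distance on $[0,K_n/\sqrt n]$, already shown to be $o(1)$, plus an $O(\epsilon)$ tail contribution, and letting $\epsilon\to 0$ concludes the proof.
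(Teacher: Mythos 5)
The reduction in your first part --- ``proving the first assertion reduces to showing that $\max_{\ell\le K_n}|\Delta_\ell|=o(n)$'' --- is too strong and, in fact, is likely false. Consider a vertex $v_\ell$ with a small subtree whose parent $u$ has one sibling carrying a subtree of size $\Theta(n)$. If $u$ is an Op~$2$ site (which, since $K_n\gg\sqrt n$ in the regime of interest, happens with non-negligible probability for \emph{some} such $\ell$), the shuffle at $u$ can move that large sibling from one side of $v_\ell$'s ancestral line to the other, shifting $v_\ell$'s chord by $\Theta(n)$ contour steps. This does not contradict the lemma, because a short chord (length $<\epsilon$) is within Euclidean distance $\epsilon/2$ of $\mathbb S^1$, which is common to both laminations, so it cannot push the Hausdorff distance above $\epsilon/2$ no matter how far it moves. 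The paper exploits exactly this: it only controls the displacement of chords whose subtree has size $>\epsilon n$ (via the quantity $MPD(x)$ restricted to those $x$), not all chords. Your $L^2$/centering route inherits the same problem: the trivial bound $|\delta_u|\le|T_u|$ on the jump at an Op~$2$ site near the root is $\Theta(n)$, and no amount of centering will make a sum of such terms $o(n)$ simultaneously for \emph{all} $K_n$ labels (you would need deviation probability $o(1/K_n)$, which the tails do not give when a single term can be of order $n$).

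Two further structural differences from the paper's argument are worth noting. First, the paper avoids any centering or $L^2$ concentration argument for $\delta_u$ entirely: it uses the \emph{worst-case} upper bound $MPD(x)$ and shows it is small in two steps --- (a) with high probability Operation~$2$ is never applied to a $\delta n$-branching point (this uses Lemma~\ref{lem:branchingpoints}, which you do not invoke and which is the key technical input), so the worst case reduces to $MPD_\delta(x)$, and then (b) $MPD_\delta$ is shown to vanish via a compactness argument on the Brownian CRT (the supremum over $x$ with $h(\theta_x)>\epsilon$ of the continuous analogue $CMPD_\delta(x)$ tends to $0$ as $\delta\to 0$), not by concentration. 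Second, your sketch of the second part asserts that bounding displacements of \emph{every} vertex gives $d_H(\bL(\tilde U_n),\bL(\tilde U_n^{(K_n)}))=o(1)$, but this runs into the same obstruction as above. The paper instead first observes that $\bL_{K_n}(\tilde U_n^{(K_n)})$ and $\bL_{K_n}(\tilde U_n)$ are both close to $\bL_\infty^{(2)}$ (using the first part plus Theorem~\ref{thm:discreteconvergencestable} and $K_n\gg\sqrt n$), then uses the maximality of the Brownian triangulation for inclusion to control the process on $[K_n/\sqrt n,\infty]$; you gesture at this but the intermediate Hausdorff claim as stated does not follow from the displacement decomposition. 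To repair the proposal, you should restrict the displacement analysis to chords of length $>\epsilon$, import Lemma~\ref{lem:branchingpoints} to rule out Op~$2$ at $\delta n$-branching points, and replace the concentration step by the CRT compactness argument.
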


The proof of this lemma, postponed to Section \ref{ssec:proof}, relies on the study of what we call $a$-branching points, for $a \in \Z_+$. For $a>0$ and $T$ a tree, we say that a vertex $u \in T$ is an $a$-branching point if at least two of its children have subtrees of size $\geq a$. Note that this is a particular case of $a$-nodes defined in Section \ref{sec:discrete}. In order to prove Lemma \ref{lem:changedtree}, we show in Section \ref{ssec:proof} that with high probability Operation $2$ is not performed on any $\epsilon n$-branching point for fixed $\epsilon>0$, and then show that it ensures that the lamination-valued processes stay close to each other.

\begin{rk}
We do not have that $d_{Sk} \left( \left(\bL_{u \sqrt{n}}\left( \tilde{U}_n \right)\right)_{0 \leq u \leq \infty}, \left(\bL_{u\sqrt{n}}\left( \tilde{U}_n^{(K_n)} \right) \right)_{0 \leq u \leq \infty} \right) \overset{\P}{\rightarrow} 0$ in all cases, and the second assumption is needed. Indeed, if $K_n=0$, we perform Operation $1$ on all vertices, and the labels of the chords of size $\geq \epsilon$, for $\epsilon$ small enough, might not appear in the process in the same order. On the other hand, the first assumption is needed as well: if $K_n=n$, then Operation $2$ is performed on all vertices, and in particular on $\epsilon n$-branching points. Hence, the large subtrees rooted in children of a given $\epsilon n$-branching point might be interchanged, which leads to a completely different lamination-valued process.
\end{rk}

We are now ready to prove Theorem~\ref{thm:dicretecv2}.

\begin{proof}[Proof of Theorem~\ref{thm:dicretecv2} from Lemma~\ref{lem:changedtree}]
Recall that $t^{(n)}$ denotes a uniform element of $\kM_n$. 
First, we know by Lemma~\ref{lem:poisson} that $\tilde{T}(t^{(n)})$ - forgetting about the labels - is distributed as a $Po(1)$-GW tree. Hence, by Theorem~\ref{thm:duquesne}, $\frac{H(\tilde{T}(t^{(n)}))}{n^{3/4}} \underset{n \rightarrow \infty}{\rightarrow} 0$ in probability. Lemma~\ref{lem:bothlaminations} therefore implies that

\begin{align*}
d_{Sk} \left( \left( \cL^{(n)}_{c} \right)_{0 \leq c \leq \infty}, \left( \bL_{c \sqrt{n}}\left(\tilde{T}(t^{(n)}) \right) \right)_{0 \leq c \leq \infty} \right) \overset{\P}{\rightarrow} 0.
\end{align*} 
On the other hand, let $K_n$ be a sequence of integers such that $\sqrt{n} \ll K_n \ll n$ and recall that only the first $\lfloor c \sqrt{n} \rfloor$ factors of $t^{(n)}$ are represented in $\cL^{(n)}_c$. By Lemma~\ref{lem:changedtree}, as $n \rightarrow \infty$, in probability:
\begin{align*}
d_{Sk} \left(\left( \bL_{c \sqrt{n}} (\tilde{U}_n)\right)_{0 \leq c \leq \infty}, \left( \bL_{c \sqrt{n}} (\tilde{U}_n^{(K_n)}) \right)_{0 \leq c \leq \infty}\right) \overset{\P}{\rightarrow} 0.
\end{align*}

The last step is to prove that $(\bL_{c \sqrt{n}}(\tilde{U}_n^{(K_n)}))_{0 \leq c \leq \infty}$ converges in distribution towards $(\bL_c^{(2)})_{0 \leq c \leq \infty}$. This is a direct consequence of Proposition~\ref{prop:discrcontin} and  Theorem~\ref{thm:discreteconvergencestable}. Indeed, we have already mentioned that $\tilde{U}_n^{(K_n)}$ is distributed as a $Po(1)$-GW tree conditioned to have $n$ vertices labelled from $1$ to $n$, the root having label $1$ and the label of the other vertices being uniformly distributed from $2$ to $n$. This gives the result.
\end{proof}

\subsection{Proof of the technical lemma}
\label{ssec:proof}

This part of the section is devoted to the proof of the technical lemma~\ref{lem:changedtree}, which provides information on $(\bL_u(\tilde{U}_n))_{u \geq 0}$. Before diving into the proof, we present a powerful tool in the study of finite trees, the so-called local limit theorem, which provides good asymptotics on the behaviour of random walks. We provide here two versions of this theorem, the first one concerning general random walks and the second one concerning its application to the size of GW trees (see \cite[Theorem $4.2.1$]{IL71} for details and proofs). 

\begin{thm}[Local limit theorem]
\label{llt}
Let $\alpha \in (1,2]$, $\mu$ a critical distribution on $\Z_+$ in the domain of attraction of an $\alpha$-stable law, and $(B_n)_{n \geq 1}$ verifying \eqref{eq:Bn}.
Let $q_1$ be the density of $Y_1^{(\alpha)}$, where we recall that $Y^{(\alpha)}$ is the $\alpha$-stable Lévy process. Then
\begin{enumerate}
\item[(i)]
Let $(X_i)_{i \geq 1}$ be a sequence of i.i.d. variables taking their values in $\Z_+ \cup \{-1\}$, of law $\mu(\cdot + 1)$. Then
\begin{align*}
\underset{k \in \Z}{\sup} \left| B_n \P \left( \sum\limits_{i=1}^n X_i = k \right) - q_1 \left( \frac{k}{B_n} \right)  \right| \underset{n \rightarrow \infty}{\rightarrow} 0.
\end{align*}
\item[(ii)] Let $\cT$ denote a $\mu$-GW tree. Then, as $n \rightarrow \infty$,
\begin{align*}
\P \left( |\cT| = n \right) \sim n^{-1-1/\alpha} \ell(n)
\end{align*}
\end{enumerate}
where $\ell$ is a slowly varying function depending on $\mu$.
\end{thm}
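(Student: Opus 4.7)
The plan is to prove (i) by Fourier inversion combined with a scaling argument, and to deduce (ii) from (i) via the Otter--Dwass cyclic formula.

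For (i), I would start from the Fourier inversion formula for integer-valued random walks. Writing $\phi(t) = \E[e^{itX_1}]$, we have
\begin{equation*}
\P\left(\sum_{i=1}^n X_i = k\right) \;=\; \frac{1}{2\pi}\int_{-\pi}^{\pi} e^{-ikt}\,\phi(t)^n\,dt,
\end{equation*}
and the change of variables $t = u/B_n$ rescales this to
\begin{equation*}
B_n\,\P\left(\sum_{i=1}^n X_i = k\right) \;=\; \frac{1}{2\pi}\int_{-\pi B_n}^{\pi B_n} e^{-i(k/B_n)u}\,\phi(u/B_n)^n\,du.
\end{equation*}
Since the density $q_1$ admits the Fourier representation $q_1(y)=\frac{1}{2\pi}\int_{\R}e^{-iyu}\psi(u)\,du$, where $\psi$ is the characteristic function of $Y^{(\alpha)}_1$, the problem reduces to controlling the difference of the two integrands uniformly in $k$.

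I would split the integration domain into a small-scale window $\lvert u\rvert\leq \delta B_n$ and a large-scale window $\delta B_n \leq \lvert u\rvert \leq \pi B_n$. On the small-scale window, the domain-of-attraction hypothesis \eqref{eq:L} translates, via Karamata's theorem and a Taylor expansion at $0$, into the asymptotic $\log\phi(t) = -c_\alpha \lvert t\rvert^\alpha L(1/\lvert t\rvert)(1+o(1))$ as $t\to 0$, and the normalization \eqref{eq:Bn} is precisely chosen so that $\phi(u/B_n)^n \to \psi(u)$ pointwise. A uniform estimate $\lvert\phi(u/B_n)\rvert^n \leq e^{-\eta\lvert u\rvert^\alpha}$ on this window, with $\eta$ independent of $n$, follows from a quantitative version of the same expansion combined with the Potter bounds (Theorem \ref{thm:potterbounds}), and provides the dominating function needed for the convergence of the integral. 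On the large-scale window, one exploits $\sup_{\delta \leq \lvert t\rvert\leq \pi}\lvert\phi(t)\rvert < 1$ to obtain an exponentially small contribution. Uniformity in $k$ is automatic because $k$ enters only through the oscillatory factor $e^{-i(k/B_n)u}$, of modulus $1$.

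For (ii), the Otter--Dwass cyclic formula gives
\begin{equation*}
\P(\lvert\cT\rvert = n) \;=\; \frac{1}{n}\,\P(S_n = -1),
\end{equation*}
where $S_n = \sum_{i=1}^n (X_i-1)$ with $X_i$ i.i.d. of law $\mu$, so that $X_i-1$ has law $\mu(\cdot+1)$ supported on $\Z_+\cup\{-1\}$. Applying part (i) with $k=-1$ yields $B_n\P(S_n=-1)\to q_1(0)$, a strictly positive constant, hence $\P(\lvert\cT\rvert=n)\sim q_1(0)/(nB_n)$. The defining equation \eqref{eq:Bn} forces $B_n = n^{1/\alpha}\widetilde{L}(n)$ for some slowly varying function $\widetilde{L}$, so writing $\ell(n) = q_1(0)/\widetilde{L}(n)$ (again slowly varying) delivers the claimed equivalent $\P(\lvert\cT\rvert=n)\sim n^{-1-1/\alpha}\ell(n)$.

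The main technical obstacle is the control of $\phi$ away from the origin in the possibly non-strongly-aperiodic setting: $\lvert\phi(t)\rvert$ may equal $1$ at some $t\neq 0$ modulo $2\pi$, and one must either assume aperiodicity or restrict the inversion to the fundamental domain of the lattice generated by the support of $\mu-1$, replacing $2\pi$ by the appropriate period. A secondary delicate point is producing the $k$-uniform error bound in the small-scale regime, which is where the Potter bounds are essential to tame the slowly varying factor $L$ along the path $\lvert t\rvert \to 0$ and to justify dominated convergence for the rescaled integral.
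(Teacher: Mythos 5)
The paper does not prove this theorem; it simply cites \cite[Theorem 4.2.1]{IL71}, so there is no authorial proof to compare against. Your sketch is the standard Fourier-analytic argument that underlies the cited result, and it is essentially correct: the inversion formula, the rescaling, the split into a low-frequency window controlled by the domain-of-attraction asymptotics and a high-frequency window controlled by $\sup_{\delta\le|t|\le\pi}|\phi(t)|<1$, the identification of the normalizer via \eqref{eq:Bn}, and the derivation of (ii) from (i) via the Otter--Dwass/Kemperman cyclic identity $\P(|\cT|=n)=\tfrac1n\P(S_n=-1)$ and the regular variation $B_n=n^{1/\alpha}\widetilde L(n)$, all go through. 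You also correctly flag the two genuine delicate points (aperiodicity/sublattice and uniformity of the dominating function). One imprecision worth fixing: because $Y^{(\alpha)}$ is \emph{spectrally positive} (and asymmetric for $\alpha<2$), the characteristic function is not real, so writing $\log\phi(t)=-c_\alpha|t|^\alpha L(1/|t|)(1+o(1))$ is not the right form of the expansion; the correct statement is $\log\phi(t)\sim \kappa\,(-it)^\alpha L(1/|t|)$ for a suitable constant $\kappa>0$ and branch of the power, which has a nontrivial phase. The pointwise convergence $\phi(u/B_n)^n\to\psi(u)$ requires this complex-valued asymptotic, while for the dominating function only the real part of $\log\phi$ is needed, and $\Re\bigl[\kappa(-it)^\alpha\bigr]$ is indeed $\sim -c|t|^\alpha$ for some $c>0$ when $\alpha\in(1,2]$; you should state these as two separate facts rather than a single modulus expansion. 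Finally, you implicitly use $q_1(0)>0$ in step (ii); this is true (the spectrally positive $\alpha$-stable law with $\alpha>1$ has a strictly positive smooth density on all of $\R$), but it deserves a sentence.
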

In particular, an important fact is that $\P(|\cT|=n)^{-1}$ grows more slowly that some polynomial in $n$. Although $q_1$ has no closed expression for $\alpha<2$, (i) can be rewritten when $\mu$ has finite variance $\sigma^2$:
\begin{align*}
\underset{k \in \Z}{\sup} \left| \sqrt{2\pi \sigma^2 n} \P \left( \sum\limits_{i=1}^n X_i = k \right) - \exp \left( -\frac{k^2}{2\sigma^2 n} \right)  \right| \underset{n \rightarrow \infty}{\rightarrow} 0.
\end{align*}

This local limit theorem allows us to understand the structure of the tree $\tilde{U}_n$. We start by setting some notations: for $a \in \Z_+$, denote by $E_a(T)$ the set of $a$-branching points of $T$ and $N_a(T) = \sum_{u \in E_a(T)} k_u(T)$ the number of vertices of $T$ that are \textit{children} of an $a$-branching point. It is straightforward by induction on $|T|$ that, for any $\epsilon >0$ and any finite tree $T$,  $\left| E_{\epsilon |T|}(T) \right| \leq \lfloor \frac{1}{\epsilon} \rfloor$. The following lemma estimates the quantity $N_{\epsilon n}(\tilde{U}_n)$ for fixed $\epsilon > 0$, and may be of independent interest.

\begin{lem}
\label{lem:branchingpoints}
Fix $\epsilon>0$. For $i \geq 1$, let $U_i$ be a uniform element of $\kU_i$, and $\tilde{U}_i$ its canonical embedding in the plane. Then the following two estimates hold:
\begin{enumerate}
\item[(i)] 
There exists a nonincreasing function $C_1$ of $\epsilon$ such that uniformly for $i \geq 2 \epsilon n$,
\begin{align*}
\E \left[ k_\emptyset(\tilde{U}_i) \mathds{1}_{\emptyset \in E_{\epsilon n}(\tilde{U}_i)} \right] \leq C_1(\epsilon) n^{-1/2}.
\end{align*}

\item[(ii)]
Let $f: \Z_+ \rightarrow \R_+$. Let $A_n$ be the event that $H(\tilde{U}_n) \leq f(n) \sqrt{n}$. Then,
\begin{align*}
\E \left[ N_{\epsilon n}(\tilde{U}_n) \mathds{1}_{A_n} \right] \leq C_2(\epsilon) f(n)
\end{align*}
for some constant $C_2(\epsilon)$.
\end{enumerate}
\end{lem}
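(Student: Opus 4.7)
I will prove (i) first by a direct computation exploiting the branching property of Galton--Watson trees, and then reduce (ii) to (i) via a subtree decomposition. The key background is Lemma~\ref{lem:poisson}, which identifies $\tilde{U}_m$ with a $\mu$-GW tree $\cT$ (with $\mu=Po(1)$) conditioned on $|\cT|=m$, together with the sharp asymptotic $\P(|\cT|=m)\sim c\,m^{-3/2}$ furnished by Cayley's formula and Stirling (equivalently, Theorem~\ref{llt}(ii)).

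\textbf{Proof of (i).} I will apply a Bayes inversion: conditionally on $k_\emptyset=k$, the $k$ subtrees $\cT_1,\ldots,\cT_k$ rooted at the children of the root are i.i.d.\ unconditioned $\mu$-GW trees, jointly conditioned only on $\sum_j|\cT_j|=i-1$. The event $\{\emptyset\in E_{\epsilon n}\}$ asks that at least two of the $|\cT_j|$ exceed $\epsilon n$, so after a union bound over pairs of indices
\begin{align*}
\E\bigl[k_\emptyset\mathds{1}_{\emptyset\in E_{\epsilon n}}\bigr]\leq \frac{1}{\P(|\cT|=i)}\sum_{k\geq 2} k\,\mu(k)\binom{k}{2}\!\!\sum_{\substack{a,b\geq \epsilon n\\a+b\leq i-1}}\!\!\P(|\cT|=a)\P(|\cT|=b)\,\P\Bigl(\sum_{j=3}^{k}|\cT_j|=i{-}1{-}a{-}b\Bigr).
\end{align*}
The pointwise bound $\P(|\cT|=m)\lesssim m^{-3/2}$ yields $\sum_{a\geq \epsilon n}\P(|\cT|=a)\lesssim (\epsilon n)^{-1/2}$, so the double sum over $(a,b)$ contributes at most $(\epsilon n)^{-1}$ once one bounds the forest-size probability uniformly via Dwass' cyclic lemma. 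Combined with $\P(|\cT|=i)\gtrsim n^{-3/2}$ (which covers the regime $i\leq n$ used in (ii); the case $i>n$ only makes things easier), the ratio is $O_\epsilon(k^2 n^{-1/2})$. Since $\mu=Po(1)$ has moments of every order, summing $\sum_k k^3\mu(k)$ converges and delivers the bound $C_1(\epsilon)n^{-1/2}$.

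\textbf{Proof of (ii).} Set $S:=\{u\in\tilde{U}_n:|\theta_u(\tilde{U}_n)|\geq 2\epsilon n\}$. Every $\epsilon n$-branching point has two children with subtree of size $\geq\epsilon n$, so its own subtree has size $\geq 2\epsilon n$, whence $E_{\epsilon n}(\tilde{U}_n)\subset S$. Moreover $S$ is closed under ancestry, hence forms a rooted subtree of $\tilde{U}_n$. Its minimal elements have pairwise disjoint subtrees of size $\geq 2\epsilon n$, so there are at most $\lfloor 1/(2\epsilon)\rfloor$ of them; covering $S$ by root-to-leaf paths within $S$ then gives the deterministic estimate
\begin{align*}
|\{u\in S:h(u)\leq D\}|\leq \tfrac{1}{2\epsilon}(D+1)\qquad\text{for every }D\geq 0.
\end{align*}
Now decompose $N_{\epsilon n}(\tilde{U}_n)\mathds{1}_{A_n}=\sum_u \mathds{1}_{u\in S}\,k_u\mathds{1}_{u\in E_{\epsilon n}}\mathds{1}_{A_n}$. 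For fixed $u$ and $j\geq 2\epsilon n$, conditioning on the ``exterior'' tree $T_{\mathrm{ext}}$ (namely $\tilde{U}_n$ with $\theta_u$ excised) makes $\theta_u$ a copy of $\tilde{U}_j$, independent of $T_{\mathrm{ext}}$, and the event $A_n$ factors as $\{H(T_{\mathrm{ext}})\leq f(n)\sqrt{n}\}\cap\{h(u)+H(\theta_u)\leq f(n)\sqrt{n}\}$. Dropping the interior constraint and applying (i) bounds the conditional expectation by $C_1(\epsilon)n^{-1/2}$. Summing over $j$ and $u$, and noting that $H(T_{\mathrm{ext}})\leq f(n)\sqrt{n}$ already forces $h(u)\leq f(n)\sqrt{n}$, I obtain
\begin{align*}
\E[N_{\epsilon n}(\tilde{U}_n)\mathds{1}_{A_n}]\leq C_1(\epsilon)\,n^{-1/2}\,\E\bigl[|\{u\in S:h(u)\leq f(n)\sqrt{n}\}|\bigr]\leq \tfrac{C_1(\epsilon)}{2\epsilon}\bigl(f(n)+n^{-1/2}\bigr),
\end{align*}
which is of the required form after a trivial case split absorbing the additive $n^{-1/2}$ into $C_2(\epsilon)f(n)$ (note that $f(n)\sqrt{n}<1$ forces $A_n$ to be a null event for $n\geq 2$).

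\textbf{Main obstacle.} Part (i) is essentially a convolution/local-limit computation; the delicate part is maintaining uniformity in $k$ after the union bound, which requires the finite higher moments of $\mu$. The more subtle conceptual issue lies in (ii): the event $A_n$ is global and correlated with $\theta_u$, so a naïve conditioning cannot apply (i) directly. The saving feature is that the height constraint splits cleanly into an interior and an exterior piece, which allows us to discard the interior constraint for free while retaining just enough exterior control to invoke the structural estimate on $|S|$.
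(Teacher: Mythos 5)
Your argument for part (ii), granted part (i), is correct and is a genuinely different route from the paper's. The paper decomposes $N_{\epsilon n}$ over heights $r \leq f(n)\sqrt n$, then invokes Duquesne's identity involving Kesten's infinite tree $\cT^*$ and a cut-tree term before applying (i). You instead condition directly on the ``exterior'' tree $Cut_u(\tilde U_n)$ and $|\theta_u|$, observe that the height constraint factors into an interior and an exterior piece, discard the interior one, apply (i), and finish with the clean deterministic bound $|\{u\in S: h(u)\leq D\}|\leq \frac{1}{2\epsilon}(D+1)$. This avoids the local-limit-plus-spine machinery and, modulo the small $+1$ bookkeeping around $h(u)$ versus $H(Cut_u(\tilde U_n))$, is tidier.

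Part (i), however, contains an arithmetic gap that makes the claimed bound fail. You assert that the double sum over $(a,b)$ contributes at most $(\epsilon n)^{-1}$, obtained by summing the two tails $\sum_{a\geq\epsilon n}\P(|\cT|=a)\lesssim(\epsilon n)^{-1/2}$ independently and bounding the forest-size probability by $1$. But then, after dividing by $\P(|\cT|=i)\gtrsim n^{-3/2}$,
\begin{align*}
\frac{(\epsilon n)^{-1}}{\P(|\cT|=i)} \;\lesssim\; (\epsilon n)^{-1}\cdot n^{3/2} \;=\; \frac{n^{1/2}}{\epsilon},
\end{align*}
which is $O(n^{1/2})$, not the required $O(n^{-1/2})$: the estimate is off by a factor of $n$. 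Bounding the forest-size probability by $1$ simply gives away the missing power. To repair this you need the triple sum over $(a,b,c)$ with $a+b+c=i-1$ to be $O((\epsilon n)^{-2})$. One way, staying close to your approach: keep the \emph{pointwise} bound $\P(|\cT|=b)\lesssim(\epsilon n)^{-3/2}$ for one subtree factor, then sum the remaining two using $\sum_{a\geq\epsilon n}\P(|\cT|=a)\lesssim(\epsilon n)^{-1/2}$ and $\sum_c\P(|\cF_{k-2}|=c)\leq 1$; this yields $(\epsilon n)^{-3/2}\cdot(\epsilon n)^{-1/2}=(\epsilon n)^{-2}$, and then $(\epsilon n)^{-2}/\P(|\cT|=i)\lesssim n^{-1/2}/\epsilon^2$ as desired. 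The paper's route instead keeps \emph{both} pointwise bounds $\P(|\cT|=t_\ell)\lesssim n^{-3/2}$ and extracts the decay $q^{-3/2}$ of the forest-size probability through Kemperman's formula and the local limit theorem, then sums over the residual size $q=i-t_1-t_2$ to win the last factor of $n$. Either patch works, but your current argument does not close.
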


Note that these results do not in fact depend on the embedding of $U_n$ in the plane. It notably relies on Lemma \ref{lem:poisson} and the local limit theorem. Let us first see how it implies Lemma \ref{lem:changedtree}.

\begin{proof}[Proof of Lemma~\ref{lem:changedtree}]
Let us first explain the main idea of this proof. On one hand, if $K_n=o(n)$, it is unlikely that we perform Operation $2$ on an $\epsilon n$-branching point. This implies that the chords that we discover until $u=K_n$ are close in $\bL(\tilde{U}_n)$ and $\bL(\tilde{U}_n^{(K_n)})$. On the other hand, if $K_n \gg \sqrt{n}$, after having discovered $K_n$ edges, $\bL_{K_n}(\tilde{U}_n)$ is already close to the Brownian triangulation $\bL_\infty^{(2)}$ which is maximum for the inclusion on the set of laminations. Since, by our first point, $\bL_{K_n}(\tilde{U}_n)$ is close to $\bL_{K_n}(\tilde{U}_n^{(K_n)})$, adding the chords labelled from $K_n+1$ to $n$ in any order will not change much the laminations and both stay close to $\bL_\infty^{(2)}$.

We now go into the details. Assume first that $K_n=o(n)$. In order to prove the first part of Lemma~\ref{lem:changedtree}, as usual, we focus on studying the large chords in both laminations. We call \textit{displacement} of a (labelled) chord $c$ of $\bL_u(\tilde{U}_n)$ the Hausdorff distance in the unit disk between $c$ and the chord with the same label in the modified lamination $\bL_u(\tilde{U}_n^{(K_n)})$.

Let us precisely study this notion of displacement: fix $\epsilon>0$ and let $x$ be a vertex of $\tilde{U}_n$ with label $e_x \leq K_n$, such that $|\theta_x(\tilde{U}_n)| > \epsilon n$. The displacement of the chord $c_x$ corresponding to $x$ is due to performing Operation $2$ on some ancestors of $x$. Therefore, the displacement of $c_x$ can be bounded by the sum of the sizes of the subtrees of the children of an ancestor of $x$ that do not contain $x$, the sum being taken over all ancestors of $x$ on which Operation $2$ is performed (that is, one of its children has label $\leq K_n$). See Fig. \ref{fig:Sdelta}, right. Remark that the length of the chords with label $e_x$ is the same in both laminations (indeed, since $x$ has label $\leq K_n$, Operation $2$ is performed on its parent and therefore $|\theta_x(\tilde{U}_n)| = |\theta_x(\tilde{U}_n^{(K_n)})|$). Hence, the displacement of the chord only corresponds to the displacement of its endpoints. 

Let us set some notation: for $x \in \tilde{U}_n$, we denote by $E(x)$ the set of ancestors of $x$ in $\tilde{U}_n$ ($x$ included), and by $\hat{E}(x)$ the set of ancestors of $x$ on which Operation $2$ is performed. The maximum possible displacement of the chord $c_x$ is defined as 
$$MPD(x) := \frac{1}{n} \sum_{\substack{v \in \hat{E}(x) \\ v \neq x}} \sum_{\substack{w \in K_v(\tilde{U}_n) \\ w \notin E(x)}} |\theta_w(\tilde{U}_n)|,$$
where $K_v(\tilde{U}_n)$ denotes the set of children of $v$. Indeed, subtrees which were on the right of the ancestral line of $x$ may be transferred to the left or conversely. This maximum possible displacement corresponds to the sum of the sizes of the green subtrees on Fig.~\ref{fig:Sdelta}, right. We admit the following statement, which we will prove later: for any $\epsilon>0$ fixed, assuming that the convergence of Theorem \ref{thm:duquesne} holds,
\begin{equation}
\label{eq:maxdisp}
\underset{x, e_x \leq K_n, |\theta_x(\tilde{U}_n)|>\epsilon n}{\sup} MPD(x) \overset{\P}{\rightarrow} 0.
\end{equation}

\begin{figure}
\center
\caption{Left: continuous setting (Aldous' CRT). $CMPD_\delta(v)$ is the sum of the sizes of the green subtrees. In red, a subtree of size $>\delta$, which is therefore not counted in $CMPD_\delta(v)$. $m_v(x)$ is the mass of the green tree rooted in $v$. Right: discrete setting (finite tree). Dots represent ancestors of $v$ on which Operation $2$ is performed, so that they have an influence on the displacement of the chord $c_v$ corresponding to $v$: the correponding subtrees are colored in green. With high probability, none of the green subtrees is large. The cross represents an ancestor of $v$ on which Operation $1$ is performed.}
\label{fig:Sdelta}
\begin{tabular}{c c c}
\begin{tikzpicture}[scale=.7]
\draw[green] (0,5) -- (-.8,6.3) -- (-.2,6.3) -- cycle;
\draw[red] (0,4) -- (1,6.5) -- (1.5,6.5) -- cycle;
\draw[green] (0,3) -- (-.8,4.3) -- (-.2,4.3) -- (0,3);
\draw[green] (0,2) -- (.8,3) -- (.2,3) -- (0,2);
\draw[green] (0,1) -- (.3,1.5) -- (.1,1.5) -- (0,2);
\draw (0,0) -- (0,6);
\draw[dashed] (0,6) -- (-.5,7.5) -- (.5,7.5) -- (0,6);
\draw[fill] (.3,6) node{$x$};
\draw[fill] (-.3,2) node{$v$};
\draw[fill] (0,-.03) circle (.03);
\draw (0,8) node{$\theta_x(\cT^{(2)})$};

\draw[fill=green] (0,5) circle (.02);
\draw[fill=red] (0,4) circle (.02);
\draw[fill=green] (0,3) circle (.02);
\draw[fill=green] (0,2) circle (.02);
\draw[fill=green] (0,1) circle (.02);
\end{tikzpicture}
&
\begin{tikzpicture}
\draw[white] (0,0) -- (5,5);
\end{tikzpicture}
&
\begin{tikzpicture}[scale=.7]
\draw (0,2) -- (0,6);
\draw[dashed] (0,6) -- (-.5,7.5) -- (.5,7.5) -- (0,6);
\draw[fill] (.3,6) node{$v$};
\draw[green] (0,5) -- (-.5,5.5) -- (-.8,6.3) -- (-.2,6.3) -- (-.5,5.5);
\draw (0,4) -- (1,5) -- (.5,6.5) -- (1.5,6.5) -- (1,5);
\draw[green] (0,3) -- (-.5,3.5) -- (-.8,4.3) -- (-.2,4.3) -- (-.5,3.5);
\draw[green] (0,3) -- (.2,3.2) -- (.3,3.5) -- (.1,3.5) -- (.2,3.2);
\draw[fill] (0,5) circle (.08);
\draw[fill] (0,3) circle (.08);
\draw (-.05,3.9) -- (.05,4.1) (.05,3.9) -- (-.05,4.1);
\draw (0,8) node{$\theta_v(\tilde{U}_n)$};
\end{tikzpicture}
\end{tabular}
\end{figure}
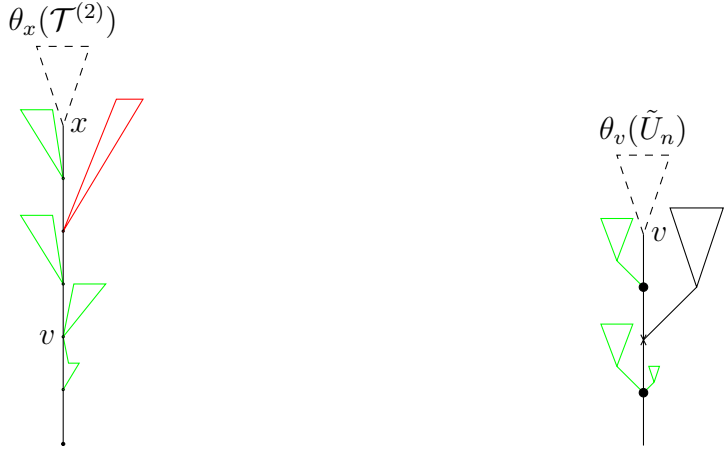

This implies that, uniformly in $u \in [0,K_n]$, with high probability as $n \rightarrow \infty$,
\begin{align*}
d_H\left( \bL_u\left( \tilde{U}_n \right), \bL_u\left( \tilde{U}_n^{(K_n)} \right) \right) \leq 2 \epsilon,
\end{align*}
which proves the first part of Lemma~\ref{lem:changedtree}.

Now, assume in addition that $K_n \gg \sqrt{n}$. Then, by Theorem~\ref{thm:discreteconvergencestable}, jointly with the convergence of Theorem~\ref{thm:duquesne}, with high probability $d_H( \bL_{K_n}(\tilde{U}_n^{(K_n)}), \bL_\infty^{(2)}) \underset{n \rightarrow \infty}{\rightarrow} 0$.
On the other hand, by the first part of Lemma~\ref{lem:changedtree}, $d_H (\bL_{K_n}(\tilde{U}_n), \bL_{K_n}(\tilde{U}_n^{(K_n)}) ) \underset{n \rightarrow \infty}{\rightarrow} 0$ in probability. Since $\bL_\infty^{(2)}$ is a maximum lamination for the inclusion, this implies that for any $\epsilon>0$:
\begin{align*}
\P \left( \exists u \in [K_n,n], d_H\left( \bL_u(\tilde{U}_n), \bL_\infty^{(2)} \right)>\epsilon \right) \rightarrow 0
\end{align*}
as $n \rightarrow \infty$, which proves the second part of Lemma~\ref{lem:changedtree}.
\end{proof}

We now need to prove \eqref{eq:maxdisp}, which states that the supremum of maximum displacements of all $x$ whose label is $\leq K_n$ and such that $|\theta_x(\tilde{U}_n)| \geq \epsilon n$ converges to $0$ in probability.

\begin{proof}[Proof of \eqref{eq:maxdisp}]
We prove in fact a slightly stronger result. Let $0<\delta<\epsilon$. We define the $\delta$-maximum possible displacement of a point $x \in \tilde{U}_n$, denoted by $MPD_\delta(x)$, as 
$$MPD_\delta(x) := \frac{1}{n} \sum_{v \in E^{(\delta)}(x)}\sum_{\substack{w \in K_v(\tilde{U}_n) \\ w \notin E(x)}} |\theta_w(\tilde{U}_n)|,$$
where $E^{(\delta)}(x)$ denotes the set of ancestors of $x$ that are not $\delta n$-branching points. We prove that, as $\delta \downarrow 0$,
\begin{equation}
\label{eq:deltabp}
\underset{\delta \downarrow 0}{\lim} \quad \underset{n \rightarrow \infty}{\limsup} \quad S_\delta(\tilde{U}_n) \quad = \quad 0
\end{equation}
in probability, where $S_\delta(\tilde{U}_n) := \underset{x, e_x \leq K_n, |\theta_x(\tilde{U}_n)|>\epsilon n}{\sup} MPD_\delta(x)$.
Let us first see how this implies \eqref{eq:maxdisp}. We only have to prove that, at $\delta$ fixed, with high probability Operation $2$ is not performed on any $\delta n$-branching point. Indeed, under this event, for all $x$, $\hat{E}(x) \subset E^{(\delta)}(x)$, and $MPD(x) \leq MPD_\delta(x)$.

To prove that, let $p_n$ be the probability that there exists a $\delta n$-branching point in $\tilde{U}_n$ having at least one child with label $\leq K_n$, conditionally given $\tilde{U}_n$. We show that $p_n \rightarrow 0$ with high probability as $n \rightarrow \infty$. First, remark that:
\begin{align*}
p_n = 1 - \frac{\binom{n-N_{\delta n}(\tilde{U}_n)}{K_n}}{\binom{n}{K_n}} \leq 1 - \left( 1-\frac{N_{\delta n}(\tilde{U}_n)}{n-K_n} \right)^{K_n}
\end{align*}
Take $g:\Z_+ \rightarrow \Z_+$ such that $g(n) \underset{n \rightarrow \infty}{\rightarrow} \infty$ and   $g(n) K_n / n \rightarrow 0$, and take $f:\Z_+ \rightarrow \Z_+$ such that $f(n) \underset{n \rightarrow \infty}{\rightarrow} \infty$ and $f(n)/g(n) \underset{n \rightarrow \infty}{\rightarrow} 0$. Then, by Lemma~\ref{lem:branchingpoints} (ii), there exists $C_2(\delta)$ such that, for $n$ large enough, $\E [ N_{\delta n} (\tilde{U}_n ) \mathds{1}_{A_n}] \leq C_2(\delta) f(n)$, where we recall that $A_n \coloneqq \{ H(\tilde{U}_n) \leq f(n)\sqrt{n} \}$. By Markov inequality and since $\P(A_n) \rightarrow 1$ as $n \rightarrow \infty$, we get that $\P( N_{\delta n} (\tilde{U}_n ) \geq g(n) | A_n) \leq 2 C_2(\delta) f(n)/g(n) \underset{n \rightarrow \infty}{\rightarrow} 0$. Hence, with high probability as $n \rightarrow \infty$, 
\begin{align*}
p_n \leq 1 - \left( 1- \frac{g(n)}{n-K_n} \right)^{K_n} \sim \frac{g(n)K_n}{n}
\end{align*}
which tends to $0$ as $n \rightarrow \infty$. Hence, with high probability, Operation $2$ is not performed on any $\delta n$-branching point and \eqref{eq:deltabp} implies \eqref{eq:maxdisp}.

Now we prove \eqref{eq:deltabp}. To this end, let us define the continuous analogue of $S_\delta(\tilde{U}_n)$ on the Brownian tree $\cT^{(2)}$. For a point $x \in \cT^{(2)}$, let $E(x)$ be the set of ancestors of $x$. Recall that $h$ is the uniform probability measure on the set of leaves of $\cT^{(2)}$ and, for $v \in E(x)$, we denote by $m_v(x)$ the $h$-mass of the connected component of $\cT^{(2)} \backslash \{v\}$ which does not contain $x$ nor the root ($m_v(x)$ may be $0$ if $v$ is not a branching point). See Fig. \ref{fig:Sdelta}, left for an example. Then, define $CMPD_\delta(x)$ (for Continuum MPD) as  $CMPD_\delta(x) := \underset{v \in E(x),v \neq x}{\sum} m_v(x) \mathds{1}_{m_v(x) \leq \delta}$ and $S_\delta(\cT^{(2)}) := \underset{x, h(\theta_x(\cT^{(2)})) > \epsilon}{\sup} CMPD_\delta(x)$.
At $\delta$ fixed, it is clear by Theorem~\ref{thm:duquesne} that, in distribution, $S_\delta(\tilde{U}_n) \rightarrow S_\delta(\cT^{(2)})$ as $n \rightarrow \infty$. What is left to prove is that, almost surely, $S_\delta(\cT^{(2)}) \rightarrow 0$ as $\delta \rightarrow 0$.
Assume that it is not the case. Then, there exists $\eta>0$ and a sequence of vertices $v_n \in \cT^{(2)}$ such that $h(\theta_{v_n}(\cT^{(2)})) > \epsilon$ and $CMPD_{1/n} (v_n) \geq \eta$ for all $n$. Since $\cT^{(2)}$ is compact, one can assume without loss of generality that $v_n$ converges to some $v_\infty \in \cT^{(2)}$. Clearly, $h(\theta_{v_\infty}(\cT^{(2)})) > \epsilon$ and $v_\infty$ should verify, for any $\delta > 0$, $CMPD_{\delta}(v_\infty) \geq \eta$, which is not possible. This provides the result. Note that we need the condition that the subtrees rooted in the vertices $(v_n)$ have sizes at least $\epsilon$. This allows to say that $CMPD_{\delta}(v_\infty) \geq \eta$ for any $\delta$, as we avoid the case of a sequence of vertices with small subtrees rooted at them, converging to a point of the skeleton of $\cT^{(2)}$.
\end{proof}

Let us finally prove the estimates of Lemma \ref{lem:branchingpoints}.

\begin{proof}[Proof of Lemma~\ref{lem:branchingpoints}]
Let us start by proving (i). In this proof, we denote by $\mu$ the $Po(1)$ distribution. In particular, $\mu$ is in the domain of attraction of a $2$-stable law. Let us denote by $\cT$ a nonconditioned $\mu$-GW tree and fix $\epsilon>0$. For $n \geq 1$ and $i \geq 2 \epsilon n$, one can write:

\begin{align*}
\E \left[ k_\emptyset(\tilde{U}_i) \mathds{1}_{\emptyset \in E_{\epsilon n}(\tilde{U}_i)} \right] &= \frac{1}{\P \left( |\cT|=i \right)} \sum_{j \in \Z_+} j \P \left( k_\emptyset(\cT)=j \right) \P \left( \left. |\cT|=i, \emptyset \in E_{\epsilon n}  (\cT) \right| k_\emptyset(\cT)=j \right)\\
&\leq \frac{1}{\P \left( |\cT|=i \right)} \sum_{j \in \Z_+} j \P \left( k_\emptyset(\cT)=j \right)  \sum\limits_{1 \leq a < b \leq j} \P \left( \left. |\cT|=i, B_{\epsilon,a,b}  \right| k_\emptyset(\cT)=j \right)
\end{align*}
where $B_{\epsilon,a,b}$ is the event that the $a$th and $b$th subtrees of the root $\emptyset$ have a subtree of size $\geq \epsilon n$. Hence, we can write
\begin{align*}
\E \left[ k_\emptyset(\tilde{U}_i) \mathds{1}_{\emptyset \in E_{\epsilon n}(\tilde{U}_i)} \right] &\leq \frac{\sum_{j\in \Z_+} j \mu_j \binom{j}{2}}{\P \left( |\cT|=i \right)} \sum\limits_{\substack{t_1 \geq \epsilon n \\ t_2 \geq \epsilon n \\ t_1+t_2 \leq i}} \P \left( |\cT|=t_1 \right) \P \left( |\cT|=t_2 \right) \P \left( \left| \mathcal{F}_{j-2} \right| = i-t_1-t_2 \right),
\end{align*}
where $\mathcal{F}_{j-2}$ is a forest of $j-2$ i.i.d. $\mu$-GW trees. Using the local limit theorem~\ref{llt} (ii), we deduce that

\begin{align*}
\E \left[ k_\emptyset(\tilde{U}_i) \mathds{1}_{\emptyset \in E_{\epsilon n} (\tilde{U}_i)} \right] &\leq C(\epsilon) n^{3/2} \sum_{j \in \Z_+} j^3 \mu_j \sum\limits_{\substack{t_1 \geq \epsilon n \\ t_2 \geq \epsilon n \\ t_1+t_2 \leq i}} n^{-3} \P \left( \left| \mathcal{F}_{j-2} \right| = i-t_1-t_2 \right)\\
&\leq C(\epsilon) n^{-3/2} \sum\limits_{\substack{t_1 \geq \epsilon n \\ t_2 \geq \epsilon n \\ t_1+t_2 \leq i}} \sum_{j \geq 2} j^3 \mu_j \frac{j-2}{i-t_1-t_2} \P \left( S_{i-t_1-t_2}=-(j-2) \right)
\end{align*}
for some constant $C(\epsilon)$, by the so-called Kemperman formula (see \cite[6.1]{Pit02}), where $S_k$ denotes the sum of $k$ i.i.d. variables of law $\mu(\cdot+1)$. Therefore, by Theorem~\ref{llt} (i), since $\mu$ has variance $1$,
\begin{align*}
\E \left[ k_\emptyset(\tilde{U}_i) \mathds{1}_{\emptyset \in E_{\epsilon n} (\tilde{U}_i)} \right] &\leq C'(\epsilon) n^{-3/2} \sum\limits_{\substack{t_1 \geq \epsilon n \\ t_2 \geq \epsilon n \\ t_1+t_2 \leq i}} \sum_{j \geq 2} j^4 \mu_j \frac{1}{(i-t_1-t_2)^{3/2}} \\
&\leq C'(\epsilon) n^{-3/2} \sum_{j \in \Z_+} j^4 \mu_j \sum_{q=1}^i \frac{i-q}{q^{3/2}} \leq C_1(\epsilon) n^{-1/2}
\end{align*}
uniformly for $i \geq 2 \epsilon n$, for some constants $C'(\epsilon), C_1(\epsilon)$. Note that we use the fact that $\mu$ has a finite fourth moment. Remark that there exists a nonincreasing choice of $C_1$ since, almost surely, $k_\emptyset(\tilde{U}_i) \mathds{1}_{\emptyset \in E_{\epsilon n}(\tilde{U}_i)} \geq k_\emptyset(\tilde{U}_i) \mathds{1}_{\emptyset \in E_{\epsilon' n}(\tilde{U}_i)}$ for $\epsilon \leq \epsilon'$.

\bigskip

Now we prove Lemma~\ref{lem:branchingpoints} (ii). Remember that we denote by $A_n$ the event $ \{ H(\tilde{U}_n) \leq f(n) \sqrt{n} \}$. Then:

\begin{align*}
\E \left[ N_{\epsilon n}(\tilde{U}_n) \mathds{1}_{A_n} \right] &= \E \left[ \mathds{1}_{A_n} \sum_{u \in \tilde{U}_n} k_u(\tilde{U}_n) \mathds{1}_{u \in E_{\epsilon n}(\tilde{U}_n)} \right] = \E \left[ \mathds{1}_{A_n} \sum_{r=0}^{f(n) \sqrt{n}}\sum\limits_{u \in \tilde{U}_n, |u|=r} k_u(\tilde{U}_n) \mathds{1}_{u \in E_{\epsilon n}(\tilde{U}_n)} \right]\\
&\leq \frac{1}{\P \left( |\cT|=n \right)} \sum_{r=0}^{f(n) \sqrt{n}} \E \left[ \mathds{1}_{|\cT|=n} \sum\limits_{\substack{u \in \cT, |u|=r}} k_u(\cT) \mathds{1}_{u \in E_{\epsilon n}(\cT)} \right]\\
&= \frac{1}{\P \left( |\cT|=n \right)}\sum_{r=0}^{f(n) \sqrt{n}} \sum_{i=0}^n \E \left[  \sum\limits_{\substack{u \in \cT, |u|=r}} k_u(\cT) \mathds{1}_{|Cut_u(\cT)|=n-i} \mathds{1}_{u \in E_{\epsilon n}(\cT), |\theta_u(\cT)|=i} \right].
\end{align*}
where, following \cite{Duq08}, we set $\theta_u(\cT)$ the subtree of $\cT$ rooted at $u$, and $Cut_u(\cT)$ the tree $\cT$ cut at the vertex $u$ ($\theta_u(\cT)$ is erased, along with the edge from $u$ to its parent).

Let us now mention the existence, when $\mu$ is critical with finite variance, of the \textit{local limit} $\cT^*$ of the conditioned $\mu$-GW trees $\left( \cT_n \right)_{n \in \Z_+}$. This limit is defined as the random variable on the set of infinite trees, such that, for any $r \in \Z_+$, 
\begin{align*}
B_r(\cT_n) \underset{n \rightarrow \infty}{\rightarrow} B_r(\cT^*)
\end{align*}
in distribution, where $B_r$ denotes the ball of radius $r$ centered at the root, for the graph distance. Its structure is known: $\cT^*$ is an infinite tree called Kesten's tree (see \cite{Kes86, AD13} for background), made of a unique infinite spine on which i.i.d. nonconditioned $\mu$-GW trees are planted. Notably, asymptotic local properties of large GW trees can be observed on $\cT^*$. In particular, by \cite[Equation $23$]{Duq08}, we get that for any $r \in \llbracket 0,f(n) \sqrt{n} \rrbracket$, any $i \in \llbracket 0,n \rrbracket$,
\begin{align*}
\E \left[ \sum\limits_{\substack{u \in \cT, |u|=r}} k_u(\cT) \mathds{1}_{|Cut_u(\cT)|=n-i}  \mathds{1}_{u \in E_{\epsilon n}(\cT), |\theta_u(\cT)|=i} \right] &= \P \left( \left| Cut_{U^*_r}(\cT^*)\right| = n-i \right) \\
& \qquad \qquad \times \E \left[ k_\emptyset(\cT) \mathds{1}_{\emptyset \in E_{\epsilon n}(\cT)} \mathds{1}_{|\cT|=i} \right]
\end{align*}
where $U^*_r$ is the vertex of the unique infinite branch of $\cT^*$ at height $r$ (see \cite{Kes86} for more background).

Remark that, if $\emptyset \in E_{\epsilon n}(\cT)$ then $|\cT| \geq  2 \epsilon n$. This allows us to write by Lemma~\ref{lem:branchingpoints} (i) and Theorem~\ref{llt} (ii), uniformly for $i \geq 2 \epsilon n$,
\begin{align*}
\frac{1}{\P \left( |\cT|=n \right)}\E \left[ k_\emptyset(\cT) \mathds{1}_{|\cT|=i} \mathds{1}_{\emptyset \in E_{\epsilon n}(\cT)} \right] &\leq \frac{\P \left( |\cT|=i \right)}{\P \left( |\cT|=n \right)} \E \left[ k_\emptyset(\tilde{U}_i) \mathds{1}_{\emptyset \in E_{\epsilon n}(\tilde{U}_i)} \right]\\
&\leq C_2(\epsilon) n^{-1/2}
\end{align*}
for some constant $C_2(\epsilon)$, which leads to
\begin{align*}
\E \left[ N_{\epsilon n}(\tilde{U}_n) \mathds{1}_{A_n} \right] &\leq \sum_{r=0}^{f(n) \sqrt{n}} \sum_{i=0}^n \P \left( \left| Cut_{U^*_r}(\cT^*)\right| = n-i \right) C_2(\epsilon) n^{-1/2}\\
&\leq f(n) \sqrt{n} \, C_2(\epsilon) n^{-1/2}.
\end{align*}
This completes the proof.
\end{proof}

\begin{rk}
The result holds as well for any $\mu$-Galton-Watson tree conditioned to have $n$ vertices, provided that $\mu$ is critical and has a finite fourth moment. 
\end{rk}

\subsection{Convergence of the associated noncrossing partitions}

The last part of this section is devoted to the study of the "last" transpositions of a minimal factorization of the $n$-cycle. More precisely, we investigate here a second way of coding a minimal factorization $t \coloneqq (t_1, \ldots, t_{n-1}) \in \kM_n$, which allows to get a grasp of the behaviour of its "end".

On one hand, for $u \in [ 0,n ]$, let us denote by $C_u(t)$ the union of the circle and all chords corresponding to the first $\lfloor u \rfloor$ transpositions that appear in $t$: $t_1, \ldots, t_{\lfloor u \rfloor}$. This lamination is simply the lamination $\cC(t)$, restricted to the first $\lfloor u \rfloor$ chords drawn in the process.

On the other hand, for $u \in [0,n]$, denote by $P_u(t)$ the union of the circle and the chords $[e^{-2i\pi \ell/n}, e^{-2i\pi \ell'/n}]$, where $\ell$ and $\ell'$ are two consecutive elements of a cycle of the partial product $t_1 \ldots t_{\lfloor u \rfloor}$. The faces of this lamination that have only chords in their boundary are called blocks of the lamination (see Fig. \ref{fig:partition}, right for an example; the hatched part is a block). Notice that $P_u(t)$ is a lamination, and notably the interior of a block is left empty. This new lamination corresponds to the noncrossing partition of $\llbracket 1,n \rrbracket$ induced by the cycles of $t_1 \ldots t_{\lfloor u \rfloor}$.

\begin{figure}[ht!]
\center
\caption{The two laminations $L_5(t)$ and $P_5(t)$, where $t \coloneqq (34)(89)(35)(13)(16)(18)(23)$ is a minimal factorization of the $9$-cycle. The hatched part is a block of $P_5(t)$.}
\label{fig:partition}
\begin{tabular}{c c c}
\begin{tikzpicture}[scale=0.7, every node/.style={scale=0.7}, rotate=-40]
\draw (0,0) circle (3);
\foreach \i in {1,...,9}
{
\draw[auto=right] ({3.3*cos(-(\i-1)*360/9)},{3.3*sin(-(\i-1)*360/9)}) node{\i};
}
\draw ({3*cos(-2*360/9)},{3*sin(-2*360/9)}) -- ({3*cos(-3*360/9)},{3*sin(-3*360/9)});
\draw ({3*cos(2*360/9)},{3*sin(2*360/9)}) -- ({3*cos(1*360/9)},{3*sin(1*360/9)});
\draw ({3*cos(-2*360/9)},{3*sin(-2*360/9)}) -- ({3*cos(-4*360/9)},{3*sin(-4*360/9)});
\draw ({3*cos(-2*360/9)},{3*sin(-2*360/9)}) -- ({3*cos(0*360/9)},{3*sin(0*360/9)});;
\draw ({3*cos(-5*360/9)},{3*sin(-5*360/9)}) -- ({3*cos(0*360/9)},{3*sin(0*360/9)});
\end{tikzpicture}
&
\begin{tikzpicture}
\draw[white] (0,0) -- (2,0);
\end{tikzpicture}
&
\begin{tikzpicture}[scale=0.7, every node/.style={scale=0.7}, rotate=-40]
\draw (0,0) circle (3);
\foreach \i in {1,...,9}
{
\draw[auto=right] ({3.3*cos(-(\i-1)*360/9)},{3.3*sin(-(\i-1)*360/9)}) node{\i};
}
\draw ({3*cos(2*360/9)},{3*sin(2*360/9)}) -- ({3*cos(1*360/9)},{3*sin(1*360/9)});
\draw[pattern=crosshatch dots, pattern color=blue!50] ({3*cos(0*360/9)},{3*sin(0*360/9)}) -- ({3*cos(-2*360/9)},{3*sin(-2*360/9)}) -- ({3*cos(-3*360/9)},{3*sin(-3*360/9)}) -- ({3*cos(-4*360/9)},{3*sin(-4*360/9)}) -- ({3*cos(-5*360/9)},{3*sin(-5*360/9)}) -- cycle;
\end{tikzpicture}
\end{tabular}
\end{figure}
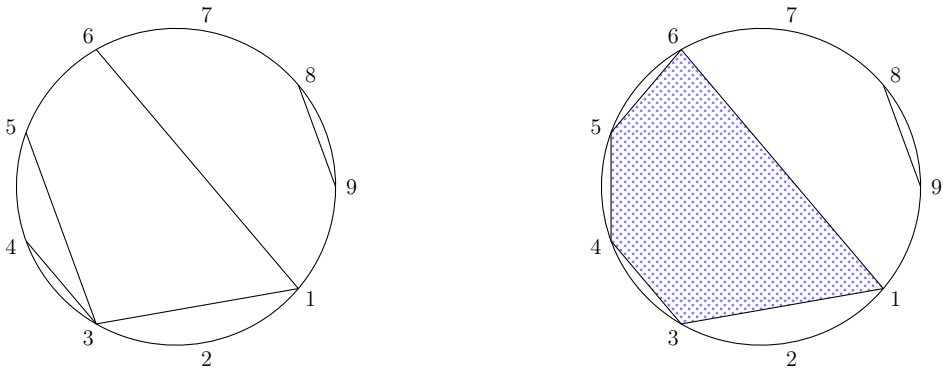

As before, let us denote by $t^{(n)}$ a uniform minimal factorization of the $n$-cycle. We set, for $u \in [0,n]$, $C_u^{(n)} = C_u(t^{(n)})$ and $P_u^{(n)} = P_u(t^{(n)})$. In particular, for $c \geq 0$, $C_{c \sqrt{n}}^{(n)} = \cL_c^{(n)}$. The next theorem answers a question of Féray and Kortchemski \cite{FK17}, who asked for a joint convergence of the lamination-valued processes $(C_{u}^{(n)})_{u \in [0,n]}$ and $(P_{u}^{(n)})_{u \in [0,n]}$. More precisely, the following two convergences hold jointly with Theorem \ref{thm:dicretecv2}, respectively in $\D(\R_+, \bL(\D)^2)$ and $\D(\R_+, \bL(\D))$:

\begin{thm}
\label{thm:jointpartition}
\begin{itemize}
\item[(i)]
The two processes asymptotically have the same behaviour at order $\sqrt{n}$:
\begin{align*}
\left( C^{(n)}_{c \sqrt{n}}, P^{(n)}_{c \sqrt{n}} \right)_{c \geq 0} \overset{(d)}{\rightarrow} \left( \bL_c, \bL_c \right)_{c \geq 0}.
\end{align*}
\item[(ii)]
Jointly with (i), the second process behaves as follows near $n$:
\begin{align*}
\left( P^{(n)}_{n-c \sqrt{n}} \right)_{c \geq 0} \overset{(d)}{\rightarrow} \left(\bL'_c \right)_{c \geq 0},
\end{align*}
where $\left(\bL'_c \right)_{c \geq 0}$ is distributed as $\left(\bL_c \right)_{c \geq 0}$, and is independent of $\left(\bL_c \right)_{c \geq 0}$ conditionally to $\bL_\infty$.
\end{itemize}
\end{thm}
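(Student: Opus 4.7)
The plan for part (i) is to leverage Theorem~\ref{thm:dicretecv2}, which already gives $(C^{(n)}_{c\sqrt{n}})_c \overset{(d)}{\to} (\bL_c^{(2)})_c$, and to show in addition that $d_{Sk}((C^{(n)}_{c\sqrt{n}})_c, (P^{(n)}_{c\sqrt{n}})_c) \overset{\P}{\to} 0$ on every compact interval $[0,T]$. The key deterministic bound I would use is $d_H(C^{(n)}_u, P^{(n)}_u) \leq \max_i \mathrm{diam}(CC_i)$, where the $CC_i$ are the connected components of $C^{(n)}_u$ seen as compact subsets of $\overline{\mathbb{D}}$: indeed, each chord of $P^{(n)}_u$ joins elements belonging to a common cycle of the partial product $t_1\cdots t_u$, and through the Goulden-Yong bijection these cycles coincide exactly with the connected components of $C^{(n)}_u$. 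It then remains to control $\max_i \mathrm{diam}(CC_i)$ at time $T\sqrt{n}$. Through the bijection, these components correspond to subtrees of $\tilde{T}(t^{(n)})$ induced by cutting the edges going to vertices of label $>T\sqrt{n}+1$, and the uniform smallness of their circular extent can be obtained by combining the convergence $\tilde{T}(t^{(n)}) \to$ Brownian CRT, Lemma~\ref{lem:branchingpoints}, and the shuffling technique of Section~\ref{ssec:shuffling}.

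For part (ii), the starting observation is that $P^{(n)}_{n-c\sqrt{n}}$ encodes, via its polygons, the cycle partition of $t_1\cdots t_{n-c\sqrt{n}}$, which through the Goulden-Yong bijection corresponds to the decomposition of $\tilde{T}(t^{(n)})$ obtained by cutting the edges going to the $c\sqrt{n}-1$ vertices with the largest labels. After the shuffling argument of Section~\ref{ssec:shuffling}, these vertices can be taken to be uniformly chosen conditionally on the unlabelled tree, and in the scaling limit this corresponds to placing $c\sqrt{n}$ uniform cuts on the skeleton of the Brownian CRT, which is precisely the construction producing $\bL_c^{(2)}$ via Theorem~\ref{thm:existence}. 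Hence $(P^{(n)}_{n-c\sqrt{n}})_c \to (\bL'_c)_c$ with $\bL'_c \stackrel{(d)}{=} \bL_c$. For the conditional independence given $\bL_\infty$, the crucial point is that, conditionally on the unlabelled Goulden-Yong tree (whose limit encodes $\bL_\infty$), the sets of vertices carrying the smallest and the largest $c\sqrt{n}$ labels are asymptotically two independent uniform subsets of size $c\sqrt{n}$; this yields the conditional independence of $\bL_c$ and $\bL'_c$ given $\bL_\infty$.

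The hard part, I expect, will be making the reduction in part (ii) fully rigorous: the process $(P^{(n)}_u)_u$ is not monotone in $u$ (each step alters polygons as cycles merge or split), so the Skorokhod convergence has to be argued by carefully controlling, uniformly in $c$ on compact intervals, the polygon chords drawn around each cycle. This reduces in turn to bounding the circular extent of the subtrees of $\tilde{T}(t^{(n)})$ rooted at uniformly chosen vertices of asymptotic density $c/\sqrt{n}$; the uniform height bounds and branching-point estimates developed in Section~\ref{ssec:proof} should provide the necessary tools, but assembling them into a uniform Skorokhod bound, jointly with the convergence of part (i), is where the bulk of the technical work lies.
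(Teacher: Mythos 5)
Your plan for part~(i) has a genuine gap in the proposed deterministic bound. You claim $d_H(C^{(n)}_u, P^{(n)}_u)\leq \max_i \mathrm{diam}(CC_i)$, where $CC_i$ ranges over the connected components of the chord configuration $C^{(n)}_u$. That inequality is formally true (the chords of $C^{(n)}_u$ and of $P^{(n)}_u$ inside the $i$-th cycle all live in the block, whose diameter is that of $CC_i$), but it is useless: $\max_i \mathrm{diam}(CC_i)$ does \emph{not} go to $0$. Indeed, by Theorem~\ref{thm:dicretecv2}, $C^{(n)}_{c\sqrt n}\to\bL^{(2)}_c$, which for $c>0$ contains macroscopic chords with positive probability; any such chord lies in a single component and already forces $\mathrm{diam}(CC_i)=\Omega(1)$. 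What actually makes $P^{(n)}_u$ and $C^{(n)}_u$ close is a much more local phenomenon --- each chord of $P^{(n)}_u$ spans a small \emph{gap} in the block boundary, not that the block itself is small. This is precisely the content of~\cite[Lemma~29]{FK17}, which the paper invokes for part~(i); the quantity to control is not the diameter of a component but the width of the region enclosed between a $P$-chord and the corresponding path of $C$-chords, and that requires a local argument, not a global diameter bound.

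For part~(ii), your high-level strategy matches the paper's: identify $P^{(n)}_{n-c\sqrt n}$ with a lamination built from the vertices carrying the largest $\lfloor c\sqrt n\rfloor$ labels, then use a shuffling/symmetry argument to conclude. However, you have flagged "the hard part" without giving a concrete mechanism for it, and that hard part is exactly where the paper puts most of its effort. The paper's route is (a) to introduce the reversed chord process $\overset{\leftarrow}{C}^{(n)}_u$ (chords of the last $\lfloor u\rfloor$ transpositions), (b) to prove by an explicit label-reversing involution $\ell\mapsto n+2-\ell$ on $\kU_n$, pushed through Goulden--Yong, that $\overset{\leftarrow}{C}^{(n)}\overset{(d)}{=}C^{(n)}$ (Lemma~\ref{lem:closeinverselamination}~(i)), and (c) to show $d_{Sk}(\overset{\leftarrow}{C}^{(n)}_{c\sqrt n}, P^{(n)}_{n-c\sqrt n})\to 0$ uniformly for $c\leq\log n$ (Lemma~\ref{lem:closeinverselamination}~(ii)). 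Step~(c) is delicate and rests on a structural statement (Lemma~\ref{lem:structure}) that the high-label vertices $A_{H_n}$ are well spread: no ancestral chain of length three lies entirely in $A_{H_n}$, and no point of $A_{H_n}$ is an $\epsilon n$-node or a child of one; the argument then goes chord by chord, matching each large chord of $\overset{\leftarrow}{C}^{(n)}_u$ (resp.\ of $P^{(n)}_{n-u}$) to a nearby chord of the other by looking one or two generations up or down in the tree. Your proposed reduction "bound the circular extent of subtrees rooted at uniformly chosen vertices" is not the right quantity for the same reason as in part~(i) --- those extents are not uniformly small --- and it omits the symmetry input~(b), which is what lets you transfer the already-proved forward convergence to the reversed process. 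Your final point about conditional independence given $\bL_\infty$ (two asymptotically independent uniform label sets, conditionally on the unlabelled tree) does coincide with the paper's argument and is sound, modulo the coupling the paper spells out to remove the disjointness conditioning.
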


In other terms, roughly speaking, the process $(P_{u}^{(n)})_{u \in [0,n]}$ is increasing at the beginning, when one adds chords which create new blocks in the corresponding partition, and decreasing later when blocks merge, which makes chords disappear.
In addition, these "increasing" and "decreasing" phases are asymptotically independent, conditionally to $\bL_\infty \coloneqq \underset{n \rightarrow \infty}{\lim} C^{(n)}_{n}$. This partition process gives therefore more information on $t^{(n)}$ than $(C_u^{(n)})$, as it explains the joint behaviour of its first and last transpositions. Note that these results were already conjectured in \cite[page $7$]{FK17}. 

We leave the proof of Theorem \ref{thm:jointpartition} (i) to the reader; it is a consequence of Theorem \ref{thm:dicretecv2} and \cite[Lemma $29$]{FK17}, which states that $P^{(n)}_u$ and $C^{(n)}_u$ are close with high probability, jointly for $u \leq \sqrt{n} \log n$.

Let us then focus on the proof of Theorem \ref{thm:jointpartition} (ii). The idea is to investigate the structure of the random tree $T(t^{(n)})$ and deduce a relation, for $u$ large, between the lamination $P_u^{(n)}$ and the set of chords of $C_n^{(n)}$ that have not yet been drawn at time $u$. To this end, denote, for $t \in \kM_n$ and $u \in [0,n]$, $\overset{\leftarrow}{C}_u(t)$ the lamination made only of the chords associated to the \textit{last} $\lfloor u \rfloor$ transpositions that appear in $t$. We set in addition $\overset{\leftarrow}{C}^{(n)}_u = \overset{\leftarrow}{C}_u(t^{(n)})$ this "inverse lamination" drawn from a uniform minimal factorization $t^{(n)}$. Then, the new process $\overset{\leftarrow}{C}^{(n)}$ is closely related to the partition process $P^{(n)}$:

\begin{lem}
\label{lem:closeinverselamination}
The process $\left(\overset{\leftarrow}{C}_{u}^{(n)}\right)_{u \in [0,n]}$ satisfies the following two properties:
\begin{itemize}
\item[(i)] In distribution,
$$\left(\overset{\leftarrow}{C}_{u}^{(n)}\right)_{u \in [0,n]} \overset{(d)}{=} \left(C_{u}^{(n)}\right)_{u \in [0,n]}$$.
\item[(ii)] The following holds in probability, as $n \rightarrow \infty$:
\begin{align*}
d_{Sk} \left( \left( \overset{\leftarrow}{C}^{(n)}_{c \sqrt{n}} \right)_{0 \leq c \leq \log n}, \left( P^{(n)}_{n-c \sqrt{n}} \right)_{0 \leq c \leq \log n} \right) \overset{\P}{\rightarrow} 0.
\end{align*}
\end{itemize}
\end{lem}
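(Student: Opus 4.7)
The plan is to prove the two parts separately. For (i), the idea is to exhibit a measure-preserving involution on $\mathfrak{M}_n$ that swaps the forward and backward chord sequences up to a disk reflection, and to combine this with a reflection-invariance of the forward process. For (ii), the idea is a chord-by-chord geometric comparison using the noncrossing chord tree $\cC(t^{(n)})$ produced by the Goulden-Yong construction.

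For (i), I would introduce $\Phi : \mathfrak{M}_n \to \mathfrak{M}_n$ defined by $\Phi(t)_j = \rho t_{n-j} \rho$, where $\rho : i \mapsto n+1-i$. A direct computation shows that the product of the $\Phi(t)_j$'s equals $\rho\sigma^{-1}\rho = \sigma$, so $\Phi$ maps $\mathfrak{M}_n$ to itself and is clearly an involution, hence a measure-preserving bijection. Since $\rho$ induces a reflection isometry $\tilde\rho$ of $\oD$, one verifies $C_u(\Phi(t)) = \tilde\rho\bigl(\overset{\leftarrow}{C}_u(t)\bigr)$, yielding $\bigl(\overset{\leftarrow}{C}^{(n)}_u\bigr)_u \overset{(d)}{=} \bigl(\tilde\rho\, C^{(n)}_u\bigr)_u$. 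The proof is then completed by establishing the $\tilde\rho$-invariance of the law of $\bigl(C^{(n)}_u\bigr)_u$, which follows from an orbit-counting argument combining the reversal bijection (between orderings of a fixed noncrossing tree giving $\sigma$ and those giving $\sigma^{-1}$) with the cyclic symmetry of $\mathfrak{M}_n$.

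For (ii), the starting point is that $\cC(t^{(n)})$ is a noncrossing tree in the disk, and removing the chords corresponding to $t_1, \ldots, t_{n-c\sqrt n}$ yields a sublamination (which differs from $\overset{\leftarrow}{C}^{(n)}_{c\sqrt n}$ by at most a single chord) whose faces are in natural bijection with the blocks of $\sigma_{n-c\sqrt n}$. For each block $B = \{v_1 < \cdots < v_k\}$, the noncrossing character of $\sigma_{n-c\sqrt n}$ forces the cycle of $B$ to be the canonical $(v_1, v_2, \ldots, v_k)$, so the polygon of $B$ in $P^{(n)}_{n-c\sqrt n}$ consists of the chords $[v_i, v_{i+1}]$ (cyclically). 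Each such polygon chord is Hausdorff-close to the corresponding segment of $B$'s face boundary in $\overset{\leftarrow}{C}^{(n)}_{c\sqrt n}$---either to a circle arc or to a bounding chord of a nested block---with error bounded by the circular arc length from $v_i$ to $v_{i+1}$. Summing over blocks, $d_H\bigl(P^{(n)}_{n-c\sqrt n}, \overset{\leftarrow}{C}^{(n)}_{c\sqrt n}\bigr)$ is bounded by the maximum such arc gap. The main obstacle is to uniformly control this arc gap for $c \in [0, \log n]$: applying (i) and Theorem~\ref{thm:dicretecv2} yields the Skorokhod convergence $\bigl(\overset{\leftarrow}{C}^{(n)}_{c\sqrt n}\bigr)_c \to \bigl(\bL^{(2)}_c\bigr)_c$, Proposition~\ref{prop:properties}(ii) ensures that the faces of $\bL^{(2)}_c$ shrink uniformly as $c \to \infty$, and a standard tightness argument extends this control to the slowly-growing window $c \in [0, \log n]$.
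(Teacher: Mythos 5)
For part (i), your involution $\Phi$ and the identity $C_u(\Phi(t)) = \widetilde{\rho}\bigl(\overset{\leftarrow}{C}_u(t)\bigr)$ are correct, and $\Phi$ is in fact the same involution as the one the paper uses, merely expressed on the factorization side rather than via the label swap $e_x \mapsto n+2-e_x$ on $\kU_n$ and the Goulden--Yong bijection. The gap is in the concluding step: the process $\bigl(C^{(n)}_u\bigr)_u$ is \emph{not} $\widetilde{\rho}$-invariant in law at finite $n$, so no orbit-counting argument can close the argument in the form you state it. Already at $n=3$ one checks directly (with $\widetilde{\rho}$ the reflection $a\mapsto 4-a$) that, conditionally on the full lamination being $\{[1,2],[1,3]\}$, the chord $C^{(3)}_1$ is deterministically $[1,2]$, whereas $\widetilde{\rho}\,C^{(3)}_1$ is deterministically $[1,3]$; the joint laws of $(C^{(3)}_1,C^{(3)}_2)$ and $(\widetilde{\rho}C^{(3)}_1,\widetilde{\rho}C^{(3)}_2)$ thus differ even though the one-dimensional marginals coincide. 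The structural obstruction is that the only permutations conjugating $(1\,2\,\cdots\,n)$ to itself are its own powers (rotations); any bijection of $\kM_n$ realizing a disk \emph{reflection} must simultaneously reverse the reading order of the transpositions --- which is precisely $\Phi$ --- so the ``cyclic symmetry $+$ reversal'' route is circular. Reflection invariance is recovered only for the limit $(\bL^{(2)}_c)_c$ (by time-reversal invariance of the Brownian excursion), not for the finite-$n$ process, and the lemma cannot be reduced to it.

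For part (ii), the chord-by-chord comparison is the right spirit but the quantitative step does not hold up. Bounding the displacement of a polygon side $[v_i,v_{i+1}]$ by ``the circular arc length from $v_i$ to $v_{i+1}$'' is vacuous for large blocks, and the claim that each polygon chord is close to ``a bounding chord of a nested block'' identifies the wrong object and is not justified. The paper's argument supplies exactly the missing structural input: it proves (Lemma~\ref{lem:structure}) that with high probability the set $A_{H_n}$ of vertices of $T(t^{(n)})$ with the $H_n$ largest labels contains no ancestral chain of length three, and contains no $\epsilon n$-node nor any child of one; it then uses condition $(C_\Delta)$ and the face structure of $\cC(t^{(n)})$ to show, in both directions, that every large chord of $\overset{\leftarrow}{C}^{(n)}_u$ is close to a boundary chord of a block of $P^{(n)}_{n-u}$ and conversely, uniformly for $u\leq H_n$. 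Your sketch gives no replacement for this control, and ``applying (i) and Theorem~\ref{thm:dicretecv2}'' is not available at this point since Theorem~\ref{thm:dicretecv2} is exactly what part (ii) is being used to help establish.
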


Let us immediately see how it implies Theorem \ref{thm:jointpartition} (ii). In what follows, we set $H_n \coloneqq \lfloor \sqrt{n} \log n \rfloor$.

\begin{proof}[Proof of Theorem \ref{thm:jointpartition}]
First, remark that by definition, for all $c \geq 0$, $C_{c \sqrt{n}}^{(n)} = \cL^{(n)}_{c}$. Therefore, by Lemma \ref{lem:closeinverselamination} (i) and (ii), 
\begin{align*}
\left( P^{(n)}_{n-c \sqrt{n}} \right)_{c \geq 0} \overset{(d)}{\rightarrow} (\bL'_c)_{c \geq 0},
\end{align*}
where $(\bL'_c)_{c \geq 0}$ is distributed as $(\bL_c)_{c \geq 0}$ (recall that $(\bL_c)_{c \geq 0}$ is the limit of the process $(\cL_c^{(n)})_{c \geq 0}$ constructed from the first transpositions in $t^{(n)}$). The only thing that we have to prove is that, conditionally to $\bL_\infty$, the processes $(\bL'_c)_{c \geq 0}$ and $(\bL_c)_{c \geq 0}$ are independent. By Lemma \ref{lem:closeinverselamination} (ii), it is enough to prove that $(C^{(n)}_{c \sqrt{n}})_{c \geq 0}$ and $(\overset{\leftarrow}{C}^{(n)}_{c \sqrt{n}})_{c \geq 0}$ are, in some sense, asymptotically independent. To this end, remember that the (non plane) tree $T(t^{(n)})$ is uniform among rooted trees of size $n$ with non-root vertices labelled from $2$ to $n$. Therefore, conditionally to the structure of this tree (that is, forgetting about labels), the sets $D_{H_n}$ (resp. $A_{H_n}$) of vertices labelled between $2$ and $H_n+1$ (resp. between $n + 1 - H_n $ and $n$) are two uniform sets of $H_n$ non-root vertices of $T(t^{(n)})$. Furthermore, $D_{H_n}$ and $A_{H_n}$ are independent conditionally to being disjoint. Notice finally that, conditionally to $(D_{H_n}, A_{H_n})$, the processes $(C_u^{(n)})_{u \leq H_n}$ and $(\overset{\leftarrow}{C}^{(n)}_u)_{u \leq H_n}$ are distributed as follows: order the vertices of $D_{H_n}$ (resp. $A_{H_n}$) uniformly at random, and draw the associated chords in this order.

We will prove that, roughly speaking, as $n \rightarrow \infty$, asymptotically we can get rid of this conditioning to be disjoint. In other words, there is only a small difference between two independent sets of $H_n$ vertices of the tree, and two such sets conditioned to be disjoint, in the sense that they give birth to close lamination-valued processes. To prove this, let us provide a way of sampling $D_{H_n}$ and $A_{H_n}$: first sample $D_{H_n}$, a $H_n$-tuple of non-root vertices in the tree, and then sample $A$ a $H_n$-tuple of non-root vertices, independent of $D_{H_n}$. Then remove from $A$ the vertices of $A$ that are in $D_{H_n}$, and resample $B$, a $|A \cap D_{H_n}|$-tuple of non-root vertices of the tree, independent of $D_{H_n}$ and $A$, conditioned to contain no vertex of $A \cup D_{H_n}$. Then, set $A_{H_n}=(A \backslash D_{H_n}) \cup B$. It is clear that $(D_{H_n}, A_{H_n})$ is distributed as a couple of uniform sets of $H_n$ vertices of the tree, conditioned to be disjoint. 

Now, we show that with high probability no point of $B \cup (A \cap D_{H_n})$ codes a large chord in the unit disk. This will prove that there is asymptotically no difference between the sets of chords coded respectively by the vertices of $A$ and the vertices of $A_{H_n}$. Roughly speaking, this will imply that only points of $A \backslash D_{H_n}$ and $D_{H_n} \backslash A$ matter, and thus that the lamination-valued processes corresponding to $D_{H_n}$ and $A_{H_n}$ (recall that it consists in ordering uniformly at random the vertices of the set, and drawing the associated chords in this order) are asymptotically independent. To prove this, remark that, by Markov inequality, 
$$\P(|A \cap D_{H_n}| \geq (\log n)^3) \leq n (H_n/n)^2 (\log n)^{-3} \leq (\log n)^{-1}.$$ 
Thus, with high probability $|B \cup (A \cap D_{H_n})|\leq (\log n)^3$. Now, fix $\epsilon>0$ and remark that for $h \leq H(T(t^{(n)}))$, at most $1/\epsilon$ points in the tree at height $h$ are the root of a subtree of size $\geq \epsilon n$. This implies that, with high probability, by Theorem \ref{thm:duquesne}, there are less than $\sqrt{n} \log n$ such points in the whole tree. Hence, the intersection of the set of such points with $B \cup (A \cap D_{H_n})$ is empty with high probability. The result follows.
\end{proof}

We finish by proving the technical lemma \ref{lem:closeinverselamination}.

\begin{proof}[Proof of Lemma \ref{lem:closeinverselamination} (i)]
The idea is again to study the non plane tree $T(t^{(n)})$. Remember that this tree has the law of a uniform element of the set $\kU_n$, that is, the set of non plane rooted trees whose non-root vertices are labelled from $2$ to $n$. Define $g: \kU_n \rightarrow \kU_n$ the involution which consists in changing the label $e_x$ of each non-root vertex $x$ in a tree $T \in \kU_n$ to $n+2-e_x$. Then, for $F \in \kM_n$, the tree $g(T(F))$ is the image of a factorization $\overset{\leftarrow}{F}$ by the Goulden-Yong bijection, which verifies
\begin{align*}
\left( C_u \left( \overset{\leftarrow}{F} \right) \right)_{u \in [0,n]} = \left( \overset{\leftarrow}{C}_u(F) \right)_{u \in [0,n]}.
\end{align*}
Since $t^{(n)}$ is uniform on $\kM_n$, $\overset{\leftarrow}{t^{(n)}}$ is uniform on $\kM_n$ as well and Lemma \ref{lem:closeinverselamination} (i) follows.
\end{proof}

In order to prove Lemma \ref{lem:closeinverselamination} (ii), we focus as usual on large chords of these lamination-valued processes. Fixing $\epsilon>0$, we shall check that, jointly for all $u \leq H_n \coloneqq \sqrt{n} \log n$, for any chord of $\overset{\leftarrow}{C}_u^{(n)}$ of length $> \epsilon$ there is always a chord of $P_u^{(n)}$ close to it, and conversely any chord of $P_u^{(n)}$ of length $> \epsilon$ can be approximated by a large chord of $\overset{\leftarrow}{C}_u^{(n)}$. 

\bigskip

Let $A_{H_n}$ be the set of vertices in $T(t^{(n)})$ with labels between $n-H_n$ and $n$. The proof of Lemma \ref{lem:closeinverselamination} (ii) is based on the following result, which provides useful properties of the set of vertices $A_{H_n}$:

\begin{lem}
\label{lem:structure}
The points of the set $A_{H_n}$ are well spread in the random tree $T(t^{(n)})$, in the sense that, for any $\epsilon>0$ fixed, the following two properties hold with high probability as $n \rightarrow \infty$:
\begin{itemize}
\item[(i)] There is no ancestral line of size $3$ in the tree (that is, a vertex, its parent and its grandparent) made only of points of $A_{H_n}$.

\item[(ii)] No point of $A_{H_n}$ is an $\epsilon n$-node, nor the child of an $\epsilon n$-node.
\end{itemize}
\end{lem}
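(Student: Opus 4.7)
The approach rests on Lemma~\ref{lem:poisson}: forgetting labels, $T(t^{(n)})$ is a $\mathrm{Po}(1)$-Galton--Watson tree $\tilde U_n$ conditioned to have $n$ vertices, and conditionally on $\tilde U_n$ the labels $\{2,\ldots,n\}$ form a uniform random bijection with its non-root vertices. Consequently $A_{H_n}$ is, conditionally on $\tilde U_n$, a uniformly chosen subset of size $H_n+1$ among the non-root vertices, and both statements reduce to structural estimates on $\tilde U_n$ combined with elementary union bounds.

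For (i), there are at most $n$ ancestral triples (vertex, parent, grandparent) with all three non-root (indexed by the deepest vertex, which must have height at least $3$), and the probability that three fixed non-root vertices all lie in $A_{H_n}$ equals $(H_n+1)H_n(H_n-1)/[(n-1)(n-2)(n-3)] = O((H_n/n)^3)$. Summing yields a bound of order $H_n^3/n^2 = (\log n)^3/\sqrt n \to 0$.

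For (ii), let $S_n\subset \tilde U_n$ denote the set of $\epsilon n$-nodes together with all their children. Since $\P(A_{H_n}\cap S_n\neq\emptyset\mid\tilde U_n)\leq |S_n|H_n/(n-1)$, it suffices to prove $|S_n|\leq C(\epsilon)\log n$ with high probability; the resulting bound of order $(\log n)^2/\sqrt n$ then tends to $0$. Writing $N_\epsilon$ for the number of $\epsilon n$-nodes and $\Delta_n$ for the maximum outdegree of $\tilde U_n$, we have $|S_n|\leq N_\epsilon(1+\Delta_n)$. The bound $\Delta_n = O(\log n)$ with high probability follows from standard maximum-of-$\mathrm{Po}(1)$ concentration, transferred to the conditioned tree $\tilde U_n$ via the local limit theorem (Theorem~\ref{llt}(ii)). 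For $N_\epsilon$ I would prove the deterministic bound $N_\epsilon\leq 1/\epsilon$ valid in any finite tree, via a charging argument on the tree $R$ whose vertices are the $\epsilon n$-nodes with the induced ancestor order: to each $\epsilon n$-node $y$ I assign a disjoint ``private'' region of mass $\geq \epsilon n$ in $\tilde U_n$ (two such regions when $y$ is a leaf of $R$, one when $y$ is an internal node of $R$ admitting a partition of its children in $\tilde U_n$ with one side free of descendant $\epsilon n$-nodes). The remaining ``doubly branching'' $\epsilon n$-nodes of $R$ (those forced to have descendant $\epsilon n$-nodes on both sides of every admissible partition) have at least two children in $R$, so their number is controlled by the standard inequality ``internals $\leq$ leaves $-1$'' applied to $R$ after contracting its linear internal nodes.

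The most delicate step is verifying the deterministic bound $N_\epsilon\leq 1/\epsilon$: identifying precisely when an $\epsilon n$-node $y$ admits a partition with a ``clean'' side (namely iff the combined mass of children-subtrees containing descendant $\epsilon n$-nodes is at most $|\theta_y|-1-\epsilon n$), checking that the private regions of distinct $\epsilon n$-nodes are pairwise disjoint via a short ancestor/non-ancestor case analysis, and combining with the contracted-tree inequality to handle the doubly branching case. Once this bound is in place, the two parts of the lemma follow from the elementary union bounds above.
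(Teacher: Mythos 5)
Your proof of (i) is essentially the paper's: count the at most $n$ ancestral triples of non-root vertices, observe that each triple lies in $A_{H_n}$ with probability of order $(H_n/n)^3$, and union-bound to get $(\log n)^3/\sqrt{n} \to 0$. For (ii), however, you take a genuinely different and arguably cleaner route. The paper first argues, via convergence to the Brownian CRT, that with high probability every $\epsilon n$-node is in fact an $\epsilon n/2$-branching point, and then invokes Lemma~\ref{lem:branchingpoints}~(ii) (a probabilistic first-moment estimate on the number of children of branching points, proved via Kesten's tree and the local limit theorem) to get an $O(\log n)$ bound on the size of $S_n$. You instead prove that the number of $\epsilon n$-nodes is \emph{deterministically} at most $1/\epsilon$ in any tree of size $n$, and combine this with a high-probability $O(\log n)$ bound on the maximum outdegree. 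I have checked your charging argument carefully and it is sound: leaves of $R$ yield two disjoint private regions of mass $\geq \epsilon n$, internal nodes of $R$ admitting a clean side (equivalently, $M_C \geq \epsilon n$ in your notation; the other inequality $M_B \geq \epsilon n$ is automatic for a non-leaf of $R$ since some descendant $\epsilon n$-node has subtree mass $> \epsilon n$) yield one such region, the pairwise disjointness of all these regions follows from your ancestor/non-ancestor case analysis, and the remaining doubly-branching nodes have at least two $R$-children so are at most $(\text{leaves of } R)-1$ in number; altogether $N_\epsilon \leq 2L + C - 1 \leq 1/\epsilon - 1$. This deterministic bound is a genuinely useful observation that the paper does not state (the paper only records the analogous bound for $\epsilon n$-\emph{branching points}, which is weaker since every $\epsilon n$-branching point is an $\epsilon n$-node but not conversely), and your route sidesteps both the CRT-convergence step and the first-moment machinery of Lemma~\ref{lem:branchingpoints}~(ii). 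The only part you leave as "standard" is the $\Delta_n = O(\log n)$ bound for conditioned $\mathrm{Po}(1)$ trees, which is indeed routine via the Lukasiewicz-path representation and absolute continuity with a polynomial cost from the local limit theorem.
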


\begin{proof}[Proof of Lemma \ref{lem:structure}]
In order to get (i), remark that the probablity that a vertex, its parent and its grandparent all are in $A_{H_n}$ is of order $(H_n/n)^3 = (\log n)^3 n^{-3/2}$. Since such a triple of vertices is uniquely characterized by the first one, there are at most $n$ of them, and the probability of seeing an ancestral line of size $3$ made only of elements of $A_{H_n}$ is less than $(\log n)^3 n^{-1/2}$.

On the other hand, (ii) is a consequence of the small number of children of the $\epsilon n$-nodes. First, since $T(t^{(n)})$ converges in distribution to the Brownian CRT, then with high probability all $\epsilon n$-nodes are $\epsilon n/2$-branching points. Now, by Lemma \ref{lem:branchingpoints} (ii) (taking $f(n) \coloneqq \log n$) and Theorem \ref{thm:duquesne}, with high probability there are less than $C(\epsilon) \log n$ children of $\epsilon n/2$-branching points in $T(t^{(n)})$, for some constant $C(\epsilon)$ depending only on $\epsilon$. Thus, on this event, since a branching point has at least one child, there are at most $2 C(\epsilon) \log n$ vertices that are either an $\epsilon n$-node or the child of one of them. The result follows: with high probability none of these points belongs to $A_{H_n}$, since $|A_{H_n}| = \lfloor \sqrt{n} \log n \rfloor$.
\end{proof}

Let us now see how this structural result implies Lemma \ref{lem:closeinverselamination} (ii):

\begin{proof}[Proof of Lemma \ref{lem:closeinverselamination} (ii)]
In the whole proof, $\epsilon>0$ and $u \leq H_n$ are fixed, and we investigate the two chord configurations $\overset{\leftarrow}{C}^{(n)}_u$ and $P_{n-u}^{(n)}$. Specifically, we prove that any large chord of $\overset{\leftarrow}{C}^{(n)}_u$ is close to a large chord of $P_{n-u}^{(n)}$, and conversely; furthermore, this holds uniformly in $u \leq H_n$.

First, let $c$ be a chord of length $ \ell(c)> \epsilon$ in $\overset{\leftarrow}{C}_u^{(n)}$. Let $e(c)$ be the location of the associated transposition in $t^{(n)}$, so that $e(c) \geq n-H_n$, and let $x(c)$ be the vertex of $T(t^{(n)})$ labelled $e(c)$. It is to note that, by Theorem \ref{thm:duquesne}, with high probability the root and its children are not coded by chords of length $>\epsilon$, and thus $x(c)$ has height $\geq 3$.

 Then, by Lemma \ref{lem:structure} (i), with high probability the parent or the grandparent of $x(c)$ has a label $< n-H_n$. We claim that the chord associated to this ancestor is close to $c$.

If the parent $y(c)$ of $x(c)$ has such a small label, denote by $\tilde{c}$ the chord associated to it. By assumption, $\tilde{c} \subset C^{(n)}_{n-u}$. By construction of the tree $T(t^{(n)})$, if $\tilde{c}$ has length $\geq  2\ell(c)$ or $\leq \epsilon/2$, then necessarily either $x(c)$ or $y(c)$ is an $\epsilon n/2$-node. However, with high probability this does not happen, by Lemma \ref{lem:structure} (ii). Thus, the chord $\tilde{c}$ is in $C^{(n)}_{n-u}$ and is at distance $\leq \epsilon$ from $c$. 

On the other hand, if $y(c)$ itself belongs to $A_{H_n}$, then with high probability the grandparent $z(c)$ of $x(c)$ is not in $A_{H_n}$. Furthermore, by Lemma \ref{lem:structure} (ii), $y(c)$ is not an $\epsilon n$-node nor the child of an $\epsilon n$-node. Thus, as before, the chord $\tilde{c}$ associated to $z(c)$ is necessarily at distance less than $\epsilon$ from $c$.

In both cases, this chord $\tilde{c}$ associated to $y(c)$ or $z(c)$ is in $C^{(n)}_{n-u}$. Therefore, it lies inside a block $B$ of $P_{n-u}^{(n)}$ (see Fig. \ref{fig:partitionblock}, left for an example). Let us prove that one of the chords in the boundary of $B$ is at distance less than $\epsilon$ from $c$. To this end, denote by $(a b)$ the transposition associated to $c$, where $1 \leq a < b \leq n$. Since $C^{(n)}_n$ satisfies the previously mentioned condition $(C_\Delta)$, its chords are sorted in decreasing labelling order around each point of the form $e^{-2i\pi x/n}$ for $1 \leq x \leq n$. Then there is no chord in $C^{(n)}_{n-u}$ connecting $e^{-2i\pi a/n}$ to $e^{-2i\pi x/n}$ where $x \notin \llbracket a,b \rrbracket$, nor connecting $e^{-2i\pi b/n}$ to $e^{-2i\pi y/n}$ where $y \in \llbracket a,b \rrbracket$. Thus, since the chord $\tilde{c}$ is inside the block $B$, the boundary of $B$ contains a chord inbetween $c$ and $\tilde{c}$, which is therefore at distance less than $\epsilon$ from $c$.

In conclusion, any large chord of $\overset{\leftarrow}{C}_u^{(n)}$ is close to a chord of $P_{n-u}^{(n)}$, uniformly for $u \leq H_n$.

\bigskip

We use the same trick to prove the converse. Specifically, take $c'$ a chord in $P_{n-u}^{(n)}$ of length greater than $\epsilon$, and define $1 \leq a<b \leq n$ such that $c'=[e^{-2i\pi a/n}, e^{-2i\pi b/n}]$.
Now, let $\mathbb{S}^{a,b}$ (resp. $\overline{\mathbb{S}}_{a,b}$) be the set of points of the form $e^{-2i\pi x/n}$ for $a < x < b$ (resp. $a \leq x \leq b$), and assume in a first time that $a$ and $b$ are not connected to any point of $\mathbb{S}_{a,b}$. In other words, the block of $P_{n-u}^{(n)}$ whose boundary contains $c'$ is on the side of $c'$ which contains $1$ (see an example on Fig. \ref{fig:partitionblock}, right). Consider now the face $F_a$ of $C_n^{(n)}$ whose boundary contains the arc $(\wideparen{e^{-2i\pi a/n}, e^{-2i\pi (a+1)/n}})$. It appears (see \cite[Proposition $2.3$]{GY02}) that the rest of its boundary is only made of chords. Since the labels of the chords in $C_n^{(n)}$ are decreasing in clockwise order around each vertex of this face, it is a simple matter to check that the boundary of $F_a$ contains $e^{-2i \pi b/n}$, and that this boundary is made exclusively of chords of $C_{n-u}^{(n)}$ between $e^{-2i\pi b/n}$ and $e^{-2i\pi a/n}$ (clockwise), and of chords of $\overset{\leftarrow}{C}_u^{(n)}$ between $e^{-2i\pi (a+1)/n}$ and $e^{-2i\pi b/n}$ (clockwise, red chords on Fig. \ref{fig:partitionblock},right). Let $\tilde{c}'$ be the largest of these chords of $\overset{\leftarrow}{C}_u^{(n)}$. If $\tilde{c}'$ has length less than $\ell(c)-\epsilon/2$, then the associated vertex in $T(t^{(n)})$ is necessarily the child of an $\epsilon n/2$-node, which with high probability does not happen by Lemma \ref{lem:structure} (ii). Therefore $d_H(\tilde{c}', c') \leq \epsilon$.

If, on the other hand, one assumes that the block containing $c'$ is on the "other side" of $c'$ (that is, this block only contains chords connecting points of $\overline{\mathbb{S}}_{a,b}$), then we use the same argument on the face $F_b$ containing the arc $(\wideparen{e^{-2i\pi b/n}, e^{-2i\pi (b+1)/n}})$. Using the same argument as before, the boundary of $F_b$ contains with high probability a chord of $\overset{\leftarrow}{C}_u^{(n)}$ at distance less than $\epsilon$ from $c'$ (otherwise the associated point in $A_{H_n}$ would be an $\epsilon n/2$-node, which with high probability does not happen by Lemma \ref{lem:structure} (ii)).

\bigskip

Finally, in probability, jointly for $u \leq H_n$,
\begin{align*}
d_H\left(\overset{\leftarrow}{C}^{(n)}_u, P^{(n)}_{n-u}\right) \overset{\P}{\rightarrow} 0.
\end{align*}

\begin{figure}
\center
\caption{Left: the red chord $c$ is a large chord of $\overset{\leftarrow}{C}^{(n)}_u$, and the gray chord $\tilde{c}$ is a chord of $C^{(n)}_{n-u}$, which is close to $c$. $B$ denotes the block of $P^{(n)}_{n-u}$ containing $\tilde{c}$. Thus, the boundary of $B$ necessarily contains a chord inbetween $c$ and $\tilde{c}$. Right: the blue chord $c'$ is a large chord of $P^{(n)}_{n-u}$, and the red chords are the chords of $\overset{\leftarrow}{C}^{(n)}_u$ that are part of the boundary of $F_a$. Among these chords, with high probability, one of them (denoted by $\tilde{c}'$ here) is not far from $c'$.}
\label{fig:partitionblock}
\begin{tabular}{c c c}
\begin{tikzpicture}[scale=3]

\draw[fill=black] (1,0) circle (.01);

\draw (0,0) circle (1);

\draw[dashed,blue,pattern=crosshatch dots, pattern color=blue!50] ({cos(12)},{sin(12)}) -- ({cos(155)},{sin(155)}) -- ({cos(165)},{sin(165)}) --({cos(180)},{sin(180)}) -- ({cos(243)},{sin(243)}) -- ({cos(323)},{sin(323)}) -- ({cos(330)},{sin(330)}) -- cycle;

\draw[red] ({cos(20)},{sin(20)}) -- ({cos(150)},{sin(150)});

\draw[dashed,blue,pattern=crosshatch dots, pattern color=blue!50] ({cos(35)},{sin(35)}) -- ({cos(140)},{sin(140)}) -- ({cos(120)},{sin(120)}) -- cycle;

\draw[dashed,blue,pattern=crosshatch dots, pattern color=blue!50] ({cos(40)},{sin(40)}) -- ({cos(110)},{sin(110)}) -- ({cos(100)},{sin(100)}) -- ({cos(85)},{sin(85)}) -- ({cos(80)},{sin(80)}) -- cycle;

\draw[gray] ({cos(12)},{sin(12)}) -- ({cos(165)},{sin(165)});

\draw (1.1,0) node{$1$};

\draw[blue,fill=white] (-.2,0) node[circle,fill=white]{$B$};

\draw[gray] (-1.05,.3) node{$\tilde{c}$};

\draw[red] (0,.5) node{$c$};
\end{tikzpicture}
&
\begin{tikzpicture}[scale=3]
\draw[white] (0,0) -- (.2,0);
\end{tikzpicture}
&
\begin{tikzpicture}[scale=3]

\draw[fill=black] (1,0) circle (.01);

\draw (1.1,0) node{$1$};

\draw (0,0) circle (1);

\draw[dashed,blue,pattern=crosshatch dots, pattern color=blue!50] ({cos(12)},{sin(12)}) -- ({cos(155)},{sin(155)}) -- ({cos(165)},{sin(165)}) --({cos(180)},{sin(180)}) -- ({cos(243)},{sin(243)}) -- ({cos(323)},{sin(323)}) -- ({cos(330)},{sin(330)}) -- cycle;

\draw[blue] ({cos(12)},{sin(12)}) -- ({cos(155)},{sin(155)});

\draw[dashed,blue,pattern=crosshatch dots, pattern color=blue!50] ({cos(35)},{sin(35)}) -- ({cos(140)},{sin(140)}) -- ({cos(120)},{sin(120)}) -- cycle;

\draw[dashed,blue,pattern=crosshatch dots, pattern color=blue!50] ({cos(40)},{sin(40)}) -- ({cos(110)},{sin(110)}) -- ({cos(100)},{sin(100)}) -- ({cos(85)},{sin(85)}) -- ({cos(80)},{sin(80)}) -- cycle;

\draw[blue] (.6,.35) node{$c'$};

\draw[red] ({cos(12)},{sin(12)}) -- ({cos(26)},{sin(26)}) -- ({cos(140)},{sin(140)}) -- ({cos(152)},{sin(152)});

\draw[red,fill=red] ({cos(152)},{sin(152)}) circle (.01);

\draw[blue,fill=blue] ({cos(155)},{sin(155)}) circle (.01);

\draw[red] (0,.5) node{$\tilde{c}'$};

\draw[red] (-.5,.5) node{$F_a$};
\draw[blue] ({1.2*cos(155)},{1.2*sin(155)}) node{$e^{-2i\pi a/n}$};
\draw[blue] ({1.25*cos(12)},{1.2*sin(12)}) node{$e^{-2i\pi b/n}$};
\end{tikzpicture}
\end{tabular}
\end{figure}
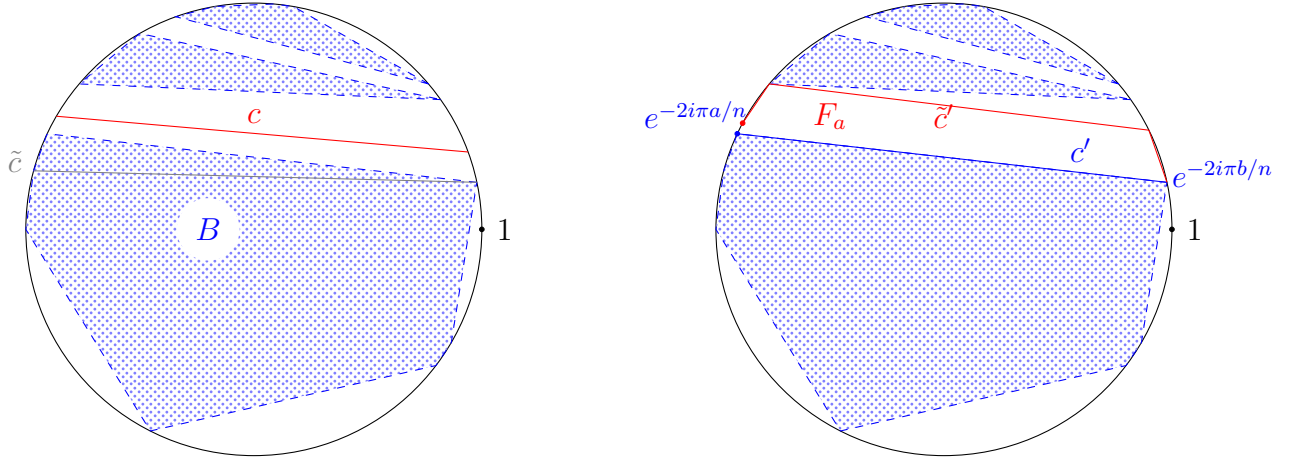

\end{proof}

\section{Computation of the distribution of \texorpdfstring{$\bL^{(\alpha)}_{c}$}{Lac} at \texorpdfstring{$c$}{c} fixed}
\label{sec:distribution}

In this section, we fix $c \in \R_+$. Recall that the Lévy process $\tau^{(\alpha),c}$ is defined as
\begin{align*}
\tau^{(\alpha),c}_s := \inf \left\{ t > 0, Y^{(\alpha)}_t - c^{1/\alpha} t < - c^{1+1/\alpha} s \right\} - c s
\end{align*}
where $Y^{(\alpha)}$ is the $\alpha$-stable Lévy process. Our goal is to prove Theorem \ref{thm:levyprocess}, which states that $\bL_c^{(\alpha)}$ is the lamination coded (in the sense of Section \ref{ssec:fraglam}) by the excursion of $\tau^{(\alpha),c}$.

To this end, we notably introduce a sequence of random trees whose associated sequence of laminations converges towards $\bL^{(\alpha)}_{c}$ and $\bL( \tau^{(\alpha),c,exc})$ at the same time. 

\subsection*{Notations of Section \ref{sec:distribution}}

\renewcommand{\arraystretch}{1.6}
\begin{center}
\begin{tabular}{|c|c|}
\hline
$F_\nu$ & generating function of a law $\nu$\\
\hline
$p_n$ & $c/B_n$\\
\hline
$\mu$ & critical distribution in the domain of attraction of an $\alpha$-stable law\\
\hline
$\mu_n$ & law such that $F_{\mu_n}(x) = F_\mu(p_n x + (1-p_n) F_{\mu_n}(x))$ \\
\hline
$\cT^{(n)}$ & $\mu_n$-GW tree\\
\hline
$W(T)$ & Lukasiewicz path of a tree $T$\\
\hline
$\bL_{Luka}(T)$ & lamination coded by $W(T)$\\
\hline
$S^{(n)}$ & random walk with i.i.d. jumps of law $\mu_n(\cdot + 1)$\\
\hline
\end{tabular}
\end{center}

Here and in the next section, we define the functions $z \rightarrow \log z$ and $z \rightarrow z^a$ (for $a \in \R$) on $\C \backslash \R_-$ the following way:

\begin{defi}
\label{def:log}
Let $z \in \C \backslash \R_-$. Then there exists a unique couple $\rho,\theta \in \R_+^* \times (-\pi,\pi)$ such that $z=\rho e^{i\theta}$. Then we define 
\begin{align*}
\log z \coloneqq \log \rho + i \theta \qquad \text{ and } \qquad z^a \coloneqq e^{a \log z},
\end{align*}
for any $a \in \R$. 
\end{defi}

\subsection{Definition and study of the process \texorpdfstring{$\tau^{(\alpha),c}$}{tac}}

This part is devoted to the study of the process $\tau^{(\alpha),c}$. We start by defining the excursion $\tau^{(\alpha),c,exc}$, and therefore the lamination $\bL(\tau^{(\alpha),c,exc})$. Let us explain some notations. To a Lévy process $X$, we can associate its Laplace exponent $\phi: \R_+^* \rightarrow \R \cup \{ + \infty, - \infty \}$ verifying $\E\left[ e^{-\lambda X_s} \right] := \exp \left( - s \phi(\lambda) \right)$, and its characteristic exponent $\psi: \R \rightarrow \C$ such that $\E\left[ e^{i t X_s} \right] := \exp \left( - s \psi(t) \right)$. A Lévy process $X$ is said to be \textit{spectrally positive} if it makes only positive jumps, i.e. almost surely $\forall s \in \R_+, X_{s-} \leq X_s$. The following theorem, which can be found notably in \cite{CUB11} (see \cite{Kal81} for the original result), gives sufficient conditions for a Lévy process to admit a density:

\begin{thm}
\label{thm:cub}
Let $X$ be a spectrally positive Lévy process and $\psi$ its characteristic exponent. Then, if $t \rightarrow \exp \left( -s \psi \right)$ is integrable for any $s>0$, then $X_s$ admits a density for each $s>0$.
\end{thm}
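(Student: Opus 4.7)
The plan is to apply Fourier inversion directly. By definition of the characteristic exponent, for every $s>0$ and every $t \in \R$ we have $\E[e^{itX_{s}}] = e^{-s\psi(t)}$, so the hypothesis that $t \mapsto \exp(-s\psi(t))$ lies in $L^{1}(\R)$ says precisely that the characteristic function of $X_{s}$ is Lebesgue integrable. The classical Fourier inversion theorem for probability measures then asserts that such a measure is absolutely continuous with respect to Lebesgue measure, with bounded continuous density
\begin{equation*}
f_{X_{s}}(x) \;=\; \frac{1}{2\pi} \int_{\R} e^{-itx}\,e^{-s\psi(t)}\,dt.
\end{equation*}
So the proof reduces to quoting Fourier inversion once the identification of $e^{-s\psi}$ with the characteristic function of $X_{s}$ is in place, and the latter is just the Lévy--Khintchine formula for $X$.

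The role of spectral positivity is not to make the Fourier inversion work, but rather to make the integrability hypothesis easily checkable: for spectrally positive $X$ the Laplace exponent $\phi$ is finite on $\R_{+}$ and $\psi$ is obtained from $\phi$ by analytic continuation through $\psi(t) = \phi(-it)$ (using the principal branch of Definition~\ref{def:log}). Thus the real part of $\psi(t)$ is controlled by the real part of $\phi(-it)$, and one can check the $L^{1}$ condition directly in terms of the Lévy triplet or of $\phi$. The only mild subtlety, which is not really an obstacle, is to justify that $\psi$ is continuous on $\R$ with $\psi(0)=0$ so that $e^{-s\psi(t)}$ is an honest characteristic function (rather than a branch of a multivalued object); this follows from standard Lévy process theory. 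No further argument is needed, so the main step and the main (and only) difficulty are both contained in invoking Fourier inversion correctly.
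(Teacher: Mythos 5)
Your argument is correct and complete: the hypothesis says exactly that the characteristic function $t \mapsto e^{-s\psi(t)}$ of $X_s$ lies in $L^{1}(\R)$, and the classical Fourier inversion theorem then yields a bounded continuous density given by the formula you write, with spectral positivity playing no logical role in that step (as you observe, it serves only to make the $L^1$ hypothesis tractable). Note, though, that the paper does not supply a proof of Theorem~\ref{thm:cub} at all; it is quoted as a known result with pointers to \cite{CUB11} and \cite{Kal81}, so there is no in-paper argument to compare against — your Fourier-inversion proof is simply the standard one.
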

We refer to \cite{CUB11} for more details. From a Lévy process $X$ verifying the assumption of Theorem \ref{thm:cub}, following \cite{CUB11}, we can construct the so-called \textit{Lévy bridge} $X^{br}$ and \textit{Lévy excursion} $X^{exc}$. From an informal point of view, the Lévy bridge $X^{br}$ has the law of $(X_s)_{s \in [0,1]}$ conditioned to go back to $0$ at $s=1$, while the Lévy excursion $X^{exc}$ has the law of $X^{br}$ conditioned to stay nonnegative between $0$ and $1$. More formally, the Lévy bridge $\left(X_s^{br}\right)_{0 \leq s \leq 1}$ is a random càdlàg process such that, for any $u \in (0,1)$, any bounded continuous function $F: \mathbb{D}([0,u], \R) \rightarrow \R$,

\begin{equation}
\label{eq:bridge}
\E \left[ F \left(\left( X^{br}_s\right)_{0 \leq s \leq u} \right)\right] = \E \left[ F \left(\left( X_s \right)_{0 \leq s \leq u}\right) \frac{q_{1-u}(-X_u)}{q_1(0)} \right]
\end{equation} 
where, for $t>0$, $q_t$ is the density of $X_t$.
In order to define $X^{exc}$, following Miermont \cite[Definition $1$]{Mie01}, we introduce the Vervaat transform of a càdlàg process $f$ going back to $0$ at time $1$, under the additional assumption that $f(1-)=0$.

\begin{defi}
\label{def:vervaat}
Let $f \in \mathbb{D}([0,1], \R)$ such that $f(0)=f(1)=f(1-)=0$. Let $t_{min}$ be the location of the right-most minimum of $f$ (that is, the largest $x$ such that $\min(f(x-),f(x))=inf f$). We define the Vervaat transform of $f$, denoted by $\tilde{f}$, as
\begin{align*}
\tilde{f}(t) = f \left(t+t_{min} \pmod{1}\right) - \underset{[0,1]}{\inf} f
\end{align*}
for $t \in [0,1)$, and $\tilde{f}(1) = \underset{t \rightarrow 1-}{\lim} \tilde{f}(t)$.
\end{defi}

Note that, by time-reversal, for any Lévy process $X$ verifying the assumption of Theorem \ref{thm:cub}, $X^{br}_{1-} = 0$. Thus, we can define $X^{exc} := \tilde{X}^{br}$ (see Fig.~\ref{fig:levy} for an example). In particular, $X^{exc}$ is always nonnegative on $[0,1]$ and, if $X$ is spectrally positive, $X^{exc}$ is an excursion-type function.

\begin{figure}
\center
\caption{Approximation of a bridge obtained from a $1.5$-stable Lévy process, and its Vervaat transform}
\label{fig:levy}
\includegraphics[scale=.9]{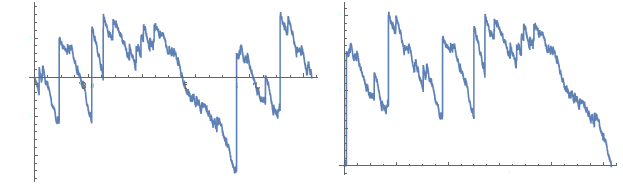}
\end{figure}

Since $\inf \{ t > 0, Y^{(\alpha)}_t \leq 0 \} = 0$ almost surely, we get that $\tau^{(\alpha),c}_0 = 0$ almost surely. Moreover, $\tau^{(\alpha),c}$ is clearly càdlàg and Markov with stationary and independent increments, as $Y^{(\alpha)}$ has these properties, and therefore $\tau^{(\alpha),c}$ is a Lévy process. The following proposition computes its Laplace exponent and its characteristic exponent.

\begin{prop}
\label{prop:exponent}
Fix $\alpha \in (1,2]$, $c > 0$. Then 
\begin{enumerate}
\item[(i)] The Laplace exponent of $\tau^{(\alpha),c}$ has the form $\nu \rightarrow c\overline{\phi}(\nu)-c\nu$ where $\overline{\phi}(\nu)$ is the only real solution of the equation
\begin{equation}
\label{eq:nu}
\overline{\phi}(\nu)^\alpha + c \overline{\phi}(\nu) - c \nu = 0
\end{equation}
\item[(ii)] The characteristic exponent of $\tau^{(\alpha),c}$ has the form $t \rightarrow c \overline{\psi}(t)+itc$, where $\overline{\psi}(t)$ is the only solution with nonnegative real part of the equation
\begin{equation}
\label{eq:psi}
\overline{\psi}(t)^\alpha + c \overline{\psi}(t) + itc = 0.
\end{equation}
\end{enumerate}
\end{prop}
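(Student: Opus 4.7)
The approach is to recognize $\tau^{(\alpha),c}$ as a shifted first-passage time process of the drifted stable process $Z_t \coloneqq Y^{(\alpha)}_t - c^{1/\alpha} t$, which is itself a spectrally positive Lévy process satisfying $\E[e^{-\lambda Z_t}] = \exp(t(\lambda^\alpha + c^{1/\alpha}\lambda))$ for $\lambda \geq 0$. Since $\E[Y^{(\alpha)}_1] = 0$ for $\alpha \in (1,2]$, $Z$ drifts almost surely to $-\infty$. Writing $T^Z_a \coloneqq \inf\{t>0 : Z_t < -a\}$, one directly checks that $\tau^{(\alpha),c}_s = T^Z_{c^{1+1/\alpha} s} - cs$. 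By the strong Markov property of $Z$ and the linear scaling in $s$, the process $\tau^{(\alpha),c}$ inherits stationary independent increments, hence is a Lévy process.

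For assertion (i), I would use the exponential martingale argument: for $\lambda \geq 0$, the process $M_t \coloneqq \exp(-\lambda Z_t - t(\lambda^\alpha + c^{1/\alpha}\lambda))$ is a mean-one nonnegative martingale. Because $Z$ has no negative jumps, $Z_{T^Z_a} = -a$ almost surely on $\{T^Z_a < \infty\}$, and optional sampling (via a truncation in $t$, using $M_t \to 0$ on $\{T^Z_a = \infty\}$) yields $\E[\exp(-(\lambda^\alpha + c^{1/\alpha}\lambda) T^Z_a)] = e^{-\lambda a}$. Setting $\nu = \lambda^\alpha + c^{1/\alpha}\lambda$ and $\Phi(\nu) = \lambda$ (its nonnegative right inverse), a short computation gives
$$\E\left[e^{-\nu \tau^{(\alpha),c}_s}\right] = \exp\left(-s\left(c^{1+1/\alpha}\Phi(\nu) - c\nu\right)\right).$$
Defining $\overline{\phi}(\nu) \coloneqq c^{1/\alpha}\Phi(\nu)$ produces the Laplace exponent $c\overline{\phi}(\nu) - c\nu$, and multiplying the identity $\Phi(\nu)^\alpha + c^{1/\alpha}\Phi(\nu) = \nu$ by $c$ recovers equation \eqref{eq:nu}.

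For assertion (ii), I would run the same argument with the complex exponential martingale $\widetilde{M}_t \coloneqq \exp(-\zeta Z_t - t(\zeta^\alpha + c^{1/\alpha}\zeta))$ for $\zeta \in \C$ with $\mathrm{Re}(\zeta) \geq 0$, powers being interpreted via Definition \ref{def:log}. Since $|\widetilde{M}_t| = \exp(-\mathrm{Re}(\zeta)Z_t - t\,\mathrm{Re}(\zeta^\alpha + c^{1/\alpha}\zeta))$, the integrability and optional-stopping arguments from the real case transfer directly, yielding $\E[\exp(-(\zeta^\alpha + c^{1/\alpha}\zeta) T^Z_a)] = e^{-\zeta a}$. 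Choosing $\zeta$ so that $\zeta^\alpha + c^{1/\alpha}\zeta = -it$ and setting $\overline{\psi}(t) \coloneqq c^{1/\alpha}\zeta$, an analogous computation gives the characteristic exponent $t \mapsto c\overline{\psi}(t) + itc$, and equation \eqref{eq:psi} follows by multiplying the relation for $\zeta$ by $c$.

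The main technical obstacle is checking that \eqref{eq:psi} admits a \emph{unique} solution $\overline{\psi}(t)$ with nonnegative real part, which amounts to showing that the map $\zeta \mapsto \zeta^\alpha + c^{1/\alpha}\zeta$ is injective on the closed right half-plane and surjects onto a region containing $\{-it/c^{1/\alpha} : t \in \R\}$. Injectivity follows from the fact that its complex derivative $\alpha\zeta^{\alpha-1} + c^{1/\alpha}$ has no zero on $\{\mathrm{Re}(\zeta) \geq 0\}$, since $\alpha\zeta^{\alpha-1}$ lies in the sector of opening $\pi(\alpha-1) < \pi$ around the positive real axis and therefore cannot equal $-c^{1/\alpha}$; surjectivity along the imaginary axis can then be obtained by a continuity-in-$t$ argument starting from the real case $t = 0$, where $\zeta = 0$ is the unique solution. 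Together with the routine justification of complex optional stopping, this completes the proof.
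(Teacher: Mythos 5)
Your proof of (i) is essentially the paper's argument, merely reorganized: you pack the drift into $Z_t = Y^{(\alpha)}_t - c^{1/\alpha}t$ and use the exponential martingale $\exp(-\lambda Z_t - t(\lambda^\alpha + c^{1/\alpha}\lambda))$, while the paper applies Doob's stopping theorem to $\exp(-\lambda Y^{(\alpha)}_t - t\lambda^\alpha)$ at the stopping time $\tau^{(\alpha),c}_s + cs$ and uses $Y^{(\alpha)}_{\tau^{(\alpha),c}_s + cs} = c^{1/\alpha}\tau^{(\alpha),c}_s$. Since $-\lambda Y^{(\alpha)}_t - t\lambda^\alpha = -\lambda Z_t - t(\lambda^\alpha + c^{1/\alpha}\lambda)$, these are the same martingale stopped at the same time, so the computations coincide.

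For (ii), the paper passes from (i) by a terse ``by analytic continuation'' and separately notes that $\overline{\psi}(t)$ must have nonnegative real part because $|\E[e^{it\tau^{(\alpha),c}_s}]|\le 1$. You instead rerun the whole martingale argument with complex $\zeta$ in the closed right half-plane, which is a more explicit version of the same idea and has the advantage of automatically producing the root of \eqref{eq:psi} with nonnegative real part. Two points need care, though neither is fatal: the claim that $\widetilde M$ is a martingale \emph{is} the analytic continuation step (the identity $\E[e^{-\zeta Z_t}]=e^{t(\zeta^\alpha+c^{1/\alpha}\zeta)}$, proved for $\zeta\in\R_+$, extends to $\{\Re(\zeta)\ge0\}$ by analyticity), and you must also establish the existence, for each $t$, of a $\zeta$ in the right half-plane solving $\zeta^\alpha + c^{1/\alpha}\zeta=-it$, which you defer to a continuity-in-$t$ argument.

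The substantive difference is your treatment of uniqueness, which is genuinely different from --- and potentially shorter than --- the paper's Theorem~\ref{thm1} (implicit function theorem, explicit roots at $\alpha=2$, then transport of the positive-real-part branch in $(\alpha,t)$). However, as written it has a real gap: you assert that injectivity of $g(\zeta)=\zeta^\alpha + c^{1/\alpha}\zeta$ on the closed right half-plane ``follows from the fact that its complex derivative has no zero'' there. Non-vanishing of the derivative does \emph{not} imply injectivity of a holomorphic map on a domain (consider $z\mapsto e^z$). What you in fact observe is stronger and does the job, but you must say so: for $\Re(\zeta)\ge 0$ one has $\alpha\zeta^{\alpha-1}$ in the sector $|\arg w|\le (\alpha-1)\pi/2\le\pi/2$, hence $\Re\, g'(\zeta)\ge c^{1/\alpha}>0$, and a holomorphic function whose derivative has strictly positive real part on a \emph{convex} domain is injective there, since
\begin{equation*}
g(\zeta_2)-g(\zeta_1) = (\zeta_2-\zeta_1)\int_0^1 g'\bigl(\zeta_1 + s(\zeta_2-\zeta_1)\bigr)\,\mathrm{d}s
\end{equation*}
and the integral has real part $\ge c^{1/\alpha}>0$. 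Inserting this convexity argument closes the gap and yields a cleaner uniqueness proof than the one in the paper.
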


Remark that by Proposition~\ref{prop:exponent} (ii), as $|t| \rightarrow \infty$, $|\overline{\psi}(t)| \rightarrow \infty$, and therefore $\overline{\psi}(t)=o(\overline{\psi}(t)^\alpha)$. Hence, $\overline{\psi}(t)^\alpha \sim -itc$ as $|t| \rightarrow \infty$, and in particular 

\begin{equation}
\label{eq:realpart}
\Re(\overline{\psi}(t)) \underset{|t| \rightarrow \infty}{\sim} |tc|^{\frac{1}{\alpha}} \cos \left( \frac{\pi}{2\alpha} \right).
\end{equation}
Thus, $\tau^{(\alpha),c}$ verifies the assumption of Theorem \ref{thm:cub}, and therefore admits a density. In addition, one can easily check that $\tau^{(\alpha),c}$ is spectrally positive. This allows us to define the excursion $\tau^{(\alpha),c,exc}$ and the lamination $\bL(\tau^{(\alpha),c,exc})$.

\begin{proof}[Proof of Proposition~\ref{prop:exponent}]
Let us first prove (i). Since $\tau_s^{(\alpha),c}$ is a stopping time according to the canonical filtration associated to $Y^{(\alpha)}$ and is almost surely finite, for any $\lambda \in \R$, by Doob's stopping time theorem,

\begin{align*}
\E \left[ \exp \left(-\lambda Y^{(\alpha)}_{\tau_s^{(\alpha),c}+cs} - \left(\tau_s^{(\alpha),c}+cs \right) \lambda^\alpha \right) \right] = 1.
\end{align*}
Now remark that for $s \geq 0$, $Y_{\tau_s^{(\alpha),c}+cs}^{(\alpha)} = c^{1/\alpha} \tau_s^{(\alpha),c}$. Therefore $\E \left[ e^{-\lambda c^{1/\alpha} \tau_s^{(\alpha),c} - (\tau_s^{(\alpha),c}+cs) \lambda^\alpha} \right] = 1$, which can be rewritten $\E \left[ e^{-( \lambda c^{1/\alpha}+\lambda^\alpha) \tau_s^{(\alpha),c}} \right] = e^{cs\lambda^\alpha}$.

Since $x \rightarrow x^\alpha + c^{1/\alpha} x$ is a bijection from $\R_+$ to itself, we get that for all $\nu \geq 0$, 

\begin{align*}
\E \left[ e^{-\nu \tau_s^{(\alpha),c}} \right] = e^{- s c \overline{\phi}(\nu) + s c \nu}
\end{align*}
where $\overline{\phi}(\nu)$ verifies \eqref{eq:nu}. Finally, it is easy to see that, for all $\nu > 0$, \eqref{eq:nu} has exactly one real solution. 

By analytic continuation, the characteristic exponent of $\tau^{(\alpha),c}$ has the form $c \overline{\psi}(t) + itc$ where $\overline{\psi}(t)$ is solution of \eqref{eq:psi}. Remark that $\overline{\psi}(t)$ has nonnegative real part, as $| \E[ e^{i t \tau_s^{(\alpha),c}}]| \leq \E [ |e^{i t \tau_s^{(\alpha),c}} |] = 1$. The fact that \eqref{eq:psi} has exactly one solution with nonnegative real part is postponed to the end of the section (see Theorem~\ref{thm1}).
\end{proof}

\subsection{A new family of random trees}

The key idea of the proof of Theorem~\ref{thm:levyprocess} is to introduce a new sequence of conditioned random trees $( \cT^{(n)}_{\lfloor cB_n \rfloor} )_{n \in \Z_+}$, such that the sequence $( \bL (\cT^{(n)}_{\lfloor cB_n \rfloor} ) )_{n \in \Z_+}$ converges in distribution towards both $\bL( \tau^{(\alpha),c,exc} )$ and $\bL_c^{(\alpha)}$ (Theorem~\ref{thm:biasedgwtree}). These trees are Galton-Watson trees conditioned by their number of vertices, whose offspring distribution varies with $n$. 
 
Let $\mu$ be a critical distribution in the domain of attraction of an $\alpha$-stable law, and $\left( B_n \right)_{n \in \Z_+}$ a sequence verifying \eqref{eq:Bn}. We recall that $\cT$ denotes a $\mu$-GW tree, and that $\cT_n$ denotes a $\mu$-GW tree conditioned to have $n$ vertices.
For $n \in \Z_+$ large enough so that $c B_n/n \leq 1$, define 
\begin{align*}
p_n := c\frac{B_n}{n}.
\end{align*}
and let $\mu_n$ be the law whose generating function $F_{\mu_n}$ verifies
\begin{equation}
\label{eq:analytic}
\forall x \in [-1,1], F_{\mu_n}(x) = F_\mu \left( p_n x + (1-p_n) F_{\mu_n}(x) \right)
\end{equation}
where $F_\mu$ is the generating function of $\mu$ (that is, for $x \in [-1,1]$, $F_\mu(x)=\sum_{i \in \Z_+} \mu(i) x^i$). Remark, by taking $x=1$ in \eqref{eq:analytic}, that $\mu_n$ is also critical for all $n$. We let $\cT^{(n)}$ be a nonconditioned GW tree with offspring distribution $\mu_n$, and $\cT^{(n)}_s$ be the tree $\cT^{(n)}$ conditioned to have $s$ vertices, for $s \in \Z_+$. Remark that, by \eqref{eq:analytic}, for any $n\geq 1$, any $k \geq 1$, $\P(|\cT^{(n)}|=k)>0$ as soon as $0<p_n<1$. Note also that \eqref{eq:analytic} appears in \cite[Proposition $1$ (i)]{Ber10} (taking in this Proposition $x=1$), where Bertoin studies a similar model of random trees coding rare mutations in a population.

\begin{thm}
\label{thm:biasedgwtree}
The following two convergences hold in distribution, as $n \rightarrow \infty$:
\begin{enumerate}
\item[(i)]
$\bL \left(\cT^{(n)}_{\lfloor c B_n \rfloor} \right) \overset{(d)}{\rightarrow} \bL_c^{(\alpha)}$
\item[(ii)]
$\bL \left( \cT^{(n)}_{\lfloor c B_n \rfloor} \right) \overset{(d)}{\rightarrow} \bL \left( \tau^{(\alpha),c,exc} \right)$
\end{enumerate}
\end{thm}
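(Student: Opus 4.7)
The plan is to prove parts (i) and (ii) by quite different techniques: for (i) I would compare $\cT^{(n)}_{\lfloor cB_n\rfloor}$ with $\cT_n$ via a marking construction, while for (ii) I would analyse the Lukasiewicz path of $\cT^{(n)}_{\lfloor cB_n\rfloor}$ as a random walk whose step law $\mu_n$ depends on $n$.

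For part (i), the key observation is that equation \eqref{eq:analytic} admits a natural probabilistic interpretation: it is precisely the fixed-point equation for the generating function of the number of ``closest marked descendants'' of the root in a $\mu$-Galton--Watson tree $\cT$ in which each non-root vertex is marked independently with probability $p_n$. This means that the reduced tree $\mathcal{R}(\cT)$, whose vertices are the root and the marked vertices (each marked vertex's parent being its nearest marked ancestor), has the same distribution as $\cT^{(n)}$. Conditioning on $|\cT|=n$ and on the total number of marks being $\lfloor cB_n\rfloor-1$ then couples $\cT^{(n)}_{\lfloor cB_n\rfloor}$ with $\cT_n$ equipped with a uniformly chosen subset of $\lfloor cB_n\rfloor-1$ marked non-root vertices. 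I would then compare $\bL(\cT^{(n)}_{\lfloor cB_n\rfloor})$ with the discrete lamination $\bL_{\lfloor cB_n\rfloor}(\cT_n)$ of Section \ref{ssec:discrete}: under this coupling the two have chords in natural bijection (one per marked vertex), and the arc on $\mathbb{S}^1$ delimited by the chord of a marked vertex $v$ corresponds to the fraction of marked vertices in the subtree rooted at $v$ in the former, and to the fraction of all vertices in that subtree in the latter. A uniform law of large numbers, valid because the marks are \emph{i.i.d.}\ and subtree sizes are well-behaved by Theorem \ref{thm:duquesne}, controls the difference between these fractions and yields $d_H\bigl(\bL(\cT^{(n)}_{\lfloor cB_n\rfloor}),\bL_{\lfloor cB_n\rfloor}(\cT_n)\bigr)\to 0$ in probability. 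Combining with Proposition \ref{prop:discrcontin} and Theorem \ref{thm:discreteconvergencestable}, which together imply $\bL_{\lfloor cB_n\rfloor}(\cT_n)\to \bL_c^{(\alpha)}$, concludes (i).

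For part (ii), I would study the Lukasiewicz path $W^{(n)}$ of $\cT^{(n)}_{\lfloor cB_n\rfloor}$, which is a random walk $S^{(n)}$ with \emph{i.i.d.}\ jumps of law $\mu_n(\cdot+1)$ conditioned to first hit $-1$ at time $\lfloor cB_n\rfloor$. The first step is to show that, after suitable space normalisation by a sequence $A_n$, the unconditioned walk $S^{(n)}_{\lfloor cB_n t\rfloor}/A_n$ converges in distribution to $\tau^{(\alpha),c}_{ct}$ (up to an affine change of coordinates). To extract the required asymptotics of $F_{\mu_n}$, I would rewrite \eqref{eq:analytic} in the form $F_\mu(v_n)-v_n = p_n(u_n-z)$, with $u_n = F_{\mu_n}(z)$ and $v_n = p_n z + (1-p_n)u_n$, and use the real asymptotic $F_\mu(1-s)-(1-s)\sim s^\alpha L(1/s)$ together with its complex counterpart (Theorem \ref{thm:estimate}) to expand both sides near $z=1$. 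Matching the resulting exponent against $c(\bar{\phi}(\lambda)-\lambda)$ from Proposition \ref{prop:exponent} then pins down $A_n$ (expected to be of order $B_n$) and the correct time-rescaling. The conditional excursion convergence is then obtained by combining the absolute-continuity identity \eqref{eq:bridge}, a local limit theorem for $S^{(n)}$, and the existence of a density for $\tau^{(\alpha),c}_t$ ensured by \eqref{eq:realpart} and Theorem \ref{thm:cub}. Finally, since the lamination coded by the Lukasiewicz excursion of a finite plane tree $T$ coincides with the lamination $\bL(T)$ of Section \ref{ssec:discrete}, and since the functional $f\mapsto\bL(f)$ is invariant under multiplication of $f$ by a positive constant, this excursion convergence yields $\bL(\cT^{(n)}_{\lfloor cB_n\rfloor})\to \bL(\tau^{(\alpha),c,exc})$.

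The hardest step will be the asymptotic analysis of $\mu_n$ in the complex plane: because $\mu_n$ varies with $n$, standard scaling-limit results for random walks do not apply directly, and the precise complex asymptotic of $F_\mu$ given by Theorem \ref{thm:estimate} is essential in order to recover the exponent $c(\bar{\phi}(\lambda)-\lambda)$ of $\tau^{(\alpha),c}$ rather than a simpler $\alpha$-stable exponent. A secondary difficulty is the excursion convergence, which requires a local limit theorem uniform in $n$ for the family $\{S^{(n)}\}$; this should follow from the same complex analysis but is a nontrivial add-on.
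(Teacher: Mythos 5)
Your proposal matches the paper's strategy closely for both halves: part~(i) is proved via the reduced-tree interpretation of the fixed-point equation \eqref{eq:analytic}, and part~(ii) via the Lukasiewicz path viewed as a conditioned random walk of step law $\mu_n(\cdot+1)$, a local limit theorem built on Theorem~\ref{thm:estimate}, the bridge-excursion machinery (\eqref{eq:bridge}, Vervaat transform), and finally Lemma~\ref{lem:lukacontour}-type comparison between Lukasiewicz and contour laminations. Your identification of the two hard points (complex asymptotics of $F_{\mu_n}$, and a local limit theorem uniform over the family $\{S^{(n)}\}$) is exactly where the paper invests its effort.

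There is, however, a genuine slip in your coupling for part~(i). You claim that conditioning the pair $(\cT,\text{marks})$ on \emph{both} $|\cT|=n$ and the number of marks being $\lfloor cB_n\rfloor-1$ yields a reduced tree distributed as $\cT^{(n)}_{\lfloor cB_n\rfloor}$. This is not correct: $\cT^{(n)}_{\lfloor cB_n\rfloor}$ is the reduced tree $\cT^{\cV_{n,c}}$ conditioned \emph{only} on $|\cT^{\cV_{n,c}}|=\lfloor cB_n\rfloor$, i.e.\ only on the number of marks; further conditioning on $|\cT|=n$ changes the law of the reduced tree. The paper's route is to condition solely on the reduced-tree size and then prove (Lemma~\ref{lem:concentrated}, via a Chernoff bound against the binomial) that $|\cT|$ is exponentially concentrated around $n$ under this conditioning; one can then work tree-by-tree over $T\in Z_n$, where the marks are indeed a uniform random subset of the non-root vertices, and run your ``uniform law of large numbers'' argument (Proposition~\ref{prop:tripleconvergence}). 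So you need to replace your exact distributional equality with this asymptotic one. A second, smaller caveat: your plan to match the lamination $\bL(\cT^{(n)}_{\lfloor cB_n\rfloor})$ with $\bL_{\lfloor cB_n\rfloor}(\cT_n)$ hinges on comparing, for each marked $v$, the \emph{fraction of marked vertices} in $\theta_v(\cT)$ against the \emph{fraction of all vertices} in $\theta_v(\cT)$; to make this uniform over all chords that can possibly have macroscopic length you need not only binomial concentration on large subtrees (your LLN) but also an a priori bound showing that no \emph{small} subtree can accidentally trap $\gtrsim \epsilon cB_n$ marks — this is the role of the event $E(T)$ and Proposition~\ref{prop:tripleconvergence}~(i), which is not subsumed by a naive LLN and does need the bound $H(\cT)\lesssim n/B_n$ plus the counting estimate $n_q(T)\leq |T|H(T)/q$. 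With these two repairs your outline matches the proof.
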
 

We prove the two parts of Theorem~\ref{thm:biasedgwtree} separately.

\subsection{Proof of Theorem~\ref{thm:biasedgwtree} (i)}

In order to prove Theorem~\ref{thm:biasedgwtree} (i), we start by seeing $\cT^{(n)}$ as a reduced version of a $\mu$-GW tree. To this aim, let us define the notion of vertex-marking process on a tree. Let $T$ be a plane tree, and $V(T)$ be the set of its vertices. A \textit{vertex-marking process} on $T$ is a function $\cV: V(T)\rightarrow \{ 0,1 \}$ such that $\cV(\emptyset)=1$. We say that a vertex $x \in V(T)$ is \textit{marked} if $\cV(x)=1$. To a vertex-marking process $\cV$ on a plane tree $T$, we associate the \textit{reduced tree} $T^{\cV}$ defined the following way: 
\begin{itemize}
\item the set of vertices of $T^\cV$ is the set of marked vertices of $T$: $V(T^\cV) \coloneqq \left\{ x \in V(T), \cV(x)=1 \right\}$.
\item we erase all the edges of the initial tree $T$.
\item we put a new edge between two vertices of $T^\cV$ if one is the nearest marked ancestor of the other in $T$.
\end{itemize}
(see an example on Fig.~\ref{fig:reductionfigure}).

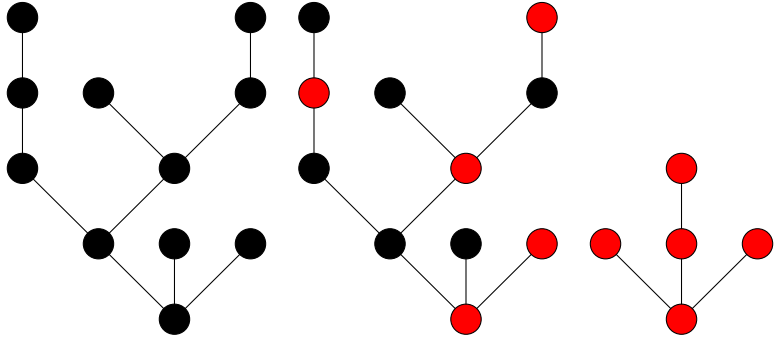
\begin{figure}
\caption{A tree, the same tree with a vertex-marking process (the marked vertices are colored in red) and the associated reduced tree.}
\label{fig:reductionfigure}
\centering
\begin{tabular}{lll}
\begin{tikzpicture}
\draw (0,1) -- (0,0) -- (1,1) ;
\draw (0,0) -- (-1,1) -- (-2,2) -- (-2,3) -- (-2,4) ;
\draw (-1,1) -- (0,2) -- (-1,3) ;
\draw (0,2) -- (1,3) -- (1,4) ;
 \foreach \Point in {(0,0), (1,1), (0,2), (-2,3), (1,4)}
        \draw[fill=black] \Point circle (0.2);
 \foreach \Point in {(0,1), (-1,1), (-2,2), (-1,3), (1,3), (-2,4)}
        \draw[fill=black] \Point circle (0.2);
\end{tikzpicture}
&
\begin{tikzpicture}
\draw (0,1) -- (0,0) -- (1,1) ;
\draw (0,0) -- (-1,1) -- (-2,2) -- (-2,3) -- (-2,4) ;
\draw (-1,1) -- (0,2) -- (-1,3) ;
\draw (0,2) -- (1,3) -- (1,4) ;
 \foreach \Point in {(0,0), (1,1), (0,2), (-2,3), (1,4)}
        \draw[fill=red] \Point circle (0.2);
 \foreach \Point in {(0,1), (-1,1), (-2,2), (-1,3), (1,3), (-2,4)}
        \draw[fill=black] \Point circle (0.2);
\end{tikzpicture}
&
\begin{tikzpicture}
\draw (1,1) -- (0,0) -- (-1,1) ;
\draw (0,0) -- (0,1) -- (0,2) ;
\foreach \Point in {(0,0), (1,1), (-1,1), (0,1), (0,2)}
        \draw[fill=red] \Point circle (0.2);
\end{tikzpicture}
\end{tabular}
\end{figure}

A natural vertex-marking process on a tree $T$ consists in marking the root, and marking each other vertex independently with probability $p_n$. We denote this process by $\cV_{n,c}$. 
Notice that the associated reduced tree is essentially a conditioned version of the tree of alleles of Bertoin \cite{Ber10}, where one forgets about the labels of the vertices. 

The proof is based on the study of the reduced tree, and consists in proving that the lamination associated to this tree is roughly the sublamination of $\bL(\cT_n)$ built by drawing only chords that correspond to marked vertices. For this, we mostly use concentration inequalities on binomial variables. First, remark that the (nonconditioned) GW tree $\cT^{(n)}$ is distributed as $\cT^{\cV_{n,c}}$. Therefore we can focus on the lamination $\bL((\cT^{\cV_{n,c}})_{\lfloor c B_n \rfloor})$, where $\cT$ is a $\mu$-GW tree.

The first technical lemma concerns the size of $\cT$, conditionally to the event that $|\cT^{\cV_{n,c}}|=\lfloor cB_n \rfloor$. Its proof is postponed to the end of the paragraph. Let us introduce a notation: a sequence $(x_n)_{n \in \Z_+}$ being given, we say that $x_n=oe(n)$ if there exists $C>0$, $\epsilon>0$ such that $x_n \leq C e^{-n^\epsilon}$ for all $n$.

\begin{lem}
\label{lem:concentrated}
As $n \rightarrow \infty$,
\begin{align*}
\P \left(\left||\cT|-n\right| \geq n^{1-1/3\alpha} \Big| |\cT^{\cV{n,c}}|=\lfloor cB_n \rfloor\right) =oe(n).
\end{align*}
\end{lem}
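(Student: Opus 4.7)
The plan is to apply Bayes' formula to the identity $\cT^{(n)}\overset{(d)}{=}\cT^{\cV_{n,c}}$ (which is exactly the content of \eqref{eq:analytic}) and then control the conditional fluctuations by binomial concentration. Conditional on $|\cT|=k$, the size of the reduced tree is $1+X_k$ with $X_k \sim \mathrm{Bin}(k-1,p_n)$ (the root is always marked, and each of the other $k-1$ vertices is marked independently with probability $p_n$). Writing $m=\lfloor cB_n\rfloor$, Bayes' formula gives
\begin{align*}
\P\bigl(|\cT|=k \,\big|\, |\cT^{\cV_{n,c}}|=m\bigr) \;=\; \frac{\P(|\cT|=k)\;\P(X_k=m-1)}{\P(|\cT^{\cV_{n,c}}|=m)}.
\end{align*}

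The first step is to lower-bound the denominator by a negative power of $n$. Retaining only the contribution of $k=n$, Theorem~\ref{llt}(ii) gives $\P(|\cT|=n)\sim n^{-1-1/\alpha}\ell(n)$, while the local central limit theorem for the binomial --- whose mean $(n-1)p_n=m+O(1)$ by the very definition of $p_n$ and whose variance is $\sim cB_n$ --- yields $\P(X_n=m-1)\asymp B_n^{-1/2}$. So the denominator is at least $n^{-C}$ for some fixed $C>0$.

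The heart of the argument is then to show that, after summation over $|k-n|\geq n^{1-1/3\alpha}$, the numerator is $oe(n)$. Observe first that $(k-1)p_n-(m-1) = (k-n)p_n+O(1)$, so the mean of $X_k$ differs from the target $m-1$ by a quantity of order $|k-n|p_n$. In the ``moderate'' regime $k\in[n/2,2n]$, Bernstein's inequality gives
\begin{align*}
\P(X_k=m-1) \;\leq\; 2\exp\!\Bigl(-c\,\tfrac{(k-n)^2 p_n}{k}\Bigr),
\end{align*}
and for $|k-n|\geq n^{1-1/3\alpha}$ this exponent is of order $B_n\,n^{-2/3\alpha} = n^{1/3\alpha}L(n)$ by \eqref{eq:Bn}, which is already $oe(n)$. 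Multiplying by the polynomial factor $\P(|\cT|=k)\leq Ck^{-1-1/\alpha}\ell(k)$ from Theorem~\ref{llt}(ii) and summing over at most $n$ values of $k$ still leaves an $oe(n)$ contribution.

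The main obstacle is to handle the two extreme regimes where $k$ is not of order $n$, since the previous quadratic estimate is too weak there. For $k\geq 2n$, the mean $(k-1)p_n$ exceeds $m-1$ by at least $(k-n)p_n\gtrsim (k/n)B_n$, so the one-sided Bernstein bound gives $\P(X_k=m-1)\leq \exp\bigl(-c(k/n)B_n\bigr)$; summed against $\sum_{k\geq 2n}\P(|\cT|=k)\leq 1$ this yields an $\exp(-cB_n)$ estimate times a polynomial, which is $oe(n)$ since $B_n$ grows as a positive power of $n$. Symmetrically, for $k\in[m,n/2]$ (smaller values give $\P(X_k=m-1)=0$) the mean falls short of $m-1$ by at least $cB_n/2-O(1)$ while $\mathrm{Var}(X_k)\leq kp_n\leq cB_n/2$, and Bernstein again gives $\exp(-cB_n)$ uniformly in $k$, which is trivially summable. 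Combining the three regimes and dividing by the polynomial lower bound on the denominator gives the claimed $oe(n)$ bound.
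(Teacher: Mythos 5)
Your proof is correct and follows essentially the same approach as the paper: Bayes' formula to express the conditional law of $|\cT|$ given $|\cT^{\cV_{n,c}}|=\lfloor cB_n\rfloor$, a local limit theorem to lower-bound the normalizing denominator by a polynomial in $n$, and binomial concentration (Bernstein for you, Chernoff with relative entropy in the paper) to show the contribution from $|k-n|\geq n^{1-1/3\alpha}$ is stretched-exponentially small. Your version is marginally more careful in modelling the reduced tree size as $1+\mathrm{Bin}(k-1,p_n)$ rather than $\mathrm{Bin}(k,p_n)$ and in explicitly separating the regimes $k\leq n/2$, $n/2\leq k\leq 2n$, $k\geq 2n$, but the underlying argument is the same.
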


\begin{proof}
In order to prove this lemma, observe that for a tree $T$, conditionally to $|~T^{\cV_{n,c}}~|~=~\lfloor cB_n \rfloor$, we have for $A \in \Z_+$,

\begin{align*}
\P \left( |T|=A \Big| |T^{\cV_{n,c}}| = \lfloor cB_n \rfloor \right) &= \frac{\P \left(  |T|=A \right)}{\P \left( |T^{\cV_{n,c}}| = \lfloor cB_n \rfloor \right)} \P \left(|T^{\cV_{n,c}}| = \lfloor cB_n \rfloor \Big| |T|=A \right)\\
&= \frac{\P \left(  |T|=A \right)}{\P \left( |T^{\cV_{n,c}}| = \lfloor cB_n \rfloor \right)} \P \left( Bin \left(A,cB_n/n \right) = \lfloor c B_n \rfloor \right)\\
&\leq \frac{1}{\P \left( |T^{\cV_{n,c}}| = \lfloor cB_n \rfloor \right)} \exp \Big( - A D \left( cB_n/A || cB_n/n \right) \Big)
\end{align*}
by Chernoff inequality, where $D(x||y) = x \log \frac{x}{y} + (1-x) \log \frac{1-x}{1-y}$. Observe that, for any $x,y$, $D(x||y) \geq \frac{(x-y)^2}{2(x+y)}$. This allows us to write:
\begin{align*}
\P \left( |T|=A \Big| |T^{\cV_{n,c}}| = \lfloor cB_n \rfloor \right) &\leq \frac{1}{\P \left( |T^{\cV_{n,c}}| = \lfloor cB_n \rfloor \right)} \exp \Big( - \frac{cB_n}{2n(n+A)} (A-n)^2 \Big).
\end{align*}
On the other hand, remark that 
\begin{align*}
\P \left( |T^{\cV_{n,c}}| = \lfloor cB_n \rfloor \right) &\geq \P \left( |T^{\cV_{n,c}}| = \lfloor cB_n \rfloor \Big| |T|=n  \right) \times \P \left( |T|=n \right)\\
&= \P \left( Bin(n,cB_n/n) = \lfloor cB_n \rfloor \right) \times \P \left( |T|=n \right).
\end{align*}
By the local limit theorem~\ref{llt}, $\P \left( |T|=n \right)$ decays at most polynomially in $n$. At the same time, $\P \left( Bin(n,cB_n/n) = \lfloor cB_n \rfloor \right) \sim \frac{1}{\sqrt{2\pi cB_n}} \left( \frac{cB_n}{\lfloor cB_n \rfloor} \right)^{\lfloor cB_n \rfloor} \left(\frac{n-cB_n}{n-\lfloor cB_n \rfloor} \right)^{n-\lfloor cB_n \rfloor}$ decays at most polynomially as well.

On the other hand, $\exp \Big( - \frac{cB_n}{2n(n+A)} (A-n)^2 \Big) = oe(n) \text{ for } |A-n| \geq n^{1-1/3\alpha}$ and is bounded by $\exp \Big( -  \frac{A}{2 n} \Big)$ for $n$ large enough and $A \geq n^2$. The result follows.
\end{proof}

This lemma allows us to restrict ourselves to the study of a tree $\cT$ with roughly $n$ vertices, exactly $\lfloor cB_n \rfloor$ of which are marked. In what follows, we fix $A>0$ and place ourselves under the two conditions: $||\cT|-n|\leq n^{1-1/3\alpha}$ and $H(\cT) \leq A |\cT|/B_{|\cT|}$. Indeed, by Lemma~\ref{lem:concentrated} and Theorem~\ref{thm:duquesne}, proving the convergence of Theorem~\ref{thm:biasedgwtree} (i) under these conditions in enough to get it in whole generality (again, this convergence has to be understood as: under these conditions, the lamination admits a limit, which converges to $\bL_c^{(\alpha)}$ as $A \rightarrow \infty$).
We denote by $Z_n$ the set of trees verifying these two conditions.

For a given finite tree $T$, we denote by $\overline{V}(T)$ the set of marked vertices of $T$. In what follows, for $\epsilon<1$, $T^{(\epsilon)}$ denotes the set of vertices $x$ of $T$ such that $|\theta_x(T)| > \epsilon |T|$ (where we recall that $\theta_x(T)$ is the subtree of $T$ rooted in $x$). Remark that $T^{(\epsilon)}$ is always nonempty since it contains at least the root. We now define three events on a finite tree $T$ with $\lfloor cB_n \rfloor$ marked vertices (including the root). 

$E(T)$: there exists $x \in T$ such that $|\theta_x(T)|\leq \epsilon |T|$ and that the number of marked vertices in $\theta_x(T)$ is $\geq 2 \epsilon c B_n$. In other words, this is the event that there exists a small subtree which contains a large number of marked vertices.

$F(T,k)$: $|\overline{V}(T) \cap T^{(\epsilon)}| = k$. The number of marked vertices whose subtree contains more than $\epsilon |T|$ vertices is equal to $k$.

Notice that, under the event $F(T,k)$, one can separate $T \backslash (\overline{V}(T) \cap T^{(\epsilon)})$ into $2k-1$ components the following way: taking the first and last times that each element of $\overline{V}(T) \cap T^{(\epsilon)}$ is visited by the contour function of $T$, we get $2k$ times between $0$ and $2n$, which we order increasingly. The components correspond to the vertices visited for the first time by the contour exploration between two consecutive of these times. Since the root is in $\overline{V}(T) \cap T^{(\epsilon)}$, $0$ and $2n$ belong to this set of times and these components form a partition of $T \backslash (\overline{V}(T) \cap T^{(\epsilon)})$. Denote the components by $K_1,\ldots,K_{2k-1}$, and their respective sizes by $s(K_1),\ldots,s(K_{2k-1})$. Finally, denote by $N(K_i)$ the number of marked vertices in $K_i$.

$G(T,k)$: $F(T,k)$ holds and there exists $i \leq 2k-1$ such that $s(K_i) \geq \epsilon n$ and such that, in addition, $\left|N(K_i)- s(K_i) \, cB_{|T|}/|T|\right| \geq B_{|T|}^{3/4}$. We will prove that, for all $k$, with high probability, this even does not occur. In other words, the number of marked vertices in each of these components is very concentrated around its mean.

We get convergences of the probabilities of these three events, uniformly on $Z_n$, as $n \rightarrow \infty$. In the following theorem, the probability has to be understood in the sense that the tree $T$ is fixed, and the marked vertices are random.

\begin{prop}
\label{prop:tripleconvergence}
\begin{itemize}
\item[(i)] $\underset{T \in Z_n}{\sup} \P(E(T)) \underset{n \rightarrow \infty}{\rightarrow} 0$.
\item[(ii)] For any $k \in \Z_+$, $\underset{n \rightarrow \infty}{\lim} \underset{T \in Z_n}{\inf} \P(F(T,k)) = \P(X=k)$ where $X \sim Po(1)$. In particular, these values sum to $1$.
\item[(iii)] For any $k \in \Z_+$, $\underset{T \in Z_n}{\sup} \P(G(T,k)) \underset{n \rightarrow \infty}{\rightarrow} 0$.
\end{itemize}
\end{prop}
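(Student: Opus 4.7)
My proof plan has three flavors, one per part, all reducing to concentration of Binomial counts of marked vertices in specific subsets of $T$.

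For (i), I would apply Chernoff's inequality vertex by vertex. Fixing $x \in T$ with $|\theta_x(T)| \leq \epsilon|T|$, the number of marked vertices in $\theta_x(T)$ is dominated by $Bin(\lfloor \epsilon|T| \rfloor, p_n)$ with mean at most $\epsilon c B_n(1+o(1))$ uniformly on $Z_n$ (using $||T|-n|\leq n^{1-1/3\alpha}$). Chernoff's bound gives $\P(Bin(\lfloor\epsilon|T|\rfloor, p_n) \geq 2\epsilon c B_n) \leq \exp(-C_\epsilon B_n)$ for some $C_\epsilon > 0$; union-bounding over the at most $2n$ vertices of $T$, combined with the polynomial growth $B_n \sim (n/L(n))^{1/\alpha}$ coming from \eqref{eq:Bn}, easily kills the factor $n$ and yields $\P(E(T)) \to 0$ uniformly on $Z_n$.

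For (iii), conditioning on $F(T,k)$ determines the partition $K_1,\ldots,K_{2k-1}$, and conditionally on the marked set $\overline{V}(T) \cap T^{(\epsilon)}$ the counts $N(K_i)$ are independent $Bin(s(K_i), p_n)$. For components with $s(K_i) \geq \epsilon n$ the mean is at least $\epsilon c B_n$, and a deviation of order $B_{|T|}^{3/4} \sim B_n^{3/4}$ represents $\Theta(B_n^{1/4})$ standard deviations, so Bernstein's inequality gives a tail bound of order $\exp(-c' B_n^{1/2})$. Summing over the $O(k)$ relevant components closes the argument.

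For (ii), the main obstacle, I would combine Poisson approximation with a structural bound on $T^{(\epsilon)}$. The key structural fact is that $T^{(\epsilon)}$ is a rooted subtree of $T$ whose leaves have pairwise disjoint subtrees in $T$ of size $>\epsilon|T|$, hence at most $\lfloor 1/\epsilon \rfloor$ leaves, so $|T^{(\epsilon)}| \leq (1/\epsilon) H(T) \leq (A/\epsilon)|T|/B_{|T|} = O(n/B_n)$ on $Z_n$. The count $|\overline{V}(T) \cap T^{(\epsilon)}|-1$ is then distributed as $Bin(|T^{(\epsilon)}|-1, p_n)$ with bounded parameter $\lambda_n(T) := (|T^{(\epsilon)}|-1)p_n$, and by Le Cam's inequality its total-variation distance to $Po(\lambda_n(T))$ is $(|T^{(\epsilon)}|-1)p_n^2 = O(B_n/n) \to 0$. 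The hard part is identifying the limit of the infimum over $T \in Z_n$: since $\lambda_n(T)$ varies with the shape of $T$, one must combine the Poisson approximation with an extremal analysis to select the correct limiting distribution and verify the normalization (values summing to $1$), which in turn reflects tightness—no escape of probability mass to infinity—and itself follows from the uniform boundedness of $\lambda_n(T)$ on $Z_n$.
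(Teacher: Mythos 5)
Your overall strategy---reducing all three parts to concentration and Poisson approximation for counts of marked vertices in subsets of $T$---is the right one and is close in spirit to the paper's proof. But there is a genuine gap running through all three parts: you treat the marking as unconditional i.i.d.\ Bernoulli$(p_n)$, whereas the probability in Proposition~\ref{prop:tripleconvergence} is computed \emph{conditionally on the total number of marked vertices being exactly} $\lfloor cB_n\rfloor$ (this is what was fixed after Lemma~\ref{lem:concentrated}; it is the whole point of working with $\cT^{(n)}_{\lfloor cB_n\rfloor}$). Under this conditioning the marked set is a uniform random subset of size $\lfloor cB_n\rfloor$, so counts in subregions are hypergeometric and jointly negatively dependent, not independent binomials. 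The paper deals with this systematically via the identity
\[
\P\bigl(|\overline{V}(T)\cap S|=k\bigr)\;=\;\frac{\P(Y_1=k)\,\P(Y_2=\lfloor cB_n\rfloor-k)}{\P(Y=\lfloor cB_n\rfloor)},
\qquad Y_1\sim\mathrm{Bin}(|S|,p_n),\ Y_2\sim\mathrm{Bin}(|T|-|S|,p_n),
\]
and then uses the local limit theorem (Theorem~\ref{llt}) to show that $\P(Y_2=\lfloor cB_n\rfloor-k)$ is uniformly comparable (indeed, for (ii), asymptotically equal) to $\P(Y=\lfloor cB_n\rfloor)$, thereby reducing the conditioned count to the free binomial $Y_1$.

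For parts (i) and (iii), your route can be salvaged: hypergeometric tails are dominated by the corresponding binomial tails (Hoeffding), so a Chernoff or Bernstein bound on $\mathrm{Bin}(\lfloor\epsilon|T|\rfloor,p_n)$ does give an upper bound for the conditional count, and your union bound over $O(n)$ vertices works because $B_n$ grows polynomially---but you should say this rather than assert domination by the binomial distribution, which is false as a statement about distributions. (The paper instead uses the ratio-of-binomials bound plus Chebyshev, and sums the per-vertex error only over the $\lfloor cB_n\rfloor$ \emph{marked} vertices.) Also, in (iii) ``conditioning on $F(T,k)$ determines the partition $K_1,\ldots,K_{2k-1}$'' is not right: you must condition on \emph{which} $k$ vertices of $T^{(\epsilon)}$ are marked, not only how many; and once you do, the counts $N(K_i)$ are independent binomials \emph{conditioned on their sum being $\lfloor cB_n\rfloor-k$}, which is exactly the hypergeometric structure you need to control. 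For part (ii), the bare claim that $|\overline{V}(T)\cap T^{(\epsilon)}|-1\sim\mathrm{Bin}(|T^{(\epsilon)}|-1,p_n)$ is wrong as stated; the conditioning must be shown to be asymptotically negligible, which is exactly what the local limit theorem comparison $\P(Y_2=\lfloor cB_n\rfloor-k)\sim\P(Y=\lfloor cB_n\rfloor)$ provides (valid because $|T^{(\epsilon)}|=o(n)$ on $Z_n$, as your bound $|T^{(\epsilon)}|\le H(T)/\epsilon$ correctly shows). Once that reduction is in place your Le Cam step is fine.
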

Let us immediately see how it implies Theorem~\ref{thm:biasedgwtree} (i)

\begin{proof}[Proof of Theorem~\ref{thm:biasedgwtree} (i) using Proposition~\ref{prop:tripleconvergence}]
We will prove that $\bL((\cT^{\cV_{n,c}})_{\lfloor c B_n \rfloor})$ is close in distribution to $\bL_c(\tilde{C}(\cT)))$ under the assumptions  $||\cT|-n|\leq n^{1-1/3\alpha}$ and $H(\cT) \leq A |\cT|/B_{|\cT|}$, which will straightforwardly imply Theorem~\ref{thm:biasedgwtree} (i) by Theorem \ref{thm:discreteconvergencestable}. To this end, we will prove that large chords have almost the same location in both laminations.

For $u \in \overline{V}(\cT) \cap \cT^{(\epsilon)}$, define $X_u$ a uniform variable on the edge from $u$ to its parent, so that $(X_u)_{u \in \overline{V}(\cT) \cap \cT^{(\epsilon)}}$ are independent. Then, remark that, on one hand, the chords corresponding to $u$ and $X_u$ are at distance at most $2\pi/|\cT|$ in $\bL(\tilde{C}(\cT))$. On the other hand, by Proposition \ref{prop:tripleconvergence} (ii), the set $\{ X_u, u \in \overline{V}(\cT) \cap \cT^{(\epsilon)}\}$ is asymptotically distributed as a Poisson point process $\cP$ of intensity $p_n d\ell$, on the set of edges of $\cT$ whose endpoints are in $\cT^{(\epsilon)}$ (conditionally given that no two points of $\cP$ are in the same edge, which happens with high probability).

Proposition \ref{prop:tripleconvergence} (i) ensures that large chords (namely, chords that have length $\geq 2\pi\epsilon$) in $\bL((\cT^{\cV_{n,c}})_{\lfloor c B_n \rfloor})$ are necessarily coded by points of $\overline{V}(\cT) \cap \cT^{(\epsilon)}$.

Finally, by Proposition \ref{prop:tripleconvergence},(iii), each chord in $\bL((\cT^{\cV_{n,c}})_{\lfloor c B_n \rfloor})$ coded by a vertex $u$ of $\overline{V}(\cT) \cap \cT^{(\epsilon)}$ is asymptotically close to the chord corresponding to $u$ in $\bL(\tilde{C}(\cT))$, which concludes the proof.
\end{proof}

Now we prove Proposition~\ref{prop:tripleconvergence}.

\begin{proof}[Proof of Proposition~\ref{prop:tripleconvergence}]
The proofs of these three statements rely on estimates of binomial tails. Let us start by proving (ii). For $T \in Z_n$, 
\begin{align*}
\P \left( \left|\overline{V}(T) \cap T^{(\epsilon)}\right|=k \right) = \frac{\P(Y_1=k) \, \P(Y_2=\lfloor cB_n \rfloor-k)}{\P(Y=\lfloor cB_n \rfloor)}
\end{align*}
where $Y_1=Bin(|T^{(\epsilon)}|,p_n),Y_2=Bin(|T|-|T^{(\epsilon)}|,p_n)$ and $Y=Bin(|T|,p_n)$. 
We now use the following key fact: for any tree $T$, any $q >0$, let $n_q(T)$ be the number of vertices $x$ of $T$ such that $|\theta_x(T)|>q$. Then 
\begin{equation}
\label{eq:nqt}
n_q(T) \leq |T| \frac{H(T)}{q}.
\end{equation}
Indeed, $h \in \llbracket 0,H(T) \rrbracket$ being fixed, there are at most $|T|/q$ such vertices with height exactly $h$. The result follows by summing over all $h$.
In particular, uniformly in $T \in Z_n$, $|T^{(\epsilon)}| \leq A|T|/B_{|T|} \times 1/\epsilon$. Hence, as $n \rightarrow \infty$, $\P(Y_2=\lfloor cB_n \rfloor-k) \sim \P(Y=\lfloor cB_n \rfloor)$ and
\begin{align*}
\P \left( \left|\overline{V}(T) \cap T^{(\epsilon)}\right|=k \right) \sim \P(Y_1=k) \sim \P \left(X = k \right)
\end{align*}
where $X$ is a Poisson variable of parameter $1$.

In order to prove (i), we use a similar method. Take $T \in Z_n$ and $x$ such that $|\theta_x(T)| \leq \epsilon |T|$. Then the probability that there are $K$ marked vertices in $\theta_x(T)$ is
\begin{align*}
\frac{\P(Y'_1=K) \, \P(Y'_2=\lfloor cB_n \rfloor-K)}{\P(Y'=\lfloor cB_n \rfloor)}
\end{align*}
where $Y'_1=Bin(|\theta_x(T)|,p_n), Y'_2=Bin(|T|-|\theta_x(T)|,p_n)$ and $Y'=Bin(|T|,p_n)$.
By the local limit theorem \ref{llt}, there exists a constant $C_1$ depending only on $\epsilon$ such that, uniformly in $K$, $\P(Y'_2=\lfloor cB_n \rfloor-K) \leq C_1 \P(Y'=\lfloor cB_n \rfloor)$. Hence, the probability $r_x$ that there are more that $2\epsilon cB_n$ vertices in $\theta_x(T)$ satisfies $r_x \leq C_1 \P(Y'_1 \geq 2\epsilon cB_n)$. By Bienaymé-Tchebytchev inequality, for $n$ large enough, 

$$\P(Y'_1 \geq 2\epsilon cB_n) \, \leq \, \P\left(\left|Y'_1-\E(Y'_1)\right| \geq \frac{\epsilon}{2} cB_n\right) \, \leq \, \frac{4 Var(Y'_1)}{\epsilon^2 c^2 B_n^2}.$$
Since $Var(Y'_1) = |\theta_x(T)| p_n (1-p_n) \leq |\theta_x(T)| p_n$, we obtain:
\begin{align*}
r_x \leq 4 C_1 \frac{|\theta_x(T)|}{cB_n \epsilon^2 n}.
\end{align*}

Now observe that, by \eqref{eq:nqt}, the number of vertices $x$ in $T$ (marked or not) such that $|\theta_x(T)|\geq |T|/\log n$ is $\leq H(T) \log n \leq A \log n \, |T|/B_{|T|}$. Hence, with high probability, the number of such vertices that are marked is $O(\log n)$. Distinguishing marked vertices $x$ such that $|\theta_x(T)|\leq |T|/\log n$ and marked vertices such that $\epsilon |T| \geq |\theta_x(T)|\geq |T|/\log n$, we get that there exists a constant $C$ such that
\begin{align*}
\sum_{\substack{x \text{ marked }\\|\theta_x(T)| \leq \epsilon |T|}} r_x \leq  C \left( \log n \frac{\epsilon |T|}{c\epsilon^2 n B_n} + B_n \frac{|T|/\log n}{c \epsilon^2 n B_n} \right) \leq 2 C \left( \frac{\log n}{cB_n \epsilon} + \frac{1}{c\epsilon^2 \log n} \right),
\end{align*}
using the fact that the number of marked vertices in $T$ is exactly  $\lfloor c B_n \rfloor$. This quantity tends to $0$, which provides the result.

Finally, we sketch the idea of the proof of (iii). 
Remark that $N(K_1),\ldots,N(K_{2k-1})$ are distributed as binomials of parameters $(s(K_1),p_{|T|}),\ldots,(s(K_{2k-1}),p_{|T|})$, conditionally to their sum being equal to $\lfloor cB_n \rfloor-k$. The only thing that we need to prove is that, as $n$ grows, for any $T \in Z_n$, for any $M \geq \epsilon n$, for any subset of $M$ points of $T$ independent of the vertex-marking process, the number $N$ of marked vertices among these $M$ points is concentrated enough around its mean. More precisely, since $N$ follows a binomial distribution of parameters $(M,p_n)$, we only need to prove that
\begin{align*}
\P \left( \left|B-\E[B] \right| \geq B_n^{3/4} \right) \underset{n \rightarrow \infty}{\rightarrow} 0.
\end{align*}
where $B \sim Bin(M,p_n)$.
As in the proof of Lemma \ref{lem:concentrated}, this is a direct application of Chernoff inequality. The result follows.
\end{proof}

\subsection{Proof of Theorem~\ref{thm:biasedgwtree} (ii)}

In order to prove this part of the theorem, we need to introduce an other way of coding a finite tree, called the \textit{Lukasiewicz path} of the tree (see Fig. \ref{fig:luka} for an example). Let $T$ be a plane tree with $n$ vertices. Its Lukasiewicz path $\left( W_t(T) \right)_{0 \leq t \leq n}$ is constructed as follows: $W_0(T)=1$ and, for $i \in \llbracket 0,n-1 \rrbracket$, $W_{i+1}(T) - W_i(T) =k_{v_i}(T)-1$. In particular, $W_n(T)=-1$. We define it on the whole interval $[0,n]$ by taking its linear interpolation. 

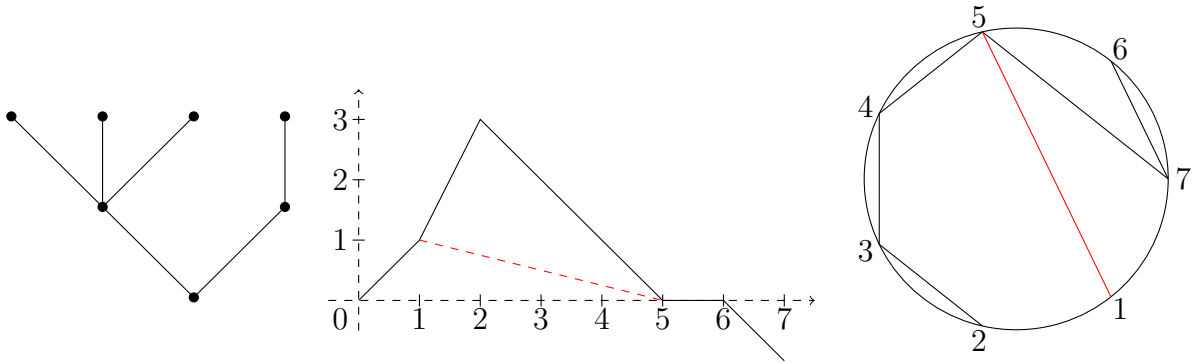
\begin{figure}[h!]
\center
\caption{A tree $T$, its Lukasiewicz path $W(T)$ and the lamination $\bL_{Luka}(T)$. In red, a chord of $\bL_{Luka}(T)$ and the way to draw it from $W(T)$.}
\label{fig:luka}
\begin{tabular}{c c c}
\begin{tikzpicture}[scale=1.2]
\draw[white] (0,0) -- (0,-.7);
\draw (0,0)--(-1,1) -- (-2,2) (-1,2) -- (-1,1) -- (0,2) (0,0)--(1,1) -- (1,2);
\draw[fill] (0,0) circle (.05);
\draw[fill] (-1,1) circle (.05);
\draw[fill] (-2,2) circle (.05);
\draw[fill] (0,2) circle (.05);
\draw[fill] (-1,2) circle (.05);
\draw[fill] (1,1) circle (.05);
\draw[fill] (1,2) circle (.05);
\end{tikzpicture}
&
\begin{tikzpicture}[scale=.8]
\draw[dashed,->] (0,-.5) -- (0,3.5);
\draw[dashed,->] (-.5,0) -- (7.5,0);
\draw (0,0)--(1,1) -- (2,3) -- (5,0) -- (6,0) -- (7,-1);
\draw (-.3,-.3) node{$0$};
\draw (1,-.1) -- (1,.1) (2,-.1) -- (2,.1) (3,-.1) -- (3,.1) (4,-.1) -- (4,.1) (5,-.1) -- (5,.1) (6,-.1) -- (6,.1) (7,-.1) -- (7,.1);
\draw (-.1,1) -- (.1,1) (-.1,2) -- (.1,2)
(-.1,3) -- (.1,3);
\draw (1,-.3) node{$1$};
\draw (2,-.3) node{$2$};
\draw (3,-.3) node{$3$};
\draw (4,-.3) node{$4$};
\draw (5,-.3) node{$5$};
\draw (6,-.3) node{$6$};
\draw (7,-.3) node{$7$};
\draw (-.3,1) node{$1$};
\draw (-.3,2) node{$2$};
\draw (-.3,3) node{$3$};
\draw[red,dashed] (1,1) -- (5,0);
\end{tikzpicture}
&
\begin{tikzpicture}[scale=2]
\draw (0,0) circle (1);
\draw 
({cos(-2*360/7},{sin(-2*360/7}) -- ({cos(-3*360/7},{sin(-3*360/7})
({cos(-3*360/7},{sin(-3*360/7}) -- ({cos(-4*360/7},{sin(-4*360/7})
({cos(-4*360/7},{sin(-4*360/7}) -- ({cos(-5*360/7},{sin(-5*360/7})
({cos(-0*360/7},{sin(-0*360/7}) -- ({cos(-5*360/7},{sin(-5*360/7})
({cos(-0*360/7},{sin(-0*360/7}) -- ({cos(-6*360/7},{sin(-6*360/7})
;
\draw[red] ({cos(-1*360/7},{sin(-1*360/7}) -- ({cos(-5*360/7},{sin(-5*360/7});
\draw ({1.1*cos(-0*360/7},{1.1*sin(-0*360/7}) node{$7$};
\draw ({1.1*cos(-1*360/7},{1.1*sin(-1*360/7}) node{$1$};
\draw ({1.1*cos(-2*360/7},{1.1*sin(-2*360/7}) node{$2$};
\draw ({1.1*cos(-3*360/7},{1.1*sin(-3*360/7}) node{$3$};
\draw ({1.1*cos(-4*360/7},{1.1*sin(-4*360/7}) node{$4$};
\draw ({1.1*cos(-5*360/7},{1.1*sin(-5*360/7}) node{$5$};
\draw ({1.1*cos(-6*360/7},{1.1*sin(-6*360/7}) node{$6$};
\end{tikzpicture}
\end{tabular}
\end{figure}

Recall that we defined in Section \ref{ssec:fraglam} a lamination $\bL(C(T))$  associated to a tree $T$ through its contour function. Here, we shall need another lamination, which is discrete, defined through its Lukasiewicz path. Specifically, fix a plane tree $T$ with $n$ vertices. For every  $0 \leq a \leq n-1$, set $d(a)=\min \{b \in \{a+1, a+2, \ldots,n\}: W_{b}(T)<W_{a}(T) \} $, and set
$$\bL_{Luka}(T)= \bigcup_{a=0}^{n-1} \left[e^{-2i \pi a/n},e^{-2i \pi d(a)/n}\right].$$
(see Fig. \ref{fig:luka} for an example).

The following result shows that the laminations $\bL(C(T))$ and $\bL_{Luka}(T)$ are close, provided that $T$ is a large tree with rather small height.

\begin{lem}
\label{lem:lukacontour}
Let $f: \Z_+ \rightarrow \Z_+$ be such that $f(n) = o(n)$. Then
\begin{align*}
\underset{|T|=n, H(T) \leq f(n)}{\sup} d_{H} \left( \bL(C(T)), \bL_{Luka}(T) \right)  \quad \mathop{\longrightarrow}_{n \rightarrow \infty} \quad  0.
\end{align*}
\end{lem}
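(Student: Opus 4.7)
The plan is to set up an explicit one-to-one correspondence between the chords of $\bL_{Luka}(T)$ and a family of ``integer-time'' chords of $\bL(C(T))$ differing only by a small rotation, and then to show that every other chord of $\bL(C(T))$ lies close to one of these.

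First I would enumerate the vertices of $T$ in lexicographic order as $v_{0}, v_{1}, \ldots, v_{n-1}$ and set $s_{a} \coloneqq |\theta_{v_{a}}(T)|$, $h_{a} \coloneqq h(v_{a})$. A direct induction on the contour exploration yields $g_{v_{a}} = 2a - h_{a}$ for the first visit time of $v_{a}$. Moreover, if $a \geq 1$ and $v_{a}$ has parent $v_{p}$, the portion of the contour tracing $\theta_{v_{a}}(T)$ occupies the interval $[g_{v_{a}}-1, \, g_{v_{a}}+2s_{a}-1]$, whose endpoints are identified by $\sim_{C(T)}$. The associated chord of $\bL(C(T))$ is
\[
C_{a} \coloneqq \bigl[e^{-i\pi(g_{v_{a}}-1)/n}, \, e^{-i\pi(g_{v_{a}}+2s_{a}-1)/n}\bigr] = \bigl[e^{-2i\pi(a/n - (h_{p}+2)/(2n))}, \, e^{-2i\pi((a+s_{a})/n - (h_{p}+2)/(2n))}\bigr],
\]
which is exactly the image of the Lukasiewicz chord $L_{a} \coloneqq [e^{-2i\pi a/n}, e^{-2i\pi(a+s_{a})/n}]$ under the rotation of angle $\pi(h_{p}+2)/n$. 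Since rotations are isometries of $\oD$, we obtain $d_{H}(L_{a}, C_{a}) \leq \pi(h_{p}+2)/n \leq \pi(f(n)+2)/n$.

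Every non-degenerate chord of $\bL_{Luka}(T)$ is of the form $L_{a}$ for some $a \geq 1$ (the chord $L_{0}$ being a single point), so the bound above accounts for one direction of the Hausdorff inequality. For the other direction, I would observe that every chord of $\bL(C(T))$ not of the form $C_{a}$ arises from a pair $(s,t)$ with $s \sim_{C(T)} t$ where $s$ is non-integer: writing $s = g_{v_{a}} - 1 + \alpha$ for the unique $v_{a}$ whose sub-excursion band contains $s$ and some $\alpha \in (0,1)$, the equivalent time is $t = g_{v_{a}} + 2s_{a} - 1 - \alpha$, so this intermediate chord sits at Hausdorff distance at most $\pi\alpha/n \leq \pi/n$ from $C_{a}$. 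Combining the two bounds yields
\[
\sup_{|T|=n, \, H(T) \leq f(n)} d_{H}\bigl(\bL(C(T)), \bL_{Luka}(T)\bigr) \leq \frac{\pi(f(n)+3)}{n} \xrightarrow[n \to \infty]{} 0.
\]

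The main obstacle is the careful bookkeeping around the equivalence relation at non-integer times, namely verifying that every pair $(s, t)$ with $s \sim_{C(T)} t$ falls unambiguously into a sub-excursion band associated with a single vertex, so that each intermediate chord is genuinely close to the $C_{a}$ identified above. The key arithmetic identity $g_{v_{a}} - 1 = 2a - h_{p} - 2$, which converts the time-axis shift $(h_{p}+2)/(2n)$ into the angular rotation relating $L_{a}$ and $C_{a}$, is the backbone of the argument; the rest reduces to elementary manipulations.
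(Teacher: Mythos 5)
Your proof is correct and follows essentially the same approach as the paper: enumerate vertices in lexicographic order, use the standard identity $g_{v_a}=2a-h(v_a)$ to locate the contour chord for each vertex, observe that the Lukasiewicz chord $[e^{-2i\pi a/n},e^{-2i\pi(a+s_a)/n}]$ differs from it by a rotation of angle $O(h(v_a)/n)=O(f(n)/n)$, and note that all other contour chords lie within $O(1/n)$ of one of these representatives. The only cosmetic difference is the choice of representative: you take the ``bottom'' chord $[g_{v_a}-1,\,g_{v_a}+2s_a-1]$ (the $\alpha=0$ limit), while the paper uses $[g_{v_a},\,d_{v_a}]$ (the $\alpha=1$ case); both differ from $L_a$ by a $\pi(f(n)+O(1))/n$ rotation and from each other by $O(1/n)$.
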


\begin{proof}
Let $T$ be a tree with $n$ vertices and height $\leq f(n)$. Let $0 \leq r \leq n-1$, and recall that $v_r(T)$ denotes the $(r+1)$-th vertex of $T$ in the lexicographical order. Let $k$ be the size of the subtree rooted at $v_r(T)$. Then, $v_r(T)$ corresponds to a chord between $e^{-2i\pi r/n}$and $e^{-2i\pi (r+k)/n}$ in $\bL_{Luka}(T)$. In $\bL(C(T))$, $v_r(T)$ codes a chord between $e^{-2i\pi (2r-h(v_r(T))) /2n}$ and $e^{-2i\pi (2(r+k-1)-h(v_r(T))) /2n}$, while points in the edge between $v_r(T)$ and its parent code (infinitely many) chords at distance $\leq 2\pi/n$ to this first one. The result follows since, by assumption, uniformly for all $r$, $h(v_r(T)) = o(n)$.
\end{proof}

Recall that $ \cT^{(n)}_{\lfloor cB_n \rfloor} $ is a Galton-Watson tree with offspring distribution $\mu_{n}$, conditioned on having $\lfloor cB_n \rfloor$ vertices. The main tool to establish Theorem~\ref{thm:biasedgwtree} (ii) is the fact that the Lukasiewicz path of $ \cT^{(n)}_{\lfloor cB_n \rfloor} $ is distributed as a conditioned random walk (see \cite[Section 1.2]{LG05}). More precisely, let $S^{(n)}$ be the integer-valued random walk started from $0$ with i.i.d. jumps, whose jump distribution is given by $\P(S^{(n)}_{1}=k)=\mu(k+1)$ for $k \geq -1$. We extend it on $\R_+$ by linear interpolation. Then, $(S^{(n)}_a)_{0 \leq a \leq \lfloor cB_n \rfloor}$ conditioned on the event $ \{S^{(n)}_{\lfloor cB_n \rfloor} = -1 \textrm{ and } S^{(n)}_a \geq  0  \textrm{ for } a \leq \lfloor cB_n \rfloor -1 \}$ is distributed as the Lukasiewicz path of the tree $\cT^{(n)}_{\lfloor cB_n \rfloor}$. In order to obtain a limit theorem for $S^{(n)}$, we rely on the following local limit theorem.

\begin{thm}
\label{thm:biasedllt}
Fix $0 <u \leq 1$. The following convergence holds as $n \rightarrow \infty$:
\begin{align*}
\underset{|j| \leq n^{3/8}}{\sup} \  \underset{k \in \Z}{\sup} \left| B_n \P \left( S^{(n)}_{\lfloor ucB_n + j \rfloor} = k \right) - q_u \left( \frac{k}{B_n}\right)  \right| \underset{n \rightarrow \infty}{\rightarrow} 0
\end{align*}
where $q_u$ is the density of $\tau_u^{(\alpha),c}$. 
\end{thm}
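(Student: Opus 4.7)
The plan is to prove this local limit theorem by Fourier inversion, reducing the claim to a suitable control of the rescaled characteristic function of $S^{(n)}_1$ on the full unit circle. Setting
\[
\Phi_n(s) \coloneqq \E\bigl[e^{isS^{(n)}_1/B_n}\bigr] = e^{-is/B_n}F_{\mu_n}(e^{is/B_n}),
\]
Fourier inversion for the integer-valued walk gives
\[
B_n\,\P\bigl(S^{(n)}_{\lfloor ucB_n+j\rfloor}=k\bigr) = \frac{1}{2\pi}\int_{-\pi B_n}^{\pi B_n}e^{-iks/B_n}\,\Phi_n(s)^{\lfloor ucB_n+j\rfloor}\,ds.
\]
Moreover, $\tau^{(\alpha),c}_u$ admits an integrable characteristic function by \eqref{eq:realpart}, so $q_u(k/B_n)$ is recovered by the analogous integral over $\R$ of $e^{-iks/B_n}e^{-u(c\overline{\psi}(s)+isc)}$. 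The uniformity in $k\in\Z$ is therefore automatic, and it suffices to show that $\mathds{1}_{[-\pi B_n,\pi B_n]}(s)\Phi_n(s)^{\lfloor ucB_n+j\rfloor}$ converges to $e^{-u(c\overline{\psi}(s)+isc)}$ in $L^1(\R)$, uniformly in $|j|\leq n^{3/8}$.

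For the pointwise convergence on compacts, I would show $B_n\log F_{\mu_n}(e^{is/B_n})\to -\overline{\psi}(s)$. Writing $w_n(s)\coloneqq 1-F_{\mu_n}(e^{is/B_n})$, the identity \eqref{eq:analytic} evaluated at $x=e^{is/B_n}$ rearranges to
\[
-p_n\bigl[w_n(s)+e^{is/B_n}-1\bigr] = F_\mu(1+\omega_n) - (1+\omega_n), \qquad \omega_n \coloneqq p_n(e^{is/B_n}-1)-(1-p_n)w_n(s).
\]
Under the ansatz $w_n(s)\sim W(s)/B_n$ with $\Re W(s)\geq 0$, one checks that $\omega_n\sim -W(s)/B_n$ and $|1+\omega_n|<1$, so Theorem \ref{thm:estimate} applies to the right-hand side. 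Expanding both sides to first order in $1/B_n$ (using $p_n = cB_n/n$ and $1/n = o(1/B_n)$ since $\alpha>1$), and absorbing the constant $\alpha(\alpha-1)/\Gamma(3-\alpha)$ from \eqref{eq:Bn} that appears between the slowly-varying functions of \eqref{eq:L} and of Theorem \ref{thm:estimate}, the limiting equation for $W(s)$ is precisely \eqref{eq:psi}, so $W = \overline{\psi}$. Together with $|j|\leq n^{3/8} = o(B_n)$ for $\alpha\in(1,2]$, this yields $\Phi_n(s)^{\lfloor ucB_n+j\rfloor}\to e^{-u(c\overline{\psi}(s)+isc)}$ uniformly on compacts in $s$ and uniformly in $|j|\leq n^{3/8}$.

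The main obstacle will be promoting this pointwise convergence to $L^1$ convergence of the integrand, which requires a uniform bound $|\Phi_n(s)|^{\lfloor ucB_n\rfloor}\leq g(s)$ with $g\in L^1(\R)$ independent of $n$, valid across the full range $|s|\leq \pi B_n$. For $|s|\leq R$ with $R$ large but fixed the bound follows from the local analysis together with the growth $\Re\overline{\psi}(s)\sim|sc|^{1/\alpha}\cos(\pi/(2\alpha))$ coming from \eqref{eq:realpart}. For $R\leq|s|\leq \pi B_n$ the estimate must instead be extracted globally from \eqref{eq:analytic}: in the regime where the argument of $F_\mu$ remains within the neighbourhood of $1$ where Theorem \ref{thm:estimate} is effective, Potter's bounds (Theorem \ref{thm:potterbounds}) convert the Karamata-type asymptotic into a uniform power-law bound; in the complementary regime, one uses aperiodicity of $\mu$, which provides $\sup_{|\theta|\geq\eta}|F_\mu(e^{i\theta})|<1$ for any $\eta>0$, and transports this bound to $F_{\mu_n}$ via the small factor $p_n\to 0$ in \eqref{eq:analytic}. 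This global control of the characteristic function across the whole unit circle, rather than the local near-$1$ analysis, is the delicate part of the argument.
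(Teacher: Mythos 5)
Your overall architecture matches the paper's: Fourier inversion, pointwise convergence of $\Phi_n(s)^{\lfloor ucB_n+j\rfloor}$ via the implicit equation \eqref{eq:analytic} together with Theorem \ref{thm:estimate}, and then an $L^1$ domination argument to conclude. Your treatment of the pointwise convergence (rescaling \eqref{eq:analytic}, identifying the limit with \eqref{eq:psi}, and using $|j|\leq n^{3/8}=o(B_n)$) is sound and is essentially the content of the paper's Lemma \ref{lem:conv} (ii).

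The genuine gap is in the tail estimate for $R\leq|s|\leq\pi B_n$, where you propose a two-regime argument whose second branch — ``one uses aperiodicity of $\mu$, which provides $\sup_{|\theta|\geq\eta}|F_\mu(e^{i\theta})|<1$, and transports this bound to $F_{\mu_n}$'' — cannot work. The offspring law $\mu_n$ varies with $n$, and indeed $F_{\mu_n}(e^{i\theta})\to 1$ uniformly over the \emph{entire} unit circle (this is the paper's Lemma \ref{lem:conv} (i), which follows from $F_{\mu_n}(0)\to 1$ and the elementary bound $|F_{\mu_n}(z)-1|\leq 2(1-F_{\mu_n}(0))$ for $|z|\leq 1$). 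Consequently, no bound of the form $|F_{\mu_n}(e^{i\theta})|\leq 1-\delta$ uniform in $n$ holds on any $\{|\theta|\geq\eta\}$; the ``complementary regime'' of your case analysis is empty, and the aperiodicity bound simply does not transport. The estimate has to come entirely from the quantitative asymptotics of the implicit equation across the whole range $|s|\in[A,\pi B_n]$, which is what the paper's Lemma \ref{lem:conv} (iii) provides: one shows $|Y_n(s)|\geq 1$ and $K_n(s)Y_n(s)$ is comparable to $A_n(s)=(-cB_n(e^{is/B_n}-1))^{1/\alpha}$, and — crucially, because these are complex quantities — that $\arg Y_n(s)$ stays away from $\pi/2+\pi\Z$, so that $\Re Y_n(s)\gtrsim|Y_n(s)|$. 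Only then does one obtain a real decay bound $|\Phi_n(s)|\leq 1-\frac{C'}{B_n}|s|^{1/2\alpha}$ that is integrable after exponentiation. This control of the argument of $Y_n(s)$, not aperiodicity, is the delicate point; your first regime (Potter bounds applied to the Karamata asymptotic from Theorem \ref{thm:estimate}) is the right idea, but it must be made to cover the whole tail and must track arguments as well as moduli.
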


The proof of this result is postponed to Section \ref{ssec:local}; let us first explain how it entails Theorem~\ref{thm:biasedgwtree} (ii).

\begin{proof}[Proof of Theorem~\ref{thm:biasedgwtree} (ii) from Theorem~\ref{thm:biasedllt}] The first step is to show that the convergence
$$ \left( \frac{S^{(n)}_{\lfloor cB_n \rfloor t}}{B_n} \right)_{0 \leq t \leq 1 }  \textrm{ under } \P( \, \cdot \, |   S^{(n)}_{\lfloor cB_n \rfloor} = -1 \textrm{ and } \forall  a \leq \lfloor cB_n \rfloor -1, S^{(n)}_a \geq  0  )  \quad \mathop{\longrightarrow}^{(d)}_{n \rightarrow \infty} \quad (\tau^{(\alpha),c,exc}_{t})_{0 \leq t \leq 1}$$
holds in distribution. To this end, we follow the classical path, which consists in first showing a convergence under a ``bridge'' condition by combining an unconditioned convergence with absolute continuity and time-reversal, and then using the Vervaat transformation. To do it, we start by proving an unconditioned convergence, namely
\begin{equation}
\label{eq:eq1}
\left( \frac{S^{(n)}_{\lfloor cB_n \rfloor t}}{B_n} \right)_{0 \leq t \leq 1 }   \quad \mathop{\longrightarrow}^{(d)}_{n \rightarrow \infty} \quad  \left(\tau^{(\alpha), c}_t\right)_{0 \leq t \leq 1}
\end{equation}
By \cite[Theorem 16.14]{Kal02}, to prove \eqref{eq:eq1}, it is enough to check that the one-dimensional convergence holds for $t=1$, which is an immediate consequence of Theorem~\ref{thm:biasedllt}. 

Next, we prove the "bridge" version of this theorem, first up to time $u \in (0,1)$. Let $F: \mathbb{D}([0,1],\R) \rightarrow \R$ be a continuous bounded function. Then, setting $\phi_{k}(i)=\P(S^{(n)}_{k}=i)$, by absolute continuity, we have
\begin{align*}
\E \left[ F \left(\left( \frac{S^{(n)}_{\lfloor cB_n \rfloor t}}{B_n} \right)_{0 \leq t \leq u} \right) \Big| S^{(n)}_{\lfloor cB_n \rfloor} = -1 \right] = \E \left[F\left(\left( \frac{S^{(n)}_{\lfloor cB_n \rfloor t}}{B_n} \right)_{0 \leq t \leq u} \right) \frac{\phi_{\lfloor cB_n \rfloor (1-u)}(-S^{(n)}_{\lfloor cB_n \rfloor u}-1)}{\phi_{ \lfloor cB_n \rfloor}(-1)}\right]
\end{align*}

By combining Theorem~\ref{thm:biasedllt} and \eqref{eq:eq1}, this quantity converges to $\E[F((\tau^{(\alpha),c}_t )_{0 \leq t \leq u}) \frac{q_{1-u}(-\tau^{(\alpha),c}_u)}{q_1(0)}]$ as $n \rightarrow \infty$. By \eqref{eq:bridge}, this is equal to $\E[F((\tau^{(\alpha),c,br}_t )_{0 \leq t \leq u})]$. In order to obtain the convergence up to time $1$, it is enough to show tightness on $[0,1]$. Observe that we already know that, conditionally given $S^{(n)}_{\lfloor cB_n \rfloor} = -1$, the sequence $(\frac{S^{(n)}_{\lfloor cB_n \rfloor t}}{B_n})_{0 \leq t \leq 1}$ is tight on $[0,u]$. In order to prove that it is tight on $[0,1]$, we prove that, for $u \in [0,1]$, the process $( \frac{S^{(n)}_{\lfloor c B_n \rfloor - \lfloor c B_n \rfloor t}}{B_n} )_{0 \leq t \leq u}$ is tight on $[0,u]$. For this, just remark that by time-reversal, 
\begin{align*}
\left( \frac{S^{(n)}_{\lfloor cB_n \rfloor}-S^{(n)}_{\lfloor cB_n \rfloor - \lfloor cB_n \rfloor t }}{B_n} \right)_{0 \leq t \leq u} \overset{(d)}{=} \left( \frac{S^{(n)}_{\lfloor cB_n \rfloor t}}{B_n} \right)_{0 \leq t \leq u}
\end{align*}
which is tight conditionally given $S^{(n)}_{\lfloor cB_n \rfloor} = -1$ by the previous observation. 

In order to deduce the convergence of the excursions from the convergence of the bridge versions of the processes, we make use of the Vervaat transform, following Definition~\ref{def:vervaat}. Note that the minimum of $\tau^{(\alpha),c,br}$ is almost surely unique. Indeed, it is true for the unconditioned version $\tau^{(\alpha),c}$ and transfers to the bridge by the absolute continuity relation \eqref{eq:bridge}. Therefore, the Verwaat transform is continuous at $\tau^{(\alpha),c,br}$, and by applying it to the bridge convergence this completes the first step.

To prove that the convergence of the rescaled Lukasiewicz paths of $ \cT^{(n)}_{\lfloor c B_n \rfloor} $ to $\tau^{(\alpha),c,exc}$ implies the convergence of $\bL ( \cT^{(n)}_{\lfloor c B_n \rfloor} )$ to  $\bL ( \tau^{(\alpha),c,exc} )$, first note that a straightforward adaptation of \cite[Proposition 3.5]{Kor14} shows that $\bL_{Luka}(\cT^{(n)}_{\lfloor c B_n \rfloor})$ converges in distribution to $\bL ( \tau^{(\alpha),c,exc} )$ as $n \rightarrow \infty$. To conclude the proof, in view of  Lemma~\ref{lem:lukacontour}, it remains to check that $H (\cT^{(n)}_{\lfloor c B_n \rfloor})=o(B_n)$ with high probability. Let us prove that in fact, with high probability, $H (\cT^{(n)}_{\lfloor c B_n \rfloor}) \leq B_n^{3/4}$. To this end, remark that the height of a vertex in $\cT^{\cV_{n,c}}$ is the number of marked vertices in the ancestral line of the corresponding vertex in $\cT$. Now let $x \in \cT$ be a marked vertex and $h(x)$ be its height. Then, copying the proof of Proposition \ref{prop:tripleconvergence} (i), there exists a constant $C>0$ such that, if $||\cT|-n| \leq n^{1-1/3\alpha}$ and $H(\cT) \leq A n/B_n$, we have by Chernoff inequality:
\begin{align*}
\P \left( N_x \geq B_n^{3/4} \right) &\leq C \, \P \left( Bin(h(x),p_n) \geq B_n^{3/4} \right) \leq C \, \P \left(Bin(A n/B_n,p_n) \geq B_n^{3/4} \right)\\ 
&\leq C \, \exp \left( -2 \frac{B_n^{5/2}}{An} \right) \leq C \, \exp \left(-2 A^{-1} n^{1/8} \right)
\end{align*}
for $n$ large enough, where $N_x$ is the number of marked vertices in the ancestral line of $x$ in $\cT$ (the exponents used here are not optimal but are sufficient to get our result). Here we have used the fact that there exists a constant $K$ such that $B_n \geq K \sqrt{n}$ for all $n$ large enough.
Hence, by a union bound over all $\lfloor cB_n \rfloor$ vertices in the tree, with high probability no marked vertex has more than $B_n^{3/4}$ marked vertices in its ancestral line, which concludes the proof.
\end{proof}

\subsection{Proof of the local estimate}
\label{ssec:local}

In this section, we establish Theorem \ref{thm:biasedllt}. For $n \geq 1$ and $t \in \R$, the following quantity will play an important role:
$$ Y_n(t) \coloneqq B_n \left( 1 - F_{\mu_n} \left( e^{\frac{it}{B_n}} \right)\right).$$
The proof relies on the following estimates. Recall that $c \overline{\psi}(t)+itc$ denotes the characteristic exponent of $\tau^{(\alpha),c}$. 
\begin{lem}
\label{lem:conv}
The following assertions are satisfied:
\begin{enumerate}
\item[(i)] The convergence $F_{\mu_n}(e^{{it}/{B_n}}) \rightarrow 1$ holds as $n \rightarrow \infty$, uniformly for $t \in \R$.
\item[(ii)] Let $\cK$ be a compact subset of $\R$ which does not contain $0$. The convergence $Y_n(t) \rightarrow \overline{\psi}(t)$ holds as $n \rightarrow \infty$, uniformly for $t \in \cK$.
\item[(iii)] For $t \notin 2\pi B_n \Z$, set 
$$K_n(t)= \left(\frac{L\left(  B_n/|Y_n(t)|  \right)}{L(B_n)} \right)^{\frac{1}{\alpha}} \text{ and }A_n(t)=\left(-c B_n ( e^{it/B_n}-1) \right)^{\frac{1}{\alpha}}.$$
Then, for every $\eta>0$, there exists $A>0$ such that, for $n$ large enough, for every $t$ such that $|t| \in [A,\pi B_n]$, we have $|Y_{n}(t)| \geq 1$ and $|K_n(t)Y_n(t)-A_n(t)| \leq \eta |A_n(t)|$.
\end{enumerate}
\end{lem}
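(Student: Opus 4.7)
The three assertions all follow from combining the functional equation \eqref{eq:analytic} with the near-$1$ expansion $F_\mu(1+\omega) = 1+\omega + (-\omega)^\alpha L(1/|\omega|)(1+o(1))$ from Theorem~\ref{thm:estimate}. Setting $y_n(t) = F_{\mu_n}(e^{it/B_n})$ and $a_n(t) = p_n e^{it/B_n} + (1-p_n) y_n(t)$, the relation $y_n = F_\mu(a_n)$ together with the algebraic identity $1-a_n = (1-p_n)(1-y_n) + p_n(1-e^{it/B_n})$ yields, after rearrangement and multiplication by $B_n$, the master relation
\begin{align}
\label{eq:masterplan}
Y_n(t) \;=\; -B_n\bigl(e^{it/B_n}-1\bigr) \;-\; \frac{n}{c}\,(1-a_n)^\alpha\, L\!\Bigl(\tfrac{1}{|1-a_n|}\Bigr)(1+o(1)),
\end{align}
valid whenever $|1-a_n|\to 0$. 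All three parts will be extracted from \eqref{eq:masterplan} by specialising to different regimes of $t$.

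For part (i), I would start from the trivial bound $|1-y_n|\leq 2$ and bootstrap using \eqref{eq:masterplan}: since $1/p_n = n/(cB_n)$ grows polynomially in $B_n$ while $(1-a_n)^\alpha L(1/|1-a_n|)$ is controlled by the Potter bounds of Theorem~\ref{thm:potterbounds}, a finite number of iterations forces $|1-y_n|\to 0$, with a rate that is uniform in the relevant range of $t$.

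For part (ii), on a compact set $\cK\subset\R\setminus\{0\}$, part (i) gives $1-a_n\sim 1-y_n$ to leading order, and $-B_n(e^{it/B_n}-1)\to -it$ uniformly in $t\in\cK$. Plugging into \eqref{eq:masterplan} and using slow variation $L(B_n/|Y_n(t)|)/L(B_n)\to 1$ (which requires an a priori boundedness of $|Y_n(t)|$, to be derived from the same relation), one obtains
\begin{align*}
Y_n(t) \;=\; -it \;-\; \frac{nL(B_n)}{cB_n^\alpha}\,Y_n(t)^\alpha\,(1+o(1)).
\end{align*}
With the normalisation of $L$ dictated by Theorem~\ref{thm:estimate} and the definition \eqref{eq:Bn} of $B_n$, the coefficient $nL(B_n)/(cB_n^\alpha)$ converges to $1/c$, and any accumulation point $Y$ of $Y_n(t)$ satisfies $Y^\alpha + cY + itc = 0$, the defining equation of $\overline{\psi}(t)$. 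Uniqueness of the solution with nonnegative real part (Proposition~\ref{prop:exponent}) together with a compactness argument yields $Y_n(t)\to\overline{\psi}(t)$ uniformly on $\cK$.

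For part (iii), the range $|t|\in[A,\pi B_n]$ allows $-B_n(e^{it/B_n}-1)$ to be as large as $O(B_n)$, so the dominant balance in \eqref{eq:masterplan} shifts: one must normalise by $A_n(t)$ rather than by a constant. The $\alpha$-th root of the resulting relation gives, after absorbing the slowly varying correction into $K_n(t)$, an equation of the form $K_n(t)Y_n(t)/A_n(t) = 1 + o(1)$ as $A\to\infty$. The main obstacle will be to simultaneously (a) establish the a priori lower bound $|Y_n(t)|\geq 1$ by iterating \eqref{eq:masterplan} from a crude estimate, and (b) control $L(B_n/|Y_n(t)|)/L(B_n)$ uniformly over a range of $t$ spanning several orders of magnitude, which is exactly where the Potter bounds become essential. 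Once both are in hand, a careful calibration of the threshold $A$ as a function of $\eta$ will deliver the $\eta$-closeness uniformly in $|t|\in[A,\pi B_n]$.
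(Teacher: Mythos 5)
Your master relation \eqref{eq:masterplan} is, up to notation, exactly the paper's Lemma~\ref{lem:o1}, and the arguments you sketch for parts (ii) and (iii) (reduce to the algebraic equation via slow-variation, get two-sided bounds on $|Y_n(t)|$ from Potter, then take $\alpha$-th roots with an argument control in (iii)) track the paper's proof closely. The normalisation issue you flag in (ii) does come out right once one is consistent about which of the two slowly varying functions ($\ell$ from the hypothesis of Theorem~\ref{thm:estimate}, or the $L$ of \eqref{eq:Bn}, differing by the constant $\Gamma(3-\alpha)/(\alpha(\alpha-1))$) is meant; the coefficient of $Y_n(t)^\alpha$ indeed tends to $1/c$.

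The genuine gap is in part (i). You propose to bootstrap $|1-y_n(t)|\to 0$ out of \eqref{eq:masterplan} starting from $|1-y_n|\leq 2$, but \eqref{eq:masterplan} is obtained by applying Theorem~\ref{thm:estimate} at $\omega=a_n-1$, and Theorem~\ref{thm:estimate} is an asymptotic statement valid only as $|\omega|\to 0$. You have $|1-a_n(t)|\leq (1-p_n)|1-y_n(t)|+2p_n$, so to make $|1-a_n(t)|$ small you already need $|1-y_n(t)|$ small: the validity of the relation you want to iterate presupposes the conclusion of (i). The crude bound $|1-y_n|\leq 2$ is not in the asymptotic regime, there is no quantitative version of Theorem~\ref{thm:estimate} at your disposal for $|\omega|$ of order $1$, and Potter bounds only control ratios of $L$ at two large arguments, not $L$ near a fixed point. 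The paper sidesteps this entirely: it proves (i) first and independently by a real-axis argument. Evaluating \eqref{eq:analytic} at $x=0$ gives $F_\mu(x_n)-x_n=p_n F_{\mu_n}(0)\to 0$ where $x_n=(1-p_n)F_{\mu_n}(0)\in[0,1]$; since $x\mapsto F_\mu(x)-x$ is continuous, strictly decreasing on $(0,1)$, and vanishes only at $1$ (by criticality of $\mu$), this forces $x_n\to 1$, hence $F_{\mu_n}(0)\to 1$. The uniform statement then follows from the elementary bound $|1-F_{\mu_n}(z)|\leq 2\,(1-F_{\mu_n}(0))$ for $|z|\leq 1$. Only after (i) is in hand does the paper derive the master relation (Lemma~\ref{lem:o1}), and then (ii) and (iii). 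You should reorder your argument accordingly: prove (i) by an elementary real-axis argument, then derive \eqref{eq:masterplan}, then use it for (ii) and (iii).

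Two smaller remarks. In (iii) you describe the lower bound $|Y_n(t)|\geq 1$ as coming from an iteration; in the paper it comes from a single application of the relation: if $|Y_n(t)|$ were small, Potter would make $|K_n(t)Y_n(t)|^\alpha$ small, and the equation $(K_n(t)Y_n(t))^\alpha(1+o(1))+cY_n(t)=A_n(t)^\alpha$ could not be balanced against $|A_n(t)^\alpha|\geq \tfrac{2c}{\pi}A$. And after you get $|1-(A_n/(K_n Y_n))^\alpha|\leq 2c\eta^{(\alpha-1)/2}$ from the $\alpha$-th power relation, you still need the argument of $A_n(t)/(K_n(t)Y_n(t))$ to be bounded away from $\pi+2\pi\Z$ to extract $|1-K_n Y_n/A_n|$ small; this argument control follows from $\Re(Y_n(t))\geq 0$ and the explicit range of $\arg(A_n(t))$, and is worth stating explicitly since the map $z\mapsto z^\alpha$ is not injective on a full neighbourhood of the unit circle.
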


Let us first explain how Theorem \ref{thm:biasedllt} follows from Lemma \ref{lem:conv}.

\begin{proof}[Proof of Theorem \ref{thm:biasedllt} using Lemma \ref{lem:conv}.]
Fix $u \in (0,1]$. In the whole proof, for convenience, we will write $u c B_n + j$ instead of $\lfloor u c B_n + j \rfloor$. We let $f(t) \coloneqq e^{- u \ c \ \overline{\psi}(t)}$ be the characteristic function of $\tau_u^{(\alpha),c}$ and $q_u(x) \coloneqq \frac{1}{2\pi} \int_{-\infty}^{\infty} e^{-itx} f(t) dt$ be its density (which we recall exists by Theorem \ref{thm:cub}). Fix $\epsilon>0$. The goal is to prove that, for $n$ large enough, uniformly in $x \in \R$ such that $xB_n \in \Z$, uniformly in $|j| \leq n^{3/8}$,

\begin{equation}
\label{eq:llt}
\left| B_n \P \left( S^{(n)}_{ucB_n + j} =xB_n \right) - q_u(x)  \right| \leq \epsilon.
\end{equation}
For $n \in \Z_+$, $t \in \R$, we set $\phi^{(n)}(t) = F_{\mu_n} (e^{it})$. First, by Fourier inversion, we have for all $k \in \Z$:
$\P( S^{(n)}_{ucB_n + j} =k) = \frac{1}{2\pi} \int_{-\pi}^{\pi} e^{-itk} \left( \phi^{(n)}(t) \right)^{ucB_n+j} dt$.
Hence, for $x \in \R$ such that $xB_n$ is an integer, we can write
\begin{align*}
B_n \P \left( S^{(n)}_{ucB_n + j} =xB_n \right) = \frac{1}{2\pi} \int_{-\pi B_n}^{\pi B_n} e^{-itx} \left( \phi^{(n)}\left(\frac{t}{B_n}\right) \right)^{ucB_n+j} dt
\end{align*}
Therefore, for any $A>\epsilon >0$, we can write
\begin{align*}
\left| B_n \P \left( S^{(n)}_{ucB_n + j} =xB_n \right) - q_u(x)  \right| \leq \frac{1}{2\pi} (I_\epsilon + I_1(A) + I_2(A) + I_3(A))
\end{align*}
where
\begin{align*}
I_\epsilon= \int_{-\epsilon}^\epsilon \left| e^{-itx} \left( \left( \phi^{(n)}\left(\frac{t}{B_n}\right) \right)^{ucB_n+j} -f(t) \right) \right| dt,
\end{align*}

\begin{align*}
I_1(A)= \int_{\epsilon \leq |t| \leq A}\left| e^{-itx} \left( \left( \phi^{(n)}\left(\frac{t}{B_n}\right) \right)^{ucB_n+j} -f(t) \right) \right| dt, 
\end{align*}
\begin{align*}
I_2(A)= \int_{A \leq |t| \leq \pi B_n} \left| e^{-itx} \left( \phi^{(n)}\left(\frac{t}{B_n}\right) \right)^{ucB_n+j} \right| dt \text{ and }
I_3(A)= \int_{A \leq |t| < \infty} \left| e^{-itx} f(t) \right| dt.
\end{align*}
We now bound these four quantities, for certain $A$ well chosen.

\emph{Bounding $I_\epsilon$.} Straightforwardly, since $|\phi^{(n)}|$ and $|f|$ are bounded by $1$ on $\R$, $I_\epsilon \leq 4\epsilon$ for all $n \geq 1$.

\emph{Bounding $I_{1}(A)$.} Since, by definition, $\phi^{(n)}(t/B_{n})=1-Y_{n}(t)/B_{n}$,  Lemma \ref{lem:conv} (ii) entails that, at $\epsilon, A$ fixed, $I_{1}(A) \rightarrow 0$ as $n \rightarrow \infty$, uniformly in $|j| \leq n^{3/8}$.

\emph{Bounding $I_{3}(A)$.} We have already seen that $\Re(\overline{\psi}(t)) \sim |tc|^{\frac{1}{\alpha}} \cos \left( \frac{\pi}{2\alpha} \right)$ as $|t| \rightarrow \infty$. Thus, $|f(t)|$ decays exponentially fast as $|t| \rightarrow + \infty$, and $I_{3}(A) \rightarrow 0$ as $A \rightarrow \infty$ (remark that $I_3(A)$ does not depend on $n$). Hence, for $A$ large enough, $I_3(A) \leq \epsilon$.

\emph{Bounding $I_{2}(A)$.} The main challenge is in fact to bound $I_{2}(A)$. To this aim, we deeply use Lemma~\ref{lem:conv} (ii) and (iii).  For $t \in \R$, we have
\begin{eqnarray}
\left|\phi^{(n)}\left( \frac{t}{B_n} \right) \right|^2 = \left| 1- \frac{Y_n(t)}{B_{n}} \right|^2 &=&\left( 1-\frac{\Re(Y_n(t))}{B_n}\right)^2 + \left( \frac{\Im(Y_n(t))}{B_n} \right)^2    \notag \\
 & \leq  & 1-2\frac{\Re(Y_n(t))}{B_n}+ 2\left(\frac{|Y_n(t)|}{B_n}\right)^2  \label{eq:phin}
\end{eqnarray}
We keep the notation of  Lemma \ref{lem:conv} (iii) and assume that $A>0$ is large enough, so that for every $n$ large enough and $|t| \in [A,\pi B_n]$ we have $|Y_{n}(t)| \geq 1$ and
\begin{equation}
\label{eq:AK}
\left|K_n(t)Y_n(t)-A_n(t)\right| \leq  \frac{1}{2}|A_n(t)|.
\end{equation}
Note that $K_n(t) \in \R_+^*$ for all $t$, that for $t \in [A,\pi B_n]$, $arg(A_n(t)) = \frac{t-\pi B_n}{2\alpha B_n} \in [-\frac{\pi}{2\alpha},0]$ and that for $t \in [-\pi B_n, -A]$, $arg(A_n(t)) = \frac{t+\pi B_n}{2\alpha B_n} \in [0,\frac{\pi}{2\alpha}]$. Therefore, by \eqref{eq:AK},  uniformly for $|t| \in [A,\pi B_n]$, $arg(Y_n(t))$ is bounded away from $\pi/2+ \pi \Z$, and therefore $\Re(Y_n(t)) \geq C |Y_n(t)|$ for some constant $C>0$. Recall indeed that $\Re(Y_n(t)) \geq 0$ for all $t \in \R$.
Then, by \eqref{eq:phin},
\begin{align*}
\left|\phi^{(n)}\left( \frac{t}{B_n} \right) \right|^2 \leq 1-2C\frac{|Y_n(t)|}{B_n} + 2 \left( \frac{|Y_n(t)|}{B_n}\right)^2.
\end{align*}
On the other hand, uniformly for $t \in \R$, ${|Y_n(t)|}/{B_n} \rightarrow 0$ by Lemma~\ref{lem:conv} (ii). Hence, for $n$ large enough and $|t| \in [A,\pi B_n]$,
\begin{align*}
\left|\phi^{(n)} \left( \frac{t}{B_n} \right) \right|^2 \leq 1-C\frac{|Y_n(t)|}{B_n} \leq 1-C\frac{\sqrt{K_n(t)|Y_n(t)|}}{B_n},
\end{align*}
where we have used the Potter bounds \ref{thm:potterbounds} and the fact that $|Y_n(t)|\geq 1$. Hence, \eqref{eq:AK}  gives:
\begin{align*}
\left|\phi^{(n)} \left( \frac{t}{B_n} \right) \right|^2 \leq 1-\frac{C}{\sqrt{2}} \frac{\sqrt{|A_n(t)|}}{B_n} \leq 1-\frac{C}{\sqrt{2}B_n}\left(2 c B_n \sin \left( \frac{|t|}{2B_n}\right) \right)^{1/2\alpha}
\end{align*}
which is less than $1-\frac{C'}{B_n} |t|^{1/2\alpha}$ for some absolute constant $C'>0$, using the fact that $\sin x \geq \frac{2}{\pi} x$ for $x \in [0,\frac{\pi}{2}]$. We finally get for $A$ large enough, for every $n$ large enough and $|t| \in [A,\pi B_n]$
\begin{align}
\label{eq:I2}
I_2(A) &= \int_{A \leq |t| \leq \pi B_n} \left| \left( \phi^{(n)}\left(\frac{t}{B_n}\right) \right)^{ucB_n+j} \right| dt \leq \int_{A \leq |t| \leq \pi B_n} \left(1-\frac{C'}{B_n} |t|^{1/2\alpha}\right)^{\frac{ucB_n+j}{2}} dt \nonumber \\
&\leq \int_{A \leq |t| < +\infty} e^{-\frac{C'}{2}(uc+j/B_n)|t|^{1/2\alpha}} dt \leq \int_{A \leq |t| < +\infty} e^{-\frac{ucC'}{4}|t|^{1/2\alpha}} dt.
\end{align}
Thus, for $A>0$ large enoug, for any $n$ large enough and any $|j| \leq n^{3/8}$, $I_{2}(A) \leq \epsilon$. This completes the proof.
\end{proof}

We now prove separately the three  parts of Lemma \ref{lem:conv}.

\begin{proof}[Proof of  Lemma \ref{lem:conv} (i)] It is enough to show that $F_{\mu_n}(0)\rightarrow 1$ as $n \rightarrow \infty$.
Let us denote, for $n \in \Z_+$, $x_n\coloneqq(1-p_n)F_{\mu_n}(0)$. By \eqref{eq:analytic}, $x_n=F_\mu\left(x_n\right)-p_nF_{\mu_n}(0)$. In particular, $F_\mu(x_n)-x_n \rightarrow 0$. Since $f: x \rightarrow F_\mu(x)-x$ is continuous on $[0,1]$ (and hence uniformly continuous), we just have to prove that $1$ is the only fixed point of $F_\mu$. For this, we use the fact that $\mu$ is critical, which implies that $f'(x)= F_\mu'(x)-1$ is negative on $(0,1)$ and $f$ is decreasing. Since $f(1)=0$, $f > 0$ on $[0,1)$ which concludes the proof. 
\end{proof}

The proofs of Lemma \ref{lem:conv} (ii) and (iii) use the following estimate.
\begin{lem}
\label{lem:o1} As $n \rightarrow \infty$, uniformly for $t \in \R \backslash 2\pi B_n \Z$,
$$\frac{L\left( B_n/|Y_n(t)| \right)}{L(B_n)}Y_n(t)^\alpha\left(1+o(1)\right) + c Y_n(t) + c B_n \left( e^{\frac{it}{B_n}}-1 \right) = 0
$$
where the $o(1)$ holds when $n \rightarrow \infty$, uniformly in $t \in \R \backslash 2\pi B_n \Z$, and where $L$ is the slowly varying function defined in \eqref{eq:L}. 
\end{lem}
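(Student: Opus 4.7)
The starting point is the functional equation \eqref{eq:analytic} defining $\mu_n$. Setting $x = e^{it/B_n}$ and introducing the shorthand
$$u_n(t) \coloneqq p_n\bigl(1 - e^{it/B_n}\bigr) + (1-p_n)\frac{Y_n(t)}{B_n},$$
so that $1 - \frac{Y_n(t)}{B_n} = F_{\mu_n}(e^{it/B_n}) = F_\mu(1 - u_n(t))$, a direct rearrangement yields the exact identity
\begin{equation*}
F_\mu\bigl(1 - u_n(t)\bigr) - \bigl(1 - u_n(t)\bigr) \;=\; p_n \cdot \Bigl(\bigl(1 - e^{it/B_n}\bigr) - \frac{Y_n(t)}{B_n}\Bigr). \qquad (\ast)
\end{equation*}
Lemma~\ref{lem:conv}(i) gives $Y_n(t)/B_n \to 0$ and $p_n \to 0$, so $u_n(t) \to 0$; after verifying that $u_n(t) \notin \R_-$ and $|1-u_n(t)|<1$ for $n$ large (which reduces, using $\Re Y_n(t)\geq 0$ and $\Re(1-e^{it/B_n})\geq 0$, to $\Re u_n(t) > 0$), Theorem~\ref{thm:estimate} applied with $\omega = -u_n(t)$ converts $(\ast)$ into the asymptotic
\begin{equation*}
u_n(t)^\alpha L\bigl(1/|u_n(t)|\bigr)\bigl(1+o(1)\bigr) \;=\; p_n \cdot \Bigl(\bigl(1 - e^{it/B_n}\bigr) - \frac{Y_n(t)}{B_n}\Bigr). \qquad (\ast\ast)
\end{equation*}

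The next step is a bootstrap showing $u_n(t) \sim Y_n(t)/B_n$: by definition $u_n(t) - Y_n(t)/B_n = p_n\bigl((1 - e^{it/B_n}) - Y_n(t)/B_n\bigr)$, which by $(\ast\ast)$ equals $u_n(t)^\alpha L(1/|u_n(t)|)(1+o(1))$, and since $\alpha > 1$ this correction is asymptotically negligible compared to $u_n(t)$ itself. The equivalence $u_n(t) \sim Y_n(t)/B_n$ then yields $L(1/|u_n(t)|) \sim L(B_n/|Y_n(t)|)$ by slow variation, and $u_n(t)^\alpha \sim (Y_n(t)/B_n)^\alpha$ by continuity of the principal branch of Definition~\ref{def:log} (once one checks that $\arg u_n(t)$ stays bounded away from $\pm \pi$). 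Substituting these equivalents into $(\ast\ast)$, multiplying both sides by $n$, using $np_n = cB_n$, and rewriting $n/B_n^\alpha$ via the normalization \eqref{eq:Bn} of $B_n$ (whose multiplicative constant is precisely the one absorbed into the statement of Theorem~\ref{thm:estimate}) converts the right-hand side into $-cY_n(t) - cB_n(e^{it/B_n}-1)$ and the left-hand side into $\frac{L(B_n/|Y_n(t)|)}{L(B_n)} Y_n(t)^\alpha(1+o(1))$, which is the claim.

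\textbf{Main obstacle.} The subtle point is the uniformity in $t \in \R \setminus 2\pi B_n \Z$, since $u_n(t)$ behaves very differently depending on whether $|t|$ is small, moderate, or close to $\pi B_n$ (where $e^{it/B_n}$ has nearly completed a full rotation). To handle this I would split into three regimes and glue the estimates together. For $|t|$ in a fixed compact set bounded away from $0$, Lemma~\ref{lem:conv}(ii) provides the uniform convergence $Y_n(t) \to \overline{\psi}(t)$, from which the argument above applies directly and all $o(1)$ terms are uniform. For $|t| \in [A,\pi B_n]$ with $A$ large, Lemma~\ref{lem:conv}(iii) gives $|Y_n(t)| \geq 1$ together with a controlled argument, so that $u_n(t)$ lies in a sector avoiding $\R_-$; the slow variation step is then made uniform via the Potter bounds (Theorem~\ref{thm:potterbounds}), which allow $|u_n(t)|$ to range over a wide window while keeping $L(1/|u_n(t)|)/L(B_n/|Y_n(t)|) \to 1$. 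The boundary overlap between the two regimes is treated by choosing the constant $A$ large enough that both estimates apply simultaneously.
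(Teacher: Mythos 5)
Your algebraic skeleton is essentially the paper's: rewriting \eqref{eq:analytic} as the exact identity $F_\mu(1-u_n(t)) - (1-u_n(t)) = u_n(t) - Y_n(t)/B_n$ with $u_n(t) = p_n(1-e^{it/B_n}) + (1-p_n)Y_n(t)/B_n$ (which is the paper's $-X_n(t)$), invoking Theorem~\ref{thm:estimate}, bootstrapping $u_n(t) \sim Y_n(t)/B_n$ because $\alpha>1$, and then cleaning up via $np_n = cB_n$ and \eqref{eq:Bn}. But the ``main obstacle'' paragraph introduces a genuine circularity: you propose to establish the required uniformity in $t$ by invoking Lemma~\ref{lem:conv}(ii) in the compact regime and Lemma~\ref{lem:conv}(iii) near the endpoints, yet both of those are \emph{proved from} Lemma~\ref{lem:o1} in the paper --- the proof of (ii) opens with ``Lemma~\ref{lem:o1} implies that\dots'' and the proof of (iii) with ``From Lemma~\ref{lem:o1}, we get\dots''. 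At this point in the development only Lemma~\ref{lem:conv}(i) is available.

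Fortunately part (i) alone suffices, and no regime splitting is needed. The $o(1)$ in Theorem~\ref{thm:estimate} is a function of $\omega$ only, tending to $0$ as $|\omega|\to 0$ inside $|1+\omega|<1$. Lemma~\ref{lem:conv}(i) gives $y_n(t)=Y_n(t)/B_n \to 0$ uniformly in $t\in\R\setminus 2\pi B_n\Z$, and $p_n(1-e^{it/B_n}) = O(p_n)\to 0$ uniformly, so $|u_n(t)|\to 0$ uniformly; plugging $\omega=-u_n(t)$ into Theorem~\ref{thm:estimate} therefore yields a uniform error. The bootstrap then shows $u_n(t)-Y_n(t)/B_n = O\bigl(|u_n(t)|^\alpha L(1/|u_n(t)|)\bigr) = o(|u_n(t)|)$ uniformly (since $\alpha>1$ and slowly varying functions grow slower than any positive power), hence $u_n(t)/(Y_n(t)/B_n)\to 1$ uniformly. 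The slowly-varying replacement $L(1/|u_n(t)|) \sim L(B_n/|Y_n(t)|)$ is then uniform by Karamata's uniform convergence theorem, since the ratio of arguments tends to $1$ uniformly while both arguments tend to $+\infty$. In short, the scale of $|u_n(t)|$ does vary widely across $t$, but none of the estimates above care about that scale --- only that it goes to zero --- so the regimes collapse and the appeal to (ii) and (iii) should simply be deleted. (Your bookkeeping of the constant $\Gamma(3-\alpha)/(\alpha(\alpha-1))$, absorbed between \eqref{eq:Fgenerating} and the normalization \eqref{eq:Bn}, is correct, though it would be cleaner to write it out explicitly in $(\ast\ast)$.)
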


\begin{proof}
Our main object of interest is the generating function $F_\mu$ of $\mu$. It is notably known (see \cite[$XVII.5$, Theorem $2$]{Fel08}) that $F_\mu$ has the following Taylor expansion at $1-$, on the real axis:

\begin{equation}
\label{eq:Fgenerating}
F_\mu(1-s)-(1-s) \underset{s \downarrow 0}{\rightarrow} \frac{\Gamma(3-\alpha)}{\alpha(\alpha-1)} s^\alpha L\left(\frac{1}{s}\right),
\end{equation} 
where $L$ is the slowly varying function given by \eqref{eq:L}.

Now, observe that, if $t/B_n \neq 0 \mod [2\pi]$, $F_{\mu_n} \left(e^{it/B_n} \right) \neq e^{it/B_n}$. To see this, remark that by \eqref{eq:analytic},
\begin{align*}
F_{\mu_n} \left(e^{it/B_n} \right) = e^{it/B_n} \Rightarrow F_\mu \left(e^{it/B_n} \right) = e^{it/B_n}
\end{align*}
which is possible only if $e^{{it}/{B_n}}=1$ by the case of equality in the triangular inequality (using the fact that $F_\mu(0)>0$). This implies that, if $t/B_n \neq 0 \mod [2\pi]$, $p_n e^{it/B_n} + (1-p_n) F_{\mu_n} \left( e^{it/B_n} \right) < 1$ and we can apply Theorem~\ref{thm:estimate}  to \eqref{eq:analytic}. To simplify notation, set $y_n(t) \coloneqq 1- F_{\mu_n}(e^{it/B_n} ) = Y_n(t)/B_n$. By Lemma \ref{lem:conv} (i), $y_n(t) \rightarrow 0$ uniformly in $t \in \R$, and when $t/B_n \neq 0 \mod [2\pi]$ we can write:
\begin{align}
\label{eq:eeeeh}
F_{\mu_n}\left(e^{it/B_n} \right) &= F_\mu \left( p_n e^{it/B_n} + (1-p_n) F_{\mu_n}\left(e^{it/B_n} \right) \right) \nonumber \\
1-y_n(t) &= F_\mu \left( p_n e^{it/B_n} + (1-p_n)(1-y_n(t)) \right) = F_\mu \left(1+ p_n(e^{it/B_n}-1) - y_n(t) (1-p_n) \right) \nonumber .
\end{align}
Hence, by Theorem~\ref{thm:estimate} and \eqref{eq:Fgenerating},
$$1-y_n(t) = 1 + X_n(t) + \frac{\Gamma(3-\alpha)}{\alpha (\alpha -1)} L \left(  \frac{1}{\left|X_n(t)\right|}\right) \left(-X_n(t)\right)^\alpha (1+o(1)),$$
where we have set  $X_n(t) = p_n(e^{it/B_n}-1) - y_n(t) (1-p_n)$ to simplify notation. Therefore:
\begin{equation}
\label{eq:pnun}
-y_n(t) = X_n(t) + \frac{\Gamma(3-\alpha)}{\alpha (\alpha -1)} L\left(\frac{1}{|X_n(t)|}\right) (-X_n(t))^\alpha (1+o(1)).
\end{equation}
By Lemma \ref{lem:conv} (i), $y_n(t)$ - and therefore $X_n(t)$ - both converge to $0$ uniformly for $t \in \R \backslash 2\pi B_n \Z$. Hence \eqref{eq:pnun} immediately implies that $X_n(t) \sim -y_n(t)$, and thus that $p_n(e^{it/B_n}-1) = o(y_n(t))$. This allows us to reduce \eqref{eq:pnun} to
\begin{align*}
-y_n(t) &= e^{it/B_n}-1 + \frac{1}{p_n} \frac{\Gamma(3-\alpha)}{\alpha (\alpha -1)} L \left(\frac{1}{|y_n(t)|} \right) y_n(t)^\alpha (1+o(1)).
\end{align*}
Remember that by definition $Y_n(t) \coloneqq y_n(t) B_n$. Then 
\begin{align*}
-Y_n(t) = B_n(e^{it/B_n}-1) + \frac{n B_n^{-\alpha}}{c} \frac{\Gamma(3-\alpha)}{\alpha (\alpha -1)} L \left(\frac{B_n}{|Y_n(t)|} \right) Y_n(t)^\alpha (1+o(1))
\end{align*}
which boils down, by \eqref{eq:Bn}, to
\begin{align*}
-c Y_n(t) = cB_n(e^{it/B_n}-1) + \frac{1}{L(B_n)}  L \left(\frac{B_n}{|Y_n(t)|} \right) Y_n(t)^\alpha (1+o(1))
\end{align*}
uniformly in $t \in \R \backslash 2\pi B_n \Z$. This completes the proof.
\end{proof}

\begin{proof}[Proof of  Lemma \ref{lem:conv} (ii)] We show this convergence by analyzing the implicit equation \eqref{eq:analytic}. Let $\cK$ be a compact subset of $\R$ which does not contain $0$. Lemma \ref{lem:o1} implies that, uniformly for $t \in \cK$,
\begin{equation}
\label{eq:uniform}
\left(\frac{L\left(  B_n/|Y_n(t)|  \right)}{L(B_n)}Y_n(t)^\alpha+itc\right)(1+o(1)) + c Y_n(t) = 0.
\end{equation}
Now remark that, by the Potter bounds \ref{thm:potterbounds}, for $n$ large enough,
\begin{eqnarray*}
\min \left(|Y_n(t)|^{(\alpha+1)/2}, |Y_n(t)|^{(3\alpha-1)/2}\right) & \leq& \frac{L\left(  B_n/|Y_n(t)|  \right)}{L(B_n)}|Y_n(t)|^\alpha \\
&& \qquad  \leq \quad  \max \left(|Y_n(t)|^{(\alpha+1)/2}, |Y_n(t)|^{(3\alpha-1)/2} \right).
\end{eqnarray*}
Hence, by \eqref{eq:uniform}, there exists $C>0$ such that, for $n$ large enough and for all $t \in \cK$ (using the fact that $0 \notin \cK$), $C^{-1} \leq |Y_n(t)| \leq C$. This implies that, uniformly for $t \in \cK$,  $\frac{L\left(  B_n/|Y_n(t)|  \right)}{L(B_n)} \rightarrow 1$ as $n \rightarrow \infty$, and that \eqref{eq:uniform} reduces to $(Y_n(t)^\alpha+itc)(1+o(1)) + c Y_n(t) = 0$. Remember that for all $n$, $t$, $\Re Y_n(t) \geq 0$. Therefore $Y_n(t)$ converges to the unique solution of \eqref{eq:psi} with nonnegative real part, which is the characteristic exponent of $\tau^{(\alpha),c}$. 
\end{proof}

\begin{proof}[Proof of  Lemma \ref{lem:conv} (iii)]  From Lemma~\ref{lem:o1}, we get 
\begin{equation}
\label{eq:knyn}
(K_n(t)Y_n(t))^\alpha(1+o(1)) + c Y_n(t) - A_n(t)^\alpha = 0.
\end{equation}
First, for $|t| \in [A, \pi B_n]$, we have $|A_n(t)^\alpha|=2cB_n \sin(\frac{|t|}{2B_n}) \geq \frac{2c}{\pi} A$, which tends to $+\infty$ as $A \rightarrow +\infty$. Second, by the Potter bounds \ref{thm:potterbounds}, $|K_n(t)Y_n(t)|^\alpha \leq |Y_n(t)|^{(\alpha+1)/2} + |Y_n(t)|^{(3\alpha-1)/2}$ for $n$ large enough. These two observations, combined with \eqref{eq:knyn}, readily entail that for fixed $\eta \in (0,1)$, we can find $A>0$ such that  uniformly for $|t| \in [A,\pi B_n]$, $\underset{n \rightarrow \infty}{\liminf} |Y_n(t)| \geq \frac{2}{\eta}$.

Now fix $A>0$ and $n_0 \geq 1$ such that, for all $n \geq n_0$, $|Y_n(t)| \geq \frac{1}{\eta}$. In particular, for $n \geq n_0$, $|Y_n(t)|\geq 1$ and we get from \eqref{eq:knyn}:
\begin{align*}
\left| (K_n(t) Y_n(t))^\alpha -A_n(t)^\alpha \right| &\leq c \left|Y_n(t)\right|^{(\alpha+1)/2} \eta^{(\alpha-1)/2} + o \left( K_n(t) Y_n(t)^\alpha \right)\\
&\leq 2 c \eta^{(\alpha-1)/2} \left|K_n(t) Y_n(t)\right|^\alpha
\end{align*}
for $n$ large enough, using the fact that $|K_n(t)|^\alpha \geq |Y_n(t)|^{\frac{1-\alpha}{2}}$ by the Potter bounds. Therefore, $| 1 - ( \frac{A_n(t)}{K_n(t)Y_n(t)})^\alpha | \leq 2c \eta^{(\alpha-1)/2}$. Now remark that $arg(\frac{A_n(t)}{K_n(t)Y_n(t)})$ is bounded away from $\pi + 2 \pi \Z$, uniformly in $n$. Then $arg(\frac{A_n(t)}{K_n(t)Y_n(t)})$ is necessarily close to $0$, which readily entails that $|1-\frac{K_n(t)Y_n(t)}{A_n(t)}| \leq \eta'$ where $\eta' \rightarrow 0$ as $\eta \rightarrow 0$. This completes the proof.
\end{proof}

\subsection{Study of the solutions of the implicit equation \texorpdfstring{\eqref{eq:psi}}{eqpsi}}

We finish this section by proving that \eqref{eq:psi} has only one solution with nonnegative real part and that this real part is positive for $t>0$; this will imply that this solution is $\overline{\psi}(t)$ by Proposition~\ref{prop:exponent} (ii). Fix $c>0$, and denote by $f: \C \backslash \R_- \times (1,+\infty) \times \R_+^* \rightarrow \C$ the function 
\begin{align*}
f(x, \alpha, t) \coloneqq x^{\alpha} + c x + itc
\end{align*}
Therefore, \eqref{eq:psi} can be rewritten $f(\overline{\psi}(t),\alpha,t)=0$, and we are interested in the solutions in $x$, at $\alpha$ and $t$ fixed, of the equation
\begin{equation}
\label{eq:fzero}
f(x,\alpha,t)=0.
\end{equation}
Note that we also define $f$ for $\alpha > 2$ although we are only interested in the case $\alpha \leq 2$, as this allows to use the implicit function theorem at $\alpha=2$. 

\begin{thm}
\label{thm1}
For any $\alpha \in (1,2]$ and $t>0$, \eqref{eq:fzero} has exactly one solution with nonnegative real part, and this real part is positive.
\end{thm}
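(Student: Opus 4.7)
My approach is to apply the argument principle to $h(x) := f(x,\alpha,t) = x^\alpha + cx + itc$ on the right half-disc $D_R = \{x \in \C : \Re x > 0,\, |x| < R\}$ for $R$ large, and show that the variation of $\arg h$ along $\partial D_R$, oriented counterclockwise, equals exactly $2\pi$. Combined with the argument principle this gives exactly one zero of $h$ inside $D_R$; letting $R \to \infty$, this yields exactly one solution of \eqref{eq:fzero} in $\{\Re x > 0\}$.

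The first step is to check that $h$ does not vanish on $\partial D_R$: for $x = iy$ with $y \neq 0$, $\Re h(iy) = |y|^\alpha \cos(\pi\alpha/2) < 0$ since $\pi\alpha/2 \in (\pi/2, \pi]$ forces $\cos(\pi\alpha/2) < 0$; at $y = 0$, $h(0) = itc \neq 0$; and on the arc $|x| = R$, $|h(x)| \sim R^\alpha \to \infty$. The nonvanishing on the imaginary axis also shows that the unique solution of \eqref{eq:fzero} with nonnegative real part cannot lie on $i\R$, hence has strictly positive real part.

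The two contributions to the variation of $\arg h$ around $\partial D_R$ are computed separately. On the semicircular arc $\{Re^{i\theta} : \theta \in [-\pi/2, \pi/2]\}$, the uniform convergence $h(Re^{i\theta})/R^\alpha \to e^{i\alpha\theta}$ yields a contribution of $\alpha\pi$ as $R \to \infty$. On the imaginary-axis segment, traversed with $y$ decreasing from $R$ to $-R$, I use the explicit formula
\begin{align*}
h(iy) = |y|^\alpha \cos(\pi\alpha/2) + i\bigl(\mathrm{sign}(y)\, |y|^\alpha \sin(\pi\alpha/2) + cy + ct\bigr)
\end{align*}
to see that $\Im h(iy) > 0$ for $y \geq 0$ and admits a unique zero $y_0 < 0$ (strict monotonicity on $(-\infty, 0)$ follows by differentiation), with $\Im h(iy) < 0$ for $y < y_0$. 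The image curve therefore lies in the closed second quadrant for $y \in [y_0, +\infty)$, crosses the negative real axis at $y = y_0$, and lies in the open third quadrant for $y < y_0$. A continuous branch of $\arg h(iy)$ starts near $\pi\alpha/2$ at $y = R$, decreases to $\pi/2$ at $y = 0$ (where $h = itc$), increases to $\pi$ at $y = y_0$, and continues to increase as the image turns into the third quadrant, reaching the value $(4-\alpha)\pi/2$ (the continuous representative of the principal argument $-\pi\alpha/2$) at $y = -R$. The net variation on this segment is $(4-\alpha)\pi/2 - \pi\alpha/2 = \pi(2 - \alpha)$, and the total winding is $\pi(2-\alpha) + \alpha\pi = 2\pi$.

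The principal delicate point is the continuous-branch analysis on the imaginary axis, in particular the fact that past $y_0$ the argument keeps increasing into the third quadrant rather than turning back through the second; this rests on the strict monotonicity of $y \mapsto \Im h(iy)$ on $(-\infty, 0)$ and on the strict negativity of $\Re h(iy)$ there. All other ingredients (the arc asymptotics, the sign of $\cos(\pi\alpha/2)$, the nonvanishing on $i\R$) are elementary.
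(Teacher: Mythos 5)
Your proof is correct and takes a genuinely different route from the paper. The paper starts from the explicit quadratic roots at $\alpha=2$ (of which exactly one has positive real part for $t$ large), then deforms continuously in $\alpha$ and in $t$ via the implicit function theorem, using the nonvanishing on $i\R$ to argue that the count of roots in $\{\Re x > 0\}$ cannot change; this tacitly relies on a compactness argument (roots cannot escape to infinity as the parameters vary) that is not spelled out. Your winding-number computation on $\partial D_R$ gives the count directly and uniformly in $(\alpha,t)$, with the boundedness handled automatically by the arc estimate, so in that sense it is the more self-contained argument. Both proofs isolate the nonvanishing on the imaginary axis (your $\Re h(iy) = |y|^\alpha\cos(\pi\alpha/2) < 0$, the paper's final paragraph) as the essential ingredient. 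Two remarks worth adding to a polished version of your write-up: (1) the contour $\partial D_R$ passes through the branch point $0$ of $x \mapsto x^\alpha$; this is harmless since $h$ extends continuously there with $h(0) = itc \neq 0$, so a small indentation around $0$ into $\{\Re x > 0\}$ contributes nothing as its radius shrinks, but it deserves a sentence. (2) No monotonicity of $\arg h(iy)$ on the pieces of the diameter is actually needed, and your phrasing ("decreases to $\pi/2$\ldots increases to $\pi$\ldots continues to increase") slightly overclaims; what you do use, and what is correct, is the quadrant confinement $h(iy) \in \overline{Q_2}$ for $y \in [y_0, R]$ and $h(iy) \in \overline{Q_3}$ for $y \in [-R, y_0]$ (with the image never touching $0$), which already forces the endpoint values $\pi\alpha/2 + o(1)$ and $(4-\alpha)\pi/2 + o(1)$ of any continuous branch, hence the net change $\pi(2-\alpha)$.
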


\begin{proof}[Proof of Theorem~\ref{thm1}]
We first prove that \eqref{eq:fzero} has a unique such solution for $t$ large enough. Then we use the local continuity in $\alpha$ and $t$ of the solutions of (\ref{eq:fzero}) to extend it to all $t>0$. First, remark that, at $t$ fixed, $f$ is $C^1$ on $\C \backslash \R_- \times (1,+\infty) \times \R_+^*$, and its derivative with respect to $x$ is 
\begin{equation}
\label{eq:ift}
\frac{\partial f}{\partial x}(x,\alpha,t) = \alpha x^{\alpha - 1} + c
\end{equation}  
which is always nonzero when $x$ is a solution of \eqref{eq:fzero}.

In the case $\alpha=2$, \eqref{eq:fzero} has two solutions that are $\frac{-c \pm \sqrt{c^2-4itc}}{2}$. As $t \rightarrow +\infty$, these solutions are equivalent to $\pm \sqrt{tc} e^{-i\pi/4}$. Therefore, we can take $t_0 > 0$ large such that \eqref{eq:fzero} has exactly one solution with positive real part for $\alpha=2$ and $t=t_0$.
Assume that the real part of a solution of \eqref{eq:fzero} is never $0$. Then, by \eqref{eq:ift}, we can use the implicit function theorem around any solution of \eqref{eq:fzero}. This entails that for any $\alpha \in (1,2]$ there exists exactly one solution of $f(x,\alpha, t_0)=0$ that has positive real part. Using again the implicit function theorem at $\alpha$ fixed by letting $t$ vary from $t_0$ to any positive value of $t$, we get Theorem~\ref{thm1}.

Let us finally prove that, indeed, for $t>0$ the real part of a solution of \eqref{eq:fzero} is never $0$. Let $x$ be a solution of \eqref{eq:fzero} and assume that $x=ia$ for some $a\in \R$. Then $0=(ia)^\alpha +iac+itc=a^\alpha e^{i\alpha \pi/2} + c (a+t) e^{-i\pi/2}$ which has no solution.
\end{proof}

\begin{rk}
One can prove that, for $\alpha \in (3/2,2]$ and $t$ large enough, \eqref{eq:fzero} has a second solution which has negative real part. This "negative branch" ultimately vanishes at some $t(\alpha)$, and the corresponding solutions of (\ref{eq:fzero}) converge to the negative real line. The discontinuity of the branch shall therefore be related to the fact that the function $\log$ is not defined on this line.
\end{rk}
\section{Generating functions of stable offspring distributions}

\label{sec:appendix}

This section is devoted to the proof of Theorem \ref{thm:estimate}. We fix a critical offspring distribution $\mu$ (that is, a probability distribution on the nonnegative integers with mean $1$), and we assume that there exists $ \alpha \in (1,2]$ and a slowly varying function $\ell: \R_{+} \rightarrow \R^{*}_{+}$ such that

\begin{equation*}
F_\mu(1-s) - (1-s)  \quad \underset{s \downarrow 0}{\sim} \quad s^\alpha \ell \left( \frac{1}{s} \right),
\end{equation*}
where $F_\mu$ denotes the generating function of $\mu$. This is equivalent to saying that $\mu$ is in the domain of attraction of an $\alpha$-stable law.
We define $L$, the slowly varying function such that
\begin{equation}
\label{eq:Fmu}
\forall x \in \R_+^*, \quad \ell(x) = \frac{\Gamma(3-\alpha)}{\alpha(\alpha-1)} L(x).
\end{equation} 
By e.g. \cite[$XVII.5$, Theorem $2$]{Fel08} and \cite[Lemma $4.7$]{BS15}, if $X$ is a random variable of law $\mu$, then the following statement holds:

\begin{equation}
\label{eq:L2}
\E \left[ X^2 \mathds{1}_{X \leq x} \right] \underset{x \rightarrow +\infty}{\sim} x^{2-\alpha} L(x)+1.
\end{equation}
where $L$ is the function appearing in \eqref{eq:Fmu}. Note that the "$+1$" term is negligible except when $\mu$ has finite variance, in which case $\alpha=2$.

Let us first introduce some notation. For $x \geq 0$, we set $M_x= \mu([x,\infty))$. The main tool in the proof of Theorem \ref{thm:estimate} is the following estimate.

\begin{prop}

\label{prop:sim}

As $|\omega| \rightarrow 0$, with $\Re (\omega) < 0$,

\begin{align*}
\int_{\R_+} \left( 1-e^{\omega x} \right) M_x \mathrm{d}x  \quad \mathop{\sim} \quad  \frac{\Gamma(3-\alpha)}{\alpha (\alpha-1)} (-\omega)^{\alpha-1} \left( L \left( \frac{1}{|\omega|} \right) + \mathds{1}_{\alpha=2} \right)
\end{align*}
where $\mathds{1}_{\alpha=2} = 1$ if $\alpha=2$ and $0$ otherwise.
\end{prop}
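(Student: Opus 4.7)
The strategy is to reduce Proposition~\ref{prop:sim} to an asymptotic statement about $F_\mu$ near $1$ in the complex plane, and then apply Theorem~\ref{thm:estimate}.

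First, I would establish the closed-form identity
$$
\int_0^\infty (1 - e^{\omega x}) M_x\,dx \;=\; 1 + \frac{1 - F_\mu(e^\omega)}{\omega}, \qquad \Re(\omega) < 0.
$$
This follows from Fubini: one has $M_x \geq 0$ with $\int_0^\infty M_x\,dx = \E[N] = 1$ (by criticality of $\mu$), and $|1 - e^{\omega x}| \leq 2$ throughout the region. Swapping integrals and computing $\int_0^y (1-e^{\omega x})\,dx = y - (e^{\omega y}-1)/\omega$ yields the identity, using $F_\mu(e^\omega) = \E[e^{\omega N}]$.

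Next, I would invoke Theorem~\ref{thm:estimate} with the slowly varying function $\ell$ from~\eqref{eq:Fmu}. Setting $\omega' := e^\omega - 1$, one has $|1+\omega'| = e^{\Re(\omega)} < 1$; one also checks $-\omega' = 1 - e^\omega \notin \R_-$ (if $\omega = a+ib$ with $a < 0$ and $\Im(1-e^\omega) = -e^a \sin b = 0$, then $b \in \pi\Z$ and the real part $1 - e^a \cos b > 0$), so $(-\omega')^\alpha$ is well-defined in the sense of Definition~\ref{def:log}. Theorem~\ref{thm:estimate}, combined with $\omega' = \omega + O(\omega^2)$, continuity of $z \mapsto z^\alpha$ at $-\omega$ (which lies in the open right half plane), and uniform convergence of slowly varying functions, gives
$$F_\mu(e^\omega) - e^\omega \;\sim\; (-\omega)^\alpha\, \ell(1/|\omega|).$$

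Finally, I would expand $1 - e^\omega = -\omega - \omega^2/2 + O(\omega^3)$, write $1 - F_\mu(e^\omega) = (1 - e^\omega) - (F_\mu(e^\omega) - e^\omega)$, and plug into the identity above. After dividing by $\omega$ and using $(-\omega)^\alpha/\omega = -(-\omega)^{\alpha-1}$, one obtains
$$
\int_0^\infty (1-e^{\omega x}) M_x\,dx \;=\; -\tfrac{\omega}{2} + O(\omega^2) + (-\omega)^{\alpha-1}\,\ell(1/|\omega|)\,(1+o(1)).
$$
The one (mild) subtlety is then a case analysis. For $\alpha \in (1,2)$, the stable term dominates (since $|\omega|^{\alpha-1}$ dwarfs $|\omega|$ and slowly varying factors are subpolynomial), reproducing the statement without the $\mathds{1}_{\alpha=2}$ correction. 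For $\alpha = 2$, the stable term equals $\tfrac{1}{2}(-\omega)L(1/|\omega|)$, which has the \emph{same} order as the linear contribution $-\omega/2$; their sum is $\tfrac{1}{2}(-\omega)\bigl(L(1/|\omega|) + 1\bigr)$, producing exactly the $\mathds{1}_{\alpha=2}$ in the conclusion. In other words, the indicator in the statement is not cosmetic: it comes from the ``non-stable'' linear Taylor term of $1-e^\omega$, which is subdominant for $\alpha<2$ but of the same order as the stable one when $\alpha=2$.
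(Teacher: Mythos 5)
Your closed-form identity is correct (it is the paper's \eqref{eq:estim} solved for the integral), and your observation that the $\mathds{1}_{\alpha=2}$ correction arises because the $-\omega/2$ Taylor term of $1-e^\omega$ has the same order as the stable term $(-\omega)^{\alpha-1}\ell(1/|\omega|)$ when $\alpha=2$ is both sound and a nice piece of bookkeeping. The problem is that the argument is circular. Theorem~\ref{thm:estimate} is not available as a black box: in this paper it is \emph{deduced} from Proposition~\ref{prop:sim}, via exactly the identity \eqref{eq:estim} you rederived, read in the opposite direction. Indeed, Section~\ref{sec:appendix} introduces Proposition~\ref{prop:sim} as ``the main tool in the proof of Theorem~\ref{thm:estimate}'' and then immediately derives Theorem~\ref{thm:estimate} from it. Your transformation therefore shows that the two statements are equivalent modulo the identity, but does not prove either.

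The genuine analytic content, which your proposal skips, is the direct estimation of $\int_{\R_+}(1-e^{\omega x})M_x\,\mathrm{d}x$. The paper does this from the Karamata tail estimate \eqref{carbink} for $M_x$, splitting the integral at a cutoff $|\omega|^{-\eta}$. For $\alpha\in(1,2)$ the dominant contribution is reduced to $\int_{|\omega|^{-\eta}}^\infty(1-e^{\omega x})x^{-\alpha}L(x)\,\mathrm{d}x$, which is handled with the Potter bounds together with a contour-deformation argument (Lemma~\ref{lem:omega}) that identifies the constant $\int_{\R_+}(1-e^{-y})y^{-\alpha}\,\mathrm{d}y = \frac{\alpha}{2-\alpha}\frac{\Gamma(3-\alpha)}{\alpha(\alpha-1)}$; the contour step is needed because the natural substitution $y=-\omega x$ moves the ray of integration off $\R_+$. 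For $\alpha=2$ the paper argues differently, Taylor-expanding $1-e^{\omega x}$ to linear order on the truncated range and using $\int_0^N xM_x\,\mathrm{d}x\sim(L(N)+1)/2$ (Lemma~\ref{lem:alpha2}), which is where the $+1$ actually enters in the paper's route (traceable to the ``$+1$'' in \eqref{eq:L2}). To repair your proof you would have to supply an independent argument of this kind rather than cite the downstream theorem.
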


Note that there is an extra term "$+1$" when $\alpha=2$. Before proving this result, let us explain how Theorem \ref{thm:estimate} then readily follows.

\begin{proof}[Proof of Theorem \ref{thm:estimate}]

We first show that
\begin{equation}
\label{eq:res}
F_\mu(e^\omega) -1 - \omega  \quad \mathop{\sim}_{\substack{|\omega| \rightarrow 0\\ \Re(\omega)<0}} \quad  \frac{\Gamma(3-\alpha)}{\alpha (\alpha -1)} (-\omega)^\alpha \left( L \left( \frac{1}{|\omega|} \right) + \mathds{1}_{\alpha=2} \right).
\end{equation}
To this end, observe that for $\omega \in \mathbb{C}$ such that $\Re(\omega) < 0$,
\begin{equation}
\label{eq:estim}
F_\mu \left( e^\omega \right) = 1 + \omega - \omega \int_{\R_+} \left( 1-e^{\omega x} \right) M_x \mathrm{d}x.
\end{equation}
Indeed,
\begin{align*}
\int_{\R_+} \left( 1-e^{\omega x} \right) M_x \mathrm{d}x &= \sum_{k \in \Z_+} \mu_k \int_{0}^k  \left( 1-e^{\omega x} \right) \mathrm{d}x = \sum_{k \in \Z_+} k \mu_k - \frac{1}{\omega} \sum_{k \in \Z_+} \mu_k \left( e^{\omega k} - 1 \right)
\end{align*}
which is equal to $1 + \frac{1}{\omega} - \frac{1}{\omega} F_\mu \left( e^{\omega} \right)$. The estimate \eqref{eq:res} then follows from Proposition \ref{prop:sim}.

Now, remark that, for $\omega \in \C$ such that $0<|1+\omega|<1$, $\Re \log (1+\omega) = \log |1+\omega| < 0$, where $\log$ is defined as in Definition \ref{def:log}. Hence, we can apply \eqref{eq:res}  to $\log (1+\omega)$. Then, as $|\omega| \rightarrow 0$ while $0 < |1+\omega|<1$,  by expanding $x \rightarrow \log (1+x)$ around $0$ and using the fact that a slowly varying function varies more slowly than any polynomial, we get that $F_\mu(1+\omega) $ is equal to

\begin{align*}
 F_\mu \left(e^{\log(1+\omega)} \right) &= 1 + \log(1+\omega) + \frac{\Gamma(3-\alpha)}{\alpha (\alpha -1)} \left( - \log (1+\omega) \right)^\alpha  \left( L \left( \frac{1}{|\log(1+\omega)|} \right)+ \mathds{1}_{\alpha=2} \right) \left(1+o(1) \right)\\
&= 1+\omega + \frac{\Gamma(3-\alpha)}{\alpha (\alpha -1)} (-\omega)^\alpha L \left( \frac{1}{|\omega|} \right) \left(1+o(1) \right),
\end{align*}
and this completes the proof.
\end{proof}

The statement of Proposition \ref{prop:sim} is slightly different whether $\alpha=2$ or $\alpha<2$, and therefore we need two different proofs. The reason comes from the following useful estimate (see \cite[Corollary $XVII.5.2$ and $(5.16)$]{Fel08}): 

\begin{equation}
\label{carbink}
 M_x   \quad \mathop{\sim}_{x \rightarrow \infty} \quad 
 \begin{cases}\displaystyle 

 \quad \frac{2-\alpha}{\alpha} x^{-\alpha} L(x)  & \text{ when } \alpha\in (1,2) \\

\displaystyle \quad  x^{-2} L'(x) &  \text{ when } \alpha=2

  \end{cases}
\end{equation}
where $L'$ is a slowly varying function such that ${L'(x)}/{L(x)} \underset{x \rightarrow \infty}{\rightarrow} 0$.

\subsection{Proof of Proposition \ref{prop:sim} for \texorpdfstring{$\alpha=2$}{a2}}

We start with the case $\alpha=2$, which is easier. In what follows, we set $C>0$ such that, for all $N \in \Z$, $N \geq 1$,
\begin{equation}
\label{eq:c}
L(N)+1 \leq C L(N).
\end{equation}
The existence of such a $C$ is guaranteed by \eqref{eq:L2} as soon as $\mu \neq \delta_1$. The proof of Proposition \ref{prop:sim} is based on the following lemma:

\begin{lem}

\label{lem:alpha2} The following assertions hold.

\begin{enumerate}

\item[(i)] As $N \rightarrow \infty$, $ \displaystyle \int_0^N x M_x  \mathrm{d}x \sim \left(L(N)+1 \right)/2$.

\item[(ii)] Fix $\epsilon>0$ and $C$ verifying \eqref{eq:c}. Then, for $ N $ large enough and $\omega \in \C$ such that $C e N |\omega| \leq \epsilon$, we have

\begin{align*}
\left| \int_0^N \left( 1-e^{\omega x} \right) M_x \mathrm{d}x - \int_0^N (-\omega x) M_x  \mathrm{d}x \right| \leq \epsilon |\omega| L(N).
\end{align*}
\end{enumerate}
\end{lem}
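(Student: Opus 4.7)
For part (i), the plan is to reduce to a truncated moment of $\mu$ via Fubini. Letting $X$ have law $\mu$, so that $M_x = \P(X \geq x)$, one rewrites
\[
\int_0^N x\, M_x \, dx \;=\; \E\!\left[ \int_0^{\min(X,N)} x\,dx \right] \;=\; \tfrac{1}{2}\Bigl( \E[X^2 \mathds{1}_{X \leq N}] + N^2\, \P(X > N)\Bigr).
\]
Specializing \eqref{eq:L2} to $\alpha = 2$ gives $\E[X^2 \mathds{1}_{X \leq N}] \sim L(N)+1$, while \eqref{carbink} gives $N^2 \P(X > N) \sim L'(N) = o(L(N))$. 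To upgrade $o(L(N))$ to $o(L(N)+1)$, I would invoke the standing assumption $\mu_0 + \mu_1 < 1$: this forces $\E[X^2] = 1 + \sum_{k \geq 2} k(k-1)\mu_k > 1$, so that $L(N)+1 \to \E[X^2] > 1$ and in particular $L(N)$ is bounded away from $0$ for large $N$. Combining these observations yields the claim.

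For part (ii), the plan is a first-order Taylor estimate for $e^{\omega x}$. The standard bound $|e^z - 1 - z| \leq \tfrac{1}{2}|z|^2 e^{|z|}$ for $z \in \C$, applied with $z = \omega x$, gives
\[
\bigl| 1 - e^{\omega x} + \omega x \bigr| \;\leq\; \tfrac{1}{2}(|\omega|x)^2\, e^{|\omega|x} \;\leq\; \tfrac{1}{2}(|\omega|x)^2\, e^{|\omega|N}
\]
for $x \in [0,N]$. Integrating against $M_x$ and bounding $x^2 \leq Nx$ then yields
\[
\left| \int_0^N (1 - e^{\omega x}) M_x\,dx - \int_0^N (-\omega x) M_x\,dx \right| \;\leq\; \frac{|\omega|^2 N\, e^{|\omega|N}}{2} \int_0^N x\, M_x \, dx.
\]
By part (i) and \eqref{eq:c}, the last integral is at most $CL(N)$ for $N$ large. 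Taking $\epsilon \leq Ce$ (harmless, since the lemma becomes weaker as $\epsilon$ grows), the hypothesis $CeN|\omega| \leq \epsilon$ forces $|\omega|N \leq 1$ and hence $e^{|\omega|N} \leq e$, so the right-hand side is bounded by $\tfrac{1}{2}\, CeN|\omega| \cdot |\omega|\, L(N) \leq \tfrac{\epsilon}{2}\, |\omega|\, L(N)$, finishing the proof.

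The argument is essentially routine. The only step requiring genuine care is the verification, in part (i), that $N^2 \P(X > N) = o(L(N)+1)$: \eqref{carbink} only gives $o(L(N))$ directly, which would be insufficient if $L(N)$ could tend to $0$, and the elementary moment argument recalled above is what rules this out.
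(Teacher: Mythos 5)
Your proof is correct and follows essentially the same route as the paper's: the identity $\int_0^N x\,M_x\,\mathrm{d}x = \tfrac{1}{2}\bigl(\E[X^2\mathds{1}_{X\leq N}] + N^2\,\P(X>N)\bigr)$ is exactly what the paper obtains in (i) (via a discrete Abel-type summation rather than Fubini), and (ii) is the same second-order Taylor bound integrated against $M_x$. Two small remarks: the ``upgrade'' from $o(L(N))$ to $o(L(N)+1)$ is in fact automatic since $L(N)\leq L(N)+1$, so no extra argument is needed there --- the real use of your observation that $L(N)+1\to\E[X^2]>1$ is to justify the existence of the constant $C$ in \eqref{eq:c} in the first place; and your explicit reduction to $\epsilon\leq Ce$ in (ii), which forces $|\omega|N\leq 1$ and hence $e^{|\omega|N}\leq e$, tidies up a point the paper leaves implicit when it invokes the bound $|e^x-1-x|\leq e|x|^2$ valid only for $|x|\leq 1$.
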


\begin{proof}

For the first assertion simply write, for $N \geq 1$,

\begin{align*}
\int_0^N x M_x  \mathrm{d}x = \sum_{k=1}^N \left( k-\frac{1}{2} \right) M_k &= \sum_{k=1}^N \left( k-\frac{1}{2} \right) \sum_{\ell=k}^\infty \mu_\ell= \frac{N^2}{2} M_{N+1} + \sum_{\ell=1}^N \frac{\ell^2}{2} \mu_\ell,
\end{align*}
which is asymptotic to $(L(N)+1)/2$ by \eqref{eq:L2} and \eqref{carbink}.

For (ii), observe that for $x \in \C$ such that $|x|\leq 1$, we have $|e^x-1-x| \leq e |x|^2$. Hence, when $CeN|\omega| \leq \epsilon$, one has:

\begin{align*}
\left| \int_0^N \left( 1-e^{\omega x} \right) M_x \mathrm{d}x - \int_0^N (-\omega x) M_x  \mathrm{d}x \right| &\leq e |\omega|^2 \int_0^N x^2 M_x \mathrm{d}x \leq e N |\omega|^2 \int_0^N x M_x \mathrm{d}x.
\end{align*}
Hence, by (i), for $N$ large enough and $CeN|\omega| \leq \epsilon$, we have $e N |\omega|^2 \int_0^N x M_x \mathrm{d}x \leq C e N |\omega|^2 L(N)$, which is at most $\epsilon |\omega| L(N)$. This completes the proof.

\end{proof}

\begin{proof}[Proof of Proposition \ref{prop:sim} for $\alpha=2$] We assume that $\alpha=2$. Fix $\epsilon>0$. For $\omega \in \C$ with $\Re(\omega)<0$, let $N_\omega \coloneqq \lfloor \frac{\epsilon}{2 C e |\omega|} \rfloor$. Therefore, Lemma~\ref{lem:alpha2} (ii) holds with $N=N_\omega$ for $|\omega|$ small enough and we get

\begin{align*}
& \left| \int_{\R_+} \left( 1-e^{\omega x} \right) M_x \mathrm{d}x - \int_{0}^{N_\omega} (-\omega x) M_x  \mathrm{d}x \right| \\
& \qquad \qquad \leq \left|\int_0^{N_\omega} \left( 1-e^{\omega x} \right) M_x \mathrm{d}x - \int_0^{N_\omega} (-\omega x) M_x  \mathrm{d}x \right| + \left| \int_{N_\omega}^\infty \left( 1-e^{\omega x} \right) M_x \mathrm{d}x \right|\\
& \qquad \qquad \leq  \epsilon |\omega| L({N_\omega}) + 2 \int_{N_\omega}^\infty M_x \mathrm{d}x \, \leq \, \epsilon |\omega| L({N_\omega}) + 3 \frac{L'(N_\omega)}{N_\omega},
\end{align*}
where we have used Lemma~\ref{lem:alpha2} (ii) and the fact that $\int_N^\infty M_x \mathrm{d}x \sim \int_N^\infty \frac{L'(x)}{x^2} \mathrm{d}x \sim \frac{L'(N)}{N}$ as $ N \rightarrow \infty $ 
(see  \cite[Proposition $1.5.10$]{BGT89}). Since  ${L'(x)}/{L(x)} \underset{x \rightarrow \infty}{\rightarrow} 0$, it follows that for $|\omega|$ small enough,

\begin{equation}
\left| \int_{\R_+} \left( 1-e^{\omega x} \right) M_x \mathrm{d}x - \int_{0}^{N_\omega} (-\omega x) M_x  \mathrm{d}x \right| \leq 2 \epsilon |\omega| L(N_\omega).
\end{equation}
But by Lemma~\ref{lem:alpha2} (i), $\int_0^{N_\omega} (-\omega x) M_x \mathrm{d}x \sim -\frac{1}{2} \omega (L(N_\omega)+1)$ as $|\omega| \rightarrow 0$. The desired result is obtained by taking $\epsilon \rightarrow 0$, using the facts that $L(N_\omega) \sim L(\frac{1}{|\omega|})$ as $|\omega| \rightarrow 0$, and that $\frac{\Gamma(3-\alpha)}{\alpha(\alpha-1)}=\frac{1}{2}$ when $\alpha=2$.

\end{proof}

\subsection{Proof of Proposition \ref{prop:sim} for \texorpdfstring{$\alpha \in (1,2)$}{a12}}

We now fix $\alpha \in (1,2)$. In the sequel, we fix $a_{0}>0$ such that for every $z \in \mathbb{C}$:

\begin{equation}
\label{eq:a0} 
|z| \leq a_{0} \implies |1-e^{z}| \leq 2 |z|.
\end{equation}

The proof is based on two technical  estimates.

\begin{lem}
\label{lem:omega}
The following assertions hold:
\begin{enumerate}
\item[(i)]  uniformly for $\omega$ with negative real part,
\begin{equation*}
\underset{\substack{a \rightarrow 0 \\ B \rightarrow \infty}}{\lim}\int_{-a \omega/|\omega|}^{-B \omega/|\omega|} \left( 1-e^{-y} \right)  y ^{-\alpha} \mathrm{d}y = \int_{\R_+} \left( 1-e^{-y} \right) y^{-\alpha} \mathrm{d}y = \frac{\alpha}{2-\alpha} \frac{\Gamma(3-\alpha)}{\alpha (\alpha - 1)};
\end{equation*}

\item[(ii)] for any fixed $\eta \in (0,1)$, we have

$$\int_{|\omega|^{-\eta}}^{+\infty} (1-e^{\omega x}) x^{-\alpha} L(x) \mathrm{d}x \underset{\substack{|\omega| \rightarrow 0 \\ \Re(\omega)<0}}{\sim} (-\omega)^{\alpha-1} L \left(\frac{1}{|\omega|} \right)  \cdot  \frac{\alpha}{2-\alpha} \frac{\Gamma(3-\alpha)}{\alpha (\alpha - 1)}.$$
\end{enumerate}

\end{lem}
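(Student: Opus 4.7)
For part (i), the plan is a standard contour-integration argument. The function $z \mapsto (1-e^{-z})z^{-\alpha}$ is holomorphic on $\C \setminus \R_-$ with the principal branch of Definition~\ref{def:log}, and for $\omega$ with $\Re\omega < 0$ the unit vector $w := -\omega/|\omega|$ lies in the open right half-plane. I would apply Cauchy's theorem to the ``pie slice'' with vertex $0$ bounded by $\R_+$, the ray $\R_+ w$, and circular arcs of radii $a$ and $B$, then let $a \to 0$ and $B \to \infty$. On the small arc, $|1-e^{-y}| \leq C|y|$ near the origin gives a contribution of order $a^{2-\alpha} \to 0$ (since $\alpha < 2$); on the large arc, the uniform bound $|1-e^{-y}| \leq 2$ on the closed right half-plane gives a contribution of order $B^{1-\alpha} \to 0$ (since $\alpha > 1$). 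Both estimates are uniform in the direction $w$. The value of the integral along the positive real axis then comes from a single integration by parts, yielding $\int_0^\infty (1-e^{-t}) t^{-\alpha}\,dt = \Gamma(2-\alpha)/(\alpha-1) = \Gamma(3-\alpha)/[(\alpha-1)(2-\alpha)]$, the stated constant.

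For part (ii), I would reduce to part (i) through the rescaling $u = |\omega| x$, which gives
\begin{equation*}
\int_{|\omega|^{-\eta}}^{\infty} (1 - e^{\omega x}) x^{-\alpha} L(x)\,dx \;=\; |\omega|^{\alpha-1}\, L(1/|\omega|) \int_{|\omega|^{1-\eta}}^{\infty} (1-e^{-wu})\, u^{-\alpha}\, \frac{L(u/|\omega|)}{L(1/|\omega|)}\,du,
\end{equation*}
with $w = -\omega/|\omega|$. The heart of the argument is to show that the right-hand integral converges to $\int_0^\infty (1-e^{-wu}) u^{-\alpha}\,du$ as $|\omega| \to 0$, which I would do by splitting the domain at fixed $0 < \delta < M$ into three pieces. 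On the middle range $[\delta,M]$, the classical uniform convergence of slowly varying functions on compact subsets of $(0,\infty)$ (see \cite{BGT89}) gives $L(u/|\omega|)/L(1/|\omega|) \to 1$ uniformly in $u$, and this piece converges to $\int_\delta^M(1-e^{-wu}) u^{-\alpha}\,du$. The tail $[M,\infty)$ is handled via Potter's bound (Theorem~\ref{thm:potterbounds}) with exponent $\epsilon < \alpha-1$, dominating the integrand by $C u^{-\alpha+\epsilon}$ which is integrable at infinity and uniformly small for $M$ large. The lower piece $[|\omega|^{1-\eta},\delta]$ is treated by combining $|1-e^{-wu}| \leq 2 u$ with Potter's bound with exponent $-\epsilon$ (for $\epsilon < \min(\alpha-1,\,2-\alpha)$), producing an integrand bounded by $C u^{1-\alpha-\epsilon}$ whose integral is $O(\delta^{2-\alpha-\epsilon})$ uniformly in small $|\omega|$; the analogous bound $\int_0^\delta (1-e^{-wu}) u^{-\alpha}\,du = O(\delta^{2-\alpha})$ controls the limiting side. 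Sending first $|\omega| \to 0$ and then $\delta \to 0$, $M \to \infty$ completes the convergence.

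To conclude, applying part (i) on the ray of direction $w$ identifies $\int_0^\infty (1-e^{-wu}) u^{-\alpha}\,du = w^{\alpha-1}\cdot\frac{\alpha}{2-\alpha}\cdot\frac{\Gamma(3-\alpha)}{\alpha(\alpha-1)}$; equivalently this follows from integration by parts together with the standard evaluation $\int_0^\infty e^{-wu} u^{1-\alpha}\,du = w^{\alpha-2}\Gamma(2-\alpha)$ (itself justified by a contour argument analogous to part~(i)). The identity $|\omega|^{\alpha-1} w^{\alpha-1} = (-\omega)^{\alpha-1}$ in the sense of Definition~\ref{def:log} holds because $|\omega| > 0$ and $\arg w = \arg(-\omega) \in (-\pi/2,\pi/2)$, so the principal logarithms of $|\omega|$ and $w$ add without branch crossing. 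The main subtlety will be verifying that Potter's bound may be applied uniformly in the lower piece, where $u/|\omega|$ is as small as $|\omega|^{-\eta}$ but nonetheless tends to $+\infty$ so the bound is valid, and tracking the chosen branch of $z^{-\alpha}$ consistently through the two changes of variables.
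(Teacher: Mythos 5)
Your proof is correct and follows essentially the same approach as the paper: for (i), the identical pie-slice contour with the same $a^{2-\alpha}$ and $B^{1-\alpha}$ arc estimates; for (ii), the same three-piece splitting of the integration domain (your cuts at $\delta$ and $M$ in the $u$-variable correspond exactly to the paper's cuts at $a/|\omega|$ and $B/|\omega|$ in $x$), with Potter bounds for the two outer pieces and uniform convergence of $L$ on compacts for the middle, followed by an appeal to part (i). The only difference — performing the real rescaling $u=|\omega|x$ before splitting rather than after — is purely notational.
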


\begin{proof}

For the first assertion, we use tools from complex analysis.   For $0<a<B<+\infty$, define the path $\gamma_a^B$ as in Fig. \ref{fig:path}, as the union of two straight lines and two arcs $\gamma_a$ and $\gamma^B$. Since $y \longmapsto \left( 1-e^{-y} \right) y^{-\alpha}$ is holomorphic on $\C \backslash \R_-$, the value of its integral on this path is $0$.

\begin{figure}[h!]

\center

\caption{The path $\gamma_a^B$}

\label{fig:path}

\begin{tikzpicture}[scale=.5]

\draw[->] ({sqrt(50)},0) -- (9,0);

\draw (-1,0) -- ({sqrt(2)},0);

\draw[->] (0,-1) -- (0,7);

\draw[red] ({sqrt(50)},0) -- ({sqrt(2)},0);

\draw[red] (1,1) -- (5,5);

\draw[blue, auto=right] (5,5) arc (45:0:{sqrt(50)}) node[midway]{$\gamma^B$};

\draw[blue] (.8,.5) node{$\gamma_a$};

\draw[blue] (1,1) arc (45:0:{sqrt(2)});

\draw ({sqrt(2)},-.5) node{$a$};

\draw (0.8,2) node{$\frac{-\omega}{|\omega|}a$};

\draw ({sqrt(50)},-.5) node{$B$ };

\draw (5.5,5.7) node{$\frac{-\omega}{|\omega|} B$}; 

\draw[red,fill=red] (1,1) circle (.1);

\draw[red,fill=red] (5,5) circle (.1);

\end{tikzpicture}

\end{figure}
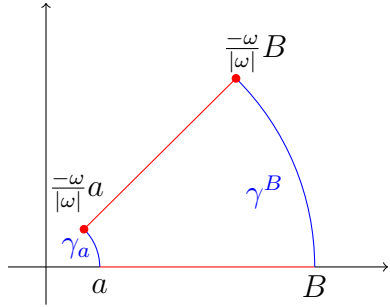

By \eqref{eq:a0}, for $0<a<a_0$, uniformly for $\omega$ with negative real part,

\begin{align*}
\left| \int_{\gamma_a} (1-e^{-y}) y^{-\alpha}\mathrm{d}y \right| \leq 2 \left| \int_{\gamma_a} |z|^{1-\alpha} \mathrm{d}z \right| \leq \pi a^{2-\alpha} \underset{a \rightarrow 0}{\rightarrow} 0
\end{align*}
and
\begin{align*}
\left| \int_{\gamma^B} (1-e^{-y}) y^{-\alpha}\mathrm{d}y \right| \leq 2  \left| \int_{\gamma^B} |z|^{-\alpha} \mathrm{d}z \right| \leq \pi B^{1-\alpha} \underset{B \rightarrow +\infty}{\rightarrow} 0.
\end{align*}
On the other hand, as $a \rightarrow 0$ and $B \rightarrow \infty$, $\int_a^B (1-e^{-y}) y^{-\alpha}\mathrm{d}y  \rightarrow  \int_{\R_+} (1-e^{-y}) y^{-\alpha}\mathrm{d}y$. This shows the first equality in (i). The second one is a simple computation.

For (ii), the idea is to write $$\int_{|\omega|^{-\eta}}^{+\infty} (1-e^{\omega x}) x^{-\alpha} L(x) \mathrm{d}x = \left(  \int_{|\omega|^{-\eta}}^{a / |\omega|} +  \int_{a/|\omega|}^{B / |\omega|}  +\int_{B / |\omega|}^{+\infty} \right) (1-e^{\omega x}) x^{-\alpha} L(x) \mathrm{d}x$$
for some $a<B$ to be fixed later, and to estimate the three terms. Let us fix $\epsilon>0$.

\emph{Third term.} By the Potter bounds, we may fix $B_0>0$ such that, for any $B \geq B_0$, for $|\omega| \leq B^{-1}$ and $x \geq {B}/{|\omega|}$, we have $L(x) \leq L(1/|\omega|) (x|\omega|)^{(\alpha-1)/2}$. This implies that, for $B \geq B_{0}$,

\begin{align*}
\left| \int_{B /|\omega|}^{+\infty} (1-e^{\omega x}) x^{-\alpha} L(x) \mathrm{d}x \right| &\leq 2 L \left( \frac{1}{|\omega|} \right) \int_{B /|\omega|}^{+\infty} |\omega|^{(\alpha-1)/2} x^{-(\alpha+1)/2} \mathrm{d}x\\
&= 2 L \left( \frac{1}{|\omega|} \right) |\omega|^{\alpha-1} \int_{B}^{+\infty} x^{-(\alpha+1)/2} \mathrm{d}x,
\end{align*}
which is less than $\epsilon L ( {1}/{|\omega|} ) |\omega|^{\alpha-1} $ for $B$ large enough (independent of $\omega$). In what follows, we take $B$ such that this holds.

\emph{First term.} By the Potter bounds, there exists $a \in (0,1)$ such that, for $|\omega|$ small enough and $|\omega|^{-\eta} \leq x \leq \frac{a}{|\omega|}$, we have $L(x) \leq L( {1}/{|\omega|} ) (x|\omega|)^{\alpha/2-1}$. Furthermore, by \eqref{eq:a0},  for $a$ small enough, 

\begin{align*}
\left| \int_{|\omega|^{-\eta}}^{a /|\omega|} (1-e^{\omega x}) x^{-\alpha} L(x) \mathrm{d}x \right| &\leq  2 |\omega| \int_{|\omega|^{-\eta}}^{a /|\omega|} x^{1-\alpha} L(x) \mathrm{d}x \leq 2 |\omega|^{\alpha/2} L\left( \frac{1}{|\omega|} \right) \int_{0}^{a /|\omega|} x^{-\alpha/2} \mathrm{d}x\\
&\leq 2 |\omega|^{\alpha-1} L\left( \frac{1}{|\omega|} \right) \int_{0}^{a} y^{-\alpha/2}\mathrm{d}y
\end{align*}
which is less than $\epsilon L ( {1}/{|\omega|} ) |\omega|^{\alpha-1} $ for $a>0$ small enough (independent of $\omega$). In what follows, we take $a>0$ such that this holds.

\emph{Second term.}  Since $L$ is slowly varying, uniformly in $x \in ( {a}/{|\omega|}, {B}/{|\omega|})$, $L(x) \sim L( {1}/{|\omega|} )$ as $|\omega| \rightarrow 0$. Therefore, for any $\epsilon'>0$, for $|\omega|$ small enough (depending on $\epsilon'$),

\begin{align*}
&\left| \int_{a/|\omega|}^{B/|\omega|} (1-e^{\omega x}) x^{-\alpha} L(x) \mathrm{d}x - L \left( \frac{1}{|\omega|} \right) \int_{a/|\omega|}^{B/|\omega|} (1-e^{\omega x}) x^{-\alpha} \mathrm{d}x \right| \\
&\qquad\qquad \qquad \qquad \qquad \leq \epsilon' L \left( \frac{1}{|\omega|} \right) \int_{a/|\omega|}^{B/|\omega|} |1-e^{\omega x}| x^{-\alpha} \mathrm{d}x  \leq 2 \epsilon' |\omega|^{\alpha-1} L \left( \frac{1}{|\omega|} \right) \int_a^B y^{-\alpha}\mathrm{d}y,
\end{align*}
where the last inequality follows from a change of variables. We conclude that for $|\omega|$ small enough (depending on $a$ and $B$),

$$\left| \int_{a/|\omega|}^{B/|\omega|} (1-e^{\omega x}) x^{-\alpha} L(x) - L \left( \frac{1}{|\omega|} \right) \int_{a/|\omega|}^{B/|\omega|} (1-e^{\omega x}) x^{-\alpha} \mathrm{d}x \right| \leq \epsilon |\omega|^{\alpha-1} L \left( \frac{1}{|\omega|} \right).$$

By putting together the three previous estimates, we get

$$\int_{|\omega|^{-\eta}}^{+\infty} (1-e^{\omega x}) x^{-\alpha} L(x) \mathrm{d}x = L \left( \frac{1}{|\omega|} \right) \left( \int_{a/|\omega|}^{B/|\omega|} (1-e^{\omega x}) x^{-\alpha} \mathrm{d}x + o(|\omega|^{\alpha-1}) \right).$$
as $|\omega| \rightarrow 0, \Re(\omega)<0$.
To conclude the proof, remark that by change of variables, $$ (-\omega)^{1-\alpha} \int_{a/|\omega|}^{B/|\omega|} (1-e^{\omega x}) x^{-\alpha} \mathrm{d}x =\int_{-a \omega/|\omega|}^{-B \omega/|\omega|} (1-e^{-y}) y^{-\alpha}\mathrm{d}y,$$ which converges towards $ \int_{\R_+} (1-e^{-y}) y^{-\alpha}\mathrm{d}y = \frac{\alpha}{2-\alpha} \frac{\Gamma(3-\alpha)}{\alpha (\alpha - 1)}$ as $\, a \rightarrow 0 \,$, $B \rightarrow \infty$ by (i).

\end{proof}

\begin{proof}[Proof of Proposition~\ref{prop:sim} in the case $\alpha \in (1,2)$.] Let us assume that $\alpha \in (1,2)$ and recall that the goal is to estimate $\int_{\R_+} (1-e^{\omega x}) M_x \mathrm{d}x$.  The idea is to write

$$\int_{0}^{\infty} \left( 1-e^{\omega x} \right) M_x \mathrm{d}x =  \int_0^{|\omega|^{-\eta}}  (1-e^{\omega x}) M_x \mathrm{d}x  +  \int_{|\omega|^{-\eta}}^{\infty}  (1-e^{\omega x}) M_x \mathrm{d}x $$
for certain well chosen $\eta>0$ and to estimate separately these two terms. Using \eqref{carbink}, we shall show that as $\omega \rightarrow 0$, the first term is $o (|\omega|^{\alpha-1} L( {1}/{|\omega|} ) )$, while the second one is asymptotic to $  \frac{\Gamma(3-\alpha)}{\alpha (\alpha-1)} (-\omega)^{\alpha-1} L ( {1}/{|\omega|} )$. Again, some care is needed as we are dealing with complex-valued quantities.

\emph{First term.} 

First of all, by definition, for any $x \in \R_+$, $M_x \leq 1$. Therefore, setting $\eta = {(2-\alpha)}/{4} \in (0,1/4)$ and using \eqref{eq:a0},

\begin{equation*}
\left| \int_0^{|\omega|^{-\eta}}  (1-e^{\omega x}) M_x \mathrm{d}x \right| \leq \int_0^{|\omega|^{-\eta}} 2 |\omega| x \mathrm{d}x \leq  |\omega|^{1-2\eta} = |\omega|^{\alpha/2}
\end{equation*}
for $|\omega|$ small enough. As a consequence, $| \int_0^{|\omega|^{-\eta}}  (1-e^{\omega x}) M_x \mathrm{d}x | = o (|\omega|^{\alpha-1} L( {1}/{|\omega|} ) )$ as $|\omega| \rightarrow 0$.

\emph{Second term.} 

Fix $\epsilon>0$. By the estimate \eqref{carbink}, as $|\omega| \rightarrow 0$, uniformly for $x \geq |\omega|^{-\eta}$, $M_x~\sim~\frac{2-\alpha}{\alpha}~x^{-\alpha}~L(x)$. This allows us to write for any $\epsilon'>0$, for $|\omega|$ small enough (depending on $\epsilon'$): 

\begin{align*}
\left|\int_{|\omega|^{-\eta}}^{+\infty} (1-e^{\omega x})M_x \mathrm{d}x  - \frac{2-\alpha}{\alpha}\int_{|\omega|^{-\eta}}^{+\infty} (1-e^{\omega x}) x^{-\alpha} L(x) \mathrm{d}x \right| \leq \epsilon' \int_{|\omega|^{-\eta}}^{+\infty} |1-e^{\omega x}| x^{-\alpha} L(x)\mathrm{d}x.
\end{align*}
In particular, mimicking the proof of Lemma \ref{lem:omega} (ii), we bound the right-hand term and get, for $|\omega|$ small enough,

\begin{align*}
\left|\int_{|\omega|^{-\eta}}^{+\infty} (1-e^{\omega x})M_x \mathrm{d}x  - \frac{2-\alpha}{\alpha}\int_{|\omega|^{-\eta}}^{+\infty} (1-e^{\omega x}) x^{-\alpha} L(x) \mathrm{d}x \right| \leq \epsilon |\omega|^{\alpha-1} L \left( \frac{1}{|\omega|}\right).
\end{align*}
The desired result then follows from the estimate of Lemma \ref{lem:omega} (ii).

\end{proof}

\bibliographystyle{abbrv}
\bibliography{biblistablelaminations}

\end{document}